\newcounter{item}[section]
\newcounter{kirshr}
\newcounter{kirsha}
\newcounter{kirshb}
\newenvironment{enumroman}{\setcounter{kirshr}{1}
\begin{list}{(\roman{kirshr})}{\usecounter{kirshr}} }{\end{list}}
\newenvironment{enumarab}{\setcounter{kirshb}{1}
\begin{list}{(\arabic{kirshb})}{\usecounter{kirshb}} }{\end{list}}
\newenvironment{athm}[1]{\vskip3mm\par\noindent
{\bf #1 }. \slshape }
{\upshape\par\vskip10pt minus3pt}
\newtheorem{theorem}{Theorem}[section]
\newtheorem{lemma}[theorem]{Lemma}
\newtheorem{corollary}[theorem]{Corollary}
\newenvironment{demo}[1]{\noindent{\bf #1.}\upshape\mdseries}
{\nopagebreak{\hfill\rule{2mm}{2mm}\nopagebreak}\par\normalfont}
\theoremstyle{definition}
\newtheorem{example}[theorem]{Example}
\newtheorem{definition}[theorem]{Definition}
\def\R{\mathbb{R}}
\def\C{{\mathfrak{C}}}
\def\Fm{{\mathfrak{Fm}}}
\def\At{{\bf At}}
\def\Nr{{\mathfrak{Nr}}}
\def\Fr{{\mathfrak{Fr}}}
\def\Sg{{\mathfrak{Sg}}}
\def\Fm{{\mathfrak{Fm}}}
\def\A{{\mathfrak{A}}}
\def\B{{\mathfrak{B}}}
\def\C{{\mathfrak{C}}}
\def\D{{\mathfrak{D}}}
\def\M{{\mathfrak{M}}}
\def\N{{\mathfrak{N}}}
\def\CA{{\bf CA}}
\def\QA{{\bf QA}}
\def\QEA{{\bf QEA}}
\def\Df{{\bf Df}}
\def\Lf{{\bf Lf}}
\def\PA{{\bf PA}}
\def\PEA{{\bf PEA}}
\def\K{{\bf K}}
\def\K{{\bf K}}
\def\RCA{{\bf RCA}}
\def\Rd{{\ Rd}}
\def\(R)RA{{\bf (R)RA}}
\def\RA{{\bf RA}}
\def\R{\mathbb{R}}
\def\Sc{{\bf Sc}}
\def\tr{{\sf tr}}
\def\c #1{{\cal #1}}
 \def\CA{{\sf CA}}
\def\B{{\sf B}}
\def\G{{\sf G}}
\def\w{{\sf w}}
\def\y{{\sf y}}
\def\g{{\sf g}}
\def\r{{\sf r}}
\def\K{{\sf K}}
 \def\Cm{{\mathfrak{Cm}}}
\def\Nr{{\mathfrak{Nr}}}
\def\restr #1{{\restriction_{#1}}}
\def\cyl#1{{\sf c}_{#1}}
\def\diag#1#2{{\sf d}_{#1#2}}
\def\sub#1#2{{\sf s}^{#1}_{#2}}
\def\Ra{{\mathfrak{Ra}}}
\def\Ca{{\mathfrak{Ca}}}
\def\set#1{\{#1\} }
\def\Ra{{\mathfrak{Ra}}}
\def\Nr{{\mathfrak{Nr}}}
\def\Tm{{\mathfrak{Tm}}}
\def\A{{\mathfrak{A}}}
\def\B{{\mathfrak{B}}}
\def\C{{\mathfrak{C}}}
\def\D{{\mathfrak{D}}}
\def\A{{\mathfrak{A}}}
\def\B{{\mathfrak{B}}}
\def\C{{\mathfrak{C}}}
\def\D{{\mathfrak{D}}}
\def\L{{\mathfrak{L}}}
\def\Rd{{\mathfrak{Rd}}}
\def\At{{\mathfrak{At}}}
\def\L{{\mathfrak{L}}}
\def\Bl{{\mathfrak{Bl}}}
\def\CA{{\bf CA}}
\def\RA{{\bf RA}}
\def\RCA{{\bf RCA}}
\def\G{{\bf G}}
\def\F{{\mathfrak{F}}}
\def\At{{\sf{At}}}
\def\N{\mathbb{N}}
\def\R{\mathfrak{R}}
\def\L{{\mathfrak L}}
\def\sub#1#2{{\sf s}^{#1}_{#2}}
\def\cyl#1{{\sf c}_{#1}}
\def\diag#1#2{{\sf d}_{#1#2}}
\def\c #1{{\cal #1}}
\def\pa{$\forall$}
\def\pe{$\exists$}
\def\ef{Ehren\-feucht--Fra\"\i ss\'e}
\def\nodes{{\sf nodes}}
\def\restr #1{{\restriction_{#1}}}
\def\Ra{{\mathfrak{Ra}}}
\def\Nr{{\mathfrak{Nr}}}
\def\CA{{\bf CA}}
\def\RCA{{\bf RCA}}
\def\c#1{{\mathcal #1}}
\def\set#1{ \{#1\}}
\def\Ca{{\mathfrak Ca}}
\def\pe{$\exists$}
\def\pa{$\forall$}
\def\Cm{{\mathfrak Cm}}
\def\Sg{{\mathfrak Sg}}
\def\P{{\mathfrak P}}
\def\Rl{{\mathfrak Rl}}
\def\N{{\cal N}}
\def\d{Dedekind-MacNeille}
\def\At{{\sf At}}
\def\Ig{{\sf Ig}}
\def\rng{{\sf rng}}
\def\dom{{\sf dom}}
\def\QRA{{\sf QRA}}
\def\w{{\sf w}}
\def\g{{\sf g}}
\def\y{{\sf y}}
\def\r{{\sf r}}
\def\RCA{\sf RCA}
\def\cyl#1{{\sf c}_{#1}}
\def\sub#1#2{{\sf s}^{#1}_{#2}}
\def\diag#1#2{{\sf d}_{#1#2}}
\def\ws{winning strategy}
\def\ef{Ehren\-feucht--Fra\"\i ss\'e}
\def\y{{\sf y}}
\def\g{{\sf g}}
\def\r{{\sf r}}
\def\w{{\sf w}}
\def\QPEA{{\sf QPEA}}
\def\RQEA{{\sf RQEA}}
\def\y{{\sf y}}
\def\g{{\sf g}}
\def\r{{\sf r}}
\def\w{{\sf w}}
\def\RA{{\sf RA}}
\def\CA{{\sf CA}}
\def\QRA{\sf QRA}
\def\CA{{\sf CA}}
\def\Ra{\mathfrak{Ra}}
\def\Lf{{\sf Lf}}
\def\RCA{\sf RCA}
\def\Kn{{\sf Kn}}
\def\G{\sf G}
\def\EF{\sf EF}
\def\PEA{\sf PEA}
\def\N{\mathbb N}
\def\K{{\bf K}}
\def\QPA{{\bf QPA}}
\def \QPEA{{\bf QPEA}}
\def\RQEA{{\bf RQEA}}
\def\cyl#1{{\sf c}_{#1}}
\def\diag#1#2{{\sf d}_{#1#2}}
\def\c #1{{\cal #1}}
\def\pa{$\forall$}
\def\pe{$\exists$}
\def\ef{Ehren\-feucht--Fra\"\i ss\'e}
\def\Ca{{\mathfrak{Ca}}}
\def\Rl{{\mathfrak{Rl}}}
\def\nodes{{\sf nodes}}
\def\restr #1{{\restriction_{#1}}}
\def\A{{\cal{A}}}
\def\B{{\mathfrak{B}}}
\def\C{{\mathfrak{C}}}
\def\D{{\mathfrak{D}}}
\def\P{{\mathfrak{P}}}
\def\Ra{{\mathfrak{Ra}}}
\def\Nr{{\mathfrak{Nr}}}
\def\F{{\mathfrak{F}}}
\def\CA{{\bf CA}}
\def\RCA{{\bf RCA}}
\def\c#1{{\mathcal #1}}
\def\Ca{{\mathfrak Ca}}
\def\pe{$\exists$}
\def\pa{$\forall$}
\def\Cm{{\mathfrak Cm}}
\def\Sg{{\mathfrak Sg}}
\def\CPEA{{\sf CPEA}}
\def\At{{\sf At}}
\def\rng{{\sf rng}}
\def\dom{{\sf dom}}
\def\dim{{\sf dim}}
\def\w{{\sf w}}
\def\g{{\sf g}}
\def\y{{\sf y}}
\def\r{{\sf r}}
\def\cyl#1{{\sf c}_{#1}}
\def\sub#1#2{{\sf s}_{[{#1}/ {#2}]}}
\def\diag#1#2{{\sf d}_{#1#2}}
\def\swap#1#2{{\sf s}_{[#1, #2]}}
\def\ef{Ehren\-feucht--Fra\"\i ss\'e}
\def\y{{\sf y}}
\def\g{{\sf g}}
\def\r{{\sf r}}
\def\w{{\sf w}}
\def\G{{\bold G}}
\def\Sc{{\sf Sc}}
\def\Df{{\sf Df}}
\def\PA{{\sf PA}}
\def\Id{{\sf Id}}
\def\QEA{{\sf QEA}}
\def\s{{\sf s}}
\def\QPA{{\sf QPA}}
\def\QPEA{{\sf QPEA}}
\def\CA{{\sf CA}}
\def\K{{\sf K}}
\def\RCA{{\sf RCA}}
\def\RQEA{{\sf RQEA}}
\def\A{{\mathfrak{A}}}
\def\QA{{\sf QA}}
\def\Co{{\sf Co}}
\title{Cylindric and polyadic algebras, new perspectives}
\author{Tarek Sayed Ahmed}
\begin{document}
\maketitle

\begin{abstract} We generalize the notions of Monk's system of varieties definable by a schema
to integrate finite dimensions. We give a general method
in this new framework
that allows one to lift a plethora of results on neat embeddings to the infinite dimensional case. Several examples are given.
We prove that that given $\alpha\geq \omega,$ $r\in \omega$ and $k\geq 1$, there is an algebra
$\B^r\in \Nr_{\alpha}\K_{\alpha+k}\sim S\Nr_{\alpha}\K_{\alpha+k}$ such that $\Pi_r\A_r/F\in \K_{\alpha}$ for any $r\in \omega$,
where $\K$ is either the class of cylindric algebras or
quasi-polyadic equality algebras $(\QEA)$.
We discuss its analogue for diagonal free algebras like Pinter's algebras and quasi-polyadic algebras.

In another direction, we use Andr\'eka's methods of splitting to
show that $\RQEA_{\alpha}$ (representable $\QEA_{\alpha}$s)
cannot be axiomatized by a finite schema over its substitution free reduct discarding substitution operations
indexed by transpositions, and is not finitely axiomatizable
over its diagonal free reduct.

We also give a general definition encompassing the two notions of Monk's and Halmos'
schema, in infinite dimensions,  which we call a generalized system of varieties definable by schema,  and give many concrete examples.
In this context we show that the free  $MV$ polyadic algebras, and free algebras
in various countable
reducts of Heyting polyadic algebras, have an interpolation property, deducing that the former class have the
super-amalgamation property.
We also show that Ferenczi's cylindric-polyadic algebras have
the super-amalgamation property.

Having at hand the notion of general schema,  our results on neat embeddings
are presented from the view point of category theory,
encompassing both the cylindric and polyadic
paradigm.

Finally, a class of cylindric-like algebras, namely, N\'emeti's directed $\CA$s,
that manifest polyadic behavior according
to our categorial classification is presented. Deep metalogical manifestations of such a class, which is the cylindric analogue
of Tarski's quasi-projective relation algebras,
like G\"odel's second incompleteness theorems and forcing in set theory are elaborated upon.
\footnote{ 2000 {\it Mathematics Subject Classification.} Primary 03G15.
{\it Key words}: algebraic logic, cylindric algebras, neat embeddings, adjoint situations, amalgamation}
\end{abstract}

\section{Introduction}

Algebraic logic starts from certain special logical considerations, abstracts from them, places them in a general algebraic context
and via this generalization makes contact with other branches of mathematics (like set theory and topology).
It cannot be overemphasized that algebraic logic is more algebra than logic,
nor more logic than algebra; in this paper we argue that algebraic logic, particularly the theory of cylindric algebras,
has become sufficiently interesting and deep
to acquire a distinguished status among other subdisciplines of mathematical logic.

This paper focuses on the notion of neat reducts, a notion central in the theory of Tarski's cylindric algebras and Halmos'
polyadic algebras, the two pillars of Tarskian algebraic logic. The first part of the
paper has a survey character, in so far as:
\begin{enumarab}
\item It contains known results and proofs.
\item Many technical details
are occasionally skipped giving only the idea of the proof, but in all cases it will give more than a glimpse of
the underlying idea.
\item Inter-connections between different theorems and concepts are elaborated upon
on a meta-level.
\end{enumarab}

However, we emphasize that the article is not at all only purely expository. First, the second part of the paper 
contains only new results.
Even the first part contains new ideas, novel approaches to old ones, and it offers
new perspectives on previously known results by comparing them to new ones, see example definition \ref{2.12} and its consequences.
Indeed deep theorems in algebraic logic, and more generally in different branches mathematics
acquire new dimensions, as long as new related results are obtained.

On the whole, we prove a diversity of new results (in part 2) collect and possibly abstract
older ones that are apparently unrelated (in part 1)
presenting them all in a unified framework. The highly abstract
general framework we present embodies the notion of systems of varieties
definable by a schema, here generalized to allow finite dimensions and Halmos' schema,
and category theory. 

Throughout the paper certain concepts, that we find worthwhile
highlighting, and deep enough to deserve special attention, may be discussed in some detail
and lengthly elaborated upon, in the process stressing on similarities and differences with other related concepts, both old and new.

The late \cite{1}, the notation of which we follow, gives a panoramic picture of the research in the area in the last
thirty years till the present day, emphasizing the bridges it has built
with other areas, like combinatorics, graph theory, data base, theory stochastics and other fields.
This paper also  surveys, refines and adds to the latest developments
of the subject, but on a smaller scale, of course. In particular, we focus more on `pure' Tarskian
algebraic logic.  Departing from there, we build different  bridges with category theory
non-classical logic, G\"odel's incompleteness theorems and forcing in set theory.

We try, as much as possible, to give a wide panorama
of results on a central key notion in the representation theory of algebraic logic, namely, that of neat reducts
and the related one of neat embedding which casts its shadow over the entire field.
Our results will be later on
wrapped up in the language of arrows, namely, category theory.

We hope that this work also provides rapid dissemination of the latest
research in the field of algebraic logic, particularly in the representation theory of both cylindric and polyadic
algebras, intimately related to the notion of neat embeddings.

Due to its length the paper is divided into two parts. Each part can be read seperately.

Let $\K$ be a cylindric - like class (like for instance quasi-polyadic algebras), so that for all $\alpha$, $\K_{\alpha}$ is a
variety consisting of $\alpha$ dimensional algebras, and for
$\alpha<\beta$, $\Nr_{\alpha}\B$ and $\Rd_{\alpha}\B$ are defined, so that the former, the $\alpha$ neat reduct of $\B$,
is a subalgebra of the latter, the $\alpha$ reduct of $\B$ and the latter  is in $\K_{\beta}$.

We start by the following two questions on subreducts and subneat reducts.
\begin{enumarab}

\item Given ordinals $\alpha<\beta$ and $\A\in \K_{\alpha},$ is there a $\B\in \K_{\beta}$ such that

(i) $\A$  embeds into the $\alpha$ reduct of $\B$?

(ii) $\A$ embeds into the $\alpha$ {\it neat} reduct of $\B$?

\item Assume that it does neatly embed into $\B$, and assume further that $\A$ (as a set) generates $\B$, is then $\B$ unique up to isomorphisms
that fix $\A$ pointwise? does $\A$ exhaust the set of $\alpha$ dimensional elements of $\B,$ namely, the neat $\alpha$
reduct of $\B$?

\end{enumarab}

If $\B$ and $\A$ are like in the second item, then $\B$ is called a {\it minimal dilation} of $\A$.
The first question is related to {\it completeness} theorems, while the second
has to do with {\it definability} properties.

The second question is completely settled in \cite{neat} and \cite{conference}, where it is shown that
minimal dilations may not be unique up to isomorphisms that fix
$\A$ pointwise, witness theorem \ref{Sc} below.
For an intense discussion of this phenomena in connection to
various amalgamation properties for classes of infinite-dimensional representable
cylindric-like algebras, and in the context of confirming several conjectures
of Tarski's on neat embeddings,  the reader is referred to \cite{Sayed}.
Theorem \ref{Sc} formulated below (in the second part of the paper) gives the gist of such connections.

In  the first part of the paper we will, therefore,  focus on the first question.

When reflecting about neat reducts several possible questions cannot help but spring to mind, each with its own range of intuition.
Let us start from the very beginning. The first natural question, as indicated above, that can cross one's mind is:

Is it true that every algebra neatly embeds into another algebra having only one extra dimension?
having $k$ extra dimension $k>1$ ($k$ possibly infinite) ?
And could it at all be possible that an $\alpha$ dimensional algebra
neatly embeds into another one in $\alpha+k$ dimensions but does not neatly
embed into any $\alpha+k+1$ algebra? Now that we have the class of neat reducts
$\Nr_n\K_m$ in front of us, can we classify
it for each $n<m$? Here by classifying we mean defining; for example is it a variety, if not, is it
elementary, if not, is it pseudo-elementary, that is a reduct of an elementary class? and if so is its elementary
theory recursively enumerable?

These are all fair questions, some are somewhat deep, and indeed difficult to answer. Such questions have provoked
extensive research that
have engaged algebraic logicians for years.

Once vexing outstanding problems in logic,
some of such questions for finite dimensions
were settled by the turn of the millennium, and others a few years later,
after thorough  dedicated trials, and dozens of publications providing
partial answers.

It is now known that for finite $n\geq 3$, in symbols for any such $n>2$, and $k\geq 1$, the
variety $S\Nr_n\CA_{n+ k+1}$ is not even finitely axiomatizable over $S\Nr_n\CA_{n+k}$;
this is a known result for cylindric algebras due to
Hirsch, Hodkinson and Maddux, answering problem 2.12 in \cite{HMT1}, witness theorems \ref{thm:cmnr1}, \ref{thm:cmnr}.

It is now also known that for $1<n<m$, $n$ finite,  the class $\Nr_n\CA_m$ is not first order definable, least a variety; however,
it is pseudo- elementary, and its elementary theory is recursively enumerable.
These last results, with a precursor by Andr\'eka and N\'emeti,
addressing the case $n=2$,  is due to the present author answering problem 2.11 in \cite{HMT1} and 4.4 in \cite{HMT2}, 
witness theorem
\ref{neatreduct}.

Both results lifts to  infinite dimensions; the first by replacing finite axiomatizability by finite schema axiomatizability
using an ingenious lifting argument of Monk's, a task implemented
by Robin Hirsch and Sayed-Ahmed \cite{t}
for other concrete cylindric-like algebras (like Pinter's substitution algebras) for all dimensions,
using a finer version of Monk's lifting argument, refined, and abstracted here,
to pass from the finite
to the transfinite, witness theorem \ref{2.12}. But below we shall have occasion to prove a stronger result, than the one proved in \cite{t},
when we address
only cylindric algebras and quasi-polyadic algebras {\it with} equality.

In the second case, the result  lifts, too,
using essentially the same argument as the finite dimensional case, by replacing certain
set algebras used in the proof for the finite dimensional case, see theorem \ref{neatreduct},
by {\it weak} set algebras, retaining as much as possible the proof for the finite dimensional case, for weak set algebras have a
{\it finitary} character \cite{IGPL}. A weak set algebra has top element consisting
of  the  set of sequences having the same infinite length agreeing {\it co-finitely} with a fixed in advance sequence having
the same length.

In the latter case, the lifting does not involve any conceptual addition, in the former case it does.
We therefore
find that formulating the lifting argument
in a very general setting is a task worthwhile implementing.

Indeed, such a lifting argument has the potential to lift many deep results on neat embeddings to the infinite dimensional case,
when the hard work is already accomplished in the finite
dimensional case. 

Another technical lifting argument will be given below, showing that when  finite reducts
of an algebra $\A$ satisfy certain properties then this algebra has the neat embedding property, and so
in the cases we have a neat embedding theorem, like in most cylindric-like algebras dealt with here,
$\A$ turns out  representable.  

A result of the author's, is that in the cylindric case, there are some representable algebras that do not satisfy such properties
formulated in the hypothesis of theorem \ref{firstlifting} below, 
and this answers an open question in \cite{HMT1}, namely, problem $2.13$, \cite{2.13}.
The fact that problems $2.11, 2.12$ and $2.13$ in \cite{HMT1}
address the notion of neat reducts clearly indicates that such a notion is central in cylindric algebra representation theory,
and that it involved, and still involves  quite a few  subtleties and intricacies.

For example the solution to problem 2.11 and several variations on it,
re-appeared in the introduction of \cite{HMT2} as `proofs of Tarski's whose co-authors Henkin and Monk
could not reconstruct'.  All these problems are now solved by the author, with a precursor
\cite{SL}, which is a joint publication with Istvan N\'emeti.
The reader may consult \cite{Sayedneat} in this connection for an overview, where it is shown how the answer
of such questions follow from the apparently remote
fact that the variety of representable cylindric algebras of infinite dimensions
does not have the amalgamation property, a classical  result of Pigozzi's.
This tie between amalgamation properties and neat embedding properties is best 
visualised in a general framework using the language of category theory by viewing
the neat reduct operator as a functor;  this task will be  implemented in theorem \ref{cat} below.

Very roughly the last theorem says that the amalgamation property holds in the target 
category if and only if the neat reduct functor has a right adjoint. 
In polyadic algebra terminogy this right adjoint is called a dilation, in a dilation dimensions are stretched,
when forming neat reducts, they are, dually,  compressed.
If the neat reduct functor turns out to be an equivalence, with inverse the `dilation' functor, 
then the stronger super amalgamation property holds in the target 
category, and this has a converse, theorem \ref{cat}. 
This is the case indeed  with polyadic algebras of infinite dimension, 
but is not the case with cylindric algebras of any finite dimension $>2$,
in this last case the neat reduct functor does not have a right adjoint \cite{conference}.

The idea of lifting here, we further emphasize, can be traced back to Monk 
lifting his classical non-finite axiomatizability result for representable $\CA_n$,
$n$ finite $n>3$ to the transfinite.

To make our argument as general as much as possible, we introduce the new notion of a system of varieties
of {\it Boolean algebras} with operators definable by a schema.
It is like the definition of a Monk's schema, except that we integrate finite dimensions, in such a way that
the $\omega$ dimensional case, uniquely determining higher dimensions, is a natural limit of all $n$ dimensional varieties for finite $n$.

This is crucial for our later investigations.
The definition is general enough to handle our algebras, and narrow enough to prove what we
need.

Splitting techniques of Andr\'eka's will be used to prove, conceptually different new
{\it relative non-finite axiomatizability results} for infinite dimensions,
though one of them has been already done for countable dimensions by the author
\cite{splitting}, theorems \ref{splitting1}, \ref{splitting2}. 
Here we extend it to the uncountable case, a task implemented in \cite{recent}.

Examples of the so-called blow up and blur constructions will be outlined, and a
model-theoretic proof that the class of neat reducts is not first order axiomatizable will be given, both viewed as instances of splitting.

Splitting techniques basically involve splitting an atom in an algebra
into a number of smaller atoms, possibly infinite, getting another super-algebra.
These atoms are big, in the sense that they are cylindrically
equivalent to their copy. This dual nature of such atoms, being small {\it precisely because they are atoms},
and on the other hand, being big
in at least the above sense, helps among many other things,
to obtain non-representable algebras by splitting  an atom or more 
in representable algebras.

Such algebras are {\it barely non-representable}
in the sense that their `small' subalgebras are representable for example all subalgebras generated by $k$ elements,
where $k$ is a fixed in advance finite number;  and various reducts thereof are
also representable.  This technique typically proves impossibility of universal axiomatizations of a class $\K$
of representable algebras
using only $k$  many variables over a strict reduct of $\K$.
This can be done, when the number of  atoms obtained after splitting
are finite and are larger than $k$, modulo some non-trivial
combinatorics, witness theorem \ref{splitting1}.  Another instance of splitting will be used below to render
another relative non-finite  axiomatizability result, theorem \ref{splitting2}. Several instances of splitting are sprinkled
throughout the whole of the first part  of the paper.

Such delicate constructions  turn out  apt
for obtaining relative  non finite-axiomatizability results,
a task impressively initiated by
Andr\'eka \cite{Andreka}, using an ingenious combination of Monk-like constructions
and {\it dilations} in the sense of \cite{HMT2}. This term does not have anything to do
with the term dilation encountered a few paragraphs before
in the context of stretching dimensions.

Here  `dilation' means that  in an atomic algebra, or rather its atom structure, 
if its not rightdown impossible 
to insert an atom, then insert one.  Monk like algebras, on the other hand 
are finite, hence atomic. The atoms are given colours,
and the operations are defined so as to avoid monochromatic traingle. If the atoms are more than colours (which is the case with splitting methods),
then any representation will produce an inconsistency namely a monochromatic traingle.
Such algebras wil be witnessed below in theorems \ref{thm:cmnr1}, \ref {thm:cmnr}.

Another delicate construction that we will have 
occasion to use (twice) is the rainbow construction to prove several algebraic results that have subtle 
impact on finite variable 
fragments of first order logic, showing that 
non-principal types in countable first order $L_n$ theories, may not be omitted even if we substantially broaden 
the class of permissable models - relativizing semantic - in the process 
allowing models which are severely relativized.

We show that certain classes consisting 
of algebras that have a neat embedding property are not elementary, theorem \ref{rainbow},  and that 
other such classes, that are varieties 
are not atom- canonical, theorem \ref{blowupandblur}. Such classes allow relativized representations, 
and so the two negative algebraic 
results exclude relativized (complete) 
representations (representations respecting infinitary meets) 
for abstract algebras, thereby implying failure of omitting types even when we consider 
models in clique guarded semantics. 

In all cases, algebras adressed are countable  and atomic, 
and the type that cannot be omitted  in relativized models will be the non-principal type of co-atoms; it is non-principal because its meet
is the zero element. In `omiting types' terminology non-principal types are sometimes 
referred to as non-isolated types, which means that they cannot
be isolated by a formula, often called a witness \cite{ANT}. 
The terminology non-principal is more suitable here when we work in an algebraic setting, 
because in this context `non-principal' 
actually refers to a non-principal Boolean filter.

In the second part of the paper providing only new results, 
we will define a generalized system definable by a schema that encompasses also polyadic algebras in the infinite dimensional case;
encompassing both Monk's schema and Halmos' schema which exist at least implicitly in a scattered and fragmented
form in the  literature, definition \ref{Halmos}.

Then both systems are approached, in a yet higher level of abstraction, using category theory,
and at this level amalgamation properties (presented in the language of arrows)
are investigated in connection to properties of the neat reduct operator viewed as a functor,
as indicated above, a recent theme initiated by the author in \cite{conference}, formulated at the boundaries of variants, modifications and expansions
of quantifier logics - possibly non claasical like intuitionistic logic and many valued logic - algebraic logic and
category theory. Several non-classical examples of such generalized systems are given, and 
the metalogical property of interpolation is proved for $MV$ polyadic algebras and various reducts of Heyting polyadic algebras. 

We show that the neat reduct operator can also be defined on N\'emeti's directed $\CA$s,
establishing, when treated as a functor in a natural way, an equivalence
between classes of algebras viewed as concrete algebraic categories in different finite dimensions $>2$.
Here the concrete category of Tarski's quasi-relation 
algebras ($\sf QRA$) is used as a transient category, for each finite dimension 
the category of directed cylindric algebras of this dimension is 
equivalent to $\sf QRA$, hence they are all equivalent.

Metalogical repercussions,  in the context of finite variable fragments of untyped higher order
logic, are discussed in connection to G\"odel's second incompleteness theorem, and forcing in set theory.
This is not surprising for this class is the cylindric counterpart of $\sf QRA$s
which were originally designed to formulate set
theory without variables.

At some places some historical notes are added, and accordingly the reader will find
that many of the results are presented in a temporal (historic) way, moving
from old results to newer refinements or/and generalizations.
The first part of the paper has a survey character. The second part  contains only new results.

In their places, the main results,  theorems \ref{2.12}, \ref{new}, \ref{splitting1},
\ref{splitting2}, \ref{neatreduct},  \ref{Fer}, \ref{Sc}, \ref{cat},  and defininitions \ref{Monk}, \ref{Halmos}
are summarized in the outline of the paper to be given after we fix our notation. The new results, mostly in the second part of the paper,
on their part, will be
highlighted.

\subsection*{On the notation}

We follow, as stated above, the notation in \cite{1}. But, for the reader's convenience,
we include the following list of notation that will be used throughout the  paper.
Other than that our notation is fairly standard or self explanatory.
Unusual notation will be explained at  its first occurrence in the text.

For cardinals $m, n$ we write $^mn$ for the set of maps from $m$ to $n$.
If $U$ is an ultrafilter over $\wp(I)$ and if $\A_i$ is some structure (for $i\in I$)
we write either  $\Pi_{i\in I}\A_i/U$ or $\Pi_{i/U}\A_i$ for the ultraproduct of the $\A_i$ over $U$.
Fix some ordinal $n\geq 2$.
For $i, j<n$ the replacement $[i/j]$ is the map that is like the identity on $n$, except that $i$ is mapped to $j$ and the transposition
$[i, j]$ is the like the identity on $n$, except that $i$ is swapped with $j$.     A map $\tau:n\rightarrow n$ is  finitary if
the set $\set{i<n:\tau(i)\neq i}$ is finite, so  if $n$ is finite then all maps $n\rightarrow n$ are finitary.
It is known, and indeed not hard to show, that any finitary permutation is a product of transpositions
and any finitary non-injective map is a product of replacements.

The standard reference for all the classes of algebras to be dealt with is  \cite{HMT2}.
Each class in  $\set{\Df_n, \Sc_n, \CA_n, \PA_n, \PEA_n, \sf QEA_n, \sf QEA_n}$ consists of Boolean algebras with extra operators,
as shown in figure~\ref{fig:classes}, where $\diag i j$ is a nullary operator (constant), $\cyl i, \s_\tau,  \sub i j$ and $\swap i j$
are unary operators, for $i, j<n,\; \tau:n\rightarrow n$.

For finite $n$, polyadic algebras are the same as quasi-polyadic algebra and for the infinite
dimensional case we restrict our attention to quasi-polyadic algebras in $\sf QA_n, \QEA_n$.
Each class is defined by a finite set of equation schema. Existing in a somewhat scattered form in the literature, equations defining
$\sf Sc_n, \sf QA_n$ and $\QEA_n$ are given in the appendix of \cite{t} and also in \cite{AGMNS}.
For $\CA_n$ we follow the standard axiomatization given in definition 1.1.1 in \cite{HMT1}.
For any operator $o$ of any of these signatures, we write $\dim(o)\; (\subseteq n)$
for the set of  dimension ordinals used by $o$, e.g. $\dim(\cyl i)=\set i, \; \dim (\sub i j)=\set{i, j}$.  An algebra $\A$ in $\QEA_n$
has operators that can define any operator of $\sf QA_n, \CA_n,\;\Sc_n$ and $\sf Df_n$. Thus we may obtain the
reducts $\Rd_{\sf K}(\A)$ for $\K\in\set{\QEA_n, \sf QA_n, \CA_n, \Sc_n, \sf Df_n}$ and it turns out that the reduct always
satisfies the equations defining the relevant class so $\Rd_K(\A)\in \K$.
Similarly from any algebra $\A$ in any of the classes $\sf QEA_n, \sf QA_n, \sf CA_n, \sf Sc_n$
we may obtain the reduct $\Rd_\Sc(\c A)\in\Sc_n$ \cite{AGMNS}.

\begin{figure}
\[\begin{array}{l|l}
\mbox{class}&\mbox{extra operators}\\
\hline
\Df_n&\cyl i:i<n\\
\Sc_n& \cyl i, \s_{[i/j]} :i, j<n\\
\CA_n&\cyl i, \diag i j: i, j<n\\
\PA_n&\cyl i, \s_\tau: i<n,\; \tau\in\;^nn\\
\PEA_n&\cyl i, \diag i j,  \s_\tau: i, j<n,\;  \tau\in\;^nn\\
\QA_n&  \cyl i, \s_{[i/j]}, \s_{[i, j]} :i, j<n  \\
\QEA_n&\cyl i, \diag i j, \s_{[i/j]}, \s_{[i, j]}: i, j<n
\end{array}\]
\caption{Non-Boolean operators for the classes\label{fig:classes}}
\end{figure}
Let $\K\in\set{\QEA, \QA, \CA, \Sc, \Df}$, let $\A\in \K_n$ and let $2\leq m\leq n$ (possibly infinite ordinals).
The \emph{reduct to $m$ dimensions} $\Rd_m(\K_n)\in\K_m$ is obtained from $\A$ by discarding all operators with indices $m\leq i<n$.
The \emph{neat reduct to $m$ dimensions}  is the algebra  $\Nr_m(\A)\in \K_m$ with universe $\set{a\in\A: m\leq i<n\rightarrow \cyl i a = a}$
where  all the operators are induced from $\A$ (see \cite[definition~2.6.28]{HMT1} for the $\CA$ case).
More generally, for $\Gamma\subseteq n$ we write $\Nr_\Gamma\A$ for the algebra whose universe
is $\set{a\in\A: i \in n\setminus\Gamma\rightarrow \cyl i a=a}$ with all the operators $o$ of $\A$ where $\dim(o)\subseteq\Gamma$.
Let $\A\in\K_m, \; \B\in \K_n$.  An injective homomorphism $f:\A\rightarrow \B$ is a \emph{neat embedding} if the range of $f$
is a subalgebra of $\Nr_m\B$.
The notions of neat reducts and  neat embeddings have proved useful in
analyzing the number of variables needed in proofs,
as well as for proving representability results,
via the so-called neat embedding theorems.

Let $m\leq n$ be ordinals and let $\rho:m\rightarrow n$ be an injection.
For any $n$-dimensional algebra $\B$ (substitution, cylindric or quasi-polyadic algebra with or without equality)
we define an $m$-dimensional algebra $\Rd^\rho\B$, with the same universe and Boolean structure as
$\B$, where the $(ij)$th diagonal of $\Rd^\rho\B$ is $\diag {\rho(i)}{\rho(j)}\in\B$
(if diagonals are included in the signature of the algebra), the $i$th cylindrifier is $\cyl{\rho(i)}$, the $i$ for $j$
replacement operator is the operator $\s^{\rho(i)}_{\rho(j)}$ of $\A$, the $ij$ transposition operator is $\s_{\rho(i)\rho(j)}$
(if included in the signature), for $i, j<m$.  It is easy to check, for $\K\in\set{\Df,\Sc, \CA, \QPA, \QPEA}$,
that if $\B\in\K_n$ then $\Rd^\rho\B\in\K_m$.    Also, for $\B\in\K_n$ and $x\in \B$,
we define $\Rl_x(\B)$ by `restriction to $x$', so the universe is the set of elements of $\B$ below $x$, where the Boolean unit is $x$,
Boolean zero and sum are not changed, Boolean complementation is relative to $x$,
and the result of applying any non-Boolean operator is obtained by using the operator for $\B$
and intersecting with $x$. It is not always the case
that $\Rl_x\B$ is a $\K_{n}$ (we can lose commutativity of cylindrifiers).

For given algebras $\A$ and $\B$ having the same signature $Hom(\A, \B)$ denotes the set of al homomorphisms from $\A$ to $\B$.

\section*{Layout}

Due to its length, the paper is divided to two parts.

Follows is a rundown of the main results.

\section*{Part 1}

\begin{enumarab}

\item We abstract a lifting argument essentially due to Monk, implemented using ultraproducts
and stretching dimensions in a step-by-step way, in the general context of systems of Boolean algebras with operators, witness
theorem \ref{2.12}.
The main novelty here is integrating finite dimensions uniformly, witness definition \ref{Monk}.

\item We give many applications to existing finite dimensional Monk-like algebras
lifting non-finite axiomatizability results for classes of algebras (all varieties) that
have a neat embedding property to the transfinite, and we prove the new result (on neat embeddings) as
in the abstract, witness theorem \ref{new}. This is proved in \cite{recent2}; here we compare such a result
to its analogue of diagonal free reducts of cylindric algebras.

\item We  present a different method, namely, Andr\'eka's splitting methods.
Such methods are more basic, though they prove stronger non-finite axiomatizability results
for the infinite dimensional case
(involving excluding universal axiomatizations containing finitely
many variables), witness theorems \ref{splitting1}, \ref{splitting2}.
These are proved in \cite{recent}

\item We give one model-theoretic proof to show that for any class $\K$ between $\Sc$ and $\PEA$,
and for any pair of finite ordinals $1<n<m$ when $n$ is finite, that
the class $\Nr_n\K_m$ is not first order definable, unifying proofs in \cite{Fm, note, MLQ, IGPL}.
We give instances of the so-called blow up and blur construction, a very indicative
term introduced in \cite{ANT},
in the framework of splitting.

\item We use rainbow constructions (rainbows, for short) twice.
Once to show that several classes of algebras 
having a {\it complete} neat embedding property are not elementary, theorem \ref{rainbow}.
Our second rainbow,  blows up and blur a finite rainbow polyadic rainbow algebra,
showing that certain varieties larger than that of the representable algebras are not
atom-canonical, witness theorem \ref{blowupandblur}. 
Applications to omitting types theorems for finite variable fragments,
when we consider clique guarded sematics are given, theorem \ref{OTT}.

\end{enumarab}

\section*{Part 2}
Part 2 contains only new results.
\begin{enumarab}

\item We give a new unifying definition for Monk's schema and Halmos' schema, witness theorem \ref{Halmos}.
Several new results on the super-amalgamation property for instances of such schema
addressing $MV$ polyadic algebras,
various proper reducts of Heyting and Ferenczi's
cylindric,  polyadic algebras
of infinite dimension are given,  theorems, \ref{Fer}, \ref{mv}, \ref{heyting}.

\item Pursueing the line of research in \cite{conference},
we give a categorial   formulation of some of our results on neat embeddings, wrapping up the abstract context of
generalized systems of varieties, hitherto introduced, in the language of arrows or category theory,
witness theorem \ref{cat}.

\item Studying cylindric-like algebras, namely, N\'emeti's directed cylindric algebras,
that manifest polyadic behaviour according to our categorial classification.
We use a transient category, namely, $\sf QRA$, the concrete category of relation
algebras with quasi-projections,
to show that directed cylindric algebras in any finite dimension $n, m\geq 3$
are equivalent,  as concrete categories. A new result on the amalgamation property for
$\QRA$ is obtained, witness theorem \ref{SUPAP}, and using the hitherto established equivalence of categories the same result is obtained
for directed cylindric algebras of any dimension $>2$.

\item Connections of such directed cylindric algebras with G\"odels second incompleteness theorem
and forcing in set theory are established, witness
theorems \ref{second}, \ref{forcing}.
\end{enumarab}

Throughout this article, the high level of abstraction embodied in category theory and  in dealing with highly abstract notions, like
{\it systems of varieties definable by a schema}, is motivated and exemplified by well known concrete examples,
so that this level of abstraction can be kept from becoming a high
level of obfuscation.

\section*{Part 1}

\section{Generalizing Monk's Lifting to systems of varieties}

\subsection{The definition of complete systems definable by a schema}

We will generalize Monk's notion of systems of algebras definable by
a schema in two different directions.
In the first case we count in finite dimensions (in a uniform) manner, so that the process of lifting is at least meaningful
in this context.

In the second case we generalize Monk's schema but only for infinite
dimension to cover the polyadic paradigm.
In particular, in this (second) case which we call a {\it generalized system of varieties definable by a schema},
we can handle infinitary substitutions, that occur
in the signature of infinite dimensional polyadic algebras both with
and without equality. However, this will be done later, witness theorem \ref{Halmos}.

Let us now  concentrate on generalizing the notion of systems of varieties definable by a Monk schema to
a system of varieties where we can include
finite dimensions.

To make our argument of the lifting process, as general as much as possible, we introduce the new notion of a system of varieties
of {\it Boolean algebras} with operators definable by a schema.
It is similar to the definition of a Monk's schema \cite{HMT2}, except that we integrate finite dimensions, in such a way that
the $\omega$ dimensional case, uniquely determining higher dimensions, is a natural limit of all the $n$ dimensional varieties for finite $n$.
This is crucial for our later investigations.
The definition is general enough to handle our algebras, and narrow enough to prove what we
need.

We generalize the notion of Monk's schema allowing finite dimensions, but not necessarily all,
so that our systems are indexed by all ordinals $\geq m$ and $m$ could be finite.
(We can allow also proper infinite subsets of $\omega$, but we do not need that much.)

The main advantage in this approach is that it shows that a plethora of results proved for infinite
dimensions (like non-finite schema axiomatizability of the representable algebras)
really depend on the analogous result proved for every finite dimension starting at a certain finite $n$ which is usually $3$.
The hard work is done for the finite dimensional case.
The rest is a purely syntactical ingenious
lifting process invented by Monk.
First the general definition, then we will provide  two technical examples.

\begin{definition}\label{Monk}
\begin{enumroman}
\item Let $2\leq m\in \omega.$ A finite $m$ type schema is a quadruple $t=(T, \delta, \rho,c)$ such that
$T$ is a set, $\delta$ and $\rho$ maps $T$ into $\omega$, $c\in T$, and $\delta c=\rho c=1$ and $\delta f\leq m$ for all $f\in T$.
\item A type schema as in (i) defines a similarity type $t_{n}$ for each $n\geq m$ as follows. The domain $T_{n}$ of $t_{n}$ is
$$T_{n}=\{(f, k_0,\ldots, k_{\delta f-1}): f\in T, k\in {}^{\delta f}n\}.$$
For each $(f, k_0,\ldots, k_{\delta f-1})\in T_{n}$ we set $t_{n}(f, k_0,\ldots, k_{\delta f-1})=\rho f$.
\item A system $(\K_{n}: n\geq m)$ of classes
of algebras is of type schema $t$ if for each $n\geq m,$ $\K_{n}$ is a class of algebras of type
$t_{n}$.
\end{enumroman}
\end{definition}

\begin{definition}\label{Monk} Let $t$ be  a finite $m$ type schema.
\begin{enumroman}
\item With each $m\leq n\leq \beta$ we associate a language $L_{n}^t$ of type $t_{n}$: for each $f\in T$ and
$k\in {}^{\delta f}n,$ we have a function symbol $f_{k0,\ldots, k(\delta f-1)}$ of rank $\rho f$
\item Let $m\leq \beta\leq n$, and let $\eta\in {}^{\beta}n$ be an injection.
We associate with each term $\tau$ of $L_{\beta}^t$ a term $\eta^+\tau$ of $L_{n}^t$.
For each $\kappa\in \omega, \eta^+ v_{\kappa}=v_{\kappa}$. If $f\in T, k\in {}^{\delta f}\alpha$, and $\sigma_1,\ldots, \sigma_{\rho f-1}$
are terms of $L_{\beta}^t$, then
$$\eta^+f_{k(0),\ldots, k(\delta f-1)}\sigma_0,\ldots, \sigma_{\rho f-1}=f_{\eta(k(0)),\ldots, \eta(k(\delta f-1))}\eta^+\sigma_0\ldots \eta^+\sigma_{\rho f-1}.$$
Then we associate with each equation $e=\sigma=\tau$ of $L_{\beta}^t$ the equation
$\eta^+\sigma=\eta^+\tau$ of $L_{\alpha}^t$, which we denote by
$\eta^+(e)$.

\item A system $\K=(\K_n: n\geq m)$ of finite $m$ type schema $t$
is a {\it complete system of varieties definable by a schema},
if there is a system $(\Sigma_n: n\geq m)$ of equations such that ${\sf Mod}(\Sigma_n)=\K_n$, and
for $n\leq m<\omega$ if $e\in \Sigma_n$ and
$\rho: n\to m$ is an injection, then $\rho^+e\in \Sigma_m$; $(\K_{\alpha}: \alpha\geq \omega)$
is a system of varieties definable by a schema
and $\Sigma_{\omega}=\bigcup_{n\geq m}\Sigma_n$.
\end{enumroman}
\end{definition}

\begin{definition}
\begin{enumarab}
\item Let $\alpha, \beta$ be ordinals, $\A\in \K_{\beta}$ and $\rho:\alpha\to \beta$ be an injection.
We assume for simplicity of notation, and without any loss,  that in addition to cylindrifiers,
we have only one unary function symbol $f$ such that $\rho(f)=\delta(f)=1.$
(The arity is one, and $f$ has only one index.)
Then $\Rd_{\alpha}^{\rho}\A$ is the $\alpha$ dimensional
algebra obtained for $\A$ by defining for $i\in \alpha$ $\sf c_i$ by $\sf c_{\rho(i)}$, and $f_i$ by $f_{\rho(i)}$.
$\Rd_{\alpha}\A$ is $\Rd_{\alpha}^{\rho}\A$ when $\rho$ is the inclusion map.

\item As in the first part we assume only the existence of one unary operator with one index.
Let $\A\in \K_{\beta}$, and $x\in A$. The dimension set of $x$, denoted by $\Delta x$, is the set $\Delta x=\{i\in \alpha: {\sf c}_ix\neq x\}$.
We assume that if $\Delta x\subseteq \alpha$, then $\Delta f(x)\leq \alpha$.
Then $\Nr_{\alpha}\B$ is the subuniverse of $\Rd_{\alpha}\B$ consisting
only of $\alpha$ dimensional elements.

\item For $K\subseteq \K_{\beta}$ and an injection $\rho:\alpha\to \beta$,
then $\Rd_{\alpha}^{\rho}K=\{\Rd^{\rho}_{\alpha}\A: \A\in K\}$ and $\Nr_{\alpha}K=\{\Nr_{\alpha}\A: \A\in K\}.$
\end{enumarab}
\end{definition}
The class $S\Nr_{\alpha}\K_{\alpha+\omega}$ has special significance since it coincides in the most
known cases to
the class of representable algebras.

\subsection{Monk's argument}

In the following theorem, we use a very similar argument of lifting implemented by Monk to show that the class
of infinite dimensional cylindric algebras cannot be axiomatizable by a finite Monk's schema, using and llifting
the analogous result
for infinite dimension $>2$ \cite[theorem 3.2.87]{HMT2}.
So let us warm up by the first lifting argument;
the second argument, though in essence very similar, will be more involved technically.

We show, in our next theorem,  how properties that hold for all finite reducts of an infinite dimensional algebra forces it
to have the neat embedding property.
The proof does not use any properties not formalizable in complete systems of varieties definable by a schema;
it consists of non-trivial manipulation of reducts and neat reducts
via an ultraproduct construction, used to `stretch' dimensions.

The theorem is an abstraction of \cite[theorem 2.6.50]{HMT1}.

\begin{theorem}\label{firstlifting} Let $\alpha$ be an infinite ordinal and $\A\in \K_{\alpha}$ such that for every finite injective map $\rho$ into $\alpha$,
and for every  $x,y\in \A$, $x\neq y$, there is a function $h$ and $k<\alpha$ such
that $h$ is an endomorphism of $\Rd^{\rho}\A$, $k\in \alpha\sim \rng(\rho)$, ${\sf c}_k\circ h=h$ and $h(x)\neq h(y)$.
Then $\A\in {\sf UpS}\Nr_{\alpha}\K_{\alpha+\omega}$.
\end{theorem}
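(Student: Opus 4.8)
The plan is to realize $\A$ as a subalgebra of an ultraproduct of neat reducts, by stretching dimensions one finite step at a time and then taking an ultraproduct over the cofinite filter on $\omega$. For each finite injection $\rho$ into $\alpha$ the hypothesis gives us a supply of endomorphisms of the finite-index reduct $\Rd^\rho\A$ that ``collapse'' a prescribed pair $x\neq y$ while remaining constant along some coordinate $k$ outside $\rng(\rho)$; the constancy along $\cyl k$ is exactly what will let us view the image of $h$ inside a neat reduct once we re-index. First I would fix an enumeration and, for each finite $n$, use the hypothesis repeatedly (once for each relevant pair of distinct elements among the first $n$ of some fixed enumeration of $A$, say) together with the standard fact that a finite product of the given endomorphisms is again an endomorphism of the appropriate finite reduct, to produce for each $n$ an algebra $\B_n\in\K_{\alpha}$ (obtained from $\A$ by an $n$-fold stretching using coordinates picked from $\alpha\setminus\rng(\rho)$, available since $\alpha$ is infinite) together with an embedding of an $n$-stretched copy of $\A$ into $\Nr$ of it; this is precisely the reduct/neat-reduct bookkeeping that \cite[theorem 2.6.50]{HMT1} carries out in the $\CA$ case, and since only manipulations of $\cyl i$, the single unary $f$, and composition of homomorphisms are used, every step is expressible in a complete system of varieties definable by a schema in the sense of Definition~\ref{Monk}.

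Next I would take the ultraproduct $\B=\Pi_{n/U}\B_n$ over a nonprincipal ultrafilter $U$ on $\omega$. Because $\K_{\alpha+\omega}$ is a variety (it is defined by $\Sigma_{\alpha+\omega}=\bigcup\Sigma_n$, so in particular closed under ultraproducts) and because neat reducts commute with ultraproducts in the relevant sense, the diagonal map sends $\A$ into $\Pi_{n/U}\Nr_{\alpha}(\text{dilation of }\B_n)$, which sits inside $\Nr_{\alpha}$ of an algebra in $\K_{\alpha+\omega}$. The point of the ultraproduct is to absorb all finitely many extra dimensions simultaneously: at stage $n$ we only have $n$ genuinely usable new coordinates, but in the ultraproduct the set of coordinates that behave like ``neat'' dimensions is, modulo $U$, all of $\omega$. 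The separation requirement $h(x)\neq h(y)$ at each finite stage guarantees, via \Los's theorem, that the composite map $\A\to\B$ is injective: given $x\neq y$ in $\A$, for all large enough $n$ the chosen endomorphisms were built to keep $x,y$ apart, so they are separated in almost every factor, hence in the ultraproduct. This yields $\A\in S\Nr_{\alpha}\K_{\alpha+\omega}$ up to the outer ultrapower, i.e. $\A\in{\sf UpS}\Nr_{\alpha}\K_{\alpha+\omega}$, which is the stated conclusion.

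The main obstacle I expect is the careful dimension-chasing in the inductive step: one must check that when $\Rd^\rho\A$ is expanded/re-indexed to live in dimension $\alpha$ with the collapsing endomorphism $h$ absorbed, the element $\cyl k\circ h=h$ really does land in the neat reduct of the next dilation, and that the coordinates $k$ can be chosen coherently so that the chain of stretchings is consistent and the ranges stay disjoint from the previously used indices. This is the part where the hypothesis is used in full strength — the existence of the ``extra'' index $k\in\alpha\setminus\rng(\rho)$ with $\cyl k\circ h=h$ is exactly the handle that makes the step-by-step stretching go through — and it is where the proof of \cite[theorem 2.6.50]{HMT1} is genuinely nontrivial. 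The remaining points (closure of the variety under ultraproducts, commuting $\Nr$ with $\Pi/U$, and the \Los-theorem injectivity argument) are routine once the schema framework of Definition~\ref{Monk} is in place, precisely because no property outside that framework is invoked.
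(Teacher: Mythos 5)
Your first half is essentially the paper's: the inductive composition of the hypothesis's endomorphisms to gain $l$ new coordinates along which the map is constant, followed by an ultraproduct over $\omega$ of the finite-dimensional dilations to stretch a \emph{fixed} finite reduct $\Rd^{\rho}\A$ into $\Nr_k$ of an algebra in $\K_{\alpha+\omega}$. But there is a genuine gap in the final assembly. The hypothesis only produces endomorphisms of the finite reducts $\Rd^{\rho}\A$, not of $\A$ itself (such an $h$ need not commute with $\cyl{i}$ for $i\notin\rng(\rho)$), so what the stretching ultimately yields is the family of statements ``for every finite injection $\rho$, $\Rd^{\rho}\A$ embeds into $\Rd^{\rho}$ of a neat $\alpha$-reduct of some $\B_{\rho}\in\K_{\alpha+\omega}$'' --- there is no ``$n$-stretched copy of $\A$'' to speak of. Recovering $\A$ from this family requires a \emph{second} ultraproduct, taken over the set $I$ of all finite injections into $\alpha$, directed by inclusion of ranges, with an ultrafilter $U$ containing every cone $M_{\rho}=\{\sigma\in I:\rng\rho\subseteq\rng\sigma\}$; the diagonal map $a\mapsto (a:\rho\in I)/U$ is then a homomorphism because each fixed operation $\cyl{i}$ is respected on the $U$-large set of those $\rho$ with $i\in\rng\rho$. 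This second ultraproduct is exactly what produces the outer ${\sf Up}$ in the conclusion.

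Your single ultraproduct indexed by $n\in\omega$ (with ``the first $n$ elements of an enumeration of $A$'') cannot do this job: with countably many stages, each handling only finitely many coordinates, only countably many indices $i<\alpha$ are ever handled, so for uncountable $\alpha$ the set of stages at which a given $\cyl{i}$ is respected fails to be $U$-large for most $i$, and the diagonal map into the ultraproduct is not a homomorphism; the enumeration of $A$ likewise presupposes $|A|\leq\omega$, which is not assumed. The two ultraproducts in the paper's proof have different purposes and different index sets --- one over $\omega$ to absorb the finitely many extra dimensions of each dilation of a fixed finite reduct, and one over $I$ to glue the finite reducts of $\A$ back together --- and conflating them into one is where your argument breaks down.
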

\begin{demo}{Proof}  We first prove that the following holds for any $l<\omega$.
For every $k<\omega$, for every injection $\rho:k\to \alpha$, and every $x, y\in A$, $x\neq y$,
there exists $\sigma,h$ such that $\sigma:k+l\to \alpha$ is an injection, $\rho\subseteq \sigma$,
$h$ is an endomorphism of $\Rd_k^{\rho}\A$, $c_{\sigma_u}\circ h=h$, whenever $k\leq u\leq k+l$, and $h(x)\neq h(y)$.

We proceed by induction on $l$. This holds trivially for $l=0$, and it is easy to see that it is true for $l=1$
Suppose now that it holds for given $l\geq 1$.
Consider $k, \rho$, and $x,y$ satisfying the premise. By the induction hypothesis there are
$\sigma, h$, $\sigma:\alpha+l\to \alpha$ an injection, $\rho\subseteq \sigma$,
$h$ is an endomorphism of $\Rd_k^{\rho}\A$, ${\sf c}_{\sigma_u}\circ h=h$
whenever  $k\leq u< k+l$, and $h(x)\neq h(y).$ But then there exist $k,v$ such that $k$ is an endomorphism of
$\Rd_{k+l}^{\sigma}\A$, $v\in \alpha\sim \rng\sigma$, ${\sf c}_v\circ k=k$ and $k\circ h(x)\neq k\circ h(y)$.
Let $\sigma'$ be defined by $\sigma'\upharpoonright \alpha+(l+k)=\sigma$ and $\sigma'(k+l)=v$, and let
$h'={\sf c}_{\sigma'_u}\circ h$. It is easy to check that
$\sigma'$ and $h'$ complete the induction step.

We have $h\in Hom(\Rd_k^{\rho}\A, \Nr_{k}\B)$ where $\B=\Rd_{k+l}^{\sigma}\A$. Then $\Rd_k^{\rho}\A\in S\Nr_k\K_{\alpha+\omega}.$
For brevity let $\D=\Rd_k^{\rho}\A$. For each $l<\omega,$ let $\B_l\in \K_{k+l}$ such that $\D\subseteq \Nr_k\B_l$.
For all such $l$, let $\C_l$ be an algebra have the same similarity type as of $\K_{\omega}$ be such $\B_l=\Rd_{k+l}\C_l$.
Let $F$ be a non-principal ultrafilter on $\omega$, and let $\G=\Pi_{\l<\omega}\C_l/F$. Let
$$G_n=\{\Gamma\cap (\omega\sim n):\Gamma\in F\}.$$
Then for all $\mu<\omega$, we have
\begin{align*}
\Rd_{k+\mu}\G
&=\Pi_{\eta<\omega} \Rd_{k+\mu}\C_{\eta}/F\\
&\cong \Pi_{\mu\leq \eta<\omega}\Rd_{k+\mu}\C_{\eta}/G_{\mu}\\
&=\Pi_{\mu\leq \eta<\omega}\Rd_{\beta+\mu}\B_{\eta}/G_{\mu}.
\end{align*}
We have shown that $\G\in \K_{\omega}$.
Define $h$ from $\D$ to $\G$, via
$$x\to (x: \eta<\omega)/F.$$
Then $h$ is an injective homomorphism from $\D$ into $\Nr_k\G$.
We have $\G\in \K_{\omega}$ We now show that there exists $\B$ in $\K_{\alpha+\omega}$ such that $\D\subseteq \Nr_k\B$.
(This is a typical instance where reducts are used to 'stretch dimensions', not to compress them).

One proceeds inductively, at successor ordinals (like $\omega+1$) as follows.
Let $\rho:\omega+1\to \omega$ be an injection such that  $\rho(i)=i$, for each $i\in \alpha$. Then
$\Rd^{\rho}\G\in \K_{\omega+1}$ and $\G=\Nr_{\omega}{\Rd^{\rho}}\G$. At limits one uses ultraproducts like above.

Thus $\Rd_k^{\rho}\A\subseteq \Nr_k\B$ for some $\B\in \K_{\alpha+\omega}$. Let $\sigma$ be a permutation of
$\alpha+\omega$ such that $\sigma\upharpoonright k=\rho$ and $\sigma(j)=j$ for all $j\geq \omega$.
Then
$$\Rd_k^{\rho}\A\subseteq \Nr_k\B=\Nr_k\Rd_{\alpha+\omega}^{\sigma}\Rd_{\alpha+\omega}^{\sigma^{-1}}\B.$$
Then for any $u$ such that $\sigma[k]\subseteq u\subseteq \alpha+\omega$, we have

$$\Nr_k\Rd_{\alpha+\omega}^{\sigma}\Rd_{\alpha+\omega}^{\sigma^{-1}}\B\subseteq \Rd_k^{\sigma|k}\Nr_{uu}\Rd_{\alpha+\omega}^{\sigma^{-1}}\B.$$
Thus $\Rd_k^{\rho}\A\in S\Rd^{\rho}\Nr_{\alpha}\K_{\alpha+\omega},$ and this holds for any injective finite sequence $\rho$.

Let $I$ be the set of all finite one to one sequences with range in $\alpha$.
For $\rho\in I$, let $M_{\rho}=\{\sigma\in I:\rng\rho\subseteq \rng\sigma\}$.

Let $U$ be an ultrafilter of $I$ such that $M_{\rho}\in U$ for every $\rho\in I$. Exists, since $M_{\rho}\cap M_{\sigma}=M_{\rho\cup \sigma}.$
Then for $\rho\in I$, there is $\B_{\rho}\in \K_{\alpha+\omega}$ such that
$\Rd^{\rho}\A\subseteq \Rd^{\rho}\B_{\rho}$. Let $\C=\Pi\B_{\rho}/U$; it is in ${Up}\K_{\alpha+\omega}$.
Define $f:\A\to \Pi\B_{\rho}$ by $f(a)_{\rho}=a$, and finally define $g:\A\to \C$ by $g(a)=f(a)/U$.
Then $g$ is an embedding, and we are done.
\end{demo}

We note that handling reducts are easier than handling neat ones.
Problems involving
neat reducts tend to be messy, in the positive sense.

So lets get over with the easy part of reducts.
The infinite dimensional case follows from the definition of a system of varieties definable by schema, namely, for any such system
we have $\K_{\alpha}= {\sf HSP}\Rd^{\rho}\K_{\beta}$
for any pair of infinite ordinals $\alpha<\beta$ and any injection $\rho:\alpha\to \beta$.

For finite dimensions this is not true \cite[theorem 2.6.14]{HMT1}.
But for all algebras considered ${\sf S}\Rd_{\alpha}\K_{\beta}$ is a variety, hence the
desired conclusion; which is that every algebra is a subreduct of
an algebra in any pre-assigned higher dimension.
We can strengthen this, quite easily, to:

\begin{theorem} Let $t$ be a type system and $\K=(\K_{\alpha}:\alpha\geq \omega)$ be a system of type schema $t$.
Then the following conditions are equivalent
\begin{enumroman}
\item $\K$ is definable by a schema
\item If $\omega\leq \alpha\leq \beta$, and injective $\rho\in {}^{\alpha}\beta$, $\K_{\alpha}=HSP\Rd^{\rho}\K_{\beta}$
 \end{enumroman}
\end{theorem}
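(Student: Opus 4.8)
The plan is to prove the equivalence of the two conditions in the natural direction-by-direction fashion, since the nontrivial content is essentially an unwinding of the syntactic machinery of Definition \ref{Monk} (the variant indexing injections $\rho$) together with standard universal-algebraic facts about varieties.

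For the implication (i)$\Rightarrow$(ii): assume $\K=(\K_\alpha:\alpha\geq\omega)$ is definable by a schema, so there is a system $(\Sigma_\alpha:\alpha\geq\omega)$ with ${\sf Mod}(\Sigma_\alpha)=\K_\alpha$ and closure under $\rho^+$ for injections between indices, together with $\Sigma_\omega=\bigcup_{n\geq m}\Sigma_n$. Fix $\omega\leq\alpha\leq\beta$ and an injection $\rho\in{}^\alpha\beta$. First I would show $\Rd^\rho\K_\beta\subseteq\K_\alpha$: given $\B\in\K_\beta=\mathsf{Mod}(\Sigma_\beta)$, one checks that for each equation $e\in\Sigma_\alpha$, the term-translation $\rho^+e$ lies in $\Sigma_\beta$ (this is the schema-closure property), and that $\B\models\rho^+e$ translates back, via the definition of $\eta^+$ on terms given in Definition \ref{Monk}, to $\Rd^\rho\B\models e$; since $\Sigma_\alpha$ is an equational base, $\Rd^\rho\B\in\K_\alpha$. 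Because $\K_\alpha$ is a variety (an equational class), it is closed under $\mathsf H,\mathsf S,\mathsf P$, hence $\mathsf{HSP}\,\Rd^\rho\K_\beta\subseteq\K_\alpha$. For the reverse inclusion $\K_\alpha\subseteq\mathsf{HSP}\,\Rd^\rho\K_\beta$, I would use the neat-embedding/dilation direction that is already part of the framework: every $\A\in\K_\alpha$ embeds into (indeed is a neat reduct of) some $\B\in\K_\beta$ obtained by "stretching dimensions", exactly as in the inductive successor/limit argument in the proof of Theorem \ref{firstlifting} (successor step: adjoin one dimension via an injection $\rho:\alpha+1\to\alpha$ so that $\G=\Nr_\alpha\Rd^\rho\G$; limit step: ultraproducts). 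Composing with the permutation bookkeeping as in that proof gives $\A\in\mathsf{S}\,\Rd^\rho\K_\beta\subseteq\mathsf{HSP}\,\Rd^\rho\K_\beta$.

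For the implication (ii)$\Rightarrow$(i): assume $\K_\alpha=\mathsf{HSP}\,\Rd^\rho\K_\beta$ for all such $\alpha\leq\beta$ and injections $\rho$. In particular each $\K_\alpha$ is of the form $\mathsf{HSP}(\text{something})$, hence a variety, so by Birkhoff it is axiomatized by some equational theory $\Sigma_\alpha$ in the language $L_\alpha^t$; moreover $\K_\omega$ being a variety of the stated type determines, by the schema, candidate equation sets in each higher dimension. The work is to choose the $\Sigma_\alpha$ coherently so that the closure condition "$e\in\Sigma_n$, $\rho:n\to m$ injective $\Rightarrow\rho^+e\in\Sigma_m$" and $\Sigma_\omega=\bigcup_n\Sigma_n$ both hold. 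I would take $\Sigma_\omega$ to be the set of all equations of $L_\omega^t$ valid in $\K_\omega$, and for $\alpha>\omega$ take $\Sigma_\alpha$ to be generated by the translations $\{\rho^+e: e\in\Sigma_\omega,\ \rho:\omega\to\alpha\text{ injective}\}$ (equivalently, all valid equations of $\K_\alpha$). The key point to verify is that $\mathsf{Mod}(\Sigma_\alpha)$ really equals $\K_\alpha$: the inclusion $\K_\alpha\subseteq\mathsf{Mod}(\Sigma_\alpha)$ is immediate since these equations hold in $\Rd^\rho\K_\beta$ and pass through $\mathsf{HSP}$; the converse uses hypothesis (ii) in the form that every model of $\Sigma_\alpha$ is already captured because $\Sigma_\alpha$ was taken to be the \emph{full} equational theory of the variety $\K_\alpha$, so $\mathsf{Mod}(\Sigma_\alpha)=\mathsf{HSP}\,\K_\alpha=\K_\alpha$.

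The main obstacle I anticipate is the bookkeeping of the index-translation operators $\eta^+$ and the matching of dimension sets when passing between $\Rd^\rho\B$, $\Nr_\alpha\B$, and permuted reducts $\Rd^\sigma\B$ — i.e., making precise that $\B\models\rho^+e$ iff $\Rd^\rho\B\models e$, and that the "stretching" construction in the style of Theorem \ref{firstlifting} can be carried out for arbitrary pairs of infinite ordinals rather than just $\alpha,\alpha+\omega$. This is routine but notationally delicate, and is exactly the kind of syntactic manipulation that the generalized-schema framework of Definition \ref{Monk} was set up to streamline; no genuinely new idea beyond the finite-to-transfinite lifting of Theorem \ref{firstlifting} is required. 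The one place to be slightly careful is that $\mathsf{S}\,\Rd^\rho\K_\beta$ need not a priori be closed under $\mathsf H$ or $\mathsf P$, which is why the statement is phrased with the full $\mathsf{HSP}$ closure and why (ii)$\Rightarrow$(i) goes through Birkhoff rather than trying to exhibit $\Sigma_\alpha$ by hand.
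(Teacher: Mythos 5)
Your argument is sound in substance, but you should know that the paper itself offers no proof of this theorem: it simply cites \cite[theorem 5.6.13]{HMT2}. What you have written is essentially a reconstruction of that standard argument --- the translation lemma ($\B\models\rho^{+}e$ iff $\Rd^{\rho}\B\models e$) together with closure of varieties under ${\sf H},{\sf S},{\sf P}$ for the inclusion ${\sf HSP}\Rd^{\rho}\K_{\beta}\subseteq\K_{\alpha}$; the ultraproduct ``stretching'' construction (the paper does it in one step with an ultrafilter over the finite subsets of $\beta$, rather than your successor/limit induction, but the two are interchangeable here) for the reverse inclusion; and Tarski's ${\sf HSP}$ theorem for (ii)$\Rightarrow$(i).

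One step deserves to be promoted from a parenthetical to an actual argument. In (ii)$\Rightarrow$(i) you assert that the set of translations $\{\rho^{+}e : e\in\Sigma_{\omega},\ \rho:\omega\to\alpha\ \mbox{injective}\}$ is ``equivalently, all valid equations of $\K_{\alpha}$,'' and your entire converse inclusion ${\sf Mod}(\Sigma_{\alpha})\subseteq\K_{\alpha}$ rests on that equivalence. It is true, but it is exactly the point where hypothesis (ii) for the pair $(\omega,\alpha)$ is consumed: an equation $f$ valid in $\K_{\alpha}$ mentions only finitely many indices, hence $f=\rho^{+}e$ for some injection $\rho:\omega\to\alpha$ and some equation $e$ of $L_{\omega}^{t}$; since $\Rd^{\rho}\B\models e$ iff $\B\models f$ for every $\B\in\K_{\alpha}$, the equation $e$ holds on $\Rd^{\rho}\K_{\alpha}$ and therefore on ${\sf HSP}\Rd^{\rho}\K_{\alpha}=\K_{\omega}$, so $e\in\Sigma_{\omega}$ and $f\in\Sigma_{\alpha}$. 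The same computation verifies the closure condition $e\in\Sigma_{\alpha}\Rightarrow\rho^{+}e\in\Sigma_{\beta}$ that the schema definition demands. With that supplied, your proof is complete; the remaining bookkeeping is routine.
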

\begin{demo}{Proof} \cite[theorem 5.6.13]{HMT2}
\end{demo}
\begin{theorem} For any pair of infinite ordinals $\alpha<\beta,$ we have
$\K_{\alpha}={\sf UpUr}\Rd_{\alpha}\K_{\beta}$
\end{theorem}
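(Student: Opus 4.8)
The plan is to prove the two inclusions
$\K_{\alpha}\subseteq {\sf UpUr}\,\Rd_{\alpha}\K_{\beta}$ and
${\sf UpUr}\,\Rd_{\alpha}\K_{\beta}\subseteq \K_{\alpha}$ separately, with the first
one being the substantive direction and the second essentially a closure
observation. For the easy inclusion: $\K_{\beta}$ is a variety, so it is
closed under ${\sf Up}$ and ${\sf Ur}$; applying the $\alpha$-reduct functor
$\Rd_{\alpha}$ commutes with ultraproducts and preserves subalgebras, and by
the previous theorem (or directly from the fact that $\K$ is definable by a
schema) every $\Rd_{\alpha}\B$ with $\B\in\K_{\beta}$ already lies in
$\K_{\alpha}={\sf HSP}\,\Rd^{\rho}\K_{\beta}$. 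Since $\K_{\alpha}$ is a variety,
it absorbs the further ${\sf Up}$ and ${\sf Ur}$ on the outside, giving
${\sf UpUr}\,\Rd_{\alpha}\K_{\beta}\subseteq\K_{\alpha}$.

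For the main inclusion, fix $\A\in\K_{\alpha}$. The strategy is the standard
``stretch by ultraproduct over finite injections'' argument already used in the
proof of Theorem~\ref{firstlifting}, but now in its cleanest form since we only
need a \emph{reduct} embedding, not a neat one. First I would show that for
every finite injective $\rho$ with range in $\alpha$ there is a
$\B_{\rho}\in\K_{\beta}$ with $\Rd^{\rho}\A$ embeddable into (indeed equal, up
to the obvious identification, to) $\Rd^{\rho}_{\alpha}\B_{\rho}$; this uses
that $\K$ is a system definable by a schema, so that the equations of
$\K_{|\dom\rho|}$ transported by $\rho^{+}$ are among those of $\K_{\beta}$, and
one builds $\B_{\rho}$ by a step-by-step dimension stretch (at successors adjoin
one dimension via an injection $\rho':\gamma+1\to\gamma$ with
$\Rd^{\rho'}\G\in\K_{\gamma+1}$ and $\G=\Nr_{\gamma}\Rd^{\rho'}\G$, at limits
take ultraproducts), continuing until the dimension reaches $\beta$. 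Then I
would form the ultraproduct $\C=\Pi_{\rho\in I}\B_{\rho}/U$ over the set $I$ of
finite one-to-one sequences into $\alpha$, where $U$ is an ultrafilter
containing every $M_{\rho}=\{\sigma\in I:\rng\rho\subseteq\rng\sigma\}$ (these
form a filter base since $M_{\rho}\cap M_{\sigma}=M_{\rho\cup\sigma}$). The map
$a\mapsto (a)_{\rho\in I}/U$ is then a well-defined embedding of $\A$ into
$\Rd_{\alpha}\C$: it is a homomorphism with respect to each operator $o$ of
$\K_{\alpha}$ because, on the set $M_{\rho_{o}}\in U$ where $\rng\rho_{o}$
contains the finitely many indices involved in $o$, the operator $o$ is
interpreted compatibly in $\B_{\sigma}$; and it is injective because for $a\neq
b$ the set where the two images differ contains some $M_{\rho}$. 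This exhibits
$\A\in {\sf S}\,\Rd_{\alpha}{\sf Up}\,\K_{\beta}\subseteq
{\sf SUp}\,\Rd_{\alpha}\K_{\beta}$, and finally the standard fact that
${\sf S}\subseteq{\sf UpUr}$ on the level of elementary classes (every algebra
embeds into an ultrapower of itself, or rather ${\sf SUp}\subseteq{\sf UpUr}$
since $\Rd_{\alpha}\K_{\beta}$ is closed under $\Rd_{\alpha}$ applied to a
variety and one absorbs ${\sf S}$ into ${\sf UpUr}$) yields
$\A\in{\sf UpUr}\,\Rd_{\alpha}\K_{\beta}$.

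The step I expect to be the main obstacle is verifying that the diagonal map
$a\mapsto (a)_{\rho}/U$ really is a homomorphism for \emph{all} the
operators of the $\alpha$-dimensional signature simultaneously, including
cylindrifiers $\cyl i$ with $i$ ranging over all of $\alpha$: one must check
that for each such operator the ``good'' index set is in $U$, and that the
interpretations of that operator in the various $\B_{\sigma}$ glued together by
the ultraproduct agree with the interpretation in $\Rd^{\rho}_{\alpha}$ once the
relevant indices are in $\rng\sigma$. This is exactly the place where the
hypothesis that $\K$ is a \emph{complete system of varieties definable by a
schema} is used essentially: the schema condition guarantees that the reduct
maps $\Rd^{\rho}$ behave uniformly and that the transported equations hold, so
that the \L o\'s-style computation goes through coordinatewise. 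Everything else
--- the successor/limit dimension stretch, the filter base argument, absorbing
${\sf S}$ and $\Rd_{\alpha}$ into ${\sf UpUr}$ because $\K_{\beta}$ is a variety
--- is routine, and closely parallels the corresponding passages in the proof of
Theorem~\ref{firstlifting} given above.
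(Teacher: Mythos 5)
Your overall instinct --- a diagonal embedding into an ultraproduct taken over an ultrafilter concentrating on ever larger finite index sets --- is the right one, but two of your steps do not go through. First, the layer in which you manufacture, for each finite injection $\rho$, an honest algebra $\B_{\rho}\in\K_{\beta}$ by a transfinite ``dimension stretch'' is both problematic and unnecessary. The successor step you describe borrows the identity $\C=\Nr_{\gamma}\Rd^{\rho'}\C$ from the neat-reduct argument of theorem \ref{firstlifting}, but it cannot deliver what you need here: any injection $\rho':\gamma+1\to\gamma$ necessarily moves at least one index of $\gamma$, so the operations of $\Rd^{\rho'}\C$ indexed by $\gamma$ do not restore the original operations of $\C$, and neither the $\gamma$-reduct nor the $\gamma$-neat reduct of the renamed algebra is $\C$ again. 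Moreover, producing a member of $\K_{\beta}$ having $\A$ as a subreduct already carries essentially the full content of the theorem, so this lemma is as hard as what it is meant to prove. The paper sidesteps all of this: it indexes by the finite subsets $\Gamma$ of $\beta$, picks for each an injection $\rho(\Gamma):\Gamma\to\alpha$ that is the identity on $\Gamma\cap\alpha$, and defines $\B_{\Gamma}$ \emph{on the universe of $\A$ itself} by $f_{k}^{\B_{\Gamma}}=f_{\rho(\Gamma)(k)}^{\A}$ for $k\in\Gamma$ (arbitrarily outside $\Gamma$). No individual $\B_{\Gamma}$ need lie in $\K_{\beta}$; only the ultraproduct must, and this is automatic because each defining equation of $\K_{\beta}$ mentions finitely many indices, holds in every $\B_{\Gamma}$ with $\Gamma$ containing them (it transports along $\rho(\Gamma)$ to a schema instance valid in $\A$), and {\L}o\'s does the rest.

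Second, your concluding step is invalid: neither ${\sf S}$ nor ${\sf SUp}$ is absorbed by ${\sf UpUr}$ (for instance, the two-element Boolean algebra is a subalgebra of the four-element one but lies in ${\sf UpUr}$ of no class consisting only of copies of the latter), so exhibiting $\A$ merely as a subalgebra of some $\Rd_{\alpha}\D$ does not place it in ${\sf UpUr}\Rd_{\alpha}\K_{\beta}$. What saves the argument --- and what the paper's proof explicitly asserts --- is that the diagonal map $a\mapsto(a:\Gamma\in I)/F$ is an \emph{elementary} embedding: since $\rho(\Gamma)$ fixes $\Gamma\cap\alpha$, any first-order formula of the $\alpha$-signature involves finitely many indices, and on the $F$-large set of $\Gamma$'s containing them the relevant operations of $\B_{\Gamma}$ coincide with those of $\A$, so {\L}o\'s gives elementarity. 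From $\A\equiv\Rd_{\alpha}\D$ with $\D\in\K_{\beta}$, the Keisler--Shelah theorem and closure of $\K_{\beta}$ under ultrapowers yield $\A\in{\sf Ur}\,\Rd_{\alpha}\K_{\beta}\subseteq{\sf UpUr}\,\Rd_{\alpha}\K_{\beta}$. You should therefore upgrade your claim about the diagonal map from ``injective homomorphism'' to ``elementary embedding'' and finish via elementary equivalence rather than via the false operator inclusion.
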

\begin{proof} For simplicity we assume that we have one unary operation $f$, with $\rho(f)=1$.
The general case is the same.
Let $\A\in \K_{\alpha}$. Let $I=\{\Gamma: \Gamma\subseteq \beta, |\Gamma|<\omega\}$.
Let $I_{\Gamma}=\{\Delta \subseteq I, \Gamma\subseteq \Delta\}$, and let $F$ be an ultrafilter such that
$I_{\Gamma}\in F$ for all $\Gamma\in I$. Notice that $I_{\Gamma_1}\cap I_{\Gamma_2}=I_{\Gamma_1\cup \Gamma_2}$
so this ultrafilter exists.
For each $\Gamma\in I$, let $\rho(\Gamma)$ be an injection from $\Gamma$ into $\alpha$
such that $Id\upharpoonright \Gamma\cap \alpha\subseteq \rho(\Gamma)$, and let $\B_{\Gamma}$ be an algebra having same similarity type
as $\K_{\beta}$such that for $k\in \Gamma$
$f_k^{\B_{\Gamma}}= f_{\rho(\Gamma)[k]}^{\A}$. Then
$\D=\Pi \B_{\Gamma}/F\in \K_{\beta}$ and
$f:\A\to \Rd_{\alpha}\D$ defined via $a\mapsto (a: \Gamma\in I)/F$
is an elementary embedding.
\end{proof}

As mentioned in the introduction, it is not the case that, for $2<m<n$ every algebra in $\CA_m$ is the neat reduct of an algebra in $\CA_n$,
nor need it even be a subalgebra of a neat reduct of an algebra in $\CA_n$.  Furthermore, $S\Nr_m\CA_{m+k+1}\neq S\Nr_m\CA_m$,
whenever $3\leq m<\omega$ and $k<\omega$.

The hypothesis in the next theorem presupposes the existence of certain finite
dimensional algebras, that might look at first sight complicated, but their choice is not haphazard at all;
they are  rather an abstraction of cylindric algebras existing in the literature witnessing the last
proper inclusions.

The main idea, that leads to the conclusion of the theorem,
is to use such finite dimensional algebras to obtain an an analogous result for the infinite dimensional case.
Accordingly, we found it convenient to streamline Monk's argument who did exactly that for cylindric algebras \cite[theorem 3.2.87]{HMT2},
but we do it in
the wider context of systems of varieties of Boolean algebras with operators definable by a schema as defined above in \ref{Monk}.

Strictly speaking Monk's lifting argument is weaker,
the infinite dimensional constructed algebras are merely non-representable, in our case they are not only non-representable,
but they are {\it not subneat reducts} of  algebras in a given pre
assigned dimension; this is a technical significant difference, that needs some
non-trivial fine tuning in the proof. Another significant difference from Monk's lifting argument is that here
we have to use commutativity of ultraproducts.

The inclusion of finite dimensions in our formulation, was therefore not a luxury, nor was it motivated by
aesthetic reasons, and nor was it merely an artefact of Monk's definition.
It is motivated by the academic worthiness of the result
(for infinite dimensions).

\begin{theorem}\label{2.12} Let $(\K_{\alpha}: \alpha\geq 2)$ be a complete system of varieties definable by a schema.
Assume that for $3\leq m<n<\omega$,
there is $m$ dimensional  algebra $\C(m,n,r)$ such that
\begin{enumarab}
\item $\C(m,n,r)\in S\Nr_m\K_n$
\item $\C(m,n,r)\notin S\Nr_m\K_{n+1}$
\item $\Pi_{r\in \omega} \C(m, n,r)\in S\Nr_m\K_{n+1}$
\item For $m<n$ and $k\geq 1$, there exists $x_n\in \C(n,n+k,r)$ such that $\C(m,m+k,r)\cong \Rl_{x}\C(n, n+k, r).$
\end{enumarab}
Then for any ordinal $\alpha\geq \omega$ and $k\geq 1$,  $S\Nr_{\alpha}\K_{\alpha+k+1}$
is not axiomatizable by a finite schema over $S\Nr_{\alpha}\K_{\alpha+k}$
\end{theorem}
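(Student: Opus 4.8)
The plan is to lift the finite-dimensional separation witnessed by the algebras $\C(m,n,r)$ to the transfinite via Monk's ultraproduct-and-dilation technique, exactly as announced in the introduction. Fix $\alpha\geq\omega$ and $k\geq 1$. The target is an algebra $\B^r\in S\Nr_\alpha\K_{\alpha+k}\setminus S\Nr_\alpha\K_{\alpha+k+1}$ whose ultraproduct $\Pi_{r\in\omega}\B^r/F$ lands in $S\Nr_\alpha\K_{\alpha+k+1}$; since $S\Nr_\alpha\K_{\alpha+k+1}$ is a variety (it is $\bf S$ of a class closed under products and it is elementary by the schema axiomatization), such a sequence witnesses that no finite set of equations — equivalently no finite schema — over $S\Nr_\alpha\K_{\alpha+k}$ can carve it out, because the ultraproduct would have to violate one of finitely many axioms while each $\B^r$ satisfies them.

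First I would construct $\B^r$. Write $\alpha$ as the disjoint union of $\omega$ and a set $Y$ with $|Y|=\alpha$ (or handle $\alpha=\omega$ directly), and build $\B^r$ as an $\alpha$-dimensional algebra whose "active" part is an isomorphic copy of $\C(m,m+k,r)$ for each finite $m$, glued coherently using hypothesis (4): the compatibility $\C(m,m+k,r)\cong\Rl_{x_n}\C(n,n+k,r)$ is precisely what lets the finite reducts cohere into a single infinite-dimensional object. Concretely, I would take a directed system of the $\C(m,m+k,r)$ along the relativizations $\Rl_{x}$ and form a suitable limit, or — following Monk more literally — define $\B^r$ as a subalgebra of an ultraproduct / direct limit of the $\Rd^\rho\C(m,m+k,r)$ over finite injections $\rho$, arranged so that $\Rd_m^\rho\B^r\cong\C(m,m+k,r)$ for every finite $m$ and every finite injection $\rho:m\to\alpha$. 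Then, by hypothesis (1) and the reduct-stretching machinery already proved in Theorem \ref{firstlifting} and the two theorems following it (giving $\K_\alpha={\sf UpUr}\Rd_\alpha\K_\beta$ and the schema-definability characterization), the fact that each finite reduct $\C(m,m+k,r)$ neatly embeds into $\K_{m+k}$ propagates to give $\B^r\in S\Nr_\alpha\K_{\alpha+k}$: one assembles the finite-dimensional neat embeddings into an $(\alpha+k)$-dimensional dilation by exactly the ultraproduct-over-finite-sequences argument of Theorem \ref{firstlifting}.

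Next, the negative half: $\B^r\notin S\Nr_\alpha\K_{\alpha+k+1}$. If it were, then it would have a dilation $\D\in\K_{\alpha+k+1}$ with $\B^r\subseteq\Nr_\alpha\D$. Restricting to $m$ coordinates (taking $\Rd_m^\rho$ and then the relevant relativization) would produce a neat embedding of $\C(m,m+k,r)$ into $\K_{m+k+1}$ for suitable finite $m$, contradicting hypothesis (2). The care needed here — and this is where the "non-trivial fine tuning" flagged in the introduction enters — is that neat reducts do not commute with $\Rd_m^\rho$ as freely as ordinary reducts do, so one must check that the witness element $x_n$ from hypothesis (4) is itself $m$-dimensional in $\D$ (or can be replaced by one that is), so that $\Rl_{x_n}\Rd_m^\sigma\D$ really is in $S\Nr_m\K_{m+k+1}$. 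Finally, for the ultraproduct $\Pi_{r\in\omega}\B^r/F$ with $F$ non-principal, hypothesis (3) says $\Pi_r\C(m,m+k,r)\in S\Nr_m\K_{m+k+1}$ for each finite $m$; since ultraproducts commute with $\Rd^\rho$, $\Nr$ up to the subtleties above, and $\Rl$, each finite reduct of the ultraproduct of the $\B^r$ neatly embeds into $\K_{m+k+1}$, and then the positive lifting argument (Theorem \ref{firstlifting} again, now applied to the ultraproduct) yields $\Pi_r\B^r/F\in S\Nr_\alpha\K_{\alpha+k+1}$.

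The main obstacle I anticipate is not the ultraproduct bookkeeping, which is Monk's standard device, but the interaction between the operators $\Nr_m$, $\Rd_m^\rho$ and $\Rl_x$ in the negative direction — ensuring that a hypothetical $(\alpha+k+1)$-dimensional neat dilation of $\B^r$ genuinely forces an $(m+k+1)$-dimensional neat dilation of $\C(m,m+k,r)$, and in particular that commutativity of cylindrifiers is not silently lost when one passes to $\Rl_{x_n}$. This is precisely the point where, as the introduction remarks, one must use commutativity of ultraproducts and a careful choice of the splitting/relativization witnesses; I would isolate it as a lemma about how subneat-reduct membership transfers along $\Rd^\rho$ and $\Rl$ before assembling the three hypotheses into the final contradiction.
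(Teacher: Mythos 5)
Your proposal follows essentially the same route as the paper's proof: Monk's lifting via an ultraproduct over the finite subsets $\Gamma$ of $\alpha$ (with an ultrafilter containing every $M_\Gamma$), with $\B^r=\Pi_{\Gamma/F}\C^r_\Gamma$ where $\Rd^{\rho_\Gamma}\C^r_\Gamma=\C(|\Gamma|,|\Gamma|+k,r)$; hypothesis (1) and the reduct-stretching machinery give $\B^r\in S\Nr_\alpha\K_{\alpha+k}$, hypotheses (2) and (4) give $\B^r\notin S\Nr_\alpha\K_{\alpha+k+1}$ by pulling a putative $(\alpha+k+1)$-dimensional dilation down to an $(m+k+1)$-dimensional one through $\Rd^\lambda$ and $\Rl_{x}$ with $x=(x_{|\Gamma|}:\Gamma)/F$, and hypothesis (3) together with commutativity of the two ultraproducts gives $\Pi_{r/U}\B^r\in S\Nr_\alpha\K_{\alpha+k+1}$. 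The one point you correctly isolate as delicate (the interaction of $\Nr$, $\Rd^\rho$ and $\Rl_x$ in the negative direction) is exactly where the paper invests its effort, so your plan is sound in outline.

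There is, however, one concrete error in your concluding step: you write that ``the ultraproduct would have to violate one of finitely many axioms while each $\B^r$ satisfies them.'' The configuration is the opposite. Each $\B^r$ lies \emph{outside} $S\Nr_\alpha\K_{\alpha+k+1}$, so if a finite schema $\Sigma$ axiomatized that class over $S\Nr_\alpha\K_{\alpha+k}$, then each $\B^r$ would \emph{fail} some instance of $\Sigma$, while the ultraproduct, being \emph{inside} the class, satisfies all of them; the contradiction comes from \L{}o\'s: by pigeonhole on the finite base set of the schema some fixed $e\in\Sigma$ is failed (in some instance $\rho_r^+e$) by an $F$-large set of $r$'s, and since the symmetric construction of $\B^r$ makes $\Rd^\rho\B^r$ independent of the finite injection $\rho$ up to isomorphism, one may take the failing instance to be the same $\rho^+e$ for all these $r$, whence $\Pi_{r/U}\B^r$ fails it too. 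As stated, your sentence describes the (false) claim that $S\Nr_\alpha\K_{\alpha+k+1}$ is not closed under ultraproducts, rather than the intended argument; the facts you establish do support the correct deduction, but the deduction itself needs to be turned around and the pigeonhole-plus-symmetry step supplied, since a finite schema generates infinitely many equational instances.
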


\begin{proof} The proof is a lifting argument essentially due to Monk, by 'stretching' dimensions  using only properties of
reducts  and ultraproducts, formalizable in the context of a system of varieties definable by a schema.
The proof is divided into 3 parts,
and it is a generalization of a  lifting argument
in \cite{t}, addressing certain cylindric-like algebras. Both are generalizations of Monk's argument in
\cite[theorem 3.2.87]{HMT2}; but our formulation is far more general.
The first first is preparing for the next two, where the theorem is proved.

\begin{enumarab}

\item  Let $\alpha$ be an infinite ordinal. Let $X$ be any finite subset of $\alpha$ and
let $$I=\set{\Gamma:X\subseteq\Gamma\subseteq\alpha,\; |\Gamma|<\omega}.$$
For each $\Gamma\in I$ let $M_\Gamma=\set{\Delta\in I:\Delta\supseteq\Gamma}$
and let $F$ be any ultrafilter over $I$
such that for all $\Gamma\in I$ we have $M_\Gamma\in F$
(such an ultrafilter exists because $M_{\Gamma_1}\cap M_{\Gamma_2} = M_{\Gamma_1\cup\Gamma_2}$).

For each $\Gamma\in I$ let $\rho_\Gamma$ be a bijection from $|\Gamma|$ onto $\Gamma$.
For each $\Gamma\in I$ let $\A_\Gamma, \B_\Gamma$ be $\K_\alpha$-type algebras.

We claim that

(*) If  for each $\Gamma\in I$ we have
$\Rd^{\rho_\Gamma}\A_\Gamma=\Rd^{\rho_\Gamma}\B_\Gamma,$ then
we have
$$\Pi_{\Gamma/F}\A_\Gamma=\Pi_{\Gamma/F}\B_\Gamma.$$
The proof is standard using  Los' theorem.

Indeed,
$\Pi_{\Gamma/F}\A_\Gamma$, $\Pi_{\Gamma/F}\Rd^{\rho_\Gamma}\A_\rho$
and  $\Pi_{\Gamma/F}\B_\Gamma$ all have the same universe,  by  assumption.
Also each operator $o$ of $\K_\alpha$ is also the same for
both ultraproducts, because $\set{\Gamma\in I:\dim(o)\subseteq\rng(\rho_\Gamma)} \in F$.

Now we claim that

(**) if $\Rd^{\rho_\Gamma}\A_\Gamma \in \K_{|\Gamma|}$, for each $\Gamma\in I,$
then $\Pi_{\Gamma/F}\A_\Gamma\in \K_\alpha$.
For this, it suffices to prove that each of the defining axioms for $\K_\alpha$ holds for $\Pi_{\Gamma/F}\A_\Gamma$.

Let $\sigma=\tau$ be one of the defining equations for $\K_{\alpha}$,
the number of dimension variables is finite, say $n$.

Take any $i_0, i_1,\ldots,  i_{n-1}\in\alpha$. We have to prove that
$$\Pi_{\Gamma/F}\A_\Gamma\models \sigma(i_0,\ldots, i_{n-1})=\tau(i_0\ldots,  i_{n-1}).$$
Suppose that they are all in $\rng(\rho_\Gamma)$, say $i_0=\rho_\Gamma(j_0), \; i_1=\rho_\Gamma(j_1), \;\ldots, i_{n-1}=\rho_\Gamma(j_{n-1})$,
then $\Rd^{\rho_\Gamma}\A_\Gamma\models \sigma(j_0, \ldots ,j_{n-1})=\tau(j_0, \ldots j_{n-1})$,
since $\Rd^{\rho_\Gamma}\A_\Gamma\in\K_{|\Gamma|}$, so $\A_\Gamma\models\sigma(i_0,\ldots, i_{n-1})=\tau(i_0\ldots, i_{n-1})$.

Hence $\set{\Gamma\in I:\A_\Gamma\models\sigma(i_0, \ldots, i_{n-1})=\tau(i_0, \ldots,  i_{n-1})}\supseteq\set{\Gamma\in I:i_0,\ldots,  i_{n-1}
\in\rng(\rho_\Gamma}\in F.$
It now easily follows that
$$\Pi_{\Gamma/F}\A_\Gamma\models\sigma(i_0,\ldots,  i_{n-1})=\tau(i_0, \ldots,  i_{n-1}).$$
Thus $\Pi_{\Gamma/F}\A_\Gamma\in\K_\alpha$, and we are done.

\item Let $k\geq  1 $ and $r\in \omega$. Let $\alpha$, $I$, $F$
and $\rho_{\Gamma}$ be as above and assume the hypothesis of the theorem.

Let ${\C}_{\Gamma}^r$ be an algebra similar to $\K_{\alpha}$ such that
\[\Rd^{\rho_\Gamma}{\C}_{\Gamma}^r={\c C}(|\Gamma|, |\Gamma|+k,r).\]
Let
\[\B^r=\Pi_{\Gamma/F\in I}\C_{\Gamma}^r.\]
We will prove that
\begin{enumerate}
\item\label{en:1} $\B^r\in S\Nr_\alpha\K_{\alpha+k}$ and
\item\label{en:2} $\B^r\not\in S\Nr_\alpha\K_{\alpha+k+1}$.  \end{enumerate}

For the first part, for each $\Gamma\in I$ we know that $\C(|\Gamma|+k, |\Gamma|+k, r) \in\K_{|\Gamma|+k}$ and
$\Nr_{|\Gamma|}\C(|\Gamma|+k, |\Gamma|+k, r)\cong\C(|\Gamma|, |\Gamma|+k, r)$.
Let $\sigma_{\Gamma}$ be a one to one function
 $(|\Gamma|+k)\rightarrow(\alpha+k)$ such that $\rho_{\Gamma}\subseteq \sigma_{\Gamma}$
and $\sigma_{\Gamma}(|\Gamma|+i)=\alpha+i$ for every $i<k$.

Let $\A_{\Gamma}$ be an algebra similar to a
$\K_{\alpha+k}$ such that
$\Rd^{\sigma_\Gamma}\A_{\Gamma}=\C(|\Gamma|+k, |\Gamma|+k, r)$.  By (**)
with  $\alpha+k$ in place of $\alpha$,\/ $m\cup \set{\alpha+i:i<k}$
in place of $X$,\/ $\set{\Gamma\subseteq \alpha+k: |\Gamma|<\omega,\;  X\subseteq\Gamma}$
in place of $I$, and with $\sigma_\Gamma$ in place of $\rho_\Gamma$,
we know that  $\Pi_{\Gamma/F}\A_{\Gamma}\in \K_{\alpha+k}$.

Now we prove that $\B^r\subseteq \Nr_\alpha\Pi_{\Gamma/F}\A_\Gamma$.  Recall that $\B^r=\Pi_{\Gamma/F}\C^r_\Gamma$ and note
that $C^r_{\Gamma}\subseteq A_{\Gamma}$
(the universe of $C^r_\Gamma$ is $\C(|\Gamma|, |\Gamma|+k, r)$; the universe of $A_\Gamma$ is $\C(|\Gamma|+k, |\Gamma|+k, r)$).
 So, for each $\Gamma\in I$,
\begin{align*}
\Rd^{\rho_{\Gamma}}\C_{\Gamma}^r&=\C((|\Gamma|, |\Gamma|+k, r)\\
&\cong\Nr_{|\Gamma|}\C(|\Gamma|+k, |\Gamma|+k, r)\\
&=\Nr_{|\Gamma|}\Rd^{\sigma_{\Gamma}}\A_{\Gamma}\\
&=\Rd^{\sigma_\Gamma}\Nr_\Gamma\A_\Gamma\\
&=\Rd^{\rho_\Gamma}\Nr_\Gamma\A_\Gamma
\end{align*}
By (*) we deduce that
$$\Pi_{\Gamma/F}\C^r_\Gamma\cong\Pi_{\Gamma/F}\Nr_\Gamma\A_\Gamma\subseteq\Nr_\alpha\Pi_{\Gamma/F}\A_\Gamma,$$
proving \eqref{en:1}.

Now we prove \eqref{en:2}.
For this assume, seeking a contradiction, that $\B^r\in S\Nr_{\alpha}\K_{\alpha+k+1}$,
$\B^r\subseteq \Nr_{\alpha}\C$, where  $\C\in \K_{\alpha+k+1}$.
Let $3\leq m<\omega$ and  $\lambda:m+k+1\rightarrow \alpha +k+1$ be the function defined by $\lambda(i)=i$ for $i<m$
and $\lambda(m+i)=\alpha+i$ for $i<k+1$.
Then $\Rd^\lambda(\C)\in \K_{m+k+1}$ and $\Rd_m\B^r\subseteq \Nr_m\Rd^\lambda(\C)$.
For each $\Gamma\in I$,\/  let $I_{|\Gamma|}$ be an isomorphism
\[{\C}(m,m+k,r)\cong \Rl_{x_{|\Gamma|}}\Rd_m {\C}(|\Gamma|, |\Gamma+k|,r).\]
Exists by assumption. Let $x=(x_{|\Gamma|}:\Gamma)/F$ and let $\iota( b)=(I_{|\Gamma|}b: \Gamma)/F$ for  $b\in \C(m,m+k,r)$.
Then $\iota$ is an isomorphism from $\C(m, m+k,r)$ into $\Rl_x\Rd_m\B^r$.
Then $\Rl_x\Rd_{m}\B^r\in S\Nr_m\K_{m+k+1}$.
It follows that  $\C (m,m+k,r)\in S\Nr_{m}\K_{m+k+1}$ which is a contradiction and we are done.

Now we prove the third part of the theorem, putting the superscript $r$ to use.
Recall that $\B^r=\Pi_{\Gamma/F}\C^r_\Gamma$, where $\C^r_\Gamma$ has the type of $\K$
and $\Rd^{\rho_\Gamma}\C^r_\Gamma=\C(|\Gamma|, |\Gamma|+k, r)$.
We know that
$$\Pi_{r/U}\Rd^{\rho_\Gamma}\C^r_\Gamma=\Pi_{r/U}\C(|\Gamma|, |\Gamma|+k, r) \subseteq \Nr_{|\Gamma|}\A_\Gamma,$$
for some $\A_\Gamma\in\K_{|\Gamma|+k+1}$.

Let $\lambda_\Gamma:|\Gamma|+k+1\rightarrow\alpha+k+1$ extend $\rho_\Gamma:|\Gamma|\rightarrow \Gamma \; (\subseteq\alpha)$ and satisfy
\[\lambda_\Gamma(|\Gamma|+i)=\alpha+i\]
for $i<k+1$.  Let $\F_\Gamma$ be a $\K_{\alpha+k+1}$ type algebra such that $\Rd^{\lambda_\Gamma}\F_\Gamma=\A_\Gamma$.
As before, $\Pi_{\Gamma/F}\F_\Gamma\in\K_{\alpha+k+1}$.  And
\begin{align*}
\Pi_{r/U}\B^r&=\Pi_{r/U}\Pi_{\Gamma/F}\C^r_\Gamma\\
&\cong \Pi_{\Gamma/F}\Pi_{r/U}\C^r_\Gamma\\
&\subseteq \Pi_{\Gamma/F}\Nr_{|\Gamma|}\c A_\Gamma\\
&=\Pi_{\Gamma/F}\Nr_{|\Gamma|}\Rd^{\lambda_\Gamma}\F_\Gamma\\
&\subseteq\Nr_\alpha\Pi_{\Gamma/F}\F_\Gamma,
\end{align*}
proving the required.

\end{enumarab}
\end{proof}
An obvious modification of the proof shows that:

\begin{theorem}\label{Monk} Assume that for $2<m<n<\omega$, there is
$\C(m, n)\in \K_m$ such that for all $k\in \omega$,
$\C(m,m+k)\subseteq \Nr_m\C(n, n+k)$, $\C(m, m+k)\notin S\Nr_m\K_{m+\omega}$  and the ultraproduct (on $k$)
is in $S\Nr_n\K_{\omega}$.
Furthermore, for $m<n$ and $k\geq 1$, there exists $x_n\in \C(n,n+k)$ such that $\C(m,m+k)\cong \Rl_{x}\C(n, n+k, r).$
Then $S\Nr_{\alpha}\K_{\alpha+\omega}$ cannot be axiomatizable by a finite schema.
\end{theorem}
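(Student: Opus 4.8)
The plan is to run, essentially verbatim, the three-part argument in the proof of Theorem \ref{2.12}, the only structural change being that the ultraproduct taken there over the auxiliary index $r$ is here taken over the number $k$ of stretched dimensions; correspondingly, $\alpha+k+1$ is replaced throughout by $\alpha+\omega$, and $m+k+1$ by $m+\omega$. In particular the two dimension-parameter-free combinatorial facts from the first part of that proof are reused unchanged: if $\Rd^{\rho_\Gamma}\A_\Gamma=\Rd^{\rho_\Gamma}\B_\Gamma$ for every $\Gamma$ then $\Pi_{\Gamma/F}\A_\Gamma=\Pi_{\Gamma/F}\B_\Gamma$, and if $\Rd^{\rho_\Gamma}\A_\Gamma\in\K_{|\Gamma|}$ for every $\Gamma$ then $\Pi_{\Gamma/F}\A_\Gamma\in\K_\alpha$. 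As there, $F$ is an ultrafilter on $I=\set{\Gamma:X\subseteq\Gamma\subseteq\alpha,\; |\Gamma|<\omega}$ with every $M_\Gamma\in F$, and $\rho_\Gamma:|\Gamma|\to\Gamma$ a bijection; but now $X$ is taken to be a finite initial segment of $\omega$ of size exceeding some fixed finite $m\geq 3$, with the $\rho_\Gamma$ normalised to fix $X$ pointwise.

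First I would fix an infinite ordinal $\alpha$ and an integer $k\geq 1$ and build the witness $\B_k$. From the hypothesis, applied to the pair $(|\Gamma|,|\Gamma|+k)$ in place of $(m,n)$, together with $\C(|\Gamma|+k,|\Gamma|+2k)\in\K_{|\Gamma|+k}$, we get $\C(|\Gamma|,|\Gamma|+k)\in S\Nr_{|\Gamma|}\K_{|\Gamma|+k}$. Let $\C_\Gamma^k$ be a $\K_\alpha$-type algebra with $\Rd^{\rho_\Gamma}\C_\Gamma^k=\C(|\Gamma|,|\Gamma|+k)$, and put $\B_k=\Pi_{\Gamma/F}\C_\Gamma^k$. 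Then, exactly as in the proof of the first clause of the second part of Theorem \ref{2.12} --- choosing injections $\sigma_\Gamma:|\Gamma|+k\to\alpha+k$ extending $\rho_\Gamma$ with $\sigma_\Gamma(|\Gamma|+i)=\alpha+i$, lifting the dilations to $\K_{\alpha+k}$-type algebras $\A_\Gamma$ via the second combinatorial fact in dimension $\alpha+k$, and chasing neat reducts via the first --- one obtains $\B_k\subseteq\Nr_\alpha\Pi_{\Gamma/F}\A_\Gamma$ with $\Pi_{\Gamma/F}\A_\Gamma\in\K_{\alpha+k}$, so $\B_k\in S\Nr_\alpha\K_{\alpha+k}$. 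For non-membership, suppose $\B_k\subseteq\Nr_\alpha\C$ with $\C\in\K_{\alpha+\omega}$; reducing along $\lambda:m+\omega\to\alpha+\omega$ given by $\lambda\restr m=Id$ and $\lambda(m+i)=\alpha+i$ gives $\Rd^\lambda\C\in\K_{m+\omega}$ and $\Rd_m\B_k\subseteq\Nr_m\Rd^\lambda\C$, hence $\Rd_m\B_k\in S\Nr_m\K_{m+\omega}$. Using the relativization hypothesis with $|\Gamma|$ in place of $n$ (allowed since $m<|X|\leq|\Gamma|$) to obtain isomorphisms $I_{|\Gamma|}:\C(m,m+k)\cong\Rl_{x_{|\Gamma|}}\Rd_m\C(|\Gamma|,|\Gamma|+k)$, and amalgamating them over $F$ into an embedding $b\mapsto(I_{|\Gamma|}b:\Gamma)/F$ of $\C(m,m+k)$ into $\Rl_x\Rd_m\B_k$ with $x=(x_{|\Gamma|}:\Gamma)/F$, we conclude, by the standard behaviour of relativization under neat reducts, that $\Rl_x\Rd_m\B_k\in S\Nr_m\K_{m+\omega}$ and hence $\C(m,m+k)\in S\Nr_m\K_{m+\omega}$, contradicting the hypothesis $\C(m,m+k)\notin S\Nr_m\K_{m+\omega}$. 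Thus $\B_k\in S\Nr_\alpha\K_{\alpha+k}\setminus S\Nr_\alpha\K_{\alpha+\omega}$ for every $k\geq 1$.

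Next I would run the third part of the proof of Theorem \ref{2.12}, now with the ultraproduct over $k$. Fix a nonprincipal ultrafilter $U$ on $\omega$. From the hypothesis that $\Pi_{k/U}\C(|\Gamma|,|\Gamma|+k)\in S\Nr_{|\Gamma|}\K_{|\Gamma|+\omega}$, pick dilations $\A_\Gamma\in\K_{|\Gamma|+\omega}$ of it and $\K_{\alpha+\omega}$-type algebras $\F_\Gamma$ with $\Rd^{\lambda_\Gamma}\F_\Gamma=\A_\Gamma$ for suitable $\lambda_\Gamma:|\Gamma|+\omega\to\alpha+\omega$ extending $\rho_\Gamma$; then $\Pi_{\Gamma/F}\F_\Gamma\in\K_{\alpha+\omega}$ by the second combinatorial fact, and commuting the two ultraproducts,
\[\Pi_{k/U}\B_k=\Pi_{k/U}\Pi_{\Gamma/F}\C_\Gamma^k\cong\Pi_{\Gamma/F}\Pi_{k/U}\C_\Gamma^k\subseteq\Pi_{\Gamma/F}\Nr_{|\Gamma|}\F_\Gamma\subseteq\Nr_\alpha\Pi_{\Gamma/F}\F_\Gamma,\]
so $\Pi_{k/U}\B_k\in S\Nr_\alpha\K_{\alpha+\omega}$. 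To finish: suppose $S\Nr_\alpha\K_{\alpha+\omega}={\sf Mod}(\Sigma)$ for some finite schema $\Sigma$. Since $\Sigma$ has only finitely many schema-equations, each with finitely many dimension indices, and since each $\B_k$, being assembled symmetrically from a single algebra per finite cardinality, is invariant up to isomorphism under finitary permutations of $\alpha$, the failure $\B_k\notin{\sf Mod}(\Sigma)$ occurs, for every $k$, at one and the same schema-equation $s\in\Sigma$ and one and the same ($k$-independent) instantiation; by the fundamental theorem on ultraproducts $\Pi_{k/U}\B_k$ then also fails that instance of $s$, contradicting $\Pi_{k/U}\B_k\in S\Nr_\alpha\K_{\alpha+\omega}={\sf Mod}(\Sigma)$. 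Equivalently, since $S\Nr_\alpha\K_{\alpha+\omega}=\bigcap_{k<\omega}S\Nr_\alpha\K_{\alpha+k}$ and each $S\Nr_\alpha\K_{\alpha+k}$ is a schema-defined variety, a finite schema holding throughout the intersection is --- by the finitary character of equational derivation together with dimension-homogeneity of these classes --- already a consequence of $S\Nr_\alpha\K_{\alpha+k}$ for some finite $k$, which would force $S\Nr_\alpha\K_{\alpha+k}=S\Nr_\alpha\K_{\alpha+\omega}$ and contradict $\B_k\notin S\Nr_\alpha\K_{\alpha+\omega}$.

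No genuinely new obstacle arises; the transposition $r\leadsto k$, $k+1\leadsto\omega$ is mechanical, and even the use of commutativity of ultraproducts was already present in Theorem \ref{2.12}. The steps that were the crux there remain the crux here: the reduct / neat-reduct bookkeeping in the non-membership step --- specifically, that $\Rl_x$ commutes with $S$ and with $\Nr_m$ well enough for $\Rl_x\Rd_m\B_k\in S\Nr_m\K_{m+\omega}$ to yield $\C(m,m+k)\in S\Nr_m\K_{m+\omega}$ --- together with the passage from ``finite schema'' to ``finite dimension level'', which needs the homogeneity of the $\B_k$ under dimension permutations (so that a finite schema failing in $\B_k$ fails at a canonical instance independent of $k$) and the finitariness of equational consequence. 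These should be spelled out with the same care as in the proof of Theorem \ref{2.12}.
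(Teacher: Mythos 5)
Your proposal is correct and is exactly what the paper intends: the paper offers no separate proof of this theorem, stating only that it follows by "an obvious modification" of the proof of Theorem \ref{2.12}, and your write-up is precisely that modification (ultraproduct over $k$ instead of $r$, with $\alpha+k+1$ replaced by $\alpha+\omega$), carried out with the same reduct/neat-reduct bookkeeping and the same concluding homogeneity-plus-\L o\'s step.
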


In what follows we give concrete instances of our lifting argument, then use it to solve the infinite analogue of the famous problem
numbered 2.12 in \cite{HMT2} (this is already done in \cite{t} but in the cylindric case our result is stronger),
that has provoked extensive research for decades,
including, to name a few algebraic logicians who worked on it, we mention Andr\'eka, N\'emeti, Monk, Maddux, Hirsch, Hodkinson
Sayed Ahmed and Venema. The problem was first approached by Monk; more accurately, it was raised by Monk.

Our following exposition also has a historic (temporal) perspective, starting from Monk's original algebras, all the way to refined
Monk-like algebras constructed by Hirsch and Hodkinson solving problem 2.12 in \cite{HMT2},
culminating in the result on neat embeddings formulated
in theorem \ref{new} and highlighted in the abstract.

\subsection{Monk's algebras}

Monk defined the required algebras, witnessing the non finite axiomatizability of $\RCA_n$, for finite $n\geq 3$,
via their atom structure. We write following Monk's notation $n\not{R}m$ if $(n,m)\notin R$ and  for a cylindric atom structure $\G$,
we write, following the notation of \cite{HMT1}.
$\Ca\G$ for its complex (cylindric) algebra.

An $n$ dimensional atom structure is a triple
$\G=(G, T_i, E_{ij})_{i,j\in n}$
such that $T_i\subseteq G\times G$ and $E_{ij}\subseteq G$, for all $i, j\in n$. An atom structure so defined, is a cylindric atom structure if
its complex algebra $\Ca\G\in \CA_n$. $\Ca\C$ is the algebra
$$(\wp(G), \cap, \sim T_i^*, E_{ij}^*)_{i,j\in n},$$ where
$$T_i^*(X)=\{a\in G: \exists b\in X: (a,b)\in T_i\}$$
and
$$E_{i,j}^*=E_{i,j}.$$
Cylindric algebras are axiomatized by so-called Sahlqvist equations, and therefore it is easy to spell out first order correspondents
to such equations characterizing
atom structures of cylindric algebras.

\begin{definition}
For $3 \leq m\leq n < \omega$, ${\G}_{m, n}$ denotes
the cylindric atom structure such that ${\G}_{m, n} = (G_{m, n},
T_i, E_{i,j})_{i, j < m} $ of dimension
$m$ which is defined as follows:
$G_{m, n}$ consists of all pairs $(R, f)$ satisfying
the following conditions:
\begin{enumarab}
\item $R$ is equivalence relation on $m$,
\item $f$ maps $\{ (\kappa, \lambda) : \kappa, \lambda < n,
\kappa  \not{R} \lambda\}$ into $n$,
\item for all $\kappa, \lambda < m$, if $\kappa \not{R} \lambda$
then $f_{\kappa \lambda } = f_{\lambda \kappa},$
\item for all $\kappa, \lambda, \mu < m$, if $\kappa \not{R}
\lambda R \mu$ then $f_{\kappa \lambda } = f_{\kappa \mu}$,
\item for all $\kappa, \lambda, \mu < n$, if $\kappa \not{R}
\lambda \not{R} \mu \not{R} \kappa$ then $|f_{\kappa \lambda },
f_{\kappa \mu}, f_{\lambda \mu}| \neq 1.$
\end{enumarab}
For $\kappa < m$ and $(R, f), (S, g) \in G(m,n)$ we define
\begin{eqnarray*}
&(R, f) T_\kappa (S, g) ~~ \textrm{iff} ~~ R \cap {}^2(n
\smallsetminus
\{\kappa\}) = S \cap {}^2(m \smallsetminus \{\kappa\}) \\
& \textrm{and for all} ~~ \lambda, \mu \in m \smallsetminus
\{\kappa\}, ~~ \textrm{if} ~~ \lambda \not{R} \mu~~ \textrm{then} ~~
f_{\lambda \mu } = g_{\lambda \mu}.
\end{eqnarray*}
For any $ \kappa, \lambda <m$, set
$$ E_{\kappa \lambda} = \{ ( R, f) \in G(m,n) : \kappa R \lambda \}.$$
\end{definition}

Monk proves that this indeed defines a cylindric atom structure, he defines
the $m$ dimensional cylindric algebra $\C(m,n)=\Ca(\G(m,n)),$ then he proves:

\begin{theorem}
\begin{enumarab}
\item For $3\leq m\leq n<\omega$ and $n-1\leq \mu< \omega,$ $\Nr_m\C(n,\mu)\cong \C(m,\mu)$.
In particular, $\C(m, m+k)\cong \Nr_m(\C(n, n+k))$ for every finite $k$.
\item Let $x_n=\{(R,f)\in G(n, n+k): R=(R\cap {}^2n) \cup (Id\upharpoonright {}^2(n\sim m))\\
\text { for all $u, v$,} uRv, f(u,v)\in n+k,
\text { and
for all } \mu\in n\sim m, v<\mu,\\ f(\mu, v)=\mu+k\}.$

Then
$\C(n, n+k)\cong \Rl_x\Rd_n\C(m, m+k).$
\end{enumarab}
\end{theorem}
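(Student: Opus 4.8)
\begin{demo}{Proof}
The plan is to read each isomorphism off an explicit map between the underlying atom structures and then lift it to the complex algebras in the routine way.

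For part (1), the map to use is the restriction map $\pi\colon G(n,\mu)\to G(m,\mu)$, $\pi(R,f)=(R\cap{}^2m,\,f\restriction\set{(\kappa,\lambda):\kappa,\lambda<m,\ \kappa\not R\lambda})$. One first checks that $\pi$ is well defined: conditions (1)--(5) are universally quantified over the index set, hence pass to the subset $m\subseteq n$ of indices. The substance is then to show that $\pi$ is a \emph{surjective bounded morphism} for the operators $\cyl i,\diag i j$ with $i,j<m$, whose fibres are exactly the blocks of the equivalence relation $\theta$ on $G(n,\mu)$ generated by $\set{T_i:m\le i<n}$ (each $T_i$ being an equivalence relation here, by inspection of its definition). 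For surjectivity, given $(R_0,f_0)\in G(m,\mu)$ one puts $R=R_0\cup(\Id\restriction(n\sim m))$ and extends $f_0$ over the new pairs by colouring each coordinate in $n\sim m$ constantly on the $R$-blocks it meets and with pairwise distinct values across distinct blocks; there are at most $n-1\le\mu$ blocks, so this is possible, and this choice makes (5) hold for free. The ``forth'' part of the bounded-morphism condition ($u\,T_i\,v\Rightarrow\pi(u)\,T_i\,\pi(v)$ for $i<m$, and $u\in E_{ij}\iff\pi(u)\in E_{ij}$) is immediate from the definitions; the ``back'' part --- given $\pi(u)\,T_i\,w$ in $G(m,\mu)$, find $v$ with $u\,T_i\,v$ and $\pi(v)=w$ --- copies the coordinate-$i$ data of $w$ on the $m$-block and then re-assigns the coordinate-$i$ colours on pairs meeting $n\sim m$ afresh so as to keep (5), which is again possible because $\mu\ge n-1$. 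Finally $u\,\theta\,v\Rightarrow\pi(u)=\pi(v)$ is clear, and the converse follows by driving any $(R,f)$ to the canonical representative of its $\pi$-fibre through the successive moves $T_{n-1},T_{n-2},\dots,T_m$, freezing the new coordinates one at a time and staying inside $G(n,\mu)$ by the same colour count. Granting all this, $a\mapsto\pi[a]$ is a Boolean isomorphism from $\set{a\subseteq G(n,\mu):a\text{ is a union of }\theta\text{-blocks}}$ onto $\wp(G(m,\mu))$ commuting with the $\cyl i$ and $\diag i j$, $i,j<m$; the domain is precisely the universe of $\Nr_m\Ca(\G(n,\mu))=\Nr_m\C(n,\mu)$, so $\Nr_m\C(n,\mu)\cong\C(m,\mu)$, from which the displayed consequence follows on specialising $\mu$.

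For part (2), I would exhibit a direct isomorphism of atom structures between $\G(m,m+k)$ and the ``frozen'' substructure of $\G(n,n+k)$ picked out by $x_n$. Define $\psi$ on $G(m,m+k)$ by $(R_0,f_0)\mapsto(R,f)$ where $R=R_0\cup(\Id\restriction(n\sim m))$, $f$ agrees with $f_0$ on pairs inside $m$, and on every pair meeting $n\sim m$ one installs the canonical high colour $f(\kappa,\lambda)=\max(\kappa,\lambda)+k$ prescribed by $x_n$. The point is to verify that $\psi$ lands in $x_n$ and preserves (1)--(5): the only nontrivial clause is (5), and a triangle meeting $n\sim m$ is never monochromatic, because its two ``high'' sides both carry the colour $(\text{largest index of the triangle})+k\ge m+k$, while the third side either lies inside $m$ and carries a colour $<m+k$, or also meets $n\sim m$ and then involves a strictly smaller largest index. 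One then checks that $\psi$ is a bijection of $G(m,m+k)$ onto $x_n$, that for $i,j<m$ it matches $T_i$ with $T_i$ and $E_{ij}$ with $E_{ij}$ (a $T_i$-move with $i<m$ alters no coordinate in $n\sim m$, so it remains inside $x_n$), and that on subsets of $x_n$ the surplus cylindrifiers $\cyl i$, $m\le i<n$, act as the identity --- because the only element of $x_n$ that agrees with a given $x_n$-atom off coordinate $i$ while still having coordinate $i$ frozen is that atom itself. Hence $a\mapsto\psi[a]$ is an isomorphism of $\C(m,m+k)$ onto $\Rl_{x_n}\Rd_m\C(n,n+k)$ (equivalently onto $\Rl_{x_n}\C(n,n+k)$, the surplus dimensions being inert), which is the assertion of part (2).

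The main obstacle in both parts is the same and is essentially bookkeeping at the heart of Monk's construction: ensuring that the combinatorial conditions (3)--(5) defining $G(\cdot,\cdot)$ --- above all the no-monochromatic-triangle condition (5) --- remain intact under every map and under every one-coordinate adjustment performed, and making precise that the bound on the second parameter is exactly what guarantees the needed colourings exist ($\mu\ge n-1$ in part (1); the separation of the frozen colours $\ge m+k$ from the low colours $<m+k$ in part (2)). Once those verifications are in hand, the rest is the routine dictionary between atom structures and their complex algebras.
\end{demo}
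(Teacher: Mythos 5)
Your argument is correct and follows essentially the route the paper relies on: the paper offers no proof of its own for this theorem, merely citing \cite[theorems 3.2.77--3.2.86]{HMT2}, and your restriction map on atom structures with the fibre-equals-block and bounded-morphism verifications for part (1), together with the frozen canonical extension $\psi$ and the inertness of the surplus cylindrifiers on $x_n$ for part (2), is a faithful reconstruction of Monk's argument carried out there. The only remark worth adding is that the statement as printed is garbled (in part (2) the roles of $m$ and $n$ are interchanged and $R\cap{}^2n$ should read $R\cap{}^2m$); you have read it in the intended direction, which is the form actually used later in the paper.
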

\begin{demo}{Proof} \cite[theorems 3.2.77-3.2.86]{HMT2}.
\end{demo}
\begin{theorem} The class $\RCA_{\alpha}$ is not axiomatized by a finite schema.
\end{theorem}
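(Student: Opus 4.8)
The plan is to obtain this as an immediate consequence of the abstract lifting theorem, Theorem~\ref{Monk} (the ``obvious modification'' just stated), applied to $\K=\CA$, with Monk's finite-dimensional algebras $\C(m,n)=\Ca(\G(m,n))$ supplying the required witnesses. First I would invoke the Neat Embedding Theorem: for every infinite ordinal $\alpha$ one has $\RCA_\alpha=S\Nr_\alpha\CA_{\alpha+\omega}$ (the coincidence noted right after the definition of complete systems of varieties). Hence it suffices to show that $S\Nr_\alpha\CA_{\alpha+\omega}$ admits no finite schema axiomatization, and this is exactly the conclusion delivered by Theorem~\ref{Monk}. So the whole argument collapses to verifying, for each pair $2<m<n<\omega$, the hypotheses of Theorem~\ref{Monk} for the algebras $\C(m,n)$.

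Those hypotheses I would read off from the properties of $\C(m,n)$ already recorded. Monk's verification that $\G(m,n)$ is a cylindric atom structure gives $\C(m,n)\in\CA_m$. The neat-reduct identity $\C(m,m+k)\cong\Nr_m\C(n,n+k)$ (the relevant specialization of $\Nr_m\C(n,\mu)\cong\C(m,\mu)$) gives $\C(m,m+k)\subseteq\Nr_m\C(n,n+k)$ for all $k$. Non-representability of each $\C(m,m+k)$ together with representability of the ultraproduct $\Pi_{k/U}\C(m,m+k)$ --- the combinatorial core of Monk's classical theorem, \cite[\S3.2]{HMT2} --- give the remaining two conditions, since $S\Nr_m\CA_\omega=\RCA_m$ (here one silently corrects the evident subscript slip ``$S\Nr_n$'' in the statement of Theorem~\ref{Monk} to ``$S\Nr_m$''). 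Finally, the relativization clause is supplied, up to a transparent bookkeeping of indices, by part~2 of Monk's theorem: for the explicit element $x_n\in\C(n,n+k)$ exhibited there --- the atoms in which the surplus coordinates $m,\dots,n-1$ are frozen into a rigid pattern --- one gets $\C(m,m+k)\cong\Rl_{x_n}\Rd_m\C(n,n+k)$, so $\C(m,m+k)$ is a relativization of $\C(n,n+k)$ to a smaller Boolean unit. Feeding all of this into Theorem~\ref{Monk} yields that $S\Nr_\alpha\CA_{\alpha+\omega}=\RCA_\alpha$ is not axiomatizable by a finite schema.

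Since the lifting machinery (Theorem~\ref{Monk}, built on the reduct-and-ultraproduct manipulations in the proof of Theorem~\ref{2.12}) and the concrete facts about $\C(m,n)$ are both already in place, the only genuine work internal to this proof is the bookkeeping needed to see that Monk's statements, as phrased, really match the hypotheses of Theorem~\ref{Monk} --- most delicately the relativization clause, where one must check that the map induced on ultraproducts by the isomorphisms $I_{|\Gamma|}$ identifies $\C(m,m+k)$ with $\Rl_{x_n}\Rd_m\C(n,n+k)$. The substantive difficulty lies entirely in the cited finite-dimensional results, namely the argument that having more atoms (``colours'') than available room forces a monochromatic triangle in any representation of $\C(m,m+k)$, whereas the ultraproduct has enough slack to be represented consistently; I expect this to be the one non-formal ingredient, with everything downstream of it a mechanical invocation of the lifting theorem.
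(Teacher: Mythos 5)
Your proposal is correct and follows exactly the paper's route: the paper's own proof is the one-liner ``by Theorem~\ref{Monk} and $\RCA_{\alpha}=S\Nr_{\alpha}\CA_{\alpha+\omega}$,'' with the hypotheses of Theorem~\ref{Monk} supplied by Monk's algebras $\C(m,n)$ and the preceding theorem on their neat reducts and relativizations. You have simply made explicit the verification the paper leaves implicit (including the subscript slip), so there is nothing to add.
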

\begin{demo}{Proof} By theorem \ref{Monk} and $\RCA_{\alpha}=S\Nr_{\alpha}\CA_{\alpha+\omega}$.
\end{demo}

Johnsson defined a polyadic atom structure based on the structure $\G(m,n)$. First a helpful piece of notation:
For relations $R$ and $G$, $R\circ G$ is the relation
$$\{(a,b): \exists c (a,c)\in R, (c, b)\in S\}.$$
Now Johnson extended the atom structure $\G(m,n)$ by

$(R,f)\equiv_{ij}(S,g)$ iff $f(i,j)=g(j,i)$ and if $(i,j)\in R$, then $R=S$, if not, then $R=S\circ [i,j]$, as composition of relations.

Strictly speaking, Johnsson did not define substitutions quite in this way; because he had all finite transformations, not only transpositions.
Then, quasi-polyadic algebras was not formulated in schematizable form, a task accomplished by Sain and Thompson \cite{ST} much later; and
indeed they were able to prove the following:

\begin{theorem}(Sain-Thompson)\cite{ST} For any infinite ordinal $\alpha$,
the classes ${\sf RQPA}_{\alpha}$ and ${\sf RQPEA}_{\alpha}$ are not finite schema axiomatizable.
\end{theorem}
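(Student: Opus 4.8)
The plan is to derive the statement from Theorem~\ref{Monk} applied to $\K\in\{\QA,\QEA\}$, taking as the required finite--dimensional witnesses the complex algebras of Johnson's quasi--polyadic atom structures. Concretely, for $3\le m<n<\omega$ let $\GG(m,n)$ be the structure obtained from Monk's cylindric atom structure $\G(m,n)$ by adjoining the transposition relations $(R,f)\equiv_{ij}(S,g)$ described above, and set $\C(m,n)=\Ca(\GG(m,n))$; for $\K=\QA$ one simply discards the diagonal constants $E_{ij}$. The first task is to check $\C(m,n)\in\K_m$, i.e.\ that $\GG(m,n)$ really is a $\QEA_m$ (resp.\ $\QA_m$) atom structure. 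Since the schematised equational axiomatization of quasi--polyadic (equality) algebras supplied by Sain and Thompson \cite{ST} consists, like the cylindric axioms, of Sahlqvist equations --- the operators $\swap ij$ are completely additive, $\swap ij$ is a Boolean endomorphism, $\swap ij\swap ijx=x$, and the axioms governing the interaction of $\swap ij$ with $\cyl i$ and $\diag ij$ are Sahlqvist --- it suffices to verify their first--order correspondents on $\GG(m,n)$; this is Johnson's routine but somewhat lengthy computation with the relations $\equiv_{ij}$.

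Next I would feed the remaining hypotheses of Theorem~\ref{Monk}, reducing each to the cylindric development already recorded in the excerpt. \emph{(a) Neat reducts:} $\Nr_m\C(n,\mu)\cong\C(m,\mu)$ for $3\le m\le n<\omega$, $n-1\le\mu<\omega$, hence in particular $\C(m,m+k)\cong\Nr_m\C(n,n+k)$; the cylindric part is \cite[theorems~3.2.77--3.2.86]{HMT2} quoted above, and one only has to note that Monk's atom map realising that isomorphism also respects each $\equiv_{ij}$, because on the sub--atom--structure cutting out $\Nr_m$ the transposition structure of $\GG(n,\mu)$ restricts to that of $\GG(m,\mu)$, by the schematic form of the definition of $\equiv_{ij}$. \emph{(b) Relativisation:} for $m<n$, $k\ge1$, Monk's element $x_n\in\C(n,n+k)$ still satisfies $\C(m,m+k)\cong\Rl_{x_n}\Rd_m\C(n,n+k)$; one checks $\swap ij\,x_n\le x_n$ for $i,j<m$, so that passing to $\Rl_{x_n}$ neither disturbs the transposition operators nor (as in the cylindric case) destroys commutativity of cylindrifiers, and the cylindric isomorphism upgrades to an isomorphism of $\K_m$--algebras. \emph{(c) Non--representability:} $\C(m,m+k)\notin S\Nr_m\K_{m+\omega}$; taking reducts commutes with $\Nr_m$ and with forming subalgebras, so if $\C(m,m+k)\in S\Nr_m\QEA_{m+\omega}$ then its $\CA$--reduct $\Ca(\G(m,m+k))$ would lie in $S\Nr_m\CA_{m+\omega}=\RCA_m$, contradicting Monk's theorem; the $\QA$ case is entirely analogous (one checks the monochromatic--triangle obstruction already defeats representations of the $\QA$--reduct, e.g.\ by passing instead to the $\Sc$--reduct).

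The genuinely new point, and the main obstacle, is the last hypothesis: the ultraproduct over $k$, $\Pi_{k/U}\C(n,n+k)$, must lie in $S\Nr_n\K_\omega$, i.e.\ be representable as a quasi--polyadic algebra. This cannot simply be quoted from the cylindric case, since a quasi--polyadic algebra with representable $\CA$--reduct need not itself be representable. Instead I would reinspect Monk's limit representation: the sole obstruction to representing $\C(n,n+k)$ is the monochromatic triangle forced by clause~(5) of the definition of $\G(n,n+k)$, which offers only $n+k$ ``colours'', whereas in the ultraproduct over $k$ infinitely many colours become available and no such triangle is forced; the transposition operators are defined purely from $R$ and $f$ and play no part in this counting, so the (weak) set--algebra representation built in the cylindric argument extends to interpret each $\swap ij$ as the honest coordinate--transposition, giving a genuine quasi--polyadic representation. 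With (a)--(c) and this fact in hand, Theorem~\ref{Monk} yields that $S\Nr_\alpha\K_{\alpha+\omega}$ is not finite--schema axiomatizable for every infinite $\alpha$ and each $\K\in\{\QA,\QEA\}$. Finally, by the neat embedding theorem for quasi--polyadic algebras, ${\sf RQPA}_\alpha=S\Nr_\alpha\QA_{\alpha+\omega}$ and ${\sf RQPEA}_\alpha=S\Nr_\alpha\QEA_{\alpha+\omega}$, so both representable classes fail to be finite--schema axiomatizable, as claimed, mirroring in the present framework the original argument of Sain and Thompson \cite{ST}.
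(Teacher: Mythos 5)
Your proof is correct in outline and takes essentially the same route as the paper, whose own argument for this theorem is only a brief sketch resting on exactly the ingredients you assemble: the neat embedding characterisation ${\sf RQPA}_{\alpha}=S\Nr_{\alpha}{\sf QPA}_{\alpha+\omega}$, Johnson's quasi-polyadic expansion of Monk's atom structures, non-representability of the (diagonal-free reducts of the) finite-dimensional algebras together with representability of their ultraproduct, and the lifting argument of theorem \ref{Monk}. The one step you treat more carefully than the paper does is the verification that the ultraproduct is representable \emph{as a quasi-polyadic algebra} rather than merely having a representable cylindric reduct, and your resolution (the transpositions are defined schematically from $(R,f)$, so the limit representation realises them as genuine coordinate transpositions) is the intended one.
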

\begin{demo}{Proof} One proof uses the fact that ${\sf RQPA}_{\alpha}=S\Nr_{\alpha}{\sf QPA}_{\alpha+\omega}$, and that the diagonal free reduct
of Monk's algebras (hence their infinite dilations) are not representable. Another can be distilled from \cite{t}; this will be elaborated on below.
A completely analogous result holds for Pinter's algebras ${\sf Sc}$s,
using also finite dimensional Pinter's algebras, which are the reducts of Monk's algebras described
above, satisfying the hypothesis of theorem \ref{2.12}; a finer result is also
found in \cite{t}.
\end{demo}
Sain Thompson's paper also contains the result that the class of representable quasi polyadic equality algebras {\it with equality}
of infinite dimension cannot be axiomatized by any set of universal formulas containing only finitely
many variables. However, the proof has a serious gap, that was partially corrected by the present author in \cite{splitting}
addressing only countable (infinite) dimensions; here we extend the correction to  the uncountable case, too.

\subsection{Hirsch and Hodkinson's Monk-like  algebras}

Now we apply the lifting argument to existing finite dimensional algebras.
We prove the conclusion of theorem \ref{2.12}, for cylindric algebras
solving the infinite dimensional version of the famous 2.12 problem in algebraic logic.
We use {\it existing} Monk-like algebras, that
renders a stronger result than the one proved in \cite{t};
though the latter has the bonus of proving an analogous weaker non-finite axiomatizability result for infinite dimensions
for {\it various diagonal free reducts} of cylindric algebras and quasi-polaydic equality ones.
In \cite{t} Monk's lifting-argument, or rather its refined version formulated in theorem \ref{2.12}, was also used.

The finite dimensional algebras we use are constructed by Hirsch and Hodkinson; and
they are based on a relation algebra construction, having $n$ dimensional hyperbasis.
Such combinatorial algebras have affinity with what has become to be know in the literature
as Monk's algebras and/or Maddux's algebras.

Related algebras were constructed in \cite{t}.
We recall the construction of Hirsch and Hodkinson in \cite[section 15.2, starting p.466]{HHbook}.
They prove their result for cylindric algebras. Here, by noting that
their atom structures are also symmetric; it permits expansion by substitutions,
so we can slightly extend the result to polyadic equality algebras, too.
Then using theorem \ref{2.12} we prove a strong non-finite axiomatizability result for quasi-polyadic equality
algebras, witness theorem \ref{new}.

\begin{definition}\label{relation}\cite[section 15.2, starting p. 466]{HHbook}.
Define  relation algebras $\A(n,r)$ having two parameters $n$ and $r$ with $3\leq n<\omega$ and $r<\omega$.
Let $\Psi$ satisfy $n,r\leq \Psi<\omega$. We specify the atom structure of $\A(n,r)$.
\begin{itemize}
\item The atoms of $\A(n,r)$ are $id$ and $a^k(i,j)$ for each $i<n-1$, $j<r$ and $k<\psi$.
\item All atoms are self converse.
\item We can list the forbidden triples $(a,b,c)$ of atoms of $\A(n,r)$- those such that
$a.(b;c)=0$. Those triples that are not forbidden are the consistent ones. This defines composition: for $x,y\in A(n,r)$ we have
$$x;y=\{a\in \At(\A(n,r)); \exists b,c\in \At\A(n,r): b\leq x, c\leq y, (a,b,c) \text { is consistent }\}$$
Now all permutations of the triple $(Id, s,t)$ will be inconsistent unless $t=s$.
Also, all permutations of the following triples are inconsistent:
$$(a^k(i,j), a^{k'}(i,j), a^{k''}(i,j')),$$
if $j\leq j'<r$ and $i<n-1$ and $k,k', k''<\Psi$.
All other triples are consistent.
\end{itemize}
\end{definition}
Hirsch and Hodkinson invented means to pass from
relation algebras to $n$ dimensional cylindric algebras,
when the relation algebras in question have what they call a hyperbasis, cf. \cite[definition 12.11]{HHbook}.
$n$ dimensional hyperbasis generalize the notion of Madux's cylindric basis, by allowing labelled hyperedges of arbitrary finite
lengths.

Unless otherwise specified,
$\A=(A,+,\cdot, -,  0,1,\breve{} , ;, \Id)$
will denote an arbitrary relation algebra with $\breve{}$ standing for converse, and $;$ standing for composition, and
$\Id$ standing for the identity relation.

\begin{definition}\cite[definition12.21]{HHbook}. Let $3\leq m\leq n\leq k<\omega$, and let $\Lambda$ be a non-empty set (of labels).
An $n$ wide $m$ dimensional $\Lambda$ hypernetwork over $\A$ is a map
$N:{}^{\leq n}m\to \Lambda\cup \At\A$ such that $N(\bar{x})\in \At\A$ if $|\bar{x}|=2$ and $N(\bar{x})\in \Lambda$ if $|\bar{x}|\neq 2$,
with the following properties:
\begin{itemize}
\item $N(x,x)\leq \Id$ ( that is $N(\bar{x})\leq \Id$ where $\bar{x}=(x,x)\in {}^2n.)$

\item $N(x,y)\leq N(x,z);N(z,y)$
for all $x,y,z<m$
\item If $\bar{x}, \bar{y}\in {}^{\leq n}m$, $|\bar{x}|=|\bar{y}|$
and $N(x_i,y_i)\leq \Id$ for all $i<|\bar{x}|$, then $N(\bar{x})=N(\bar{y})$

\item when $n=m$, then $N$ is called an $n$ dimensional $\Lambda$ hypernetwork.

\end{itemize}
\end{definition}

\begin{definition}\cite[definitions 12.11, 12.21]{HHbook}.  Let $M,N$ be $n$ wide $m$ dimensional $\Lambda$ hypernetworks.
\begin{enumarab}
\item For $x<m$ we write $M\equiv_xN$ if $M(\bar{y})=N(\bar{y})$ for all $\bar{y}\in {}^{\leq n}(m\sim \{x\})$
\item More generally, if $x_0,\ldots, x_{k-1}<m$ we write $M\equiv_{x_0,\ldots,x_{k-1}}N$
if $M(\bar{y})=N(\bar{y})$ for all $\bar{y}\in {}^{\leq n}(m\sim \{x_0,\ldots, x_{k-1}\}).$
(Using an expression of Maddux, here $M$ and $N$ agree off of
$\{x_0,\ldots x_{n-1}\}$.)
\item If $N$ is an $n$ wide $m$ dimensional $\Lambda$ -hypernetwork over $\A$, and $\tau:m\to m$ is any map, then
$N\circ \tau$ denotes the $n$ wide $m$ dimensional $\Lambda$ hypernetwork over $\A$ with labellings defined by
$$N\circ \tau(\bar{x})=N(\tau(\bar{x})) \text { for all }\bar{x}\in {}^{\leq n}m.$$
That is
$$N\circ \tau(\bar{x})=N(\tau(x_0),\ldots,\tau(x_{l-1}))$$
\end{enumarab}
\end{definition}

\begin{lemma} Let $N$ be an $n$ dimensional $\Lambda$ hypernetwork over $\A$ and $\tau:n\to n$ be a map.
Then $N\circ \tau$, which we often denote simply by $N\tau$, is also a network.
\end{lemma}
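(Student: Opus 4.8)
The plan is to verify directly that $N\tau = N\circ\tau$ satisfies each clause in the definition of an $n$-dimensional $\Lambda$-hypernetwork over $\A$ (the case $m=n$ of Definition 12.21), reducing each clause for $N\tau$ to the corresponding clause for $N$ via the reindexing $\bar x\mapsto \tau(\bar x)=(\tau(x_0),\dots,\tau(x_{l-1}))$, where $l=|\bar x|$.

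First I would dispose of the typing condition. Since $\tau(\bar x)$ has the same length $l$ as $\bar x$, we have $N\tau(\bar x)=N(\tau(\bar x))\in\At\A$ when $|\bar x|=2$ and $N\tau(\bar x)\in\Lambda$ when $|\bar x|\neq 2$, exactly as required. Note that non-injectivity of $\tau$ causes no harm here: even if $\tau$ collapses a pair of distinct nodes, the resulting value is still an atom (indeed an atom below $\Id$), because ``dimension'' in this context refers to tuple length, not to the cardinality of the range.

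The three substantive clauses then transfer mechanically. Reflexivity: $N\tau(x,x)=N(\tau(x),\tau(x))\leq\Id$ since $N$ is a hypernetwork. Triangle: for $x,y,z<n$, applying the triangle inequality for $N$ at the nodes $\tau(x),\tau(y),\tau(z)$ gives
\[
N\tau(x,y)=N(\tau(x),\tau(y))\leq N(\tau(x),\tau(z));N(\tau(z),\tau(y))=N\tau(x,z);N\tau(z,y).
\]
For the last clause, if $\bar x,\bar y\in{}^{\leq n}n$ satisfy $|\bar x|=|\bar y|$ and $N\tau(x_i,y_i)\leq\Id$ for all $i<|\bar x|$, then $N(\tau(x_i),\tau(y_i))\leq\Id$ for all such $i$, and since $|\tau(\bar x)|=|\bar x|=|\bar y|=|\tau(\bar y)|$, the corresponding clause for $N$ yields $N(\tau(\bar x))=N(\tau(\bar y))$, i.e.\ $N\tau(\bar x)=N\tau(\bar y)$.

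This exhausts the axioms, so $N\tau$ is a network. There is no genuine obstacle in this argument; the only point meriting a moment's care is the observation above that composing with a possibly non-injective $\tau$ can never change the arity class of a tuple, so the typing condition is automatically preserved — once this is noted, every remaining clause is a one-line pull-back of the corresponding property of $N$.
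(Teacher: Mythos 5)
Your verification is correct: each clause of the hypernetwork definition for $N\circ\tau$ pulls back to the corresponding clause for $N$ at the reindexed tuple $\tau(\bar{x})$, and your observation that $|\tau(\bar{x})|=|\bar{x}|$ (so the typing condition survives even when $\tau$ is non-injective) is the only point needing comment. The paper itself gives no argument here — it simply cites \cite[lemma 12.7]{HHbook} — and your direct check is exactly the routine proof that citation stands in for, so there is nothing substantive to compare.
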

\begin{demo}{Proof} \cite[lemma 12.7]{HHbook}.
\end{demo}

\begin{definition}\cite[definition 12.21]{HHbook2}.
The set of all $n$ wise $m$ dimensional hypernetworks will be denoted by $H_m^n(\A,\Lambda)$.
An $n$ wide $m$ dimensional $\Lambda$
hyperbasis for $\A$ is a set $H\subseteq H_m^n(\A,\lambda)$ with the following properties:
\begin{itemize}
\item For all $a\in \At\A$, there is an $N\in R$ such that $N(0,1)=a$
\item For all $N\in R$ all $x,y,z<n$ with $z\neq x,y$ and for all $a,b\in \At\A$ such that
$N(x,y)\leq a;b$ there is $M\in R$ with $M\equiv_zN, M(x,z)=a$ and $M(z,y)=b$
\item For all $M,N\in H$ and $x,y<n$, with $M\equiv_{xy}N$, there is $L\in H$ such that
$M\equiv_xL\equiv_yN$. Here the notation $M\equiv_S N$ means that, using Maddux's terminology that $M$ and $N$ agree off of $S$.

\item For a $k$ wide $n$ dimensional hypernetwork $N$, we let $N|_m^k$ the restriction of the map $N$ to $^{\leq k}m$.
For $H\subseteq H_n^k(\A,\lambda)$ we let $H|_k^m=\{N|_m^k: N\in H\}$.
\item When $n=m$, $H_n(\A,\Lambda)$ is called an $n$ dimensional hyperbases.
\end{itemize}
We say that $H$ is symmetric, if whenever $N\in H$ and $\sigma:m\to m$, then $N\circ\sigma\in H$.
\end{definition}
We note that $n$ dimensional hyperbasis are extensions of Maddux's notion of cylindric basis.

\begin{theorem} If $H$ is a $m$ wide $n$ dimensional $\Lambda$ symmetric
hyperbases for $\A$, then $\Ca H\in {\sf PEA}_n$.
\end{theorem}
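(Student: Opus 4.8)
The plan is to build on the Hirsch--Hodkinson result that $\Ca H\in\CA_n$ whenever $H$ is an $m$ wide $n$ dimensional $\Lambda$ hyperbasis for $\A$ \cite{HHbook}; this part of the statement needs no symmetry, and it already equips the universe $\wp(H)$ with its Boolean structure, the cylindrifiers $\cyl i X=\set{N\in H:\exists M\in X,\ M\equiv_i N}$ and the diagonals $\diag i j=\set{N\in H:N(i,j)\le\Id}$ (for $i,j<n$), all satisfying the $\CA_n$ axioms. So it remains to (a) define the substitution operators $\s_\tau$ ($\tau\in{}^nn$) on $\Ca H$, for which the hypothesis that $H$ is symmetric will be used in an essential way, and (b) verify the defining ${\sf PEA}_n$ axioms governing the $\s_\tau$ and their interaction with the $\cyl i$ and the $\diag i j$. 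Since $n$ is finite, every $\tau\in{}^nn$ is finitary and the replacements are $\CA_n$-term-definable, so one could equally add only the transpositions $\swap i j$ and then invoke the term-equivalence of ${\sf PEA}_n$ with $\QEA_n$; the direct route is just as quick.

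For step (a) I would put, for $\tau\in{}^nn$ and $X\subseteq H$,
\[\s_\tau X=\set{N\in H:N\circ\tau\in X},\]
where $N\circ\tau$ is the relabelled hypernetwork $(N\circ\tau)(\bar x)=N(\tau\bar x)$ defined above. Symmetry of $H$ gives $N\circ\tau\in H$ for every $N\in H$, so $\s_\tau$ is an internal operation on $\Ca H$; being a preimage operator, it is automatically a complete Boolean endomorphism (in particular normal and additive). For a transposition this reads $\swap i j X=\set{N\circ[i,j]:N\in X}$, the two descriptions coinciding because $[i,j]$ is an involution.

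For step (b) I expect everything to reduce to a mechanical unwinding of these definitions, resting on three facts. First, $(N\circ\sigma)\circ\tau=N\circ(\sigma\circ\tau)$ and $N\circ\Id=N$, which give $\s_\Id=\Id$ and the functoriality axiom $\s_\sigma\s_\tau=\s_{\sigma\circ\tau}$. Second, that $M\equiv_i N$ iff $M\circ\tau\equiv_{\tau(i)}N\circ\tau$ for a permutation $\tau$, which yields the axioms coupling $\s_\tau$ to the cylindrifiers --- for instance $\swap i j\cyl k X=\cyl k\swap i j X$ when $k\notin\set{i,j}$, and $\swap i j\cyl i X=\cyl j\swap i j X$ --- after relabelling the witness $M$. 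Third, the hypernetwork clause forcing $N\circ\tau=N$ whenever $N$ already identifies every pair that $\tau$ collapses (in particular $N\circ[i,j]=N$ when $N(i,j)\le\Id$), which yields the axioms tying $\s_\tau$ to the diagonals, such as $\s_\tau\diag k l=\diag{\tau(k)}{\tau(l)}$ and $\diag i j\cdot\swap i j X=\diag i j\cdot X$. Together with the $\CA_n$ axioms already in hand, this is the full schema defining ${\sf PEA}_n$, whence $\Ca H\in{\sf PEA}_n$.

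The one point that genuinely needs the hypothesis, and the only one requiring care, is the well-definedness in step (a): composing a hypernetwork in $H$ with a coordinate map $\tau:n\to n$ must again produce a hypernetwork lying in $H$, and one must check that all the defining clauses of a hypernetwork survive, including those imposed on hyperedges $\bar x\in{}^{\leq m}n$ of length $\neq 2$ and not merely on the length-$2$ edges --- this is exactly why the theorem assumes closure under all coordinate maps rather than some weaker symmetry. Beyond that I foresee no real obstacle: the genuinely hard combinatorics --- the consistency of the relation algebra $\A(n,r)$, the existence of the hyperbasis, the verification of the $\CA_n$ axioms --- is already absorbed into the cited Hirsch--Hodkinson result, and the passage to ${\sf PEA}_n$ contributes only the transparent algebra of coordinate relabelling.
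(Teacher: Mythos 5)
Your proposal is correct and follows essentially the same route as the paper: the paper likewise takes the cylindric structure on $\wp(H)$ for granted, defines the transposition operators by ${\sf p}_{ij}X=\{N\in H:\exists M\in X\,(N=M\circ[i,j])\}$ (which agrees with your preimage formula since $[i,j]$ is an involution), and relies on symmetry of $H$ exactly where you do, namely to guarantee $N\circ\tau\in H$. If anything you supply more detail than the paper, which simply asserts the ${\sf PEA}_n$ axioms after stating the definitions.
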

\begin{demo}{Proof}
Let $H$ be the set of $m$ wide $n$ dimensional $\Lambda$ symmetric hypernetworks for $\A$.
The domain of $\Ca(H)$ is $\wp(H)$.
The Boolean operations are defined as expected (as complement and union of sets). For $i,j<n$ the diagonal is defined by
$${\sf d}_{ij}=\{N\in H: N(i,j)\leq \Id\}$$
and for $i<n$ we define the cylindrifier ${\sf c}_i$ by
$${\sf c}_iS=\{N\in H: \exists M\in S(N\equiv_i M\}.$$
Now the polyadic operations are defined by
$${\sf p}_{ij}X=\{N\in H: \exists M\in S(N=M\circ [i,j])\}$$

Then $\Ca(H)\in {\sf PEA}_n$. Furthermore, $\A$ embeds into
$\Ra(\Ca(H))$ via
$$a\mapsto \{N\in H: N(0,1)\leq a\}.$$
\end{demo}

\begin{theorem} Let $3\leq m\leq n\leq k<\omega$ be given.
Then $\Ca(H|^k_m)\cong \Nr_m(\Ca(H))$
\end{theorem}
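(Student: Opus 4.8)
The plan is to write down an explicit isomorphism and verify its properties clause by clause, following the pattern of the $\CA$ case in \cite{HHbook} and checking that the substitution operators cause no new trouble. Since $H$ is a symmetric $\Lambda$-hyperbasis, its restriction $H|^k_m$ is again a symmetric $\Lambda$-hyperbasis (\cite{HHbook}), so by the theorem above $\Ca(H|^k_m)\in{\sf PEA}_m$ and $\Ca(H)\in{\sf PEA}_n$, while $\Nr_m\Ca(H)$ is a subalgebra of $\Ca(H)$ lying in ${\sf PEA}_m$. Define
\[
\theta\colon \Ca(H|^k_m)\longrightarrow \Ca(H),\qquad \theta(S)=\{N\in H:\ N|^k_m\in S\}.
\]
I claim $\theta$ is an isomorphism onto $\Nr_m\Ca(H)$. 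Up to the surjection $N\mapsto N|^k_m$ the set $\theta(S)$ is just the preimage of $S$, so $\theta$ preserves finite intersections and complements; it is injective because $H|^k_m=\{N|^k_m:N\in H\}$, so distinct subsets of $H|^k_m$ have distinct preimages. Moreover $\theta(S)\in\Nr_m\Ca(H)$: for $m\le i<n$ the inclusion $\theta(S)\subseteq{\sf c}_i\theta(S)$ is trivial, and if $N\in{\sf c}_i\theta(S)$ then $N\equiv_i M$ for some $M\in\theta(S)$, but $i\ge m$ forces $N$ and $M$ to agree on all tuples from $^{\le k}m$, hence $N|^k_m=M|^k_m\in S$ and $N\in\theta(S)$.

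For surjectivity onto $\Nr_m\Ca(H)$, let $a\in\Nr_m\Ca(H)$ and put $S=\{N|^k_m:N\in a\}$; I claim $\theta(S)=a$. Only $\theta(S)\subseteq a$ needs work. Here I would use the combinatorial lemma — provable from the amalgamation clause in the definition of a hyperbasis (the two-coordinate case is exactly ``$M\equiv_{xy}N\Rightarrow\exists L\in H\ (M\equiv_x L\equiv_y N)$'') by induction on $n-m$, as in \cite{HHbook} — that whenever $N,M\in H$ and $N|^k_m=M|^k_m$, there is a finite chain $N=N_0,N_1,\dots,N_t=M$ in $H$ with, for each $j$, $N_j\equiv_{i_j}N_{j+1}$ for some $i_j$ with $m\le i_j<n$. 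Given $N\in\theta(S)$, choose $M\in a$ with $N|^k_m=M|^k_m$ together with such a chain from $M$ to $N$; since ${\sf c}_i a=a$ for all $i\ge m$, a step-by-step induction along the chain (at step $j$, $N_{j+1}\in{\sf c}_{i_j}\{N_j\}\subseteq{\sf c}_{i_j}a=a$) yields $N\in a$.

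It remains to check that $\theta$ preserves the operations of ${\sf PEA}_m$. For a diagonal, $\theta(\{N'\in H|^k_m:N'(i,j)\le\Id\})=\{N\in H:N(i,j)\le\Id\}$ because $i,j<m$, which is the diagonal ${\sf d}_{ij}$ of $\Ca(H)$ and is already $m$-dimensional. For the substitution operator, for $i,j<m$ the transposition $[i,j]$ fixes every coordinate $\ge m$ and maps $m$ into $m$, so $(N\circ[i,j])|^k_m=(N|^k_m)\circ[i,j]$; together with symmetry of $H$ (which makes ${\sf p}_{ij}$ everywhere defined and is what puts $\Ca(H|^k_m)$ in ${\sf PEA}_m$ in the first place) this gives $\theta({\sf p}_{ij}S)={\sf p}_{ij}\theta(S)$. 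For a cylindrifier ${\sf c}_i$ with $i<m$, the inclusion ${\sf c}_i\theta(S)\subseteq\theta({\sf c}_iS)$ is routine: if $N\equiv_i M$ in $H$ and $M|^k_m\in S$ then, restricting $\equiv_i$ to $^{\le k}m$, $N|^k_m\equiv_i M|^k_m$, so $N|^k_m\in{\sf c}_iS$. The reverse inclusion is the crux: given $N\in H$ and $M'\in S$ with $N|^k_m\equiv_i M'$ one must produce $M\in H$ with $M\equiv_i N$ and $M|^k_m=M'$ — i.e.\ lift a single $\equiv_i$-move from $H|^k_m$ back to $H$. This is exactly the hypernetwork-extension lemma obtained from the remaining hyperbasis clauses (the ``$N(x,y)\le a;b$''-splitting together with amalgamation), verbatim as in the $\CA$ argument of \cite{HHbook}, and it is the step I expect to be the main obstacle; everything else is bookkeeping, with the single extra observation that none of these constructions disturbs the labels indexed outside $m$, hence none disturbs the substitution structure, so $\theta$ is an isomorphism of ${\sf PEA}_m$-algebras.
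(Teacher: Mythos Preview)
Your proposal is correct and follows precisely the route the paper intends: the paper's own ``proof'' is nothing more than the citation \cite[theorem 12.22]{HHbook}, and what you have written is a faithful reconstruction of that argument, with the additional (and necessary) observation that the substitution operators ${\sf p}_{ij}$ are preserved by $\theta$ because $(N\circ[i,j])|^k_m=(N|^k_m)\circ[i,j]$ when $i,j<m$, and symmetry of $H$ ensures ${\sf p}_{ij}$ is well-defined on both sides. That polyadic check is the only thing not already in the $\CA$ version, and you handle it correctly.

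One remark on the surjectivity step: your ``chain lemma'' (if $N|^k_m=M|^k_m$ then $N$ and $M$ are connected in $H$ by $\equiv_{i}$-moves with $m\le i<n$) does not drop out of the two-coordinate amalgamation axiom by a naive induction on $|S|$; the clean way is the induction on $n-m$ you allude to, but carried out at the level of the theorem itself via $\Nr_m\Ca(H)=\Nr_m\Nr_{n-1}\Ca(H)$ together with the one-step case $\Ca(H|^k_{n-1})\cong\Nr_{n-1}\Ca(H)$, where the chain has length one and the lifting lemma you describe for ${\sf c}_i$ suffices. This is how the argument is organised in \cite{HHbook}, so your deferral to that source for the combinatorics is legitimate and matches the paper exactly.
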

\begin{demo}{Proof}\cite[theorem 12.22]{HHbook}
\end{demo}

The set $H=H_n^{n+1}(\A(n,r), \Lambda)$ of all $(n+1)$ wide $n$
dimensional $\Lambda$ hypernetworks over $\A(n,r)$ is an $n+1$ wide $n$
dimensional {\it symmetric} $\Lambda$ hyperbasis.
$H$ is symmetric, if whenever $N\in H$ and $\sigma:m\to m$, then $N\circ\sigma\in H$.
Hence $\A(n,r)$ embeds into the $\sf Ra$ reduct of $\Ca(H).$
\begin{theorem}\label{thm:cmnr1}

Let $\A(n,r)$ be as in definition \ref{relation}. Assume that $3\leq m\leq n$, and let
$$\C(m,n,r)=\Ca(H_m^{n+1}(\A(n,r),  \omega)).$$ Then the following hold assumptions hold (the first two items are the assumptions
in theorem \ref{2.12})

\begin{enumarab}

\item For any $r$ and $3\leq m\leq n<\omega$, we
have $\C(m,n,r)\in \Nr_m{\sf PEA}_n$ and $\Rd_{ca}\C(m,n,r)\notin S\Nr_m{\sf CA_{m+1}}$
and $\Pi_r\C(m,n,r)\in \Nr_n\PEA_{m+1}$.

\item  For $m<n$ and $k\geq 1$, there exists $x_n\in \C(n,n+k,r)$ such that $\C(m,m+k,r)\cong \Rl_{x}\C(n, n+k, r).$

\item For any ordinal $\alpha>2$, possibly infinite, $S\Nr_{\alpha}\CA_{\alpha+k+1}$ is not axiomatizable by a finite schema
over $S\Nr_{\alpha}\CA_{\alpha+k}.$
An analogous result holds for quasi-polyadic equality algebras. In fact, there exists
$\B^r\in \Nr_{\alpha}\sf QEA_{\alpha+k}$
with $\Rd_{ca}\B^r\notin S\Nr_{\alpha}\CA_{\alpha+k}$,
such that $\Pi_r \B/F\in S\Nr_{\alpha}\sf QEA_{\alpha +k +1}$
for any non principal ultrafilter $F$ on $\omega$.
\end{enumarab}
\end{theorem}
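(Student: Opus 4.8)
\noindent{\bf Proof proposal (plan).}
The first two clauses are exactly the hypotheses of Theorem \ref{2.12} — clauses (1)--(4) there, with $\K=\CA$ applied to the $\CA$-reducts and with $\K=\QEA$ (which in finite dimensions coincides with $\PEA$) applied to the full algebras — so once these are in place, clause (3) is just an invocation of that theorem. The plan is therefore: verify clauses (1) and (2) for the Hirsch--Hodkinson algebras $\C(m,n,r)=\Ca(H_m^{n+1}(\A(n,r),\omega))$ of Definition \ref{relation}, quoting \cite[section 15.2]{HHbook} for the one genuinely hard input, and then read off clause (3) from the lifting machinery.

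For clause (1): As noted, $H=H_n^{n+1}(\A(n,r),\omega)$ is an $(n+1)$ wide $n$ dimensional \emph{symmetric} hyperbasis for $\A(n,r)$ — symmetry because every atom of $\A(n,r)$ is self-converse and the list of forbidden triples $(a^k(i,j),a^{k'}(i,j),a^{k''}(i,j'))$ is closed under permuting the three entries. Hence $\Ca(H)\in\PEA_n$ by the theorem on symmetric hyperbases, and by the neat-reduct theorem for hyperbases $\C(m,n,r)=\Ca(H|_m^{n+1})\cong\Nr_m\Ca(H)\in\Nr_m\PEA_n$; in particular $\Rd_{ca}\C(m,n,r)\in S\Nr_m\CA_n$. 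The substantive half is $\Rd_{ca}\C(m,n,r)\notin S\Nr_m\CA_{m+1}$ — which a fortiori gives $\notin S\Nr_m\CA_{m+k+1}$ when $n=m+k$ — and this is the combinatorial core of \cite[section 15.2]{HHbook}: a game in which $\exists$ is forced to label hyperedges with only $m+1$ indices while the $r$ forbidden triples make a monochromatic configuration unavoidable once $r$ is exceeded. I would quote this verbatim rather than reprove it. For the product condition, $\Pi_{r\in\omega}\C(m,n,r)\in S\Nr_m\PEA_{n+1}$: in $\Pi_{r\in\omega}\A(n,r)$ the bound ``$j\le j'<r$'' in the forbidden triples is no longer uniform, so this product relation algebra carries an $(n+2)$ wide $(n+1)$ dimensional symmetric hyperbasis; commuting $\Ca$ with the product, embedding into the complex algebra of hypernetworks over $\Pi_r\A(n,r)$, and applying the neat-reduct theorem once more lands $\Pi_r\C(m,n,r)$ inside $\Nr_m\PEA_{n+1}$.

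For clause (2): I would exhibit $x_n\in\C(n,n+k,r)$ explicitly as the set of $(n+1)$ wide $n$ dimensional hypernetworks over $\A(n,n+k)$ whose restriction to arguments outside $m$ is frozen to the identity-plus-top-label pattern — the hypernetwork analogue of Monk's set $x_n$ in the theorem following Definition \ref{relation} — and then check that truncating a hypernetwork to arguments from $m$ is a bijection of $x_n$ onto $H_m^{m+1}(\A(m,m+k),\omega)$ respecting all Boolean, cylindric, diagonal and substitution operations. This yields $\C(m,m+k,r)\cong\Rl_{x_n}\Rd_m\C(n,n+k,r)$. This step is pure bookkeeping, modelled on \cite[section 15.2]{HHbook} and on \cite[theorems 3.2.77--3.2.86]{HMT2}.

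With (1) and (2) in hand, Theorem \ref{2.12} applied with $\K=\CA$ (tracking $\Rd_{ca}\C(m,m+k,r)$) gives that $S\Nr_\alpha\CA_{\alpha+k+1}$ is not finitely-schema axiomatizable over $S\Nr_\alpha\CA_{\alpha+k}$ for every ordinal $\alpha>2$ — the finite case being the Hirsch--Hodkinson--Maddux theorem — and applied with $\K=\QEA$ it gives the same over $S\Nr_\alpha\QEA_{\alpha+k}$. The ``in fact'' clause is read off the internal construction in the proof of Theorem \ref{2.12}: the witness produced there is $\B^r=\Pi_{\Gamma/F}\C^r_\Gamma$ with $\Rd^{\rho_\Gamma}\C^r_\Gamma\cong\C(|\Gamma|,|\Gamma|+k,r)$, and since each $\C(|\Gamma|,|\Gamma|+k,r)$ is a genuine \emph{neat} reduct, the isomorphism chain there upgrades to $\B^r\cong\Pi_{\Gamma/F}\Nr_\Gamma\A_\Gamma=\Nr_\alpha\Pi_{\Gamma/F}\A_\Gamma$, so $\B^r\in\Nr_\alpha\QEA_{\alpha+k}$; the finite-dimensional reduction step of that proof, fed with $\Rd_{ca}\C(m,m+k,r)\notin S\Nr_m\CA_{m+k+1}$, gives $\Rd_{ca}\B^r\notin S\Nr_\alpha\CA_{\alpha+k+1}$; and part (3) of that proof gives $\Pi_{r/F}\B^r\in S\Nr_\alpha\QEA_{\alpha+k+1}$. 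The one real obstacle is the non-embeddability statement in clause (1): everything else is a direct appeal to the hyperbasis theorems proved above, a transcription of Monk's bookkeeping, or a re-run of the lifting argument of Theorem \ref{2.12}; I would cite \cite{HHbook} for the game argument, noting only that it is the symmetry of the atom structure that promotes the cylindric statement to the polyadic-equality one.
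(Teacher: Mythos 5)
Your proposal is correct and follows essentially the same route as the paper: establish clauses (1) and (2) via the symmetric-hyperbasis theorems (with $\Ca(H|_m^{n+1})\cong\Nr_m\Ca H$, the non-embeddability quoted from \cite[section 15.2]{HHbook}, and an explicit $x_n$ for the relativization isomorphism), then feed these into Theorem \ref{2.12} to get clause (3) and read the ``in fact'' witness $\B^r=\Pi_{\Gamma/F}\C^r_\Gamma$ off that proof's internal construction. The only cosmetic difference is that where you sketch the product claim via a hyperbasis over $\Pi_r\A(n,r)$, the paper simply cites \cite[theorem 15.1(4)]{HHbook} and defers the stronger representability of the ultraproduct to the discussion following the proof (where it is needed for Theorem \ref{new}).
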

\begin{proof}
\begin{enumarab}
\item $H=H_n^{n+1}(\A(n,r), \omega)$ is a wide $n$ dimensional $\omega$ symmetric hyperbases, so $\Ca H\in {\sf PEA}_n.$
But $H_m^{n+1}(\A(n,r),\omega)=H|_m^{n+1}$.
Thus
$$\C_r=\Ca(H_m^{n+1}(\A(n,r), \omega))=\Ca(H|_m^{n+1})\cong \Nr_m\Ca H$$
\item From \cite[theorem15.1 (4)]{HHbook}
\item For $m<n$, let $$x_n=\{f\in F(n,n+k,r): m\leq j<n\to \exists i<m f(i,j)=Id\}.$$
Then $x_n\in \c C(n,n+k,r)$ and ${\sf c}_ix_n\cdot {\sf c}_jx_n=x_n$ for distinct $i, j<m$.
Furthermore
\[{I_n:\c C}(m,m+k,r)\cong \Rl_{x_n}\Rd_m {\c C}(n,n+k, r).\]
via
$$I_n(S)=\{f\in F(n, n+k, r): f\upharpoonright m\times m\in S,$$
$$\forall j(m\leq j<n\to  \exists i<m\; f(i,j)=Id)\}.$$

\item Follows from theorem \ref{2.12}.
\end{enumarab}
\end{proof}

Now we can recover Monk's and Sain and Thompson's results, from the following reasoning
which also involves a lifting argument.
We know that $\Rd_{ca}\C^r$ is not in $S\Nr_n\CA_{n+1},$ hence not
representable, but we actually have $\Pi_{r\in \omega}\C^r\in \sf RQEA_{n}$.
This can be proved by showing that \pe\ has a \ws\  in the $\omega$ rounded game of an elementary
substructure of $\Pi \C_r/F$, as we proceed to show.

However, in item (1) above, we will need and use that the
ultraproduct is actually {\it  representable}, so that it neatly embeds into $\omega$ extar dimensions, not just $k+1$.
This is stated in \cite{HHbook} with several scattered hints formulated in the text or in the exercises.
We fill in some (but not all) of the details.

Let $G_{\omega}^k$ be the usual atomic game defined on atomic networks with $k$ pebbles and $\omega$
rounds \cite[definition 11.1, lemma 11. 2, definition 11.3]{HHbook}.

We first address the relation algebras on which the $\C_r$s are based.
Given $k$, then we show that for any $r\geq  k^2$, we have \pe\ has a \ws\ in $G_{\omega}^k$
in $\A(n,r)$. This implies using ultraproducts and an elementary chain argument that \pe\
has a \ws\ in the $\omega$ rounded game in an elementary substructure of $\Pi\A(n,r)/F$,
hence the former is representable and then so is the latter because
${\sf RRA}$ is a variety.

Consider the only move by \pa\, namely, a triangle move $N(x,y,z,a,b).$
To label the edges $(w,z)$ where $w\neq k$, $w\neq x,y,z$, \pe\ uses $a^0(i, j_{w})$ where $i<n-1$
and $a,b$ not less that $a(i)$, where
the numbers $j_w(w\in k\in \{x,y,z\}$ are distinct elements
of $\{j<r: \neg \exists u,v\in k\sim {z}(N(u,v)\leq a(i,j))\}$. Because
$r$ {\it is} large enough,  this set has size at least $|k\sim \{x,y,z\}|$ and so
it  is impossible to find $j_w$. Then
any triangle labelled by $a^k(i,j)$ $a^{k'}(i,j')$ and
$a^{k''}(i, j'')$, the indices $j', j, j''$ are distinct and so
the triangle is consistent.

To show that $\Pi \C^r/F$ is also representable, it suffices to find a representation of
$\A\prec \Pi \A(n,r)/F$  that embeds {\it all}
$m$ dimensional hypernetworks, respecting $\equiv_i$ for all $i<m$.

We construct such a representation in a step-by- step fairly standard manner.
Our base representation $M_0$ may not embed all $m$ dimensional hypernetworks; we extend to one that
does.
Assume that $M$ is a relation algebra representation of $\A$.

Exists by the above argument.
We build a chain of hypergraphs $M_t:t<\omega$ their limit (defined in a precise
sense) will as required.  Each $M_t$ will have hyperedges of length $m$ labelled by
atoms of $\A$, the labelling of other hyperedges will be done later.

We proceed by a step by step manner, where the defects are treated one by one, and they are diffused at the limit obtaining the required
hypergraph, which also consists of
two parts, namely, edges and hyperedges \cite[proposition 13.37]{HHbook2}.

This limiting hypergraph will be compete (all  edges and hypergraphs will be labelled;
which might not be the case with $M_t$ for $t<\omega$.)
Every atom-hyperedge will indeed be labelled by an atom of $\A$ and hyperedges with length
$\neq  m$ will also be labelled. Let $M_0=M$.

We require inductively that $M_t$  satisfies:

Any $m$ tuple of  $M_t$ is contained in $\rng(v)$ for some $N\in H$ and some embedding $v:N\to  M_t$.
such that this embedding satisfies the following two conditions:

(a) if $i,j<m$  an  edge of $M_t$, then
$M_t(v(i) ,v(j))=N(i,j),$

(b) Whenever $a\in {}^{\leq ^m} m$ with $|a|\neq 2$, then $v(a)$ is a
hyperedge of $M_t$ and is labelled by $N(\bar{a})$.

(Note that an embedding might not be injective).

We assume that during the game all  hypernetworks played are strict, that is if
$N(x,y)\leq {\sf Id}$, then $x=y$.
For the base of the induction we take   $M_0=M$. Assume that we have built $M_t$ $(t<\omega)$.
We proceed as follows.

We define $M_{t+1}$ such that for every quadruple
$(N, v, k, N')$ where $N, N'\in H$, $k<m$ and $M\equiv_k N'$ and $v:N\to M_t$
is an embedding, then  the restriction $v\upharpoonright n\sim \{k\}$ extends to an embedding
$v'; N'\to M_{t+1}$

\begin{itemize}

\item For each such $(N, v,k, N')$ we add just one new node $\pi$,
and we add the edges $(\pi, v(i))$ $(v(i), \pi)$ for each $i\in m\sim\{k\}$ and $(\pi, \pi)$
that are pairwise distinct. Such new edges are labeled by $N'(k,k)$, $(\pi, v(i))$ and $(v(i), \pi)$. This is well defined.
We extend $ v$ by defining $v'(k)=\pi$.

\item We add a new hyperedge $v'(\bar{a})$ for every $\bar{a}$  of length $\neq m$,
with $k\in \rng (\bar{a})$ and give it the label $N'(\bar{a})$.

\end{itemize}
Then $M_{t+1}$ will be $M_t$ with its old labels, atom hyperedges,  labelled hyperedges
together with the new ones define as above.

It is straightforward to check that the inductive hypothesis  are preserved.

Now define a labelled hypergraph as follows:
$\nodes(M)=\bigcup \nodes(M_t)$,
for any $\bar{x}$ that is an atom-hyperedge; then it is a one in some $M_t$
and its label is defined in $M$ by $M_t(\bar{x})$

The hyperedges are $n$ tuples $(x_0,\ldots, x_{m-1})$.
For each such tuple, we let $t<\omega$,
such that $\{x_0\ldots, x_{m-1}\} \subseteq  M_t$,
and we set $M(x_0,\ldots, x_{m-1})$   to be the unique
$N\in H$ such that there is an embedding
$v:N\to M$ with  $\bar{x}\subseteq \rng(v).$
This can be easily checked to be well defined. Indeed, uniqueness
is clear from the definition of embedding. Now we show existence. Note that there is an $N\in H$
and an embedding $v:N\to M_t$ with $(x_0, \ldots, x_{m-1})\subseteq \rng(v)$.
So take $\tau:m\to m$, such that $x_i=v(\tau(i))$ for each $i<m$.
As $H$ is symmetric, $N\tau\in H$, and clearly $v\circ \tau: N\tau\to M_t$
is also an embedding. But $x_i=v\circ \tau(u)$ for each $i<m$,
then let $M(x_0,\ldots, x_{m-1})=N\tau.$

Now we we define the representation $M$.
Let $L(A)$ be the signature obtained by adding an a binary relation symbol for each element
of $\A$.
Define $M\models r(\bar{x})$ if $\bar{x}$ is an edge and $M(\bar{x})\leq r.$
This representation embeds every $m$ hypernetwork.

The hyperedges are $m$ tuples $(x_0,\ldots, x_{m-1})$.
For each such tuple, we let $t<\omega$,
such that $\{x_0\ldots, x_{m-1}\} \subseteq  M_t$,
and we set $M(x_0,\ldots, x_{m-1})$   to be the unique
$N\in H$ such that there is an embedding
$v:N\to M$ with  $\bar{x}\subseteq \rng(v).$
This can be easily checked to be well defined.

Now we we define the representation $M$ of $\C_r$ that embeds all $m$ hypernetworks.
We drop the subscript $r$ to simplify the notation.
Let $L(C)$ be the signature obtained by adding an $n$ ary relation symbol for each element
of $\C$.
Now define for $r\in L(C)$
$$M\models r(\bar{x})\text { iff  } M(\bar{x})\in  r.$$
(Note that $M(\bar{x})$
is a hypernetwork, while $r$ is as set of hypernetworks).

Now we review the result in \cite{t}. The third item in our coming theorem \ref{thm:cmnr},
which is the main theorem in \cite{t}, is {\it strictly weaker} than the second item in theorem \ref{thm:cmnr1}. 
In the latter the ultraproduct was proved representable; in theorem \ref{thm:cmnr}, it is proved that the ultraproduct 
can make it only to one extra dimension, not infinitely many. 

In fact, the algebras used are Monk-like 
{\it finite} polyadic equality algebras, 
while the cylindric and polyadic equality algebras constructed before were {\it infinite} since they were constructed 
from hyperbasis having infinitely
many hyperlabels. 

Nevertheless,  in all cases the 
construction can be based on {\it the same relation algebra}, namely a variant of the relation algebra $\A(n,r)$
as specified above,  
except that now the finite atom structure $m$ dimensional 
polyadic equality algebra formed consists of all 
basic matrices of dimension $m$. No labels are involved so
the algebras are finite. We know from the above that $\A(n,r)$ has an $n+1$ wide 
$m$  
dimensional hyperbasis; in particular it has 
an $m$ dimensional hyperbasis,  
and so it has  an $m$ dimensional 
symmetric cylindric basis, that is, a polyadic equality basis $(m<n).$ 

Such basis are the atom structures of the new finite 
$m$ dimensional polyadic equality algebras. 
The construction of such algebras is due to Robin Hirsch.

\begin{theorem}\label{thm:cmnr} Let $3\leq m\leq n$ and $r<\omega$.
\begin{enumerate} 
\renewcommand{\theenumi}{\Roman{enumi}}
\item $\C(m, n, r)\in \Nr_m\QPEA_n$,\label{en:one}
\item $\Rd_{\Sc}\C(m, n, r)\not\in S\Nr_m\Sc_{n+1}$, \label{en:two}
\item $\Pi_{r/U} \C(m, n, r)$ is elementarily equivalent to a countable polyadic equality algebra $\C\in\Nr_m\QPEA_{n+1}$.  \label{en:four}
\end{enumerate} 
\end{theorem}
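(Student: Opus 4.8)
The plan is to establish the three items by combining the relation-algebra construction $\A(n,r)$ with Hirsch and Hodkinson's passage to $n$-dimensional polyadic equality algebras via hyperbases, and then to handle the ultraproduct statement by an Ehrenfeucht--Fra\"\i ss\'e game argument. For item (\ref{en:one}), I would invoke the fact recorded above that $H=H_m^{n+1}(\A(n,r),\omega)$ is an $(n+1)$-wide $m$-dimensional symmetric $\omega$-hyperbasis for $\A(n,r)$, so that $\Ca H\in {\sf QPEA}_n$ by the theorem on symmetric hyperbases, and that $H_m^{n+1}(\A(n,r),\omega)=H|_m^{n+1}$, whence $\C(m,n,r)=\Ca(H|_m^{n+1})\cong \Nr_m\Ca H$ by the theorem that $\Ca(H|_m^k)\cong\Nr_m(\Ca H)$; this gives $\C(m,n,r)\in \Nr_m\QPEA_n$. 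Here, however, the finite variant is used: the atom structure is taken to consist of all basic matrices of dimension $m$ (an $m$-dimensional polyadic equality basis of $\A(n,r)$, which exists because $\A(n,r)$ has an $(n+1)$-wide $m$-dimensional hyperbasis hence an $m$-dimensional symmetric cylindric basis), so no hyperlabels appear and $\C(m,n,r)$ is finite; the neat-reduct identity $\C(m,n,r)\cong\Nr_m\C(n,n+1,r)$-type statement is read off from the basic-matrix description.

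For item (\ref{en:two}), the point is that the $\Sc$-reduct of $\C(m,n,r)$ cannot neatly embed into even one extra dimension. I would argue this by the standard Monk/Maddux-style counting argument: if $\Rd_{\Sc}\C(m,n,r)\in S\Nr_m\Sc_{n+1}$, then (via the connection between cylindric/substitution algebras in dimension $m$ neatly embeddable in dimension $n+1$ and the existence of suitable $(n+1)$-dimensional structures over the underlying relation algebra) one could build an $(n+1)$-dimensional basic matrix, or equivalently a certain homogeneous coloured structure, over $\A(n,r)$; but the forbidden-triple constraints in Definition \ref{relation} --- specifically the inconsistency of all permutations of $(a^k(i,j),a^{k'}(i,j),a^{k''}(i,j'))$ for $j\le j'<r$ --- force a monochromatic configuration once the number of available indices $r$ is exceeded by the combinatorics of $n+1$ nodes, producing the desired contradiction. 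This is precisely the content cited as \cite[theorem 15.1(4)]{HHbook} in the proof of the cylindric version (item (1) of Theorem \ref{thm:cmnr1}), and I would simply quote that, noting that $\Sc$-reducts suffice since the obstruction already lives at the level of cylindrifiers and replacements.

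For item (\ref{en:four}), the strategy is: first show $\Pi_{r/U}\A(n,r)$ has an elementary substructure that is a representable relation algebra, by proving that for each $k$ there is $r_0$ with \pe\ having a \ws\ in the $k$-pebble $\omega$-round atomic game $G_\omega^k$ on $\A(n,r)$ for all $r\ge r_0$ --- the winning strategy being the labelling scheme sketched above (using atoms $a^0(i,j_w)$ with the $j_w$ chosen distinct and fresh, which is possible exactly because $r$ is large), so that triangles get pairwise-distinct second indices and stay consistent. By \Los' theorem \pe\ then wins $G_\omega^k$ on $\Pi_{r/U}\A(n,r)$ for every $k$, hence on a countable elementary subalgebra, which is therefore completely representable; since ${\sf RRA}$ is a variety, $\Pi_{r/U}\A(n,r)$ is representable. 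Then lift this to the polyadic equality algebras: the ultraproduct $\Pi_{r/U}\C(m,n,r)$ is elementarily equivalent to a countable algebra $\C$ which, using the step-by-step hypergraph construction described above (building $M_0\subseteq M_1\subseteq\cdots$ so that every $m$-tuple lies in the range of an embedding of some hypernetwork, diffusing defects at the limit and exploiting symmetry of the hyperbasis so that $N\tau\in H$), admits a relativized representation embedding all $m$-dimensional hypernetworks; this exhibits $\C$ as $\Nr_m$ of an algebra in $\QPEA_{n+1}$, giving $\C\in\Nr_m\QPEA_{n+1}$ as required. The main obstacle is the game argument: getting the quantitative bound on $r$ right and verifying carefully that \pe's labelling always leaves enough fresh indices across all rounds and all reachable networks (the ``$r$ is large enough'' claim), together with checking that the limiting hypergraph $M$ in the step-by-step construction genuinely yields a well-defined representation of $\C_r$ respecting $\equiv_i$ --- the well-definedness via symmetry ($N\tau\in H$, $v\circ\tau$ still an embedding) being the delicate bookkeeping point.
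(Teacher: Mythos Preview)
Your proposal conflates the two distinct constructions the paper keeps carefully apart. Theorem~\ref{thm:cmnr} concerns the \emph{finite} algebras of Definition~\ref{def:cmnr}, whose atoms are basic matrices $f:m\times m\to Bin(n,r)$ with no hyperlabels, whereas Theorem~\ref{thm:cmnr1} concerns the infinite hyperbasis algebras $\Ca(H_m^{n+1}(\A(n,r),\omega))$. You acknowledge the finite variant in item~(\ref{en:one}), but then your arguments for (\ref{en:two}) and (\ref{en:four}) revert to the hyperbasis picture.

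For item~(\ref{en:two}), ``simply quoting'' \cite[theorem~15.1(4)]{HHbook} does not suffice: that result is for the $\CA$ case with diagonals. The paper's proof for the $\Sc$ reduct is a genuinely different algebraic argument carried out \emph{inside} the putative $\Sc_{n+1}$ dilation $\C$: one builds a large set $S\subseteq\C$, extracts edge-labels $\alpha(s,i,j)\in Bin(n,r)$ via cylindrification and substitution, defines index functions $I:n\to n-1$, $J:n\to r$ with a rank ${\sf rk}(I,J)$, and shows inductively that the rank can be strictly increased more than $nr$ times --- a contradiction. Your proposed route (neat embedding $\Rightarrow$ $(n{+}1)$-dimensional basic matrix $\Rightarrow$ monochromatic triangle) is a plausible heuristic but is not what is done, and the passage from an abstract $\Sc_{n+1}$ neat embedding to a combinatorial basis over $\A(n,r)$ is exactly the nontrivial content the rank argument supplies.

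For item~(\ref{en:four}), your approach is the proof of Theorem~\ref{thm:cmnr1}, not of this theorem, and the paper explicitly warns that it does \emph{not} transfer: ``we do not guarantee that the ultraproduct on $r$ of $\C(m,n,r)$ \ldots\ is representable. It is not at all clear that we can lift the representability $\Pi_r\A(n,r)$ to $\Pi_r\C(m,n,r)$ \ldots\ which was the case when we had diagonal elements.'' The actual mechanism is entirely different and is the reason $r$ is taken to be a \emph{linear order} in Definition~\ref{def:cmnr}: a \Los\ argument gives $\Pi_{r/U}\C(m,n,r)\cong\C(m,n,\Pi_{r/U}r)$, the ultraproduct order $\Pi_{r/U}r$ contains an infinite ascending chain, and one proves directly (in \cite{t}) that $\C(m,n,J)\in\Nr_m\QPEA_{n+1}$ whenever $J$ is an infinite linear order. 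The ``one extra dimension'' in the conclusion is all that this yields --- and all that is claimed --- precisely because the representability-via-games route is unavailable here.
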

We define the algebras $\C(m,n,r)$ for $3\leq m\leq n<\omega$ and $r$ 
and then give a sketch of \eqref{en:two}. To prove \eqref{en:four}, $r$ has to be a linear order.
We start with:
\begin{definition}\label{def:cmnr}
Define a function $\kappa:\omega\times\omega\rightarrow\omega$ by $\kappa(x, 0)=0$ 
(all $x<\omega$) and $\kappa(x, y+1)=1+x\times\kappa(x, y))$ (all $x, y<\omega$).
For $n, r<\omega$ let 
\[\psi(n, r)=
\kappa((n-1)r, (n-1)r)+1.\]
All of this is simply to ensure that $\psi(n, r)$ is sufficiently big compared to $n, r$ for the proof of non-embeddability to work.  
The second parameter $r<\omega$ may be considered as a finite linear order of length $r$.    
We may extend the definition of $\psi$ to the case where its second parameter is an 
arbitrary linear order by letting $\psi(n, r)=\omega$ for any infinite linear order $r$. 

For any  $n<\omega$ and any linear order $r$, let 
\[Bin(n, r)=\set{Id}\cup\set{a^k(i, j):i< n-1,\;j\in r,\;k<\psi(n, r)}\] 
where $Id, a^k(i, j)$ are distinct objects indexed by $k, i, j$.
Let $3\leq m\leq n<\omega$ and let $r$ be any linear order.
Let $F(m, n, r)$ be the set of all  functions $f:m\times m\to Bin(n, r)$ 
such that $f$ is symmetric ($f(x, y)=f(y, x)$ for all $x, y<m$) 
and for all $x, y, z<m$ we have $f(x, x)=Id,\;f(x, y)=f(y, x)$, and $(f(x, y), f(y, z), f(x, z))\not\in Forb$, 
where $Forb$ (the \emph{forbidden} triples) is the following set of triples
 \[ \begin{array}{c}
 \set{(Id, b, c):b\neq c\in Bin(n, r)}\\
 \cup \\
 \set{(a^k(i, j), a^{k'}(i,j), a^{k^*}(i, j')): k, k', k^*< \psi(n, r), \;i<n-1, \; j'\leq j\in r}.
 \end{array}\]
Here $Bin(n,r)$ is an atom structure of a finite relation relation  
and $Forb$ specifies its operations by specifying forbidden triples. 

This atom structure defines a relation algebra; 
that is very similar but not identical to $\A(n,r)$. There the upperbound was $nr^{nr}$ here it is specified by the Ramsey function 
$\psi$.  

Now any such $f\in F(m,n,r)$ is a basic matrix on this atom structure 
in the sense of Maddux, and the whole lot of them will be a 
basis, constituting the atom structure of algebras we want.
So here instead of an infinite set of hypernetworks with hyperlabels from $\omega$ our cylindric-like algebras are finite.

Now accessibility relations coresponding to substitutions, cylindrifiers are defined, as expected on matrices,  
as follows.
For any $f, g\in F(m, n,  r)$ and $x, y<m$ we write $f\equiv_{xy}g$ if for all $w, z\in m\setminus\set {x, y}$ we have $f(w, z)=g(w, z)$.  
We may write $f\equiv_x g$ instead of $f\equiv_{xx}g$.  For $\tau:m\to m$ we write $(f\tau)$ for the function defined by 
\begin{equation}\label{eq:ftau}(f\tau)(x, y)=f(\tau(x), \tau(y)).
\end{equation}
Clearly $(f\tau)\in F(m, n, r)$. 
Accordingly, the universe of $\C(m, n, r)$ is the power set of $F(m, n, r)$ and the operators (lifting from the atom structure)
are
\begin{itemize}
\item  the Boolean operators $+, -$ are union and set complement, 
\item  the diagonal $\diag xy=\set{f\in F(m, n, r):f(x, y)=Id}$,
\item  the cylindrifier $\cyl x(X)=\set{f\in F(m, n, r): \exists g\in X\; f\equiv_xg }$ and
\item the polyadic $\s_\tau(X)=\set{f\in F(m, n, r): f\tau \in X}$,
\end{itemize}
for $x, y<m,\;  X\subseteq F(m, n, r)$ and  $\tau:m\to m$.
 \end{definition}
\medskip

\begin{enumarab}

\item It is straightforward to see 
that  $3\leq m,\; 2\leq n$ and $r<\omega$ 
the algebra $\C(m, n, r)$ satisfies all of the axioms defining $\PEA_m$
except, perhaps, the commutativity of cylindrifiers $\cyl x\cyl y(X)=\cyl y\cyl x(X)$, which it satisfies because
$F(m,n,r)$ is a symmetric cylindric basis, so that overlapping 
matrices amalgamate.
Furthermore, if  $3\leq m\leq m'$ then $\C(m, n, r)\cong\Nr_m\C(m', n, r)$
via $$X\mapsto \set{f\in F(m', n, r): f\restr{m\times m}\in X}.$$

\item If we replace $nr^{nr}$ in the previous case  by the new Ramsey function 
$\psi(n,r)$  
then the relation algebra used before will be identical to one introduced here,
which we denote, by a slight abuse of notation, by $\A(n,r)$.

Furthermore, this change will not alter our previous result. 
The new ultraproduct 
$\Pi_{r/U}\A(n,r)$ is still representable; one proves that \pe\ can win the game $G^k_{\omega}$ when $r\geq k^2$
as above,  and so is the infinite 
polyadic equality algebra $\C_r=\Ca(H_m^{n+1}(\A(n,r), \omega)$ 
is also representable. Its representability, can be proved, 
by finding a representation of an elementary countable subalgebra of $\Pi_{r/U} \A(n,r)$
that embeds all $m$ dimensional hypernetworks, and then as above, using this representation one defines a representation 
of a countable elementary subalgebra of $\C_r$, which immediately implies that $\C_r$ 
is representable (as a polyadic equality algebra of dimension $m$).

We give a sketch of proof of \ref{thm:cmnr}(\ref{en:two}), which is the heart and soul of the proof, and it is quite similar 
to its $\CA$ analogue 4.69-475 in \cite{HHbook2}. We will also refer to the latter when the proofs overlap, or are very similar.

Assume for contradiction  that 
$\Rd_{\Sc}\C(m, n, r)\subseteq\Nr_m\C$ 
for some $\C\in \Sc_{n+1}$, some finite $m, n, r$. 
Then it can be shown inductively 
that there must be a large  set $S$ of distinct elements of $\C$, 
satisfying certain inductive assumptions, which we outline next.  
For each $s\in S$ and $i, j<n+2$ there is an element $\alpha(s, i, j)\in Bin(n, r)$ obtained from $s$ 
by cylindrifying all dimensions in $(n+1)\setminus\set{i, j}$, then using substitutions to replace $i, j$ by $0, 1$.  
Then one shows that $(\alpha(s, i, j), \alpha(s, j, k), \alpha(s, i, k))\not\in Forb$.

The induction hypothesis say, most importantly, that $\cyl n(s)$ is constant, for $s\in S$, 
and for $l<n$  there are fixed $i<n-1,\; j<r$ such that for all $s\in S$ we have $\alpha(s, l, n)\leq a(i, j)$.  
This defines, like in the proof of theorem 15.8 in \cite{HHbook2} p.471, two functions $I:n\rightarrow (n-1),\; J:n\rightarrow r$ 
such that $\alpha(s, l, n)\leq a(I(l), J(l))$ for all $s\in S$.  The \emph{rank} ${\sf rk}(I, J)$ of $(I, J)$ (as defined in definition 15.9 in \cite{HHbook2}) is   
the sum (over $i<n-1$) of the maximum $j$ with $I(l)=i,\; J(l)=j$ (some $l<n$) or $-1$ if there is no such $j$.  

Next it is proved that there is a set $S'$ with index functions $(I', J')$, still relatively large 
(large in terms of the number of times we need to repeat the induction step) 
where the same induction hypotheses hold but where ${\sf rk}(I', J')>{\sf rk}(I, J)$.  (See \cite{HHbook2}, where for $t<nr$, $S'$ was denoted by $S_t$
and proof of property (6) in the induction hypothesis  on p.474 of \cite{HHbook2}.)

By repeating this enough times (more than $nr$ times) we obtain a non-empty set $T$ 
with index functions of rank strictly greater than $(n-1)\times(r-1)$, an impossibility. (See \cite{HHbook2}, where for $t<nr$, $S'$ was denoted by $S_t$.)

We sketch the induction step.  Since $I$ cannot be injective there must be distinct $l_1, l_2<n$ 
such that $I(l_1)=I(l_2)$ and $J(l_1)\leq J(l_2)$.  We may use $l_1$ as a "spare dimension" 
(changing the index functions on $l$ will not reduce the rank).  
 Since $\cyl n(s)$ is constant, we may fix $s_0\in S$ 
and choose a new element $s'$ below $\cyl l s_0\cdot \sub n l\cyl  l s$, 
with certain properties.  Let $S^*=\set{s': s\in S\setminus\set{s_0}}$.
We wish to re-establish the induction hypotheses for $S^*$, and many of these are simple to check.  
Although suitable functions $I', J'$ may not exist on the whole of $S$, but $S$ remains
large enough to enable selecting a 
subset $S'$ of $S^*$, still large in terms of the number of remaining times the induction step must be applied.
The required functions $I', J'$ now exist (for all but one value of $l<n$ the values $I'(l), J'(l)$ are determined by $I, J$, 
for  one value of $l$ there are at most $(n-1)r$ possible values, hence on a large subset the choices agree).  

Next it can be shown that $J'(l)\geq J(l)$ for all $l<n$.   Since 
$$(\alpha(s, i, j), \alpha(s, j, k), \alpha(s, i, k))\not\in Forb$$ 
and by the definition of $Forb$  
either $\rng(I')$ properly extends $\rng(I)$ or there is $l<n$ such that $J'(l)>J(l)$, hence  ${\sf rk}(I', J')>{\sf rk} (I, J)$.

Although in the previous sketch the construction is based on the same relation algebra $\A(n,r)$ as before,
we do not guarantee that the ultraproduct on $r$
of $\C(m,n,r)$ ($2<m<n<\omega)$ based on $\A(n,r)$, like in the cylindric case, is representable.
It is not all clear that we can lift the 
representability $\Pi_r\A(n,r)$ to $\Pi_r\C(m,n,r)$ for any $m\leq n<\omega$ 
which was the case when we had diagonal elements.
In fact, what was proved in \cite{t} is the weaker item (III) of the above theorem.

\item 
To prove the first two parts, the algebras we considered had 
an atom structure consisting of basic $m$ dimensional 
matrices, hence all the 
elements are generated by two dimensional elements.

In the third part 
elements are {\it essentially} three dimensional, 
so here we deal with three dimensional matrices, or tensors, if you like.

The parameter defining the algebras between two succesive dimensions, namely $r$ (which varies over $\omega$) is no longer
only a number; it has a 
linear order.  This order induces a third dimension. 

In this case the ultraproduct of the constructed 
$m$ dimensional algebras, where $m<n$, over $r$,
is only in $\Nr_m\K_{n+1}$ $(\K$ between $\Sc$ and $\PEA$),  and it is not clear that it is representable like in the $\CA$ case, 
when we have diagonal elements.

In other words, the ultraproduct could make it only to one extra dimension and {\it not}  to $\omega$ many, 
witness theorem \ref{thm:cmnr1}.

A standard L\"os argument shows that 
$\Pi_{r/U}\C(m, n, r) \cong\C(m, n, \Pi_{r/U} r)$ and $\Pi_{r/U}r$ 
contains an infinite ascending sequence. 
Here we will have to extend the definition of $\psi$ 
by letting $\psi(n, r)=\omega,$ for any infinite linear order $r$.

It  is proved in \cite{t} is the weaker item (III) of the above theorem, where it is proved that the infinite algebra 
$\C(m,n, J)\in \Nr_n\PEA_{m+1}$
when $J$ is an infinite linear order as above, and clearly $\Pi_{r/U} r$ is such. 

\end{enumarab}

Our next theorem is stronger than the result established for infinite dimension in \cite{t}
when restricted to cylindric and quasi-polyadic equality algebras.
However,  the latter has the bonus that it proves a weaker non-finite axiomatizability 
result for diagonal free algebras like Pinter's and quasi-polyadic algebras.

Here  the ultraproduct for the infinite dimensional case, like the finite dimensional case,
will be {\it representable}, not just neatly embedding in algebras have 
$\alpha+k+1$ spare dimensions. 

This will follow from the fact that the lifted algebras 
of finite dimension $>2$ have representable ultraproducts.
This is the statement, to be proved, in our next theorem restricted to finite dimensions $>2$.

\begin{theorem}\label{new} Let $\alpha >2$.  Let $\K$ be any of $\CA$ or $\QEA$, then for any $r\in \omega$, for any
$k\geq 1$, there exists $\B^{r}\in S\Nr_{\alpha}\K_{\alpha+k}\sim S\Nr_{\alpha}\K_{\alpha+k+1}$ such
$\Pi_{r\in \omega}\B^r\in {\sf RK}_{\alpha}$.
In particular, ${\sf RK}_{\alpha}$ is not axiomatizable
by a finite schema over $S\Nr_{\alpha}\K_{\alpha+k}$.
\end{theorem}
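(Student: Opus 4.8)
The plan is to handle the finite and the infinite case separately, producing in both a witnessing family built from the Hirsch--Hodkinson algebras $\C(m,n,r)=\Ca(H_m^{n+1}(\A(n,r),\omega))$ of Theorem~\ref{thm:cmnr1}, and then to conclude with the usual \Los\ argument for non-finite-schema-axiomatizability.

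For finite $\alpha>2$ I would take $\B^r=\C(\alpha,\alpha+k,r)$ (its $\CA$-reduct when $\K=\CA$). By Theorem~\ref{thm:cmnr1} we have $\B^r\in\Nr_\alpha\PEA_{\alpha+k}$; since in finite dimensions $\PEA$ and $\QEA$ coincide and the $\CA$-reduct of a neat reduct is again a neat reduct, $\B^r\in\Nr_\alpha\K_{\alpha+k}\subseteq S\Nr_\alpha\K_{\alpha+k}$, whereas $\Rd_{ca}\B^r\notin S\Nr_\alpha\CA_{\alpha+k+1}$ rules out $\B^r\in S\Nr_\alpha\K_{\alpha+k+1}\supseteq{\sf RK}_\alpha$ for both $\K$. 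The real content is that $\Pi_{r/U}\B^r\in{\sf RK}_\alpha$ for every non-principal ultrafilter $U$ on $\omega$, and here I would run precisely the argument sketched after Theorem~\ref{thm:cmnr1}. One first checks that the existential player wins the atomic game $G^k_\omega$ on the relation algebra $\A(\alpha+k,r)$ as soon as $r\geq k^2$, so (cofinitely many $r$ obliging) she wins $G^k_\omega$ on $\Pi_{r/U}\A(\alpha+k,r)$ for every finite $k$; an elementary-chain / saturation argument then yields a countable elementary subalgebra of $\Pi_{r/U}\A(\alpha+k,r)$ on which she wins the $\omega$-round game, hence that subalgebra is in ${\sf RRA}$, and so $\Pi_{r/U}\A(\alpha+k,r)\in{\sf RRA}$ because ${\sf RRA}$, being a variety, is closed under elementary equivalence. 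A step-by-step construction, treating the defects one at a time and diffusing them in the limit, then builds a representation of a countable elementary subalgebra of $\Pi_{r/U}\B^r$ embedding every $\alpha$-dimensional hypernetwork and respecting each $\equiv_i$; as ${\sf RK}_\alpha$ is likewise a variety, $\Pi_{r/U}\B^r\in{\sf RK}_\alpha$.

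For infinite $\alpha$ I would feed the family $\C(m,n,r)$ into the lifting machine of Theorem~\ref{2.12}: its hypotheses (1), (2) and (4) hold by Theorem~\ref{thm:cmnr1}, while hypothesis (3) holds in the strengthened form that $\Pi_r\C(m,n,r)$ is representable, by the finite-dimensional case just settled (applied with $m$ in place of $\alpha$, for the finitely many $m=|\Gamma|\geq 3$ that arise). Running the three-part argument in the proof of Theorem~\ref{2.12} with $\B^r=\Pi_{\Gamma/F}\C^r_\Gamma$ and $\Rd^{\rho_\Gamma}\C^r_\Gamma=\C(|\Gamma|,|\Gamma|+k,r)$, parts (1) and (2) go through verbatim and give $\B^r\in S\Nr_\alpha\K_{\alpha+k}\setminus S\Nr_\alpha\K_{\alpha+k+1}$. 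In the ultraproduct part I would replace $|\Gamma|+k+1$ by $|\Gamma|+\omega$ and $\alpha+k+1$ by $\alpha+\omega$ throughout: for each $\Gamma$ fix $\A_\Gamma\in\K_{|\Gamma|+\omega}$ with $\Pi_{r/U}\C(|\Gamma|,|\Gamma|+k,r)\subseteq\Nr_{|\Gamma|}\A_\Gamma$, extend $\rho_\Gamma$ to an injection $\lambda_\Gamma\colon|\Gamma|+\omega\to\alpha+\omega$ with $\lambda_\Gamma(|\Gamma|+i)=\alpha+i$ for $i<\omega$, and choose $\F_\Gamma$ of type $\K_{\alpha+\omega}$ with $\Rd^{\lambda_\Gamma}\F_\Gamma=\A_\Gamma$; then $\Pi_{\Gamma/F}\F_\Gamma\in\K_{\alpha+\omega}$ by claim (**) in that proof, and the identical chain of relations gives $\Pi_{r/U}\B^r\cong\Pi_{\Gamma/F}\Pi_{r/U}\C^r_\Gamma\subseteq\Pi_{\Gamma/F}\Nr_{|\Gamma|}\A_\Gamma\subseteq\Nr_\alpha\Pi_{\Gamma/F}\F_\Gamma$, so $\Pi_{r/U}\B^r\in S\Nr_\alpha\K_{\alpha+\omega}={\sf RK}_\alpha$.

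It remains to derive the non-axiomatizability statement, which is the routine \Los\ argument. If ${\sf RK}_\alpha$ were cut out inside $S\Nr_\alpha\K_{\alpha+k}$ by finitely many schemata, then each $\B^r$, lying in $S\Nr_\alpha\K_{\alpha+k}$ but not in ${\sf RK}_\alpha$, would falsify some instance of one of these schemata; by the pigeonhole principle infinitely many of the $\B^r$ falsify one and the same instance $e$, which is a single equation in finitely many dimension indices and hence a first-order sentence, so choosing $U$ to contain the set of those $r$ and applying \Los's theorem we get $\Pi_{r/U}\B^r\models\neg e$, contradicting $\Pi_{r/U}\B^r\in{\sf RK}_\alpha$. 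I expect the main obstacle to be the representability of $\Pi_{r/U}\C(\alpha,\alpha+k,r)$ in the finite-dimensional case — concretely, verifying the winning strategy for $G^k_\omega$ on $\A(\alpha+k,r)$ and, above all, carrying out the step-by-step construction of a representation of $\Pi_{r/U}\B^r$ that embeds every $\alpha$-dimensional hypernetwork; this is the ingredient that goes genuinely beyond \cite{t}, where for the chain from $\Sc$ to $\PEA$ without diagonal elements only the weaker fact that the ultraproduct over $r$ neatly embeds into one extra dimension was available.
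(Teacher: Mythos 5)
Your proposal is correct and follows essentially the same route as the paper: the finite-dimensional case is exactly the representability of $\Pi_{r/U}\C(m,m+k,r)$ established after Theorem~\ref{thm:cmnr1} (winning strategy for $G^k_\omega$ on $\A(n,r)$ for $r\geq k^2$, elementary chain, then the step-by-step construction embedding all $m$-dimensional hypernetworks), and the infinite case is the paper's rerun of the ultraproduct part of Theorem~\ref{2.12} with $\A_\Gamma\in\K_{|\Gamma|+\omega}$ and $\F_\Gamma$ of type $\K_{\alpha+\omega}$, concluding by the neat embedding theorem. Your explicit replacement of $|\Gamma|+k+1$ by $|\Gamma|+\omega$ in the index bookkeeping, and the closing \Los/pigeonhole argument, only make explicit what the paper leaves implicit.
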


\begin{proof}

Let $k\in \omega$.
We have already dealt with the finite dimensional case.
Assume that $\alpha$ is infinite. We deal with $\CA$ for simplicity of notation. The more general case
is the same.
We know that
$S\Nr_{\alpha}\CA_{\alpha+k+1}\subset S\Nr_{\alpha}\CA_{\alpha+k};$
Fix $r\in \omega$. As before,
let $I=\{\Gamma: \Gamma\subseteq \alpha,  |\Gamma|<\omega\}$.
For each $\Gamma\in I$, let $M_{\Gamma}=\{\Delta\in I: \Gamma\subseteq \Delta\}$,
and let $F$ be an ultrafilter on $I$ such that $\forall\Gamma\in I,\; M_{\Gamma}\in F$.
For each $\Gamma\in I$, let $\rho_{\Gamma}$
be a one to one function from $|\Gamma|$ onto $\Gamma.$

Let ${\C}_{\Gamma}^r$ be an algebra similar to $\CA_{\alpha}$ such that
\[\Rd^{\rho_\Gamma}{\C}_{\Gamma}^r={\C}(|\Gamma|, |\Gamma|+k,r).\]
Let
\[\B^r=\prod_{\Gamma/F\in I}\C_{\Gamma}^r.\]
From theorem \ref{2.12}, we have:
\begin{enumerate}
\item\label{en:1} $\B^r\in S\Nr_\alpha\CA_{\alpha+k}$ and
\item\label{en:2} $\B^r\not\in S\Nr_\alpha\CA_{\alpha+k+1}$.
\end{enumerate}
But since the finite dimensional ultraproducts are now representable,
we also have
$$\Pi_{r/U}\Rd^{\rho_\Gamma}\C^r_\Gamma=\Pi_{r/U}\C(|\Gamma|, |\Gamma|+k, r) \subseteq \Nr_{|\Gamma|}\A_\Gamma, $$
for some $\A_\Gamma\in\CA_{|\Gamma|+\omega}$.

Following exactly the argument is in theorem \ref{2.12}, using also the same notation,
let $\lambda_\Gamma:|\Gamma|+k+1\rightarrow\alpha+k+1$
extend $\rho_\Gamma:|\Gamma|\rightarrow \Gamma \; (\subseteq\alpha)$ and satisfy
\[\lambda_\Gamma(|\Gamma|+i)=\alpha+i\]
for $i<k+1$.  Let $\F_\Gamma$ be a $\CA_{\alpha+\omega}$ type algebra such that $\Rd^{\lambda_\Gamma}\F_\Gamma=\A_\Gamma$.
As before, $\Pi_{\Gamma/F}\F_\Gamma\in\CA_{\alpha+\omega}$.  And
\begin{align*}
\Pi_{r/U}\B^r&=\Pi_{r/U}\Pi_{\Gamma/F}\C^r_\Gamma\\
&\cong \Pi_{\Gamma/F}\Pi_{r/U}\C^r_\Gamma\\
&\subseteq \Pi_{\Gamma/F}\Nr_{|\Gamma|}\A_\Gamma\\
&=\Pi_{\Gamma/F}\Nr_{|\Gamma|}\Rd^{\lambda_\Gamma}\F_\Gamma\\
&\subseteq\Nr_\alpha\Pi_{\Gamma/F}\F_\Gamma,
\end{align*}
By the neat embedding theorem, we have $\Pi_{\Gamma/F}\F_{\Gamma}\in \RCA_{\alpha}$
and we are done.
\end{proof}

\section{Splitting in infinite dimensions}

In a different direction, Andr\'eka \cite{Andreka}  generalized some of  the results excluding axiomatizations using finitely many variables
a task done by Jonsson for relation algebras. Andr\'eka using more basic methods,
avoiding totally the notion of schema. In this connection Andr\'eka invented the method of {\it splitting} \cite{Andreka}
to show that there is no universal axiomatization of
the class of representable cylindric algebras of any infinite dimension containing only finitely many variables.

This is stronger than Monk's result; furthermore, the method proved powerful enough to prove much more refined non-finite
axiomatizability results. Andr\'eka's result was apparently generalized to
quasi-polyadic algebras with equality by Sain and Thompson in their seminal
paper \cite{SL}, but there is a
serious error in the proof,
that was corrected by the present author, but only for countable dimensions.

The idea, traced back to Jonsson for relation algebras, consists of constructing for every finite $k\in \omega$
a non-representable algebra, all of whose $k$ -generated subalgebras are representable.

Andr\'eka ingeniously transferred such an idea to cylindric algebras, and to fully implement it, she invented
the {\it nut cracker} method of splitting.
The subtle splitting technique invented by Andr\'eka can be summarized as follows.

Take a fairly simple representable algebra generated by an atom, and  break
up or {\it split} the atom into enough (finitely many)  $k$ atoms,
forming a larger algebra, that is in fact non-representable; in fact, its cylindric reduct will not be representable, due to the  incompatibility
between the number of atoms, and the number of elements in the domain of a representation. However, the ''small" subalgebras
namely, those generated by $k$ elements of such an algebra will be
representable.

This incompatibility is usually witnessed by an application of Ramsey's theorems, like in Monk's first Monk-like algebra,
but this is not always the case (as we will show below)
like  Andr\'eka' splitting. This is definitely an asset, because the proofs in this  case are much  more basic,
and often even proving stronger results.
While Monk and rainbow algebras can prove subtle results concerning non finite axiomatizability for finite
dimensions, splitting works best in the infinite dimensional case.

We give two instances of the splitting technique.
The first answers a question of Andr\'eka's
formulated on p. 193 of \cite{Andreka} for any infinite ordinal
$\alpha$, and corrects a serious error in the seminal paper \cite{ST}. The result is proved for countable dimension
in \cite{splitting}. The extension to uncountable ordinals involves transfinite
induction.

\begin{theorem}\label{splitting1} Let $\alpha\geq \omega$. Then the variety $\RQEA_{\alpha}$
cannot be axiomatized with with a set $\Sigma$ of quantifier free
formulas containing finitely many variables. In fact, for any $k<\omega$, and any set of quantifier free formulas
$\Sigma$ axiomatizing $\RQEA_{\alpha}$,
$\Sigma$ contains a formula with more than $k$ variables in which some diagonal
element occurs. In particular, the variety $\RQEA_{\alpha}$ is not axiomatizable over its diagonal free reduct
with a set of universal formulas containing infinitely many variables.
\end{theorem}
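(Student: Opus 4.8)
The strategy is to adapt Andr\'eka's nut-cracker splitting technique, as already carried out for countable $\alpha$ in \cite{splitting}, to arbitrary infinite $\alpha$ by means of transfinite induction. Fix $k<\omega$. The starting point is a simple representable $\QEA_\alpha$, call it $\A$, generated (as an algebra) by a single atom $R$; concretely one takes a set algebra whose unit is some ${}^\alpha U$ and whose generating atom is a suitably chosen binary-like relation lifted to dimension $\alpha$. The core move is to split $R$ into $2^k$ (or more precisely, finitely many but more than $k$) pairwise ``cylindrically equivalent'' atoms $R_0,\dots,R_{N-1}$, obtaining a larger algebra $\B$ whose diagonal-free/substitution-free structure is forced to be non-representable: any representation would have to realize all $N$ atoms over a base set that, because of the way cylindrifications collapse the $R_i$ onto a common projection, can accommodate only fewer than $N$ of them, an incompatibility witnessed directly (no Ramsey argument needed here, which is exactly why the method is more elementary and yields the sharper conclusion). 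On the other hand every subalgebra of $\B$ generated by $k$ elements still ``sees'' only a coarsening of the splitting, hence is isomorphic to a subalgebra of the original representable $\A$ (or of a harmless variant of it), so it is representable. This shows no quantifier-free $\Sigma$ with $\le k$ variables can axiomatize $\RQEA_\alpha$.

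For the refinement asserting that the offending formula must actually \emph{contain a diagonal constant}, one runs the same construction but arranges the splitting so that it is invisible to the diagonal-free reduct: the atoms $R_0,\dots,R_{N-1}$ are chosen so that $\Rd_{\Df}\B$ (indeed $\Rd_{\sf QA}\B$, the substitution-free-of-transpositions reduct, and a fortiori the diagonal-free reduct) \emph{is} representable, while $\B$ itself is not, the non-representability being forced only through an interaction of $\diag{i}{j}$ with the cylindrifiers on the split atoms. Then any quantifier-free axiom of $\RQEA_\alpha$ that fails in $\B$ must mention some $\diag{i}{j}$, and by the $k$-generation argument above it must use more than $k$ variables. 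The final clause of the theorem --- non-axiomatizability over the diagonal-free reduct by universal formulas with infinitely many variables --- is then immediate: such a reduct-relative universal axiomatization, if it existed, would give a diagonal-free axiomatization of $\RQEA_\alpha$ using formulas in which (trivially) no diagonal element occurs, contradicting what was just proved (here one also uses that $\Rd_{\Df}\B\in\RDf_\alpha$ while $\B\notin\RQEA_\alpha$, so no set of $\Df$-formulas can cut out $\RQEA_\alpha$).

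The transfinite induction enters in promoting the countable-dimensional construction of \cite{splitting} to uncountable $\alpha$. One builds the split algebra $\B$ and, more importantly, the representations of its $k$-generated subalgebras, by a step-by-step process indexed by the ordinals below $\alpha$: at each stage one has a partial representation respecting the finitely many dimensions activated so far, at successor stages one stretches by one dimension (using the by-now routine reduct manipulations, cf.\ the ``stretching dimensions'' arguments in theorems \ref{firstlifting} and \ref{2.12}), and at limit stages one takes unions/ultraproducts. The bookkeeping must ensure that the ``smallness'' of a $k$-generated subalgebra --- the fact that it only refers to finitely many indices and to a coarsening of the splitting with $\le k$ blocks --- is preserved through the transfinite recursion, so that the limit object is still a genuine representation. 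This extension is precisely the content of \cite{recent}.

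\textbf{Main obstacle.} The delicate point is the uncountable limit stages of the induction: one must verify that the amalgam of the dimension-by-dimension partial representations of each $k$-generated subalgebra remains a representation --- in particular that no new ``monochromatic'' type of inconsistency is introduced by the infinitely many diagonal elements $\diag{i}{j}$ with $i,j$ ranging over an uncountable index set, and that the splitting really does stay coarsened-to-$\le k$-blocks from the point of view of the subalgebra at every stage. Equivalently: showing that the non-representability of $\B$ is caused \emph{solely} by the diagonals (so that $\Rd_{\Df}\B$, and the transposition-free reduct, stay representable) while keeping all the $k$-generated subalgebras representable, uniformly in an uncountable $\alpha$. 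Everything else --- the choice of the base algebra, the combinatorics of forbidden configurations among the split atoms, the successor step --- is a straightforward, if laborious, transcription of the countable case.
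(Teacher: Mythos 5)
There is a genuine gap, and it sits exactly where the paper locates the ``serious error'' in Sain--Thompson that this theorem is meant to repair. You perform \emph{one} splitting of an atom of a representable $\QEA_{\alpha}$ inside the full $\alpha$-dimensional signature and then assert that every $k$-generated subalgebra of the resulting $\B$ is representable. But the argument that small subalgebras survive a splitting is a combinatorial one that depends on \emph{counting the substitutions available in the signature}; with $\alpha$ infinite the signature of $\QEA_{\alpha}$ contains infinitely many operations ${\sf s}_{[i,j]}$, the count is unavailable, and the representability of the $k$-generated subalgebras simply does not follow. (The fact that you split into finitely many atoms rather than infinitely many, as in \cite{ST}, does not rescue this: the obstruction is the infinitude of substitutions, not of the split copies.) The paper's proof therefore does something structurally different: for each $n$ (resp.\ each $\mu<\alpha$) it performs a \emph{separate finite splitting relative to a reduct whose similarity type contains only the finitely many substitutions from the symmetric group $G_n$} (resp.\ the transpositions with indices in $\mu$), obtaining a chain $\A_{k,n}$ (resp.\ $\A_{k,\mu}$) in which each algebra is a subreduct of the next, and then takes the directed union $\A_k=\bigcup \A_{k,\mu}$. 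Representability of a $k$-generated subalgebra of $\A_k$, and the validity of the diagonal-free universal formulas, are then checked by pushing any given finite piece of data down into some $\A_{k,\mu}$ with $\mu$ large enough, where the finite-substitution combinatorics applies.

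Consequently your transfinite induction is also aimed at the wrong object. The induction in the paper runs over the ordinals $\mu<\alpha$ indexing the substitution-reducts, with the limit stage handled by the directed union of \emph{algebras} (well-ordering of $\alpha$ makes the union well defined); there is no stage-by-stage construction of partial representations with an amalgamation problem at uncountable limits, and the ``main obstacle'' you identify is not the one that has to be overcome. The parts of your proposal concerning the role of the diagonal elements (arranging the splitting so that a diagonal-free, indeed transposition-respecting, representation of $\B$ survives, so that any failing quantifier-free axiom must both use more than $k$ variables and mention some $\diag{i}{j}$) do match the paper's $\Sigma^v\cup\Sigma^d$ argument, but they rest on the unproved claim above.
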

\begin{proof}  The theorem is proved only for $\omega$ in \cite{splitting} (the proof generalizes to any countable ordinal without much ado).
This is an outline.
Fix $k\geq 1$. Algebras $\A_{k,n}$  for $n\in \omega\sim \{0\}$, with the following properties, are constructed:
$\A_{k,n}$ is of the form $(A_{k,n}, +, \cdot ,-, {\sf c}_i, {\sf s}_{\tau}, {\sf d}_{ij})_{i,j\in \omega, \tau\in G_n}$,
where $G_n$ is the symmetric group on $n$
satisfying the following:
\begin{enumarab}
\item $\Rd_{ca}\A_{k,n}\notin \RCA_{\omega}$.
\item Every $k$-generated subalgebra of $\A_{k,n}$ is representable.
\item  There is a one to one mapping $h:\A_{k,n}\to (\B(^{\omega}W), {\sf c}_i, {\sf s}_{\tau}, {\sf d}_{ij})_{i,j<\omega, \tau\in G_n}$
such that $h$ is a homomorphism with respect to all operations of $\A_{k,n}$ except for the
diagonal elements.
\end{enumarab}
Here $k$-generated means generated by $k$ elements.
Then it can be shown hat  for $n<m$, $\A_{k,n}$ is a subreduct (subalgebra of a reduct)
of $\A_{k,m}$.

Then the directed union $\A_k=\bigcup_{n\in \omega}\A_{k,n}$, was proved to be as required when the signature is countable.
Now to lift this result to possibly uncountable ordinals, we proceed inductively. Fix finite $k\geq 1$.
For any ordinal $\alpha\geq \omega$, we assume that for all $\kappa\in \alpha$, there is an algebra $\A_{k,\kappa}$
such that $\Rd_{ca}\A_{k,\kappa}\in \RCA_{\alpha}$, and it has all substitution operations corresponding to
transposition from $\kappa$,  namely,
${\sf s}_{[i,j]}$ $i,j\in \kappa$, satisfying all three conditions.

We further assume that  $\beta<\mu<\alpha$, then $\A_{k, \beta}$ is a subreduct of $\A_{k, \mu}$.
The base of the induction is valid.

Now, as in the countable case, take $\A_k=\bigcup_{\mu\in \alpha}\A_{k,\mu}$, and define the operations the natural way; this limit is of the
same similarity type as ${\sf QPEA}_{\alpha}$, and is a well defined algebra, since $\alpha$ is well ordered.
We claim that it is as desired. Clearly it is not representable (for its cylindric reduct is not representable)

Let $|G|\leq k$. Then $G\subseteq \A_{k,\mu}$ for some $\mu\in \alpha$.
We show that $\Sg^{\A_k}G$ has to be representable.
Let $\tau=\sigma$  be valid in the variety
$\RQEA_{\alpha}$. We show that it  is valid in $\Sg^{\A_k}G$.
Let $v_1,\ldots v_k$ be the variables occurring in this equation, and let $b_1,\ldots b_k$ be arbitrary elements of $\Sg^{\A_k}G$.
We show that $\tau(b_1,\ldots, b_k)=\sigma(b_1\ldots b_k)$. Now there are terms
$\eta_1\ldots, \eta_k$ written up from elements of $G$ such that $b_1=\eta_1\ldots, b_k=\eta_k$, then we need to show that
$\tau(\eta_1,\ldots, \eta_k)=\sigma(\eta_1, \ldots, \eta_k).$
This is another equation written up from elements of $G$, which is also valid in $\RQEA_{\alpha}$.
Let $n$ be an upper bound for the indices occurring in this equation and let $\mu>n$ be such that $G\subseteq \A_{k,\mu}$.
Then the above equation is valid in $\Sg^{\Rd_{\mu}\A}G$ since the latter is representable.
Hence the equation $\tau=\sigma$ holds in $\Sg^{\A_k}G$ at the evaluation $b_1,\ldots b_k$ of variables.

For $\mu\in \alpha$, let  $\Sigma_{\mu}^v$ be the set of universal formulas using only $\mu$
substitutions and $k$ variables valid in $\RQEA_{\omega}$,
and let $\Sigma_{\mu}^d$ be the set of universal formulas
using only $\mu$ substitutions and no diagonal elements valid in $\RQEA_{\omega}$.
By $\mu$ substitutions we understand the set
$\{{\sf s}_{[i,j]}: i,j\in \mu\}.$
Then $\A_{k,\mu}\models \Sigma_{\mu}^v\cup \Sigma_{\mu}^d$.
$\A_{k,\mu}\models \Sigma_{\mu}^v$
because the $k$ generated subalgebras of $\A_{k,\mu}$ are representable,
while $\A_{k,\mu}\models \Sigma_{\mu}^d$ because $\A_{k,{\mu}}$
has a representation that preserves all operations except
for diagonal elements.

Indeed, let $\phi\in \Sigma_{\mu}^d$, then there is a representation of $\A_{k,{\mu}}$ in which all operations
are the natural ones except for the diagonal elements.
This means that (after discarding the diagonal elements) there is a one to one homomorphism
$h:\A^d\to \P^d$ where $\A^d=(A_{k,n}, +, \cdot , {\sf c}_k, {\sf s}_{[i,j]}, {\sf s} _i^j)_{k\in \alpha, i,j\in \mu}\text { and }
\P^d=(\B(^{\alpha}W), {\sf c}_k^W, {\sf s}_{[i,j]}^W, {\sf s}_{[i|j]}^W)_{k\in \alpha, i,j\in \mu},$
for some infinite set $W$.

Now let $\P=(\B(^{\alpha}W), {\sf c}_k^W, {\sf s}_{[i,j]}^W, {\sf s}_{[i|j]}^W, {\sf d}_{kl}^W)_{k,l\in \alpha, i,j\in \mu}.$
Then we have that $\P\models \phi$ because $\phi$ is valid
and so  $\P^d\models \phi$ due to the fact that  no diagonal elements  occur in $\phi$.
Then $\A^d\models \phi$ because $\A^d$ is isomorphic to a subalgebra of $\P^d$ and $\phi$ is quantifier free. Therefore
$\A_{k,\mu}\models \phi$.
Let $$\Sigma^v=\bigcup_{\mu\in \alpha}\Sigma_{\mu}^v
\text { and }\Sigma^d=\bigcup_{\mu\in \alpha}\Sigma_{\mu}^d$$
Hence $\A_k\models \Sigma^v\cup \Sigma^d.$

For if not then there exists a quantifier free  formula $\phi(x_1,\ldots x_m)\in \Sigma^v\cup \Sigma^d$,
and $b_1,\ldots, b_m$ such that $\phi[b_1,\ldots, b_n]$ does not hold in $\A_k$. We have $b_1\ldots, b_m\in \A_{k,\mu}$
for some $\mu\in \alpha$.

Take $\mu$ large enough $\geq i$ so that
$\phi\in \Sigma_{\mu}^v\cup \Sigma_{\mu}^d$.   Then $\A_{k,\mu}$ does not model $\phi$, a contradiction.
Now let $\Sigma$ be  a set of quantifier free formulas axiomatizing  $\RQEA_{\alpha}$, then $\A_k$ does not model $\Sigma$ since $\A_k$ is not
representable, so there exists a formula $\phi\in \Sigma$ such that
$\phi\notin \Sigma^v\cup \Sigma^d.$
Then $\phi$ contains more than $k$ variables and a diagonal constant occurs in $\phi$.

\end{proof}

In the above construction, infinitely many finite splittings
(not just one which is done in \cite{ST}), increasing in number but always finite, are implemented
constructing infinitely many algebras, whose similarity types contain only finitely many
substitutions. This is the main novelty occurring in \cite{t} {\it  a modification of Andr\'eka's method of splitting to adapt to
the quasi-polyadic equality case.} Such constructed non-representable algebras,
form a chain, and our desired algebra is their
directed union, so that it is basically an $\omega$ step by step construction not just one step construction.

{\it The conceptual error in Sain's Thompson paper is claiming that the small subalgebras
of the non-representable algebra, obtained by performing {\it only one splitting into infinitely many atoms}, are representable; this is not necessarily
true at all.}

For finite dimensions, as it happens,  this {\it is true} because the splitting is implemented relative to a {\it finite} set of substitutions
and representability  involves a combinatorial trick depending on {\it counting the number} of substitutions.
This technique no longer holds in the presence of infinitely many substitutions because  we simply cannot count them.
So the splitting have to be done relative to reducts containing only finitely many substitutions and
their desired algebra, witnessing the complexity of
axiomatizations, is the limit
of such reducts, synchronized by the reduct operator.

\begin{corollary}\label{splitting2} The variety $\RQEA_{\alpha}$ is not axiomatizable over its diagonal free reduct
with a set of universal formulas containing infinitely many variables.
The variety $\RQEA_{\alpha}$ is not axiomatizable over its diagonal free reduct
with a set of universal formulas containing infinitely many variables.
\end{corollary}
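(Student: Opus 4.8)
The statement is the concluding clause of Theorem \ref{splitting1}, so the plan is to read it off from the algebras and maps already produced in that theorem's proof, checking only that the argument given there for quantifier-free axiomatizations survives the harmless passage to universal ones. Concretely, fix $k<\omega$ and recall the algebra $\A_k=\bigcup_{\mu\in\alpha}\A_{k,\mu}$ built in the proof of Theorem \ref{splitting1}: its cylindric reduct is not representable, every $k$-generated subalgebra of it is representable, and there is a one-to-one map $h$ that is a homomorphism with respect to all operations \emph{except} the diagonal elements; equivalently, the reduct of $\A_k$ obtained by discarding the diagonals but keeping cylindrifiers and all substitutions embeds into a genuine set algebra.

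First I would record, exactly as in the $\Sigma^d$ computation inside the proof of Theorem \ref{splitting1}, that $\A_k$ satisfies every universal formula $\phi$ which is valid in $\RQEA_\alpha$ and in which no diagonal symbol occurs: extend $h$ to a full-signature set algebra $\P$; then $\P\models\phi$ since $\P\in\RQEA_\alpha$ and $\phi$ is valid there, hence $\P^d\models\phi$ as $\phi$ mentions no diagonal, and finally the diagonal-free reduct of $\A_k$ embeds into $\P^d$ and $\phi$ is universal, so $\A_k\models\phi$. This uses only closure of universal formulas under substructures, so nothing is lost by allowing $\phi$ to be universal rather than merely an equation, and $\phi$ may contain arbitrarily many variables. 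Next I would record, from representability of the $k$-generated subalgebras, that $\A_k$ also satisfies every universal formula valid in $\RQEA_\alpha$ using at most $k$ variables, since such a formula, evaluated at any $k$ elements of $\A_k$, is evaluated inside a $k$-generated, hence representable, subalgebra.

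Then I would argue by contradiction. Suppose $\RQEA_\alpha$ were axiomatizable over its diagonal-free reduct by a set $\Sigma$ of universal formulas that, taken together, use only finitely many variables; without loss every member of $\Sigma$ has at most $k$ variables for a fixed $k$. Each $\phi\in\Sigma$ is valid in $\RQEA_\alpha$; split $\Sigma$ into those formulas containing a diagonal symbol and those not. By the two observations above, $\A_k$ satisfies all of $\Sigma$ (the diagonal-free ones by the first observation, the remaining ones by the second, since each has at most $k$ variables). As the diagonal-free reduct of $\A_k$ is representable, $\A_k$ would then lie in $\RQEA_\alpha$, contradicting the non-representability of its cylindric reduct. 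Hence no such finite-variable universal axiomatization over the diagonal-free reduct exists; equivalently, any set of universal formulas axiomatizing $\RQEA_\alpha$ over its diagonal-free reduct must, between its members, contain infinitely many variables, and (refining the above with the rank argument of Theorem \ref{splitting1}) a diagonal-carrying formula with more than $k$ variables for every $k$.

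There is no real obstacle here beyond bookkeeping; the nearest thing to one is the precise reading of ``axiomatizable over the diagonal-free reduct'': an axiom over the diagonal-free reduct is a full-signature formula imposed \emph{in addition to} representability of the diagonal-free reduct, so the construction must supply a witness $\A_k$ whose diagonal-free reduct is already representable, which is exactly clause (3) of the list of properties of $\A_{k,n}$ in Theorem \ref{splitting1}. The uncountable case needs no new idea: it is absorbed by the transfinite-induction and directed-union construction of $\A_k$ already carried out in the proof of Theorem \ref{splitting1}.
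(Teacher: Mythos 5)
Your proposal is correct and takes essentially the paper's own route: the corollary is read off immediately from the proof of Theorem \ref{splitting1}, which already establishes that $\A_k$ satisfies $\Sigma^v\cup\Sigma^d$ while its cylindric reduct is non-representable and its diagonal-free reduct is representable via the map $h$ — your two recorded observations are exactly the paper's verifications that $\A_{k,\mu}\models\Sigma_\mu^d$ and $\A_{k,\mu}\models\Sigma_\mu^v$. The only additions are your explicit (and correct) reading of ``axiomatizable over the diagonal-free reduct'' and the tacit repair of the statement's ``infinitely'' to ``finitely many variables''.
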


In the next theorem we will also be sketchy.
\begin{theorem}\label{splitting2} For any $\alpha\geq \omega$, the class $\sf RQEA_{\alpha}$
is not finite axiomatizable over $\RCA_{\alpha}$ by a finite schema
\end{theorem}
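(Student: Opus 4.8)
The plan is to run Andr\'eka's splitting machinery once more, but now splitting atoms so as to leave the cylindric reduct representable while destroying representability as a $\QEA_\alpha$; the algebras built in the proof of Theorem~\ref{splitting1} do not serve here, since for them $\Rd_{ca}\A_{k,n}$ is already non-representable.

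First I would reformulate what has to be refuted. Suppose $\Sigma$ were a finite schema, in the language of $\QEA_\alpha$, with
$$\RQEA_\alpha=\{\A\in\QEA_\alpha:\Rd_{ca}\A\in\RCA_\alpha\}\cap{\sf Mod}(\Sigma).$$
Being a finite schema, $\Sigma$ carries a finite bound $k<\omega$ on the number of variables and on the number of dimension indices occurring in any of its instances. So it suffices to construct, for every finite $k\geq 1$ and every $\alpha\geq\omega$, a $\QEA_\alpha$-algebra $\A_k$ with
\begin{enumarab}
\item $\Rd_{ca}\A_k\in\RCA_\alpha$;
\item every subalgebra of $\A_k$ generated by at most $k$ elements lies in $\RQEA_\alpha$, so that $\A_k$ satisfies every $\QEA_\alpha$-equation in at most $k$ variables valid in $\RQEA_\alpha$, in particular $\A_k\models\Sigma$; and
\item $\A_k\notin\RQEA_\alpha$.
\end{enumarab}
Such an $\A_k$ contradicts the displayed equality, since (1) and (2) place it in the right-hand side while (3) keeps it out of the left-hand side.

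To produce $\A_k$ I would start, as in the proof of Theorem~\ref{splitting1}, from a transparent member of $\RQEA_\alpha$ --- a weak set algebra generated by a single atom $a$ --- and \emph{split} $a$ into finitely many atoms $a_0,\dots,a_{N-1}$, with $N$ chosen large relative to $k$, the splitting now being made \emph{invisible to the cylindric signature}: each $a_\ell$ is to behave under all $\cyl i$ and $\diag i j$ exactly as $a$ did, so that re-identifying the $a_\ell$ is a cylindric isomorphism and $\Rd_{ca}$ of the split algebra still carries the original representation, suitably ``thickened'' so that each $a_\ell$ can be realised by a cylindrically equivalent piece of the old $a$; this yields (1). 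The transpositions $\swap i j$, by contrast, are defined so as to permute the $a_\ell$ according to a pattern --- built around group relations such as $\swap i j\,\swap i k\,\swap i j=\swap j k$ --- that no genuine coordinate swap can realise, and the resulting clash between $N$ and the constraints any representation of the $\swap i j$ must obey is the by-now-familiar counting/consistency obstruction of splitting: it needs $N$ large but not Ramsey's theorem, and it yields (3). For (2) one checks, just as in \cite{splitting,ST,t,Andreka}, that a subalgebra on at most $k$ generators resolves only a bounded portion of the $N$-fold split --- too small for the obstruction to bite --- so it still admits a representation honouring all the substitution operations.

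Finally, to pass from countable to arbitrary $\alpha\geq\omega$ I would use the same transfinite bookkeeping as in Theorem~\ref{splitting1}: perform the construction in each dimension $\mu$ with $\omega\leq\mu<\alpha$ (the countable case being the substance of the matter), arranging that $\A_{k,\beta}$ is a subreduct of $\A_{k,\mu}$ whenever $\beta<\mu$, and then take $\A_k=\bigcup_{\mu<\alpha}\A_{k,\mu}$, verifying that (1)--(3) are inherited by this directed union. The step I expect to be the real obstacle --- and the only genuinely new point beyond Theorem~\ref{splitting1} --- is isolating a finite relational gadget for which the split is simultaneously cylindrically trivial (so that $\Rd_{ca}\A_k$ stays representable) and transposition-nontrivial in an unrepresentable way; once such a gadget is fixed, the representability of the small subalgebras and the transfinite limit are routine adaptations of the earlier argument.
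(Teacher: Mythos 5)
Your reduction to conditions (1)--(3) is logically in order: an instance $\rho^+e$ of a schema has the same variables as $e$, so a finite schema does bound the number of variables in all of its instances (it does \emph{not}, as you also assert, bound the dimension indices occurring in instances --- those range over all of $\alpha$ --- but only the variable bound is used). The proof then stops, however, at the decisive point: no algebra satisfying (1)--(3) simultaneously is ever produced. Conditions (1) and (2) pull against (3) in a way that none of the constructions you cite resolves. Andr\'eka-style splittings are engineered so that the \emph{cylindric} reduct is already non-representable (you note this yourself for the $\A_{k,n}$ of Theorem~\ref{splitting1}), whereas a splitting that is ``invisible to the cylindric signature'' must lodge its obstruction entirely in the transpositions; and it is far from routine that this can be arranged while every $k$-generated subalgebra remains a representable $\QEA_{\alpha}$ --- in the known gadgets the offending element and the diagonal already generate the whole pathology, so small subalgebras need not be representable at all. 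Deferring this as ``the only genuinely new point'' concedes the theorem rather than proving it.

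The paper sidesteps your condition (2) entirely and argues by ultraproducts. For each $n\geq 3$ it takes the concrete atomic cylindric weak set algebra $\A_n$ of \cite{ANS} --- so $\Rd_{ca}$ is representable by fiat --- whose crucial atom $a$ lives over a rectangle with first factor $Z_0$ of size $n$ and is split by a parity condition into permuted copies; it then expands $\A_n$ by \emph{abstractly} defined polyadic operations ${\sf p}_{\sigma}$ which send a permuted copy ${\sf S}_{\delta}a$ to ${\sf S}_{\sigma\circ\delta\circ[0,1]}a$ exactly when $\sigma,\delta$ ``transpose''. These operations satisfy the quasi-polyadic equality axioms, but in any genuine representation they force $|a|>n$, which is incompatible with $|Z_0|=n$; hence $\B_n\notin\RQEA_{\alpha}$ while $\Rd_{ca}\B_n\in\RCA_{\alpha}$. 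Over the cofinite ultrafilter the base is stretched to cardinality $\omega$, the twist becomes concretely realizable, and $\Pi\B_n/F$ is representable; a finite schema over $\RCA_{\alpha}$ is then refuted by the standard \Los{}--pigeonhole argument. If you wish to rescue your bounded-variable route --- which would yield the stronger conclusion that no set of universal formulas in finitely many variables axiomatizes $\RQEA_{\alpha}$ over $\RCA_{\alpha}$ --- you must actually supply the missing gadget; the theorem as stated does not require one.
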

\begin{demo}{Sketch of proof}
This follows by a refinement of the result in \cite{ANS}.
Let $n\geq 3$.
Let $Z_0=Z_1=n=\{0,1,2, \ldots n-1\}$ and $Z_i=\{(n-1)i-1, (n-1)i\}$ for $i>1$.
Let $p:\omega\to \omega$ be defined by $p(i)=(n-1)i$. Let
$$V={}^{\omega}\N^{(p)}=\{s\in {}^{\omega}\N: |\{i\in \omega: s_i\neq (n-1)i\}|<\omega\}.$$
$V$ will be the unit of our algebra.
Let $${\sf P}Z=Z_0\times Z_1\times Z_2\ldots \cap V.$$
Let $$t=Z_2\times Z_3\times\ldots \cap V.$$
We split $t$ into two parts, measured by the deviation from $p$:
$$X=\{s\in t: |\{i\in \omega\sim 2: s(i)\neq (n-1)i\}| \text { is even }\},$$
$$Y=\{s\in t: |\{i\in \omega\sim 2: s(i)\neq (n-1)i\}| \text { is odd }\}.$$
Then we define two relations $R, B$ on $n$, such that $\dom R=\dom B=n$ and $R\cap B=\emptyset$, for example, we set:
$$R=\{(u,v): u\in n, v=u+1(mod n)\},$$
$$B=\{(u,v): u\in n, v=u+2(mod n)\}.$$

$FT_{\omega}$ denotes the set of all finite transformations on $\omega$.
Let $\pi(\omega)=\{\tau\in FT_{\omega}: \tau \text { is a bijection }\}.$
Next we define {\it the  crucial} relation on $V,$
$$a=\{s\in {\sf P}Z: (s\upharpoonright 2\in R \text { and } s\upharpoonright \omega\sim 2\in X)\text { or }
(s\upharpoonright 2\in B\text { and }s\upharpoonright \omega\sim 2\in Y\}.$$
We also set
$$d=\{s\in {\sf P}Z: s_0=s_1\},$$
and we let $P$ be the permutated versions of $a$ and $d$, that is,
$$P=\{{\sf S}_{\tau}x: \tau \in \pi(\omega),\ \  x\in \{a,d\}\}.$$
Here, and elsewhere, ${\sf S}_{\tau}$ is the usual set-theoretic substitution operation corresponding to $\tau$.

For $W\in {}^{\omega}\rng Z^{(Z)}$, let
$${\sf P}W=\{s\in V: (\forall i\in \omega) s_i\in W_i\}.$$
Let $Eq(\omega)$ be the set of all equivalence relations on $\omega$.
For $E\in Eq(\omega)$, let $e(E)=\{s\in V: ker s=E\}$.
Let
$$T=\{{\sf P}W\cdot e(E): W\in {}^{\omega}\rng Z^{(Z)},  (\forall \delta\in \pi(\omega))W\neq Z\circ \delta, E\in Eq(\omega)\}.$$
Finally, let $$\At =P\cup T,$$
and
$$A_n=\{\bigcup X: X\subseteq \At\}.$$

We have
\begin{enumarab}
\item  $A_n$ is a subuniverse of the full cylindric weak set algebra
$$\langle \wp(V), + ,\cdot,  -, {\sf c}_i, {\sf d}_{ij}\rangle_{i,j\in \omega}.$$
Furthermore $\A_n$ is atomic and $\At \A_n=\At\sim \{0\}$.
\item  $\A_n$ can be expanded to a quasi-polyadic equality algebra $\B_n$ such that $\B_n$ is not representable.
\end{enumarab}
Like  \cite{ANS} Claim 1 undergoing the obvious changes
Like \cite{ANS} Claim 2, also undergoing the obvious changes, in particular the polyadic operations
are defined by:

Let $ \tau, \delta \in FT_{\omega}$. We say that $``\tau,
\delta$ transpose" iff $( \delta0-\delta1).(\tau \delta 0 - \tau
\delta 1)$ is negative.
Let $P'\subseteq P$ be the set of permutated versions of $a$.
Now we first define $ {\sf p}_\sigma : \At \rightarrow A$ for every $ \sigma
\in FT_{\omega}$.
\[ {\sf p}_\sigma({\sf S}_\delta a )  =
\begin{cases}
{\sf S}_{\sigma \circ \delta \circ [0,1] } a& \textrm{if}~~~``
\sigma, \delta
~~~\textrm{transpose}" \\
{\sf S}_{\sigma \circ \delta} a & \textrm{otherwise}
\end{cases}\]

$$ {\sf p}_\sigma (x) = {\sf S}_\sigma x \quad \textrm{if} \quad x \in At
\sim P'.$$
Then we set:
$${\sf p}_\sigma (\sum X) = \sum \{ {\sf p}_\sigma (x) : x \in X \} \quad \textrm{for}
\quad X \subseteq \At.$$
The defined operations satisfy the polyadic axioms, so that the expanded algebra $\B_n$ with the polyadic operations
is a quasi-polyadic equality algebra that is not representable.

Let $F$ be the cofinite ultrafilter over $\omega,$
then $\Pi \B_n/F$ is representable.
Of course the cylindric reduct of the ultraproduct is representable.
The point is to represent the substitutions; particularly those corresponding to transpositions.
The non-representability follows from an incompatibility condition between
the cardinality of $Z_0$ which is $n$ and how the abstract substitutions are defined.
This is expressed by the fact that this definition forces $|a|$ to be strictly greater than $n.$
When one forms the ultraproduct, this 'cardinality incompatibility condition' disappears, $Z_0$ is infinitely
stretched to have cardinality $\omega$. The abstract polyadic operations  coincide with usual
concrete ones.
\end{demo}

\section{Blow up and Blur constructions}

The splitting method due to Andr\'eka  has other several subtle sophisticated re-incarnations in the literature, not only
in the infinite dimensional case, but also in the finite dimensional case.
In fact it does has affinity to Monk's construction.

To mention only a few instances, witness  the so-called blow up and blur constructions in \cite{ANT}, \cite{can},
and the constructions in \cite{IGPL} and  \cite{Sayedneat}.

In the last
two references a finite atom structure is split twice (meaning that in each case every atom is split into infinitely many)
to give non-elementary equivalent algebras (based on the resulting two splitted atom structures) one a neat reduct
the other is not, this will be approached in detail in the next section.

In the former two references the atoms in finite Monk-like algebras both relation and cylindric, respectively
are each split to
infinitely many to prove deep results on \d\ completions, atom canonicity
and omitting types.

The idea of a blow up and blur construction, a little bit more, than in a nut shell:

Let $n$ be finite $n>2$.
Assume that $\RCA_n\subseteq \K$, and $S\K=\K$.
Start with a finite algebra $\C$ outside $\K$. Blow up and blur $\C$, by splitting
each atom to infinitely many, getting a new atom structure $\At$. In this process a (finite) set of blurs are used.

They do not blur the complex algebra, in the sense that $\C$ is there on this global level.
The algebra $\Cm\At$ will not be in $\K$
because $\C\notin \K$ and $\C$ embeds into $\Cm\At$.
Here the completeness of the complex algebra will play a major role,
because every element of $\C$,  is mapped, roughly, to {\it the join} of
its splitted copies which exist in $\Cm\At$ because it is complete.

These precarious joins prohibiting membership in $\K$ {\it do not }exist in the term algebra, only finite-cofinite joins do,
so that the blurs blur $\C$ on the
this level; $\C$ does not embed in $\Tm\At.$

In fact, the the term algebra will  not only be in $\K$, but actually it will be in the possibly larger $\RCA_n$.
This is where the blurs play their other role. Basically non-principal ultrafilters, the blurs
are used as colours to represent  $\Tm\At\A$.
In the process of representation we cannot use {\it only} principal ultrafilters,
because $\Tm\At$ cannot be completely representable for this would give that $\Cm\At\A$
is representable. But the blurs will actually provide a {\it complete representation} of the {\it canonical extension}
of $\Tm\At$, in symbols $\Tm\At^+$; the algebra whose underlying set consists of all ultrafilters of $\Tm\At\A$. The atoms of $\Tm\At$
are coded in the principal ones,  and the remaining non- principal ultrafilters, ot the blurs,
will be finite, used as colours to completely represent $\Tm\At^+$, in the process representing $\Tm\At$.

Such subtle constructions cast their shadow over the entire field of algebraic logic.
Lately, it has become fashionable in algebraic logic to
study representations of abstract algebras that has a complete representation \cite{Sayed} for an extensive overview.
A representation of $\A$ is roughly an injective homomorphism from $f:\A\to \wp(V)$
where $V$ is a set of $n$-ary sequences; $n$ is the dimension of $\A$, and the operations on $\wp(V)$
are concrete  and set theoretically
defined, like the Boolean intersection and cylindrifiers or projections.
A complete representation is one that preserves  arbitrary disjuncts carrying
them to set theoretic unions.
If $f:\A\to \wp(V)$ is such a representation, then $\A$ is necessarily
atomic and $\bigcup_{x\in \At\A}f(x)=V$.

Let us focus on cylindric algebras.
It is known that there are countable atomic $\RCA_n$s when $n>2$,
that have no complete representations;
in fact, the class of completely representable $\CA_n$s when $n>2$, is not even elementary \cite[corollary 3.7.1]{HHbook2}.

Such a phenomena is also closely
related to the algebraic notion of {\it atom-canonicity} which is an important persistence property in modal logic
and to the metalogical property of  omitting types in finite variable fragments of first order logic
\cite[theorems 3.1.1-2, p.211, theorems 3.2.8, 9, 10]{Sayed}.
A variety $V$ of Boolean algebras with operators is atom-canonical,
if whenever $\A\in V$, and $\A$ is atomic, then the complex algebra of its atom structure, $\Cm\At\A$ for short, is also
in $V$.

If $\At$ is a weakly representable but
not strongly  representable, then
$\Cm\At$ is not representable; this gives that $\RCA_n$ for $n>2$ $n$ finite, is {\it not} atom-canonical.
Also $\Cm\At\A$  is the \d\ completion of $\A$, and so obviously $\RCA_n$ is not closed under \d\ completions.

On the other hand, $\A$ cannot be completely  representable for, it can be shown without much ado,  that
a complete representation of $\A$ induces a representation  of $\Cm\At\A$ \cite[definition 3.5.1, and p.74]{HHbook2}.

Finally, if $\A$ is countable, atomic and has no complete representation
then the set of co-atoms (a co-atom is the complement of an atom), viewed in the corresponding Tarski-Lindenbaum algebra,
$\Fm_T$, as a set of formulas, is a non principal-type that cannot be omitted in any model of $T$; here $T$ is consistent
if $|A|>1$. This last connection  was first established by the  author leading up to \cite{ANT} and more, see e.g \cite{HHbook2}.

An extensive overview of such connections
can be found in \cite{Sayed}.

The key idea, as we saw,
of the construction of a Monk-like algebra is not so hard.
Such algebras are finite.
The atoms are given colours, and  cylindrifications and diagonal elements are defined by stating that monochromatic triangles
are inconsistent. If a Monk's algebra has many more atoms than colours,
it follows from Ramsey's Theorem that any representation
of the algebra must contain an inconsistent monochromatic triangle, so the algebra
is not representable. The close affinity with the splitting technique
is that {\it here the colours play the role of the base $|U_0|=m$ of the alleged
representation of $\A_{k,n}$ as above.}
Witness too \cite{HHbook}
(section on completions)  in the  context of splitting atoms, also each to infinitely many, in finite  rainbow
relation algebras \cite[lemmas, 17, 32, 34, 35, 36]{HHbook}.

Let us get more familiar with the blow up and blur construction. 
We use the notation of the above cited lemmas in \cite{HHbook2} without warning, and our proof will be very brief just stressing the main ideas.
Let $\R=\A_{K_m, K_n}$, $m>n>2$.
Let $T$ be the term algebra obtained by splitting the reds. Then $T$ has exactly two blurs
$\delta$ and $\rho$. $\rho$ is a flexible non principal ultrafilter consisting of reds with distinct indices and $\delta$ is the reds
with common indices.
Furthermore, $T$ is representable, but $\Cm\At T\notin  S\Ra\CA_{m+2}$, in particular, it is not representable. \cite [lemma 17.32]{HHbook2}

First it is obvious that  \pe\ has a \ws\ over $\At\R$ using in $m+2$ rounds,
hence $\R\notin \RA_{m+2}$, hence is not
in $S\Ra\CA_{m+2}$.  $\Cm\At T$ is also not in the latter class
$\R$ embeds into it, by mapping ever red
to the join of its copies. Let $D=\{r_{ll}^n: n<\omega, l\in n\}$, and $R=\{r_{lm}^n, l,m\in n, l\neq m\}$.
If $X\subseteq R$, then $X\in T$ if and only if $X$ is finite or cofinite in $R$ and same for subsets of $D$ \cite[lemma 17.35]{HHbook}.
Let $\delta=\{X\in T: X\cap D \text { is cofinite in $D$}\}$,
and $\rho=\{X\in T: X\cap R\text { is cofinite in $R$}\}$.
Then these are {\it the} non principal ultrafilters, they are the blurs and they are enough
to (to be used as colours), together with the principal ones, to represent $T$ as follows \cite[bottom of p. 533]{HHbook2}.
Let $\Delta$ be the graph $n\times \omega\cup m\times \{{\omega}\}$.
Let $\B$ be the full rainbow algebras over $\At\A_{K_m, \Delta}$ by
deleting all red atoms $r_{ij}$ where $i,j$ are
in different connected components of $\Delta$.

Obviously \pe\ has a \ws\ in $\EF_{\omega}^{\omega}(K_m, K_m)$, and so it has a \ws\ in
$G_{\omega}^{\omega}(\A_{K_m, K_m})$.
But $\At\A_{K_m, K_m}\subseteq \At\B\subseteq \At_{K_m, \Delta}$, and so $\B$ is representable.

One  then defines a bounded morphism from $\At\B$ to the the canonical extension
of $T$, which we denote by $T^+$, consisting of all ultrafilters of $T$. The blurs are images of elements from
$K_m\times \{\omega\}$, by mapping the red with equal double index,
to $\delta$, for distinct indices to $\rho$.
The first copy is reserved to define the rest of the red atoms the obvious way.
(The underlying idea is that this graph codes the principal ultrafilters in the first component, and the non principal ones in the second.)
The other atoms are the same in both structures. Let $S=\Cm\At T$, then $\Cm S\notin S\Ra\CA_{m+2}$ \cite[lemma 17.36]{HHbook}.

Note here that the \d\ completion of $T$ is not representable while its canonical extension is
{\it completely representable}, via the representation defined above.
However, $T$ itself is {\it not} completely representable, for a complete representation of $T$ induces a representation of its \d\ completion,
namely, $\Cm\At\A$.

In the next secion we use two rainbow constructions to prove two diffrent algebraic results that render almost the same result
for omitting types theorems in finite variable fragments of first order logic.

\subsection{Rainbows and Omitting types in Clique guarded semantics}

The rainbow construction in algebraic logic is invented by Hirsch and Hodkinson \cite{HHbook}.
This ingenious construction reduces finding subtle differences between seemingly
related notions or concepts using a very simple \ef\ forth pebble game between two players \pa\ and \pe\ on two very simple structures.
From those structures a relation or cylindric algebra can be constructed and a \ws\ for either player lifts to a \ws\ on the atom structure
of the algebra, though the number of pebbles and rounds increases in the algebra.

In the case of relation algebras, the atoms are coloured, so that the games are played on colours. For cylindric algebras,
matters are a little bit more complicated
because games are played on so-called coloured graphs, which are models of the rainbow signature coded in an
$L_{\omega_1,\omega}$ theory. The atom structure consists of finite
coloured graphs rather than colours.

Nevertheless, the essence of the two construction is very similar, because in the cylindric
algebra constructed from $A$ and $B$, the relation algebra atom structure based also on $A$ and $B$
is coded in the cylindric atom structure, but the latter has additional shades of yellow
that are used to label $n-1$ hyperedges coding the cylindric information.

The strategy for \pe\ in a rainbow game is try white, if it doesn't work try black and finally if it doesn't work
try red. In the latter case she is kind of cornered, so it is the hardest part in the game.
She never uses green. 

In the cylindric algebra case, the most difficult part
for \pe\ is to label the edge between apexes of two cones (a cone is a special coloured graph) having
a common base, when she is also forced a red.

The colours used in both constructions maty also be slightly different. For example in the proof 
of \cite{HH} \pe\ had the option to play a black colour, but in the cylindric algebra this colour was not avaiable to label edges in the 
coloured graphs. In this case the responses by \pe\ in the relation algebra rainbow game 
were handled by the shades of yellow in the corresponding cylindric rainbow game.

But in  both cases  the choice of a red, when she is forced one,
 is the most difficult part for \pa\ and if she succeeds in every round then she wins.

Indeed,  it is always the case that \pa\ wins on a red clique,
using his greens to force \pa\  play an inconsistent triple of reds.
In case of cylindric algebra \pa\ bombards \pe\ with cones having green tints, based on the same base.

We start by doing the algebra. We prove two results. One is that certain classes of algebras that have a strong neat embedding property
in higher dimensions, possible only finite, are not elementary (here by strong we mean that the algebra in 
question {\it completely} embds into the neat reduct
of the algebra in higher dimensions).
The second result 
is that several varieties also consisting of algebras having a usual neat embedding property, in higher dimensions,
are not atom canonical. In both cases we use rainbows.
Next we do the logic.

We need some preparing to do.

\begin{definition}\label{subs}
Let $n$ be an ordinal. An $s$ word is a finite string of substitutions $({\sf s}_i^j)$,
a $c$ word is a finite string of cylindrifications $({\sf c}_k)$.
An $sc$ word is a finite string of substitutions and cylindrifications
Any $sc$ word $w$ induces a partial map $\hat{w}:n\to n$
by
\begin{itemize}

\item $\hat{\epsilon}=Id$

\item $\widehat{w_j^i}=\hat{w}\circ [i|j]$

\item $\widehat{w{\sf c}_i}= \hat{w}\upharpoonright(n\smallsetminus \{i\}).$

\end{itemize}
\end{definition}

If $\bar a\in {}^{<n-1}n$, we write ${\sf s}_{\bar a}$, or more frequently
${\sf s}_{a_0\ldots a_{k-1}}$, where $k=|\bar a|$,
for an an arbitrary chosen $sc$ word $w$
such that $\hat{w}=\bar a.$
$w$  exists and does not
depend on $w$ by \cite[definition~5.23 ~lemma 13.29]{HHbook}.
We can, and will assume \cite[Lemma 13.29]{HHbook}
that $w=s{\sf c}_{n-1}{\sf c}_n.$
[In the notation of \cite[definition~5.23,~lemma~13.29]{HHbook},
$\widehat{s_{ijk}}$ for example is the function $n\to n$ taking $0$ to $i,$
$1$ to $j$ and $2$ to $k$, and fixing all $l\in n\setminus\set{i, j,k}$.]
The following is the $\CA$ analogue of \cite[lemma~19]{r}.
In the next definition we extend the definition of atomic cylindric networks to polyadic ones.
For diagonal free reducts, the definition is modified the obvious way. For example for $\Df$s, we only have the
first condition, for $\sf Sc$s we do not have the second,
and the fourth ${\sf s}_{[i,j]}$ and $[i,j]$ are  replaced, respectively, by
${\sf s}_i^j$ and $[i|j].$

Let $\delta$ be a map. Then $\delta[i\to d]$ is defined as follows. $\delta[i\to d](x)=\delta(x)$
if $x\neq i$ and $\delta[i\to d](i)=d$. We write $\delta_i^j$ for $\delta[i\to \delta_j]$.

\begin{definition}
From now on let $2< n<\omega.$ Let $\C$ be an atomic ${\sf PEA}_{n}$.
An \emph{atomic  network} over $\C$ is a map
$$N: {}^{n}\Delta\to \At\C$$
such that the following hold for each $i,j<n$, $\delta\in {}^{n}\Delta$
and $d\in \Delta$:
\begin{itemize}
\item $N(\delta^i_j)\leq {\sf d}_{ij}$
\item $N(\delta[i\to d])\leq {\sf c}_iN(\delta)$
\item $N(\bar{x}\circ [i,j])= {\sf s}_{[i,j]}N(\bar{x})$ for all $i,j<n$.

\end{itemize}
\end{definition}

Note than $N$ can be viewed as a hypergraph with set of nodes $\Delta$ and
each hyperedge in ${}^{\mu}\Delta$ is labelled with an atom from $\C$.
We call such hyperedges atomic hyperedges.
We write $\nodes(N)$ for $\Delta.$ We let $N$ stand for the set of nodes
as well as for the function and the network itself. Context will help.
We assume that $\nodes(N)\subseteq \N$.
Formulated for $\sf Df$s only, the next definition applies to all algebras considered.

\begin{definition}\label{def:games}
Let $2\leq n<\omega$. For any ${\sf PEA_n}$
atom structure $\alpha$ and $n\leq m\leq
\omega$, we define two-player games $G(\alpha),$ \; and
$F^m(\alpha)$,
each with $\omega$ rounds.
\begin{enumarab}
\item   $G(\alpha)$, or simply $G$ is the usual $\omega$ rounded atomic game on networks \cite{HHbook2}.
$G_k$ is $G$ truncated to $k$ rounds.
\item $F^m(\alpha)$, or $F^m$  is very similar, except that \pa\ can choose from only $m>n$ pebbles,
but he has the option to  re-use them.

Let $m\leq \omega$.
In a play of $F^m(\alpha)$ the two players construct a sequence of
networks $N_0, N_1,\ldots$ where $\nodes(N_i)$ is a finite subset of
$m=\set{j:j<m}$, for each $i$.

In the initial round of this game \pa\
picks any atom $a\in\alpha$ and \pe\ must play a finite network $N_0$ with
$\nodes(N_0)\subseteq  m$,
such that $N_0(\bar{d}) = a$
for some $\bar{d}\in{}^{n}\nodes(N_0)$.

In a subsequent round of a play of $F^m(\alpha)$ \pa\ can pick a
previously played network $N$ an index $l<n$, a {\it face}
$F=\langle f_0,\ldots, f_{n-2} \rangle \in{}^{n-2}\nodes(N),\; k\in
m\setminus\set{f_0,\ldots, f_{n-2}}$, and an atom $b\in\alpha$ such that
$$b\leq {\sf c}_lN(f_0,\ldots, f_i, x,\ldots f_{n-2}).$$
The choice of $x$ here is arbitrary,
as the second part of the definition of an atomic network together with the fact
that $\cyl i(\cyl i x)=\cyl ix$ ensures that the right hand side does not depend on $x$.

This move is called a \emph{cylindrifier move} and is denoted
$$(N, \langle f_0, \ldots, f_{n-2}\rangle, k, b, l)$$
or simply by $(N, F,k, b, l)$.
In order to make a legal response, \pe\ must play a
network $M\supseteq N$ such that
$M(f_0,\ldots f_{i-1}, k, f_{i+1},\ldots f_{n-2}))=b$
and $\nodes(M)=\nodes(N)\cup\set k$.

\pe\ wins $F^m(\alpha)$ if she responds with a legal move in each of the
$\omega$ rounds.  If she fails to make a legal response in any
round then \pa\ wins.
\end{enumarab}

We need some more technical lemmas which are  generalizations of lemmas formulated for relation algebras
in \cite{r}.

The next definition is also formulated ${\sf Sc}$s, and of course to applies to all its expansion studied here.

\begin{definition}\label{def:hat}
For $m\geq 5$ and $\C\in\Sc_m$, if $\A\subseteq\Nr_n(\C)$ is an
atomic cylindric algebra and $N$ is an $\A$-network with $\nodes(N)\subseteq m$, then we define
$\widehat N\in\C$ by
\[\widehat N =
 \prod_{i_0,\ldots i_{n-1}\in\nodes(N)}{\sf s}_{i_0, \ldots i_{n-1}}N(i_0\ldots i_{n-1})\]
$\widehat N\in\C$ depends
implicitly on $\C$.
\end{definition}

In what follows we write $\A\subseteq_c \B$ if $\A$ is a complete subalgebra of $\B$.

\begin{lemma}\label{lem:atoms2}
Let $n<m$ and let $\A$ be an atomic $\sf Sc_n$,
$\A\subseteq_c\Nr_n\C$
for some $\C\in\CA_m$.  For all $x\in\C\setminus\set0$ and all $i_0, \ldots i_{n-1} < m$ there is $a\in\At(\A)$ such that
${\sf s}_{i_0\ldots i_{n-1}}a\;.\; x\neq 0$.
\end{lemma}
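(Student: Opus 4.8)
The plan is to reduce the statement to two facts about the ambient $\CA_m$ algebra $\C$: (i) the atoms of $\A$ sum to $1$ already in $\C$, and (ii) the substitution term ${\sf s}_{i_0\ldots i_{n-1}}$ is a completely additive, unit‑preserving operation on $\C$. Granting (i) and (ii), the argument is immediate: since $\,\cdot\,x$ distributes over arbitrary joins in the Boolean reduct of $\C$,
\[
\sum_{a\in\At(\A)}\bigl({\sf s}_{i_0\ldots i_{n-1}}a\cdot x\bigr)
=\Bigl({\sf s}_{i_0\ldots i_{n-1}}\!\!\sum_{a\in\At(\A)}\!\!a\Bigr)\cdot x
={\sf s}_{i_0\ldots i_{n-1}}(1)\cdot x
=1\cdot x=x\neq 0 ,
\]
so some summand ${\sf s}_{i_0\ldots i_{n-1}}a\cdot x$ is nonzero, which is exactly what is wanted.

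For (i), I would argue as follows. Since $\A$ is atomic, $1=\sum^{\A}\At(\A)$; as $\A\subseteq_c\Nr_n\C$ and the three algebras $\A,\Nr_n\C,\C$ share the unit $1$, this join is preserved on passing to $\Nr_n\C$, i.e. $\sum^{\Nr_n\C}\At(\A)=1$. It then suffices to know that $\Nr_n\C$ is a \emph{complete} subalgebra of $\C$. For this I would exhibit, for each $c\in\C$, the largest element of $\Nr_n\C$ below $c$, namely $q(c)=-\,\cyl n\cyl{n+1}\cdots\cyl{m-1}(-c)$: complements of cylindrifications are closed under the corresponding cylindrifiers and cylindrifiers commute, so $q(c)\in\Nr_n\C$; plainly $q(c)\le c$; and if $u\in\Nr_n\C$ with $u\le c$ then $u\le q(c)$ (a short computation with the axiom $\cyl j(x\cdot\cyl j y)=\cyl j x\cdot\cyl j y$). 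Hence every $\C$-upper bound $c$ of a set $X\subseteq\Nr_n\C$ gives an $\Nr_n\C$-upper bound $q(c)\le c$ of $X$, so whenever $\sum^{\Nr_n\C}X$ exists it is already $\sum^{\C}X$; applying this to $X=\At(\A)$ yields $\sum^{\C}\At(\A)=1$. (This uses $m<\omega$ so that $q$ is a genuine $\CA_m$-term; the statement $\Nr_n\C\subseteq_c\C$ for $m=\omega$ can be quoted along the lines of the relation-algebra lemmas of \cite{r}.)

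For (ii), ${\sf s}_{i_0\ldots i_{n-1}}$ is, by its definition as an $sc$-word, a finite composition of cylindrifiers and of basic substitutions ${\sf s}^i_j$ ($= \cyl i(\diag i j\cdot x)$ for $i\neq j$, and the identity for $i=j$); each factor is built from completely additive operations of $\C$ together with Boolean multiplication by a constant, and sends $1$ to $1$ (using $\cyl i\diag i j=1$), and these properties are inherited by the composite. The only genuinely non-routine ingredient — and the only place where the hypothesis $\A\subseteq_c\Nr_n\C$ is used rather than just $\A\subseteq\Nr_n\C$ — is (i), i.e. the passage from $\A\subseteq_c\Nr_n\C$ to $\A\subseteq_c\C$, equivalently the fact that a neat reduct of a $\CA_m$ sits inside it as a complete subalgebra; everything after that is bookkeeping with completely additive operators.
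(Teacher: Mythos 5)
Your proof is correct and follows essentially the same route as the paper's: both arguments rest on the complete additivity (and unit-preservation) of the $sc$-word ${\sf s}_{i_0\ldots i_{n-1}}$ together with $\sum\At(\A)=1$ computed in $\C$, and then finish with a one-line Boolean argument (your distributivity of $\cdot\,x$ over the join is equivalent to the paper's ``$1-x$ would be an upper bound'' contradiction). The only difference is cosmetic: the paper first strips the cylindrifiers from the word since they fix elements of $\A\subseteq\Nr_n\C$, and it leaves the step $\Nr_n\C\subseteq_c\C$ implicit, whereas you keep the cylindrifiers and supply that step explicitly via the term $q(c)=-\cyl n\cdots\cyl{m-1}(-c)$.
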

\begin{proof}
We can assume, see definition  \ref{subs},
that ${\sf s}_{i_0,\ldots i_{n-1}}$ consists only of substitutions, since ${\sf c}_{m}\ldots {\sf c}_{m-1}\ldots
{\sf c}_nx=x$
for every $x\in \A$. We have ${\sf s}^i_j$ is a
completely additive operator (any $i, j$), hence ${\sf s}_{i_0,\ldots i_{\mu-1}}$
is too  (see definition~\ref{subs}).
So $\sum\set{{\sf s}_{i_0\ldots i_{n-1}}a:a\in\At(\A)}={\sf s}_{i_0\ldots i_{n-1}}
\sum\At(\A)={\sf s}_{i_0\ldots i_{n-1}}1=1$,
for any $i_0,\ldots i_{n-1}<n$.  Let $x\in\C\setminus\set0$.  It is impossible
that ${\sf s}_{i_0\ldots i_{n-1}}\;.\;x=0$ for all $a\in\At(\A)$ because this would
imply that $1-x$ was an upper bound for $\set{{\sf s}_{i_0\ldots i_{n-1}}a:
a\in\At(\A)}$, contradicting $\sum\set{{\sf s}_{i_0\ldots i_{n-1}}a :a\in\At(\A)}=1$.
\end{proof}

For networks $M, N$ and any set $S$, we write $M\equiv^SN$
if $N\restr S=M\restr S$, and we write $M\equiv_SN$
if the symmetric difference $\Delta(\nodes(M), \nodes(N))\subseteq S$ and
$M\equiv^{(\nodes(M)\cup\nodes(N))\setminus S}N$. We write $M\equiv_kN$ for
$M\equiv_{\set k}N$.

We write $Id_{-i}$ for the function $\{(k,k): k\in n\smallsetminus\{i\}\}.$
For a network $N$ and a partial map $\theta$ from $n$ to $n$, that is $\dom\theta\subseteq n$, then
$N\theta$ is the network whose labelling is defined by $N\theta(\bar{x})=N(\tau(\bar{x}))$
where for $i\in n$, $\tau(i)=\theta(i)$ for $i\in \dom\theta$
and $\tau(i)=i$ otherwise.
Recall that $F^m$ is the usual atomic game
on networks, except that the nodes are $m$ and \pa\ can re use nodes. Then:

Let $\K$ be any class between $\Sc$ and $\PEA$.
\begin{theorem}\label{thm:n}
Let $n<m$, and let $\A$ be an atomic $\K_m.$
If $\A\in S_c\Nr_{n}\K_m, $
then \pe\ has a \ws\ in $F^m(\At\A)$ (the latter involves only cylindrifier moves so it applies to algebras in $\K$).
In particular, if $\A$ is countable and completely representable, then \pe\ has a \ws\ in $F^{\omega}(\At\A).$
In the latter case, since $F^{\omega}(\At\A)$ is equivalent to the usual atomic rounded game on networks,
the converse is also true.
\end{theorem}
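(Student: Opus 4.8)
The plan is to run the standard ``unfolding of a neat embedding into a winning strategy'' argument of Hirsch and Hodkinson, adapted to the $sc$-word calculus of Definition \ref{subs}. So suppose $\A$ is an atomic $n$-dimensional algebra in $\K$ with $\A\subseteq_c\Nr_n\C$ for some $\C\in\K_m$; note that $F^m(\At\A)$ involves only cylindrifier moves, so for every $\K$ between $\Sc$ and $\PEA$ it is the \emph{same} game, and only the bookkeeping element $\widehat N\in\C$ of Definition \ref{def:hat} (which does use the substitution operators of $\C$) sees the richer signature. \pe's inductive hypothesis will be: after round $r$ the network $N_r$ she has built has $\nodes(N_r)\subseteq m$ and $\widehat{N_r}\neq 0$ in $\C$. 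The whole point of computing in $\C$ rather than in $\A$ is that $\C$ has $m$ dimensions, so the $m$ available nodes of the game are absorbed by the $m-n$ extra dimension indices occurring in the substitutions that make up $\widehat{N_r}$.

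First round: \pa\ picks $a\in\At\A$; since $\A\subseteq_c\Nr_n\C$ we have $a\neq 0$ in $\C$. \pe\ takes $\nodes(N_0)=n$, sets $N_0(0,\dots,n-1)=a$, and then labels the remaining finitely many tuples $\bar x\in{}^n n$ greedily: keeping a running nonzero product $x\in\C$ (initially $a$) and a yet-unlabelled tuple $\bar x$, Lemma \ref{lem:atoms2} furnishes an atom $c\in\At\A$ with $\s_{\bar x}c\cdot x\neq 0$; put $N_0(\bar x)=c$ and replace $x$ by $\s_{\bar x}c\cdot x$. After finitely many steps $\widehat{N_0}\neq 0$. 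That $N_0$ is a genuine atomic network — the diagonal, cylindrifier and transposition clauses of Definition \ref{def:games} — follows from $\widehat{N_0}\neq 0$ and atomicity of $\A$: were a clause to fail at atoms $a',b'$, then $\s a'\cdot\s b'=0$ would annihilate $\widehat{N_0}$, since the relevant substituted diagonal or cylindrifier term lies above one factor of $\widehat{N_0}$ and below the complement of the other.

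Subsequent rounds are the crux. Given a cylindrifier move $(N,F,k,b,l)$ with $F=\langle f_0,\dots,f_{n-2}\rangle$, $k\in m\setminus\{f_0,\dots,f_{n-2}\}$ and $b\leq\cyl l N(\dots)$ on the designated face, \pe\ must play $M$ with $\nodes(M)=\nodes(N)\cup\{k\}$, agreeing with $N$ off the node $k$ (so $M\supseteq N$ when $k\notin\nodes(N)$, and otherwise $M\equiv_k N$, which is exactly where \pa's freedom to re-use $k$ is harmlessly absorbed), and with $M$ realising $b$ on the designated hyperedge. Starting from $\widehat N\neq 0$ she first establishes $\widehat N\cdot\s_{\langle\dots k\dots\rangle}b\neq 0$, using $\widehat N\leq\s_{\bar f}N(\bar f)$ for the face tuple, the hypothesis $b\leq\cyl lN(\dots)$, and the $sc$-word identities of Definition \ref{subs} together with $\cyl k\cyl k=\cyl k$ and the commutation of $\cyl k$ with substitutions on indices $\neq k$ (this is the step that genuinely exploits the $m$ dimensions of $\C$ and \cite[Lemma 13.29]{HHbook}). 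She then sets the label of the designated new hyperedge to $b$ and, as in round $0$, labels the finitely many remaining new hyperedges (those involving $k$) one at a time via Lemma \ref{lem:atoms2} and complete additivity of the substitution operators, keeping the running product nonzero; the hyperedges not involving $k$ keep their old labels because the running product stays $\leq\widehat N$ and $\A$ is atomic. The resulting $M$ has $\widehat M\neq 0$, is an atomic network by the same consistency argument as before, and is a legal response, so the invariant is preserved through all $\omega$ rounds and \pe\ wins.

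For the ``in particular'' clause: a countable atomic completely representable algebra in $\K$ is, up to isomorphism, a complete subalgebra of $\Nr_n$ of a set algebra with arbitrarily many spare dimensions, hence lies in $S_c\Nr_n\K_m$ for every finite $m$; so \pe\ wins $F^m(\At\A)$ for every $m$, and therefore wins $F^\omega(\At\A)$. Conversely, since $F^\omega(\At\A)$ is the usual $\omega$-round atomic game on networks, a winning strategy for \pe\ there can be glued, by the standard step-by-step union of networks, into a complete representation of the countable $\A$. The main obstacle — the one place that is not routine — is the cylindrifier-move step, namely verifying $\widehat N\cdot\s_{\langle\dots k\dots\rangle}b\neq 0$: pushing the inequality for $b$ through the cylindrifications and substitutions inside $\widehat N$ while keeping track of a possibly re-used node is exactly where the $sc$-word machinery and the complete embedding into the $m$-dimensional $\C$ do the work.
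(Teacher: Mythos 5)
Your proof of the main implication is essentially the paper's own argument: the same invariant ($\widehat{N}\neq 0$ in the $m$-dimensional dilation $\C$), the same greedy edge-labelling via Lemma \ref{lem:atoms2} and complete additivity of the substitutions, and the same $sc$-word manipulation ${\sf c}_k\widehat{N}\cdot {\sf s}_{\bar a}b\neq 0$ to answer a cylindrifier move. The only cosmetic difference is that the paper packages the relabelling step as a standing claim (for every nonzero $x\in\C$ and finite $I\subseteq m$ there is a network $N$ on $I$ with $x\cdot\widehat{N}\neq 0$, and $\widehat{M}\cdot\widehat{N}\neq 0$ forces agreement on common nodes), which it then invokes once per round, whereas you re-run the greedy construction inside each round; these are the same computation.

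There is, however, a gap in your treatment of the ``in particular'' clause. You argue that a countable completely representable $\A$ lies in $S_c\Nr_n\K_m$ for every \emph{finite} $m$, so \pe\ wins $F^m(\At\A)$ for every finite $m$, ``and therefore wins $F^\omega(\At\A)$.'' That last inference does not follow: in $F^\omega$ \pa\ may introduce unboundedly many nodes over the $\omega$ rounds, so a family of strategies, one for each finite $m$, does not assemble into a single strategy for $F^\omega$ without an additional uniformity or compactness argument (compare the elementary-chain/ultrapower work the paper has to do elsewhere precisely to pass from winning all finite games to winning the $\omega$-round game). The correct and shorter route, which is what the paper intends, is that complete representability gives $\A\in S_c\Nr_n\K_\omega$ directly (the representation on a generalized space yields a complete embedding into $\Nr_n$ of an $\omega$-dimensional algebra, as shown in the last part of the proof of Theorem \ref{rainbow}), and then the first part of the theorem applies verbatim with $m=\omega$. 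The converse direction via gluing networks into a complete representation of a countable atomic algebra is fine and matches the paper's appeal to the standard result.
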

\begin{proof}
The proof of the first part is based on repeated use of
lemma ~\ref{lem:atoms2}. We first show:
\begin{enumerate}
\item For any $x\in\C\setminus\set0$ and any
finite set $I\subseteq m$ there is a network $N$ such that
$\nodes(N)=I$ and $x\;.\;\widehat N\neq 0$.
\item
For any networks $M, N$ if
$\widehat M\;.\;\widehat N\neq 0$ then $M\equiv^{\nodes(M)\cap\nodes(N)}N$.
\end{enumerate}
We define the edge labelling of $N$ one edge
at a time. Initially no hyperedges are labelled.  Suppose
$E\subseteq\nodes(N)\times\nodes(N)\ldots  \times\nodes(N)$ is the set of labelled hyper
edges of $N$ (initially $E=\emptyset$) and
$x\;.\;\prod_{\bar c \in E}{\sf s}_{\bar c}N(\bar c)\neq 0$.  Pick $\bar d$ such that $\bar d\not\in E$.
Then there is $a\in\At(\c A)$ such that
$x\;.\;\prod_{\bar c\in E}{\sf s}_{\bar c}N(\bar c)\;.\;{\sf s}_{\bar d}a\neq 0$.

Include the edge $\bar d$ in $E$.  Eventually, all edges will be
labelled, so we obtain a completely labelled graph $N$ with $\widehat
N\neq 0$.
it is easily checked that $N$ is a network.

For the second part, if it is not true that
$M\equiv^{\nodes(M)\cap\nodes(N)}N$ then there are is
$\bar c \in{}^{1}\nodes(M)\cap\nodes(N)$ such that $M(\bar c )\neq N(\bar c)$.
Since edges are labelled by atoms we have $M(\bar c)\cdot N(\bar c)=0,$
so $0={\sf s}_{\bar c}0={\sf s}_{\bar c}M(\bar c)\;.\; {\sf s}_{\bar c}N(\bar c)\geq \widehat M\;.\;\widehat N$.

Next, we show that:

\begin{enumerate}
\item\label{it:-i}
If $i\not\in\nodes(N)$ then ${\sf c}_i\widehat N=\widehat N$.

\item \label{it:-j} $\widehat{N Id_{-j}}\geq \widehat N$.

\item\label{it:ij} If $i\not\in\nodes(N)$ and $j\in\nodes(N)$ then
$\widehat N\neq 0 \rightarrow \widehat{N[i/j]}\neq 0$,
where $N[i/j]=N\circ [i|j]$

\item\label{it:theta} If $\theta$ is any partial, finite map $n\to n$
and if $\nodes(N)$ is a proper subset of $n$,
then $\widehat N\neq 0\rightarrow \widehat{N\theta}\neq 0$.
\end{enumerate}

The first part is easy.
The second
part is by definition of $\;\widehat{\;}$. For the third part, suppose
$\widehat N\neq 0$.  Since $i\not\in\nodes(N)$, by part~\ref{it:-i},
we have ${\sf c}_i\widehat N=\widehat N$.  By cylindric algebra axioms it
follows that $\widehat N\cdot {\sf d}_{ij}\neq 0$.  From the above
there is a network $M$ where $\nodes(M)=\nodes(N)\cup\set i$ such that
$\widehat M\cdot\widehat N\cdot {\sf d}_{ij}\neq 0$.  From the first part, we
have $M\supseteq N$ and $M=N[i/j]$.
Hence $\widehat{N[i/j]}\neq 0$.

For the final part
(cf. \cite[lemma~13.29]{HHbook}), since there is
$k\in n\setminus\nodes(N)$, \/ $\theta$ can be
expressed as a product $\sigma_0\sigma_1\ldots\sigma_t$ of maps such
that, for $s\leq t$, we have either $\sigma_s=Id_{-i}$ for some $i<n$
or $\sigma_s=[i/j]$ for some $i, j<n$ and where
$i\not\in\nodes(N\sigma_0\ldots\sigma_{s-1})$.
Now apply parts~\ref{it:-j} and \ref{it:ij}.

Now we prove the required. Suppose that 
$\A\subseteq_c\Nr_n\C$ for some $\C\in\K_m$ then \pe\ always
plays networks $N$ with $\nodes(N)\subseteq n$ such that
$\widehat N\neq 0$. In more detail, in the initial round, let \pa\ play $a\in \At\A$.
\pe\ plays a network $N$ with $N(0, \ldots n-1)=a$. Then $\widehat N=a\neq 0$.
At a later stage suppose \pa\ plays the cylindrifier move
$(N, \langle f_0, \ldots, f_{n-2}\rangle, k, b, l)$
by picking a
previously played network $N$ and $f_i\in \nodes(N), \;l<n,  k\notin \{f_i: i<n-2\}$,
and $b\leq {\sf c}_k N(f_0,\ldots,  f_{i-1}, x, f_{i+1}, \ldots, f_{n-2})$.

Let $\bar a=\langle f_0\ldots f_{i-1}, k, f_{i+1}, \ldots f_{n-2}\rangle.$
Then we have ${\sf c}_k\widehat N\cdot {\sf s}_{\bar a}b\neq 0$.
and so by the above there is a network  $M$ such that
$\widehat{M}\cdot\widehat{{\sf c}_kN}\cdot {\sf s}_{\bar a}b\neq 0$.

Hence
$M(f_0,\dots, f_{i-1}, k, f_{i-2}, \ldots, f_{n-2})=b$, and $M$ is the required response.
\end{proof}

We define a new class $\K$ consisting of coloured graphs
(which are the models of the rainbow signature satisfying the $L_{\omega_1, \omega}$ theory as formulated
in \cite{HHbook2}). Let $\N^{-{1}}$ be $\N$ with reverse order, and let $g:\N\to \N^{-1}$ be the identity map, we denote $g(n)$ by $-n$.
We assume that $0\in \N$.
We take ${\sf G}=\N^{-1}$ and our reds is $\N$.

Translating the rainbow signature to coloured graphs,
we get that our class of models (coloured graphs) $\K$ consists of:

\begin{enumarab}

\item $M$ is a complete graph.

\item $M$ contains no triangles (called forbidden triples)
of the following types:

\vspace{-.2in}
\begin{eqnarray}
&&\nonumber\\
(\g, \g^{'}, \g^{*}), (\g_i, \g_{i}, \w),
&&\mbox{any }i\in n-1\;  \\
(\g^j_0, \g^k_0, \w_0)&&\mbox{ any } j, k\in \N\\
\label{forb:pim}(\g^i_0, \g^j_0, \r_{kl})&&\mbox{unless } \set{(i, k), (j, l)}\mbox{ is an order-}\\
&&\mbox{ preserving partial function }\N^{-1}\to\N\nonumber\\
\label{forb:match}(\r_{ij}, \r_{j'k'}, \r_{i^*k^*})&&\mbox{unless }i=i^*,\; j=j'\mbox{ and }k'=k^*
\end{eqnarray}
and no other triple of atoms is forbidden.

\item If $a_0,\ldots   a_{n-2}\in M$ are distinct, and no edge $(a_i, a_j)$ $i<j<n$
is coloured green, then the sequence $(a_0, \ldots a_{n-2})$
is coloured a unique shade of yellow.
No other $(n-1)$ tuples are coloured shades of yellow.

\item If $D=\set{d_0,\ldots  d_{n-2}, \delta}\subseteq M$ and
$M\upharpoonright D$ is an $i$ cone with apex $\delta$, inducing the order
$d_0,\ldots  d_{n-2}$ on its base, and the tuple
$(d_0,\ldots d_{n-2})$ is coloured by a unique shade
$\y_S$ then $i\in S.$

\end{enumarab}

\end{definition}
\begin{definition}
Let $i\in \N^{-1}$, and let $M$ be a coloured graph  consisting of $n$ nodes
$x_0,\ldots  x_{n-2}, z$. We call $M$ an $i$ - cone if $M(x_0, z)=\g^0_i$
and for every $1\leq j\leq n-2$ $M(x_j, z)=\g_j$,
and no other edge of $M$
is coloured green.
$(x_0,\ldots x_{n-2})$
is called the center of the cone, $z$ the apex of the cone
and $i$ the tint of the cone.
\end{definition}

\begin{theorem}\label{rainbow}
Let $3\leq n<\omega$. Then there exists an atomic $\PEA_n$ with countably many atoms
such that $\Rd_{Sc}\A\notin S_c\Nr_n\Sc_{n+3}$, and there exists
a countable $\B\in S_c\Nr_n\QEA_{\omega}$ such that $\A\equiv \B$ (hence $\B$ is also atomic). In particular, for
for any class $\K$, such that $S_c\Nr_n\CA_{\omega}\subseteq \K\subseteq S_c\Nr_n\CA_{n+3}$, $\K$ is not elementary, and
for any ${\sf L}\in \{\CA, \QA, \QEA, \Sc\}$, the class of completely representable algebras of dimension $n$
is not elementary.
\end{theorem}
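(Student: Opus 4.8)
The plan is to build, via a rainbow construction, an atomic $\PEA_n$ algebra $\A$ and then apply the game-theoretic machinery of Theorem~\ref{thm:n} twice, in opposite directions. The rainbow parameters are $\G = \N^{-1}$ (the greens) and $\N$ (the reds). First I would verify that the class $\K$ of coloured graphs described above, axiomatized by an $L_{\omega_1,\omega}$ theory as in \cite{HHbook2}, has a well-defined atom structure $\alpha$ whose atoms are the finite coloured graphs in $\K$; the complex algebra $\Cm\alpha$ and its atomic subalgebra generated by the atoms give a $\PEA_n$ with countably many atoms, which we call $\A$. The cylindrifier, diagonal and substitution operators are inherited in the standard rainbow way, and symmetry of the signature ensures the polyadic substitutions are available (so that $\A$ is genuinely in $\PEA_n$, not merely $\CA_n$).

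The heart of the argument is an Ehrenfeucht--Fra\"iss\'e analysis of two games played on $\alpha$. For the negative direction, I would show that \pa\ has a winning strategy in the game $F^{n+3}(\alpha)$ (the bounded game with $n+3$ nodes, involving only cylindrifier moves, hence meaningful already for $\Sc$): \pa\ exploits the greens in the usual rainbow fashion, bombarding \pe\ with cones of distinct green tints over a common base, forcing her after finitely many rounds into a red clique that violates the order-preservation constraint \eqref{forb:pim} or the matching constraint \eqref{forb:match}. By the contrapositive of Theorem~\ref{thm:n} (in its $\Sc$ form), this yields $\Rd_{Sc}\A\notin S_c\Nr_n\Sc_{n+3}$. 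For the positive direction, I would show that \pe\ has a winning strategy in the \emph{unbounded} game $F^\omega(\alpha)$ — equivalently in the usual $\omega$-rounded atomic game on networks — because with unboundedly many nodes and unboundedly many red indices available she can always answer a cone by choosing a fresh red index, never being cornered; the standard rainbow ``try white, then black, then red'' strategy works when she has room to spread out. Hence $\A$ is completely representable, so by Theorem~\ref{thm:n} again (now the converse, valid for countable completely representable algebras) and an $\omega$-dimensional neat embedding, $\A \in S_c\Nr_n\QEA_\omega$.

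This is not quite enough: the genuinely elementary obstruction requires $\A \equiv \B$ for some $\B \in S_c\Nr_n\QEA_\omega$, with $\A$ itself only satisfying the $F^{n+3}$ failure. The route is a standard ultrapower/elementary-chain argument: take $\B$ to be a suitable countable elementary substructure obtained from an ultrapower of $\A$ (or of a variant of $\A$ in which \pe\ wins $F^\omega$ while \pa\ still wins $F^{n+3}$ on the original), using that $\alpha$ and its ultrapower satisfy the same first-order sentences while infinitary games can behave differently — this is exactly the mechanism behind \cite[corollary 3.7.1]{HHbook2}. Then $\A \equiv \B$, $\B$ is atomic and completely representable (hence $\B\in S_c\Nr_n\QEA_\omega$), but $\Rd_{Sc}\A \notin S_c\Nr_n\Sc_{n+3}$, so any class $\K$ with $S_c\Nr_n\CA_\omega \subseteq \K \subseteq S_c\Nr_n\CA_{n+3}$ separates $\B$ from $\A$ and cannot be elementary. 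The same pair $\A, \B$ witnesses non-elementariness of the completely representable algebras for each $\sf L\in\{\CA,\QA,\QEA,\Sc\}$, since $\A$ is not completely representable (its $\Sc$-reduct fails even the finite neat embedding condition, a fortiori complete representability) while its elementary equivalent $\B$ is. The main obstacle I anticipate is pinning down \pa's winning strategy in $F^{n+3}(\alpha)$ with the precise bound $n+3$ rather than some larger constant — this requires carefully tracking the shades of yellow labelling the $(n-1)$-hyperedges and checking that the cone-apex edge colouring genuinely forces an inconsistent red triangle within the allotted node budget, mirroring the delicate counting in \cite[chapter 16]{HHbook2}.
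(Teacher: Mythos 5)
Your overall architecture --- the rainbow atom structure over $\G=\N^{-1}$ and reds $\N$, \pa\ winning $F^{n+3}$ by bombarding \pe\ with cones of distinct green tints, theorem \ref{thm:n} to exclude $S_c\Nr_n\Sc_{n+3}$, and an ultrapower/elementary-chain step to produce $\B$ --- is the paper's route. But there is a genuine error in your positive direction. You claim that \pe\ has a winning strategy in the unbounded game $F^{\omega}(\alpha)$ and conclude that $\A$ itself is completely representable and lies in $S_c\Nr_n\QEA_{\omega}$. This cannot happen: a winning strategy for \pa\ in $F^{n+3}$ is a fortiori a winning strategy for him in $F^{\omega}$ (he simply never uses the extra pebbles), so the two halves of your argument are played on the same board and contradict each other. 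If $\A$ were completely representable it would lie in $S_c\Nr_n\QEA_{\omega}$, whose $\Sc$-reduct sits inside $S_c\Nr_n\Sc_{n+3}$, destroying your first half. Your hedge --- ``a variant of $\A$ in which \pe\ wins $F^{\omega}$ while \pa\ still wins $F^{n+3}$'' --- describes an impossible object.

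What is actually true, and is the whole point of the construction, is weaker: \pe\ wins every finitely truncated game $G_k(\At\C)$ for $k<\omega$ (unboundedly many nodes, only $k$ rounds). This is compatible with \pa\ winning $F^{n+3}$ because his strategy there genuinely needs $\omega$ rounds: he forces a strictly decreasing sequence of red indices in $\N$, and although each individual play ends at some finite stage, the number of rounds required is not bounded, so no win in any fixed $G_k$ results. Since ``\pe\ wins the $k$-round game'' is reflected in the first-order theory of the atom structure, a non-principal ultrapower of $\C$ is one in which \pe\ wins the full $\omega$-round game, and the elementary chain argument extracts a countable atomic $\B\equiv\C$ that is completely representable, hence in $S_c\Nr_n\QEA_{\omega}$. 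Complete representability is obtained only for $\B$, never for $\A$. Once this is repaired, your final paragraph deducing non-elementarity of the sandwiched classes and of the completely representable algebras goes through as stated.
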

\begin{demo} {Proof}

\begin{enumarab}

\item We define a polyadic equality algebra of
dimension $n$. It is a rainbow polyadic equality algebra.

We first specify its atom structure.
Let $$\At=\{a:n \to M, M\in \K, \text { $a$ is surjective  and }
\nodes(M)\subseteq \N\}.$$
We write $M_a$ for the element of $\At$ for which
$a:\alpha\to M$ is a surjection.
Let $a, b\in \At$ define the
following equivalence relation: $a \sim b$ if and only if
\begin{itemize}
\item $a(i)=a(j)\text { and } b(i)=b(j)$

\item $M_a(a(i), a(j))=M_b(b(i), b(j))$ whenever defined

\item $M_a(a(k_0)\dots a(k_{n-2}))=M_b(b(k_0)\ldots b(k_{n-1}))$ whenever
defined
\end{itemize}
Let $\At$ be the set of equivalences classes. Then define
$$[a]\in E_{ij} \text { iff } a(i)=a(j)$$
$$[a]T_i[b] \text { iff }a\upharpoonright n\smallsetminus \{i\}=b\upharpoonright n\smallsetminus \{i\}.$$
Define accessibility relations corresponding to the polyadic (transpositions) operations as follows:
$$[a]S_{ij}[b] \text { iff } a\circ [i,j]=b$$
This, as easily checked, defines a $\PEA_n$
atom structure.
Let $3\leq n<\omega$.
Let $\C$ be the complex polyadic equality algebra over $\At$. We will show that $\Rd_{Sc}\C$ is
not in $S_c\Nr_n\Sc_{n+3}$
but an elementary extension of $\C$ belongs to
$S_c\Nr_{n}\QEA_{\omega}.$

\item But first we translate the atomic games $G_k$ above \cite[definition 27(3)]{HH}, \cite{HHbook2}
to games on coloured graphs \cite[lemma 30]{HH}.

Let $N$ be an atomic $\C$ network.
Let $x,y$ be two distinct nodes occurring in the
$n$ tuple $\bar z$. $N(\bar z)$ is an atom of $\C$
which defines an edge colour of
$x,y$. Using the fact that the dimension is at least $3$,
the edge colour depends only on $x$ and $y$
not on the other elements of
$\bar z$ or the positions of $x$ and $y$ in $\bar z$.
Similarly $N$ defines shades of white for certain $(n-1)$ tuples.  In this way $N$
translates
into a coloured graph.

This translation has an inverse. More precisely, we have:
Let $M\in \K$ be arbitrary. Define $N_{M}$
whose nodes are those of $M$
as follows. For each $a_0,\ldots a_{n-1}\in M$, define
$N_{M}(a_0,\ldots,  a_{n-1})=[\alpha]$
where $\alpha: n\to M\upharpoonright \set{a_0,\ldots, a_{n-1}}$ is given by
$\alpha(i)=a_i$ for all $i<n$.
Then, as easily checked,  $N_{M}$ is an atomic $\C$ network.

Conversely, let $N$ be any non empty atomic $\C$ network.
Define a complete coloured graph $M_N$
whose nodes are the nodes of $N$ as follows:
\begin{itemize}
\item For all distinct $x,y\in M_N$ and edge colours $\eta$, $M_N(x,y)=\eta$
if and only if  for some $\bar z\in ^nN$, $i,j<n$, and atom $[\alpha]$, we have
$N(\bar z)=[\alpha]$, $z_i=x$, $z_j=y$ and the edge $(\alpha(i), \alpha(j))$
is coloured $\eta$ in the graph $\alpha$.

\item For all $x_0,\ldots, x_{n-2}\in {}^{n-1}M_N$ and all yellows $\y_S$,
$M_N(x_0,\ldots,  x_{n-2})= \y_S$ if and only if
for some $\bar z$ in $^nN$, $i_0,\ldots,  i_{n-2}<n$
and some atom $[\alpha]$, we have
$N(\bar z)=[\alpha]$, $z_{i_j}=x_j$ for each $j<n-1$ and the $n-1$ tuple
$\langle \alpha(i_0),\ldots \alpha(i_{n-2})\rangle$ is coloured
$\y_S.$ Then $M_N$ is well defined and is in $\K$.
\end{itemize}
The following is then, though tedious and long,  easy to  check:
For any $M\in \K$, we have  $M_{N_{M}}=M$,
and for any $\C$ network
$N$, $N_{{M}_N}=N.$
This translation makes the following equivalent formulation of the
games $F^m(\At\C),$ originally defined on networks.

\item The new  graph version of
the game \cite[p.27-29]{HH} builds a nested sequence $M_0\subseteq M_1\subseteq \ldots $.
of coloured graphs.

Let us start with the game $F^m$.
\pa\ picks a graph $M_0\in \K$ with $M_0\subseteq m$ and
$\exists$ makes no response
to this move. In a subsequent round, let the last graph built be $M_i$.
$\forall$ picks
\begin{itemize}
\item a graph $\Phi\in \K$ with $|\Phi|=n$
\item a single node $k\in \Phi$
\item a coloured graph embedding $\theta:\Phi\smallsetminus \{k\}\to M_i$
Let $F=\phi\smallsetminus \{k\}$. Then $F$ is called a face.
\pe\ must respond by amalgamating
$M_i$ and $\Phi$ with the embedding $\theta$. In other words she has to define a
graph $M_{i+1}\in C$ and embeddings $\lambda:M_i\to M_{i+1}$
$\mu:\phi \to M_{i+1}$, such that $\lambda\circ \theta=\mu\upharpoonright F.$
\end{itemize}
In the above game the nodes of the graphs played are bounded by $m$. For $G$ and its truncated versions $G_k$
there is no restriction on the number of nodes used during the game.

\item Now \pe\ has a \ws\ in $G_k(\At\C)$ for all finite $k\geq n$, his strategy is exactly like the \ws\ of \pe\ in
\cite[lemma 31]{HH}.

Strictly speaking  in \cite{HH} the game was played on $\CA_{\N, \N}$
but the reversing of the order in $\N$, does not affect \pe\ s \ws\ .
This reversing of the order was introduced to make \pa\
the game  $F^{n+3}$. As we shall see, this also gives the stronger result than that in \cite{HH}, stated in our theorem.

And indeed we claim that  \pa\ has a \ws\ in $F^{n+3}$ \cite[theorem 33, lemma 41]{r}.
He uses the  usual strategy in rainbow algebras,
by bombarding \pe\ with cones on the same base and different green tints.
In this way  \pe\ is forced decreasing sequence in $\N$.
To implement his strategy we will see that he needs $n+3$ pebbles that he can use and re-use.

In the initial round \pa\ plays a graph $M$ with nodes $0,1,\ldots n-1$ such that $M(i,j)=\w$
for $i<j<n-1$
and $M(i, n-1)=\g_i$
$(i=1, \ldots, n-2)$, $M(0,n-1)=\g_0^0$ and $M(0,1,\ldots, n-2)=\y_{\N}$.

In the following move \pa\ chooses the face $(0,\ldots, n-2)$ and demands a node $n$
with $M_2(i,n)=\g_i$ $(i=1,\ldots, n-2)$, and $M_2(0,n)=\g_0^{-1}.$

\pe\ must choose a label for the edge $(n+1,n)$ of $M_2$. It must be a red atom $r_{mn}$. Since $-1<0$ we have $m<n$.
In the next move \pa\ plays the face $(0, \ldots, n-2)$ and demands a node $n+1$, with $M_3(i,n)=\g_i$ $(i=1,\ldots n-2)$,
such that  $M_3(0,n+2)=\g_0^{-2}$.
Then $M_3(n+1,n)$ and $M_3(n+1,n-1)$ both being red, the indices must match.
$M_3(n+1,n)=r_{ln}$ and $M_3(n+1, n-1)=r_{lm}$ with $l<m$.

In the next round \pa\ plays $(0,1,\ldots n-2)$ and reuses the node $2$ such that $M_4(0,2)=\g_0^{-3}$.
This time we have $M_4(n,n-1)=\r_{jl}$ for some $j<l\in \N$.
Continuing in this manner leads to a decreasing sequence in $\N$.

By theorem \ref{thm:n}, it follows that $\Rd_{Sc}\C\notin S_c\Nr_n\Sc_{n+3}$,
but it is elementary equivalent
to a countable completely representable algebra. Indeed, using ultrapowers and an elementary chain argument,
we obtain $\B$ such $\C\equiv \B$ \cite[lemma 44]{r},

In more detail, we have seen that for $k<\omega$, \pe\ has a \ws\ $\sigma_n$ in $G_k(\At\C)$.
We can assume that $\sigma_k$ is deterministic.
Let $\B$ be a non-principal ultrapower of $\C$.  We can show that
\pe\ has a \ws\ $\sigma$ in $G(\At\B)$ --- essentially she uses
$\sigma_k$ in the $k$'th component of the ultraproduct so that at each
round of $G(\At\B)$,  \pe\ is still winning in co-finitely many
components, this suffices to show she has still not lost.

We can also assume that $\C$ is countable. If not then replace it by its subalgebra generated by the countably many atoms.
(called the term algebra); \ws\ s that depended only on the atom structure persist
for both players.

Now one can use an
elementary chain argument to construct countable elementary
subalgebras $\C=\A_0\preceq\A_1\preceq\ldots\preceq\B$.  For this,
let $\A_{i+1}$ be a countable elementary subalgebra of $\B$
containing $\A_i$ and all elements of $\B$ that $\sigma$ selects
in a play of $G(\At\B)$ in which \pa\ only chooses elements from
$\A_i$. Now let $\A'=\bigcup_{i<\omega}\A_i$.  This is a
countable elementary subalgebra of $\B$, hence necessarily atomic,  and \pe\ has a \ws\ in
$G(\At\A')$, so by \cite[theorem 3.3.3]{HHbook2}, $\A'$ is completely representable.

\item To see the last statement, namely that $\A'\in S_c\Nr_n\QEA_{\omega}$,  we prove something slightly more general.
Assume that $M$ is the base of
a complete representation of $\A'$, whose
unit is a generalized space,
that is, $1^M=\bigcup {}^nU_i$, where $U_i\cap U_j=\emptyset$ for distance $i$ and $j$, in some
index set $I$. For $i\in I$, let $E_i={}^nU_i$, pick $f_i\in {}^{\omega}U_i$, let $W_i=\{f\in  {}^{\omega}U_i: |\{k\in \omega: f(k)\neq f_i(k)\}|<\omega\}$,
and let ${\sf c}_i=\wp(W_i)$. Then $\C_i$ is atomic; indeed the atoms are the singletons.

Let $x\in \Nr_n\C_i$, that is ${\sf c}_ix=x$ for all $n\leq i<\omega$.
Now if  $f\in x$ and $g\in W_i$ satisfy $g(k)=f(k) $ for all $k<n$, then $g\in x$.
Hence $\Nr_n \C_i$
is atomic;  its atoms are $\{g\in W_i:  \{g(0),\ldots g(n-1)\}\subseteq U_i\}.$
Define $h_i: \A\to \Nr_n\C_i$ by
$$h_i(a)=\{f\in W_i: \exists a\in \At\A: M\models a(f(0)\ldots f(n-1))\}.$$
Let $\C=\prod _i \C_i$. Let $\pi_i:\C\to \C_i$ be the $i$th projection map.
Now clearly  $\C$ is atomic, because it is a product of atomic algebras,
and its atoms are $(\pi_i(\beta): \beta\in \At(\C_i)\}$.
Now  $\A$ embeds into $\Nr_n\C$ via $I:a\mapsto (\pi_i(a) :i\in I)$. If $a\in \Nr_n\C$,
then for each $i$, we have $\pi_i(x)\in \Nr_n\C_i$, and if $x$ is non
zero, then $\pi_i(x)\neq 0$. By atomicity of $\C_i$, there are $\bar{m}\in D$ such that
$\{g\in W_i: g(k)=[\bar{m}]_k\}\subseteq \pi_i(x)$. Hence there is an atom
of $\A$, such that $M\models ({m})$,  so $x. I(a)\neq 0.$
and we are done. Note that in this argument {\it no cardinality condition} is required.
(The reverse inclusion does not hold in general for uncountable algebras,
though it holds for atomic algebras with countably many atoms).
Hence if $\A\ $ is completely representable, then it is in $S_c\Nr_n\QEA_{\omega}$.

\end{enumarab}
\end{demo}

\begin{corollary} For any finite $m>2$ , and $n\geq m+3$, the class $S_c\Nr_m\CA_n$ is not elementary.
\end{corollary}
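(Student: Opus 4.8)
PROOF PROPOSAL.

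The plan is to obtain the corollary as a direct instance of Theorem~\ref{rainbow}, applied with the dimension $m$ playing the role of the $n$ there. Recall that that theorem produces an atomic $\PEA_m$, say $\A$, with countably many atoms such that $\Rd_{Sc}\A\notin S_c\Nr_m\Sc_{m+3}$, together with a countable $\B\in S_c\Nr_m\QEA_\omega$ with $\A\equiv\B$; and its concluding clause already asserts that \emph{every} class $\K$ with $S_c\Nr_m\CA_\omega\subseteq\K\subseteq S_c\Nr_m\CA_{m+3}$ fails to be elementary. So the only thing to verify is that, for $n\ge m+3$, the class $S_c\Nr_m\CA_n$ is sandwiched in exactly this way.

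This is a matter of checking the monotonicity of the operator $S_c\Nr_m$ in its upper index: if $m\le p\le q$ and $\C\in\CA_q$ with $\A\subseteq_c\Nr_m\C$, then by transitivity of neat reducts $\Nr_m\C=\Nr_m(\Nr_p\C)$ and $\Nr_p\C\in\CA_p$, whence $\A\in S_c\Nr_m\CA_p$; nothing is lost in the relation $\subseteq_c$ since being a complete subalgebra is a property of the Boolean reducts alone. Applying this with $(p,q)=(n,\omega)$ gives $S_c\Nr_m\CA_\omega\subseteq S_c\Nr_m\CA_n$, and with $(p,q)=(m+3,n)$ (legitimate because $m+3\le n$) gives $S_c\Nr_m\CA_n\subseteq S_c\Nr_m\CA_{m+3}$. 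Thus $S_c\Nr_m\CA_n$ is squeezed between $S_c\Nr_m\CA_\omega$ and $S_c\Nr_m\CA_{m+3}$, and Theorem~\ref{rainbow} finishes the argument.

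If one prefers an argument not quoting the last sentence of Theorem~\ref{rainbow} verbatim, one can work with the explicit witnesses $\Rd_{ca}\A$ and $\Rd_{ca}\B$: elementary equivalence is inherited by reducts, so $\Rd_{ca}\A\equiv\Rd_{ca}\B$; the right-hand algebra lies in $S_c\Nr_m\CA_\omega\subseteq S_c\Nr_m\CA_n$; and were $\Rd_{ca}\A\in S_c\Nr_m\CA_n$ we would get $\Rd_{ca}\A\subseteq_c\Nr_m\C$ with $\C\in\CA_{m+3}$ by the monotonicity above, and then, defining ${\sf s}^i_jx=\cyl i(\diag ij\cdot x)$ for $i\neq j$ and ${\sf s}^i_ix=x$ on $\C$, the resulting $\Sc_{m+3}$ would have $\Rd_{Sc}\A$ as a complete subalgebra of its neat $m$-reduct (these operators are $\CA$-term-definable and agree on $\A$ with the genuine replacements of the $\PEA_m$ $\A$), contradicting $\Rd_{Sc}\A\notin S_c\Nr_m\Sc_{m+3}$. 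Hence $S_c\Nr_m\CA_n$ is not closed under elementary equivalence and so is not elementary. The step needing the most care is purely bookkeeping: making sure that passing to $\CA$-reducts, and recovering the $\Sc$-structure term-definably inside a $\CA$, genuinely preserves $\subseteq_c$ and commutes with $\Nr_m$, so that the non-membership established in Theorem~\ref{rainbow} really does propagate to $S_c\Nr_m\CA_n$.
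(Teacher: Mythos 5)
Your proposal is correct and follows exactly the paper's own route: the paper's proof is the one-line observation that $S_c\Nr_m\CA_\omega\subseteq S_c\Nr_m\CA_n\subseteq S_c\Nr_m\CA_{m+3}$ for $n\ge m+3$, after which the concluding clause of Theorem~\ref{rainbow} applies. Your additional justification of the monotonicity of $S_c\Nr_m$ in the upper index, and the alternative unpacking via the explicit witnesses, only make explicit what the paper leaves implicit.
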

\begin{proof} Let $\K=S_c\Nr_m\CA_n$, then $S_c\Nr_m\CA_{\omega}\subseteq \K\subseteq S_c\Nr_m\CA_{m+3}$,
hence  it is not elementary.
\end{proof}
We now formulate, and prove,  two negative
new omitting types theorems, that are
consequences of the algebraic results proved above.
Now we blow up and blur a finite polyadic equality rainbow algebra.

\begin{theorem}\label{blowupandblur}
For any finite $n>2$, for any $\K$ between $\Sc$ and $\PEA,$
and for any $k\geq 4$,
the variety $S\Nr_n\K_{n+4}$ is not atom-canonical.
\end{theorem}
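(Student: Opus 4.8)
The plan is to run a \emph{blow up and blur} construction, exactly as outlined just before the statement, starting from a \emph{finite} rainbow polyadic equality algebra. First I would observe that the claim is really about $S\Nr_n\K_{n+k}$ for every $k\geq 4$: since $S\Nr_n\CA_{n+k}\subseteq S\Nr_n\CA_{n+4}$ whenever $k\geq 4$ (cut a dilation down to $n+4$ dimensions), and $\RCA_n=S\Nr_n\CA_{n+\omega}$ sits inside every $S\Nr_n\CA_{n+k}$, it suffices to produce one \emph{symmetric} atom structure $\At$ — symmetry guarantees that $\Tm\At$ and $\Cm\At$ are $\PEA_n$s — with $\Tm\At$ representable but $\Rd_{\Sc}\Cm\At\notin S\Nr_n\Sc_{n+4}$. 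The latter forces $\Cm\At\notin S\Nr_n\K_{n+4}$ for every $\K$ between $\Sc$ and $\PEA$, since $\Cm\At\in S\Nr_n\K_{n+4}$ would give $\Rd_{\Sc}\Cm\At\in S\Nr_n\Sc_{n+4}$. As each $S\Nr_n\K_{n+4}$ is a variety and $\Cm\At=\Cm\At\Tm\At$ is the \d\ completion of the atomic representable algebra $\Tm\At$, this says precisely that $S\Nr_n\K_{n+4}$ is not atom-canonical.

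For the finite seed I would take $\C$ to be a finite rainbow $\PEA_n$ built exactly as the algebra in the proof of Theorem~\ref{rainbow}, but with $\N^{-1}$ and $\N$ replaced by suitable finite linear orders, the order on the greens being long enough that \pa's cone-bombarding strategy runs for the number of rounds needed to exhaust the reds using only $n+4$ reusable pebbles; its atom structure is symmetric, so $\C\in\PEA_n$. Then, as in Theorem~\ref{rainbow}, \pa\ has a \ws\ in $F^{n+4}(\At\C)$ — he plays cones of distinct green tints on a common base and forces \pe\ into a strictly descending sequence in the finite red order, which she cannot sustain — and since $F^{n+4}$ sees only the $\Sc$-structure, Theorem~\ref{thm:n} in contrapositive form (using that $S_c\Nr_n$ and $S\Nr_n$ coincide on finite algebras) gives $\Rd_{\Sc}\C\notin S\Nr_n\Sc_{n+4}$. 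Next I would blow up and blur $\C$ in the style of \cite{ANT,can}: split each atom of $\C$ into $\omega$ copies, using a fixed \emph{finite} set of blurs, obtaining the symmetric atom structure $\At$; let $\Tm\At$ be the subalgebra of the full complex algebra generated by the atoms (its elements being the finite and cofinite joins of atoms) and $\Cm\At$ its \d\ completion. Mapping each atom $c$ of $\C$ to the join in $\Cm\At$ of its $\omega$ split copies yields, by completeness of $\Cm\At$ and a routine check on the $\PEA_n$ operations, an embedding $\C\hookrightarrow\Cm\At$; hence $\Rd_\Sc\C$ embeds into $\Rd_\Sc\Cm\At$ and so $\Cm\At\notin S\Nr_n\K_{n+4}$ by the previous step. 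These infinite joins are not available in $\Tm\At$, so $\C$ does not embed there; instead $\Tm\At$ is representable, which I would prove by exhibiting a \ws\ for \pe\ in the $\omega$-rounded atomic game on $\At\Tm\At$ — or, better, by the usual step-by-step construction of a complete representation of the canonical extension $\Tm\At^{+}$ — in which, whenever \pa\ demands a node whose prescribed label would realise a forbidden configuration of $\C$, \pe\ answers with a \emph{fresh} split copy indexed by an appropriate blur, so the obstruction is never built. Symmetry of $\At$ handles the substitutions indexed by transpositions, so $\Tm\At$ is a representable polyadic equality algebra, i.e.\ $\Tm\At\in\RPEA_n=S\Nr_n\PEA_{n+\omega}\subseteq S\Nr_n\K_{n+4}$, completing the argument.

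The hard part will be the representability of $\Tm\At$: one has to verify the ``blur conditions'' of \cite{ANT} in this $n$-dimensional polyadic-equality rainbow setting, namely that the finitely many blurs, used as colours alongside the principal ultrafilters, genuinely diffuse \emph{every} defect inherited from $\C$, so that \pe's fresh-copy responses can be amalgamated consistently over all $\omega$ rounds. This is what pins down the finite parameters of $\C$ — how much ``room'' the blur set must provide is dictated by how far $\C$ fails to be in $S\Nr_n\Sc_{n+4}$ — and it is the only genuinely delicate computation; the embedding $\C\hookrightarrow\Cm\At$, the symmetry bookkeeping, the reduction to $\Sc_{n+4}$, and the passage from $n+4$ to all $n+k$ with $k\geq 4$ are routine given Theorems~\ref{rainbow} and \ref{thm:n}.
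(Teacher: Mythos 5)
Your proposal is correct and follows essentially the same route as the paper: blow up and blur a finite rainbow polyadic equality algebra, show the term algebra is representable (the paper defers this to Hodkinson's construction, using the extra shade of red $\rho$ and the homogeneous model), embed the finite seed into $\Cm\At$ via joins of split copies, and exclude it from $S\Nr_n\Sc_{n+4}$ because \pa\ wins a finite-pebble game, then invoke closure of the variety under subalgebras and reducts. The only cosmetic difference is that the paper (following Hodkinson) splits only the red atoms into $\omega$ copies rather than every atom, and fixes the concrete parameters $n+2$ greens and $n+1$ reds rather than leaving the finite orders to be calibrated.
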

\begin{proof}

This is proved only for $\CA_n$ in \cite{can}. Here we generalize the result to any $\K$ between $\Sc$ and $\PEA$.
The idea of the proof is essentially the same. Let $\At$ be the rainbow atom structure in \cite{Hodkinson} except that we have $n+2$ greens and
$n+1$ reds.

The rainbow signature now consists of $\g_i: i<n-1$, $\g_0^i: i\in n+2$, $\r_{kl}^t: k,l\in n+1$, $t\in \omega$,
binary relations and $\y_S$ $S\subseteq n+2$,
$S$ finite and a shade of red $\rho$; the latter is outside the rainbow signature,
but it labels coloured graphs during the game, and in fact \pe\ can win the $\omega$ rounded game
and build the $n$ homogeneous model $M$ by using $\rho$ when
she is forced a red.

Then $\Tm\At$ is representable; in fact it is representable as a polyadic equality algebra;
this can be proved exactly as in \cite{Hodkinson}; by defining the polyadic operations of the set algebra $\A$
completely analogous to the one constructed in \cite{Hodkinson}
by swapping variables. This algebra is representable, $\At\A=\At$ and
$\Tm\At\A\subseteq \A$.

The atoms of $\Tm\At\A$ are coloured graphs whose edges are not labelled by
the one shade of red  $\rho$; it can also be viewed as a set algebra based on $M$
by relativizing semantics discarding assignments whose edges are labelled
by $\rho$. A coloured graph (an atom) in $\PEA_{n+2, n+1}$
is one such that at least one of its edges is labelled red.

Now $\PEA_{n+2, n+1}$ embeds into $\Cm\At\A$,
by taking every red graph to the join of its copies, which exists because $\Cm\At\A$ is complete
(these joins do not exist in the (not complete) term algebra; only joins of finite
or cofinitely many reds do, hence it serves non representability.)
A copy of a red graph is one that is isomorphic to this graph, modulo removing superscripts of reds.

Another way to express this is to take every coloured graph to the interpretation of an infinite disjunct of the ${\sf MCA}$ formulas
(as defined in \cite{Hodkinson}), and to be dealt with below; such formulas define coloured graphs whose edges are not labelled by the shade of red,
hence the atoms, corresponding
to its copies, in the relativized semantics; this defines an embedding,  because $\Cm\At\A$ is isomorphic to
the set algebra based on the same relativized semantics
using $L_{\infty,\omega}^n$ formulas in the rainbow signature.
(Notice that that the rainbow theory itself \cite[definition 3.6.9]{HHbook2} is only first order because we have only finitely
many greens).

Here again $M$ is the new  homogeneous model constructed
in the new rainbow signature, though the construction is the
same \cite{Hodkinson}.
But \pa\ can win a certain fairly simple finite rounded atomic
game \cite{can} on $\Rd_{\Sc}\PEA_{n+1, n+2},$ hence it is
outside $S\Nr_n\Sc_{n+4}$ and so is $\Cm\At\A$,  because the former is embeddable in the latter
and $S\Nr_n\Sc_{n+4}$ is a variety; in particular, it is closed
under forming subalgebras.
\end{proof}

Algebraic results are most attractive when they have non-trivial impact on first order logic.
We apply our algebraic results to show that omitting types theorem fails in finite variable fragments
even if we consider only clique  guarded semantics.
To make this last statement more precise,
we start by lifting certain notions defined for relativized representations of relation algebras in \cite[definition 13.2-4]{HHbook2} to 
cylindric-like  algebras.

For an algebra $\A\in \PEA_n$ we let let $L(A)$ be the signuate consisting 
of an $n$ ary relation symbol for each element of $\A$. For $m>n$, 
$L(A)_{\omega,\omega}^m$  is the set of  first order formulas 
taken in this signature but using only $m$ variables
while $L(A)_{\infty, \omega}^m$ is the set of $L_{\infty, \omega}$ formulas taken in 
the signature $L(A)$ using also only $m>n$ variables.

Let $n>2$ be finite. Let 
$\A\in \PEA_n$. A set $M$ is a relativized representation of $M$ if 
there exists an injective homomorphism 
$f:\A\to \wp(V)$  where $V\subseteq {}^nM$  
and $\bigcup\rng(s)=V$. For $s\in {}^nM$, we write $1(s)$ if $s\in V$.

An $n$ clique in $M$, or simply a clique in $M$, is a subset 
$C$ of $M$ such that $M\models 1(\bar{s})$ for all $\bar{s}\in {}^nC$, such a clique
can be viewed as a hypergraph such that evey $n$ 
tuple is labelled by the top element.

For $m>n$, let $C^{m}(M)=\{\bar{a}\in {}^mM: \text { $\rng(\bar{a})$ is a clique in $M$}\}.$
Let $\A\in \PEA_n$, and let $M$ be a relativized representation of $\A$.

\begin{itemize}
\item 
$M$ is said to be $m$ square ($m>n$) if witnesses for cylindrfiers can be found only localy on $m$ cliques,
that is, if whenever $\bar{s}\in C^m(M)$, $a\in A$, $i<n$ and $M\models {\sf c}_ia(\bar{s})$,
then there is a $t\in C^m(M)$ with $\bar{t}\equiv _i \bar{s}$,
and $M\models a(\bar{t})$.

\item Closer to a genuine representation, since it reflects locally commutativity of cylindrfiers, we have 
$M$ is $m$ flat if  for all $\phi\in L(A)_{\omega, \omega}^m$, for all $\bar{a}\in C^m(M)$, for all $i,j<m$, we have
$$M\models \exists x_i\exists x_j\phi\longleftarrow \exists x_j\exists x_i\phi(\bar{a}).$$

\item Let $T$ be an $L_n$ theory. 
Then $M$ is an $m$ square (flat) model of $T$ if there exists $f:\Fm_T\to \wp(V)$, where $V\subseteq {}^nM$, 
$\bigcup_{s\in V}\rng(s)=M$,  and $M$ is $m$ square (flat).
\end{itemize}

For a (possibly relativized) representation $M$, a formula $\phi$,  and an assignment $s\in {}^{\alpha}M$, we write
$M\models \phi[s]$ if $s$ satisfies $\phi$ in $M$.

Let $T$ be a countable consistent first order theory and let $\Gamma$ be a finitary  type that is realized in every model of $\Gamma$.
Then the usual Orey Henkin theorem
tells us that this type is necessarily principal, that is, it is isolated by a formula $\phi$.
We call such a $\phi$ an $m$ witness, if $\phi$ is built up of $m$ variables.
We say that $\phi$ is only a witness if is an $m$ witness for a theory containing
$m$ variables.

We denote the set of formulas in a given language by $\Fm$ and for a set of formula $\Sigma$ we write $\Fm_{\Sigma}$ for the Tarski-
Lindenbaum quotient (polyadic)
algebra.

Let $T$ be an $L_n$ theory. Then $T$ is atomic if the algebra 
$\Fm_T$ is atomic. This coincides with the usual definition of atomic theories 
(without resorting to the Tarski-Lindenbaum 
quotient algebra). Indeed if $\Fm_T$ is atomic, 
then for any formula $\alpha$ consistent with $T$, there is a formua $\beta$ consistent
with $T$ such that $T\models \beta\to \alpha$, and for any formula $\theta$  consitent 
with $T$, we have $T\models \beta\to \theta$ or $T\models \beta\to \neg \theta$.

A theorem of Vaught, using a direct application of the opmitting types theorem says, 
that in first order logic, atomic theories (here one requires that for every $n<\omega$, $\Nr_n\Fm_T$ is atomic), 
have atomic models.

Now it turns out that when we seek atomic models for atomic theories
in $L_n$, the situation is drastically different than usual first order logic.
We may not find one, even among the $n+3$
flat models. In other words, atomic models
 may not be found even if we consider clique guarded semantics.
We shall use that $\A\in S_c\Nr_n\CA_{m}$ if it has an $m$ complete 
flat representation which is an $m$ flat representation preserving infinitary meets carrying 
them to set theoretic intersection. This can be proved like its analogue for relation algebras \cite{HHbook2}.

The idea is that one neatly embeds $\A$ into the $m$ dimensional algebra $\D$, 
with universe $\{\phi^{M}: \phi\in L(A)_{\infty, \omega}^m\}$. Here 
$M$ is the complete $M$  flat representation and $\phi^M=\{s\in C^m(M): M\models \phi[s]\}$, 
where the satisfiability relation $\models$  is defined recursively the usual way. 
The operations of $\D$, in turn, are  defined as expected. 

More precisely, we have:

\begin{theorem}\label{OTT}
Assume that $2<n<\omega$.
\begin{enumarab}
\item There is a countable consistent atomic $L_n$ theory $T$
that has no $n+4$ flat atomic model

\item There is a countable consistent $L_n$ theory $T$ and a type $\Gamma$ such that
$\Gamma$ is realized in every square $n+3$ model, but does not have a witness.

\end{enumarab}
\end{theorem}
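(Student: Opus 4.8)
The plan is to read both clauses off the algebraic non‑atom‑canonicity and non‑complete‑neat‑embeddability facts already established in Theorems~\ref{blowupandblur} and~\ref{rainbow}, using the standard translation between countable atomic $L_n$ theories and countable atomic representable $\PEA_n$ algebras. First I would fix four bridging facts, all of which are cylindric‑like analogues of statements in \cite{HHbook2} and \cite{Sayed}: (a) an $L_n$ theory $T$ is atomic iff $\Fm_T$ is atomic, and (by the usual correspondence) every countable representable $\PEA_n$ is $\Fm_T$ for a countable consistent $T$; (b) an atomic model of $T$ is the same datum as a complete (relativized) representation of $\Fm_T$, and an $m$‑flat (resp. $m$‑square) atomic model is the same as an $m$‑flat (resp. $m$‑square) complete representation; (c) as recalled just before the theorem, $\A\in S_c\Nr_n\K_m$ iff $\A$ has an $m$‑complete flat representation, with the analogous square statement holding too; and (d) a complete relativized representation of an atomic algebra $\A$ induces one of its \d\ completion $\Cm\At\A$, and this extension preserves $m$‑flatness.

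For clause (1) I would take $\A$ to be the polyadic equality rainbow algebra built in the proof of Theorem~\ref{blowupandblur} (with $n+2$ greens and $n+1$ reds) and work with $\Tm\At\A$, which was shown there to be atomic and representable, in fact $\Tm\At\A\in\RPEA_n$. Being countable and non‑trivial it equals $\Fm_T$ for some countable consistent $L_n$ theory $T$, and $T$ is atomic by (a). If $T$ had an $n+4$‑flat atomic model $M$, then by (b) $M$ would give an $n+4$‑flat complete representation of $\Tm\At\A$, and by (d) this would extend to an $n+4$‑flat representation of $\Cm\At\A$, so $\Cm\At\A\in S\Nr_n\Sc_{n+4}$. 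But in the proof of Theorem~\ref{blowupandblur} the rainbow algebra $\PEA_{n+2,n+1}$ embeds into $\Cm\At\A$ and \pa\ has a \ws\ in the relevant finite atomic game on its $\Sc$‑reduct, so $\Cm\At\A\notin S\Nr_n\Sc_{n+4}$ (a variety, hence closed under $S$); contradiction, so $T$ has no $n+4$‑flat atomic model.

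For clause (2) I would take $\A$ to be the rainbow $\PEA_n$ of Theorem~\ref{rainbow}: countable, atomic, with $\Rd_{\Sc}\A\notin S_c\Nr_n\Sc_{n+3}$ and $\A\equiv\B$ for some completely representable $\B\in S_c\Nr_n\QEA_\omega$. Since complete representability entails representability and $\RPEA_n$ is a variety, $\A\in\RPEA_n$, so $\A\cong\Fm_T$ for a countable consistent $T$; put $\Gamma=\{\neg a:a\in\At\A\}$, the type of co‑atoms. Because $\A$ is atomic, $\sum\At\A=1$, hence $\prod_{a\in\At\A}\neg a=0$; if some formula $\phi$ isolated $\Gamma$ we would get $0\neq[\phi]_T\le\neg a$ for every atom $a$, whence $[\phi]_T\le 0$, absurd, so $\Gamma$ has no witness. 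Finally, if $\Gamma$ were omitted in some $n+3$‑square model $M$ of $T$, then every clique tuple of $M$ would satisfy an atom of $\A$, so $M$ would be an $n+3$‑square \emph{complete} representation of $\A$; by the square version of (c) this gives $\Rd_{\Sc}\A\in S_c\Nr_n\Sc_{n+3}$, contradicting Theorem~\ref{rainbow}. Hence $\Gamma$ is realized in every $n+3$‑square model yet has no witness.

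The hard part will be fact (d) together with its square analogue used in clause (2): one must verify carefully that omitting the co‑atom type in a merely \emph{relativized} ($m$‑flat, or $m$‑square) model still forces a genuine complete representation at the same level of relativization — of $\A$ in clause (2), and of the completion $\Cm\At\A$ in clause (1) — so that the game‑theoretic obstructions of Theorems~\ref{rainbow} and~\ref{blowupandblur} actually apply. In other words, the real content is proving the cylindric‑like ``$\Cm$‑versus‑$m$‑flat'' and ``square‑complete‑versus‑$S_c\Nr$'' correspondences (the analogues of the relation‑algebra results in \cite[Ch.~13]{HHbook2}); once those are in place the remainder is routine bookkeeping with the Tarski--Lindenbaum translation and with the definitions of $m$‑flat and $m$‑square models given above.
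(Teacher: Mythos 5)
Your proposal is correct and follows essentially the same route as the paper: clause (1) is read off the blow‑up‑and‑blur algebra of Theorem~\ref{blowupandblur} via the fact that a complete flat representation of the (term) algebra induces one of $\Cm\At\A$, and clause (2) is read off the rainbow algebra of Theorem~\ref{rainbow} with $\Gamma$ the non‑principal type of co‑atoms, exactly as in the paper. The bridging facts you isolate (Tarski--Lindenbaum translation, atomic model $=$ complete relativized representation, and the $m$‑flat/$m$‑square versions of the $S_c\Nr$ correspondence) are precisely the ones the paper invokes, citing the relation‑algebra analogues in \cite{HHbook2}.
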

\begin{proof}
\begin{enumarab}
\item
Let $\A$ be as above; then 
$\A$ is countable atomic and representable, but $\Cm\At\A\notin S\Nr_n\PEA_{n+4}$  
Let $\Gamma'$ be the set of  atoms.

We can and will assume that $\A$ is simple. Recall that it has countably many atoms, so we can assume that it is countable
by replacing it by its term algebra if necessary.

Then $\A=\Fm_T$ for some countable consistent $L_n$ theory $T$ and because $\A$ is atomic (as an expansion of a Boolean algebra),
and so  $T$ is atomic according to the above definition.

Now let $\Gamma=\{\phi: \phi_T\in \Gamma'\}$. Then we claim
that $\Gamma$ is realized in all $n+4$ flat models
models.  Consider such  a model $\M$ of $T$. If  $\Gamma$ is not realized in $\M$,
then this gives an $n+4$ complete flat  representation of $\A=\Fm_T$,
which is impossible,  because $\Cm\At\A$ is not in $S\Nr_n\PEA_{n+4}$.

Now  assume that  $M$ is an $n+4$ square atomic model of $T$.
Then $\Gamma$ is realized in $M$, and $M$ is atomic, so there must be a witness to $\Gamma$.

Suppose that $\phi$ is such a witness, so that $T\models \phi\to \Gamma$.
Then $\A$ is simple, and so we can assume
without loss of generality, that it is set algebra with a countable
base.

Let $\M=(M,R)$  be the corresponding model to this set algebra in the sense of \cite[section 4.3]{HMT2}.
Then $\M\models T$ and $\phi^{\M}\in \A$.
But $T\models \exists x\phi$, hence $\phi^{\M}\neq 0,$
from which it follows that  $\phi^{\M}$ must intersect an atom $\alpha\in \A$ (recall that the latter is atomic).
Let $\psi$ be the formula, such that $\psi^{\M}=\alpha$. Then it cannot
be the case that
that $T\models \phi\to \neg \psi$,
hence $\phi$ is not a  witness,
and we are done.

\item No we use the rainbow construction. Let $\A$ be the term algebra of $\PEA_{\N^{-1},\N}$.
Then $\A$ is an  atomic countable representable algebra, such that its $\Sc$ reduct, is not
in $S_c\Nr_n\Sc_{n+3}$ (for the same reasons as in the above rainbow proof,  namely,  \pe\ still can win all finite rounded games,
while \pa\ can win the game $F^{n+3}$, because $\A$ and the term algebra have the same
atom structure).

Assume that $\A=\Fm_T$, and let $\Gamma$ be the set $\{\phi: \neg \phi_T \text { is an atom }\}$.
Then $\Gamma$ is a non-principal type, because $\A$ is atomic, but it has no $n+3$ flat representation
omitting $\Gamma$,  for such a representation would necessarily yield a complete $n+3$ relativized representation of $\A,$ which in turn
implies that it would be in
$S_c\Nr_n\PEA_{n+3},$ and we know that this is not the case.
\end{enumarab}
\end{proof}

\section{First order definability of the class of neat reducts and $\Ra$ reducts,
another splitting technique}

Let $\K$ be any of cylindric algebra, polyadic algebra, with and without equality, or Pinter's substitution algebra.
We give a unified model theoretic construction, to show the following:
For $n\geq 3$ and $m\geq 3$, $\Nr_n\K_m$ is not elementary, and $S_c\Nr_n\K_{\omega}\nsubseteq \Nr_n\K_m.$

But first we give the general idea.
The idea at heart is Andr\'eka's splitting. A finite  atom structure,
in our case it will be a an $n$ dimensional cartesian square,   with accessibility relations corresponding to the concrete interpretations of
cylindrifiers and diagonal elements, is fixed in advance.
Then its atoms are
split twice.  Once, each atom is split into uncountably many, and once each into uncountably many except for one atom which
is only split  into {\it countably} many atoms. These atoms are called big atoms, which mean that they are cylindrically equivalent to their
original. This is a general theme in splitting arguments, as shown above.

The first splitting gives an algebra $\A$ that is a full neat reduct of an algebra in arbitrary extra dimensions;
the second gives an algebra $\B$ that is not a full neat reduct
of an algebra in just one extra dimensions, hence in any higher
dimensions.

Both algebras are representable, and elementary equivalent
 -in fact, it can be proved that their atom structures are $L_{\infty, \omega}$ equivalent -
because first order logic does not witness this infinite cardinality twist.

The Boolean reduct of $\A$ can be viewed as a countable direct product of disjoint Boolean relativizations of $\A$,
which are also atomic (denoted
below by $\A_u$).
Each component will be still uncountable; the product will be indexed by the elements of the atom structure.

The language of Boolean algebras can now be expanded
so that $\A$ is interpretable in an expanded structure $\P$,
based on the  same atomic Boolean product.

Now $\B$ can be viewed as obtained from $\P$, by replacing one of the components of the product with an elementary
{\it countable} Boolean subalgebra, and then giving it the same interpretation.
By the Feferman Vaught theorem (which says that replacing in a product one of its components by an elementary
equivalent one, the resulting product remains elementary equivalent to the original product) we have $\B\equiv \A$.

First order logic will not see this cardinality twist, but a suitably chosen term
not term definable in the language of
$\CA_{\alpha}$  will,
witnessing that the twisted algebra $\B$ is not a neat reduct.
The proof works if the atom that was split to countably many and uncountably
many atoms, is split into $\kappa$ and $\lambda$ many atoms,
where $\kappa>\lambda$ are any infinite cardinals.

A variant of the following lemma, is available in \cite{Sayed} with a sketch of proof; it is fully
proved in \cite{MLQ}. If we require that a (representable) algebra be a neat reduct,
then quantifier elimination of the base model guarantees this, as indeed illustrated below.

However, in \cite{Sayed} different relations symbols only had distinct interpretations, meaning that they could have non-empty intersections;
here we strengthen
this condition to that they have {\it disjoint} interpretations. We need this stronger
condition to show that our constructed algebras
are atomic.

\begin{lemma} Let $V=(\At, \equiv_i, {\sf d}_{ij})_{i,j<3}$ be a finite cylindric atom structure,
such that $|\At|\geq |{}^33.|$
Let $L$ be a signature consisting of the unary relation
symbols $P_0,P_1,P_2$ and
uncountably many ternary predicate symbols.
For $u\in V$, let $\chi_u$
be the formula $\bigwedge_{u\in V}  P_{u_i}(x_i)$.
Then there exists an $L$-structure $\M$ with the following properties:
\begin{enumarab}

\item $\M$ has quantifier elimination, i.e. every $L$-formula is equivalent
in $\M$ to a Boolean combination of atomic formulas.

\item The sets $P_i^{\M}$ for $i<n$ partition $M$, for any permutation $\tau$ on $3,$
$\forall x_0x_1x_2[R(x_0,x_1,x_2)\longleftrightarrow R(x_{\tau(0)},x_{\tau(1)}, x_{\tau(2)}],$

\item $\M \models \forall x_0x_1(R(x_0, x_1, x_2)\longrightarrow
\bigvee_{u\in V}\chi_u)$,
for all $R\in L$,

\item $\M\models  \forall x_0x_1x_2 (\chi_u\land R(x_0,x_1,x_2)\to \neg S(x_0,x_1,x_2))$
for all distinct ternary $R,S\in L$,
and $u\in V.$

\item For $u\in V$, $i<3,$
$\M\models \forall x_0x_1x_2
(\exists x_i\chi_u\longleftrightarrow \bigvee_{v\in V, v\equiv_iu}\chi_v),$

\item For $u\in V$ and any $L$-formula $\phi(x_0,x_1,x_2)$, if
$\M\models \exists x_0x_1x_2(\chi_u\land \phi)$ then
$\M\models
\forall x_0x_1x_2(\exists x_i\chi_u\longleftrightarrow
\exists x_i(\chi_u\land \phi))$ for all $i<3$
\end{enumarab}
\end{lemma}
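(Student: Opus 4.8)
I would build $\M$ as a ``rich'' homogeneous model, so that quantifier elimination and the homogeneity clause (6) come for free, while clauses (2)--(5) are imposed at the outset. Fix pairwise disjoint sets $P_0,P_1,P_2$ each of cardinality $\kappa$ for a large enough infinite $\kappa$ (any $\kappa\ge\aleph_0$ with $\kappa\ge|L|$ works; the intended application wants $\kappa$ uncountable), put $M=P_0\cup P_1\cup P_2$, and set $P_i^{\M}=P_i$; this gives the partition in (2). Each atom $u\in\At$ carries a coordinate triple $(u_0,u_1,u_2)\in{}^33$, and $\chi_u$ depends only on this triple; here the hypothesis $|\At|\ge|{}^33|$ is used to ensure the coordinate assignment may be taken onto ${}^33$. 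Impose the requirements: $R^{\M}\subseteq\bigcup_{u\in V}\{\bar a\in{}^3M:a_i\in P_{u_i},\,i<3\}$ for each ternary $R\in L$ (clause (3)); each $R^{\M}$ is closed under all permutations of its arguments (the rest of (2)); and $R^{\M}\cap S^{\M}=\emptyset$ for distinct ternary $R,S$ (the strong ``disjoint'' form of (4)). Clause (5) then follows from the definitions of $\chi_u$ and of the $\equiv_i$ of the cylindric atom structure $V$, since both $\exists x_i\chi_u$ and $\bigvee_{v\equiv_iu}\chi_v$ define $\{\bar a:a_j\in P_{u_j}$ for $j\ne i\}$.

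The remaining freedom --- which tuples go into each $R^{\M}$ --- is spent making $\M$ homogeneous. For each coordinate triple $w\in{}^33$ let $\mathcal T_w$ be the finite set of atomic types $\tau(x_0,x_1,x_2)$ consistent with (2)--(4) and forcing $\bigwedge_{i<3}P_{w_i}(x_i)$: such a $\tau$ records, for each pair $j<k$, whether $x_j=x_k$ (permitted only when $w_j=w_k$), and for each triple of variables with pairwise distinct entries, which (at most one) ternary symbol holds of it. The plan is to choose, disjointly and permutation-closed, the $R^{\M}$ so that every $\tau\in\bigcup_w\mathcal T_w$ is realized by $\kappa$-many tuples of $M$; a routine bookkeeping argument (using $\kappa\cdot\kappa=\kappa$ and $|L|\le\kappa$, and the fact that each $\tau$ names at most one ternary symbol per triple, so disjointness is preserved) shows this is possible.

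Given $\M$ so built, clause (1) --- quantifier elimination --- follows from a standard back-and-forth, or equivalently from the syntactic elimination test: for $\bar a,\bar b$ of the same atomic type and a new point $c$, finding $c'$ with $\bar ac$ and $\bar bc'$ of the same atomic type amounts to realizing finitely many types from the $\mathcal T_w$'s simultaneously on the $3$-subtuples through $c$, which the $\kappa$-fold richness allows; since $L$ is relational this yields full QE. Clause (6) is then read off from QE and richness: if $\M\models\exists x_0x_1x_2(\chi_u\wedge\phi)$, then by QE $\chi_u\wedge\phi$ is, inside the block $\chi_u$, a disjunction of types in $\mathcal T_u$, one of which, $\tau_0$, is realized and hence (by richness) realized $\kappa$-often; to verify $\forall x_0x_1x_2(\exists x_i\chi_u\leftrightarrow\exists x_i(\chi_u\wedge\phi))$ only the left-to-right direction needs argument, and given $\bar a$ with $a_j\in P_{u_j}$ for $j\ne i$ one produces the required $b$ by the extension property used in the back-and-forth, since the restriction of $\tau_0$ to the coordinates $\ne i$ imposes nothing on $(a_j)_{j\ne i}$ beyond $P$-membership once the disjunction is suitably refined.

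The main obstacle, and where care is needed, is exactly this last point: organizing $\mathcal T_w$ and the partition so that the ``amalgamation'' step behind QE goes through \emph{and} so that, for the $\phi$'s arising in (6), at least one realized disjunct of $\chi_u\wedge\phi$ is generic off of each coordinate $i$, uniformly in $i$. This is precisely why one strengthens the construction of \cite{Sayed} from merely \emph{distinct} to \emph{disjoint} interpretations of the ternary symbols --- disjointness keeps the type-analysis finite and the extension step uniform --- and it is also what will later make the resulting algebras atomic. I expect the bulk of the work to lie in verifying this homogeneity/extension property; the rest is bookkeeping.
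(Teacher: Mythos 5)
Your overall plan --- a partitioned universe, permutation-closed and pairwise disjoint interpretations of the ternary symbols confined to $\bigvee_{u}\chi_u$, every consistent atomic $3$-type realized $\kappa$-often, and quantifier elimination by back-and-forth --- is the standard construction; since the paper itself offers no argument for this lemma beyond the citation to \cite{MLQ}, the relevant comparison is with that construction, and for clauses (1)--(5) your route is essentially the same and is fine.

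The gap is in your treatment of clause (6), and it is not merely presentational. What your richness actually delivers is a type-by-type extension property: for each consistent atomic type $\tau\in\mathcal T_u$ and each tuple realizing the restriction of $\tau$ to the coordinates other than $x_i$, a witness for $x_i$ exists. To pass from this to the formula-level biconditional $\exists x_i\chi_u\leftrightarrow\exists x_i(\chi_u\wedge\phi)$ you assert that ``the restriction of $\tau_0$ to the coordinates $\ne i$ imposes nothing on $(a_j)_{j\ne i}$ beyond $P$-membership once the disjunction is suitably refined.'' That is false whenever $\phi$ itself constrains the coordinates other than $x_i$: take $\phi$ to be $x_0=x_1$, take $u$ with $u_0=u_1$, and take $i=2$. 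Then $\chi_u\wedge\phi$ is satisfiable, and $\exists x_2\chi_u$ holds at every pair $(a_0,a_1)\in P_{u_0}\times P_{u_1}$, but $\exists x_2(\chi_u\wedge\phi)$ fails at any such pair with $a_0\neq a_1$; every consistent atomic type extending $\chi_u\wedge\phi$ contains $x_0=x_1$, so no refinement of the disjunction can help. (A relational variant such as $\phi\equiv R(x_0,x_0,x_1)$ causes the same failure if $R$ is permitted on non-injective triples.) Hence clause (6) cannot hold for arbitrary $L$-formulas $\phi$ in any structure whose $P_j$ are not singletons, and no construction closes this gap without modifying the statement. You must either restrict $\phi$ to the class for which every satisfiable completion of $\chi_u\wedge\phi$ inside $\chi_u$ imposes nothing off coordinate $i$ --- in particular to the ternary relation symbols $R$, which is all that the application in theorem \ref{neatreduct} uses, and even there you must fix the diagonal behaviour of the $R$'s consistently with (6) --- or build the equality pattern of $u$ into $\chi_u$ from the start. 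As written, your proof of (6) does not survive the first example above.
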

\begin{proof}\cite{MLQ}
\end{proof}
\begin{lemma}\label{term}
\begin{enumarab}

\item For $\A\in \CA_3$ or $\A\in \Sc_3$, there exist
a unary term $\tau_4(x)$ in the language of $\Sc_4$ and a unary term $\tau(x)$ in the language of $\CA_3$
such that $\CA_4\models \tau_4(x)\leq \tau(x),$
and for $\A$ as above, and $u\in \At={}^33$,
$\tau^{\A}(\chi_{u})=\chi_{\tau^{\wp(^33)}(u).}$

\item For $\A\in \PEA_3$ or $\A\in \PA_3$, there exist a binary
term $\tau_4(x,y)$ in the language of $\Sc_4$ and another  binary term $\tau(x,y)$ in the language of $\Sc_3$
such that $PEA_4\models \tau_4(x,y)\leq \tau(x,y),$
and for $\A$ as above, and $u,v\in \At={}^33$,
$\tau^{\A}(\chi_{u}, \chi_{v})=\chi_{\tau^{\wp(^33)}(u,v)}.$


\end{enumarab}
\end{lemma}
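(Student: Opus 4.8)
The plan is to prove Lemma \ref{term} in three moves: write down explicit terms $\tau,\tau_4$ in each case, verify the inequality $\CA_4\models\tau_4\leq\tau$ (resp.\ $\PEA_4\models\tau_4(x,y)\leq\tau(x,y)$) by equational manipulation, and finally compute the value of $\tau$ on the characteristic elements $\chi_u$ directly from the description of the base model $\M$ furnished by the previous lemma. For the first move I would produce a $3$-dimensional term $\tau$ whose induced operation on the cylindric set algebra $\wp({}^33)$ carries (tuples of) atoms to atoms, i.e.\ induces a (partial) map $\tau^{\wp({}^33)}$ on the $27$ cells, together with a $4$-dimensional companion $\tau_4$ obtained from $\tau$ by routing its auxiliary index through the genuinely new index $3$ and cutting down by suitable $\diag 3 i$- and $\cyl 3$-factors. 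Concretely, in the cylindric/polyadic-equality case one can take for $\tau$ a term defining a variable-transposition substitution --- recall that for dimension $\geq 3$ such substitutions are term-definable from $\cyl{}$ and $\diag{}{}$, and for part (ii) one takes a binary composition of such, so that the value on a pair of cells is again a cell --- and for $\tau_4$ its $\Sc_4$-refinement in which the auxiliary index is $3$; for the substitution-algebra reducts one uses the corresponding terms in the weaker signature. The exact shape of $\tau_4$ is dictated by what the subsequent twisting argument needs (that $\tau_4$ genuinely separates $\A$ from its twist $\B$), subject to the constraint in the next step.

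The second move is to check $\CA_4\models\tau_4(x)\leq\tau(x)$ and the binary analogue. Since every operator involved is monotone, this reduces to the observation that re-routing the auxiliary index through the fresh dimension $3$ and intersecting with the extra diagonal/cylindrifier factors can only decrease the value; this follows from $\cyl 3\cyl 3 y=\cyl 3 y$, from $\cyl 3(\diag 3 i\cdot y)\leq y$ where applicable, and from monotonicity, all of which are valid in every $\CA_4$. This is a finite equational computation.

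The third move computes $\tau^\A(\chi_u)$. By construction $\A$ is a set algebra over the model $\M$, so the $\chi_u$ ($u\in{}^33$) are $27$ pairwise disjoint elements forming a partition of the unit, and they satisfy $\cyl i\chi_u=\sum_{v\equiv_i u}\chi_v$ (clause (5) of the previous lemma) together with the corresponding clauses governing $\diag i j$ and the substitutions (clauses (2), (5), (6)). Feeding the short explicit term $\tau$ through these identities yields $\tau^\A(\chi_u)=\chi_{\tau^{\wp({}^33)}(u)}$ --- for instance, with $\tau=\s_{[0,1]}$ one reads off $\tau^{\wp({}^33)}(u)=u\circ[0,1]$ --- and the binary case is identical, carrying two cells through at once.

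The main obstacle I anticipate is pinning down $\tau_4$ so that it meets both requirements simultaneously: it must be a genuine shrinking of $\tau$ supported by the new dimension (hence not simply a copy of $\tau$), so that it can later witness the non-neat-reductness of the twisted algebra, while still obeying $\CA_4\models\tau_4\leq\tau$. Balancing these two demands is where the real content lies; once the terms are correctly chosen, the two displayed properties are routine finite verifications. A secondary point to handle carefully is that the subalgebra of $\A$ generated by $\{\chi_u:u\in{}^33\}$ need not be a copy of $\wp({}^33)$ as a full cylindric algebra --- the diagonals do not restrict cleanly to it --- so the identity $\tau^\A(\chi_u)=\chi_{\tau^{\wp({}^33)}(u)}$ must be established by direct computation in $\M$ (term by term, using the clauses of the previous lemma) rather than by transport along an abstract isomorphism.
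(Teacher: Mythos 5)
Your overall strategy coincides with the paper's: its entire proof of this lemma consists of exhibiting the witness terms and leaving the two verifications (the inequality in $\CA_4$, resp.\ $\PEA_4$, and the evaluation on the $\chi_u$ via the clauses of the preceding lemma) as routine equational computations. Your candidates for part (i) are exactly the paper's choices, namely $\tau_4(x)={}_3{\sf s}(0,1)x$, the transposition implemented through the fresh index $3$, and $\tau(x)={\sf s}_1^0{\sf c}_1x\cdot {\sf s}_0^1{\sf c}_0x$, its three-dimensional majorant; these agree on the atoms of $\wp({}^33)$ precisely as you describe, and your remark that the identity on the $\chi_u$ must be checked directly in $\M$ rather than transported along an isomorphism is well taken.

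The one genuine gap is part (ii). A ``binary composition of transposition substitutions'' cannot serve there: in $\PA_3$ and $\PEA_3$ the transpositions ${\sf s}_{[i,j]}$ are primitive operations of the signature, so any term built from them is already term-definable in three dimensions and would be useless for the subsequent twisting argument --- this is exactly the paper's remark that the polyadic case ``is a little bit more complicated because the former term above is definable.'' What is needed instead is a relative-product-style term that genuinely consumes the spare dimension: the paper takes $\tau_4(x,y)={\sf c}_3({\sf s}_3^1{\sf c}_3x\cdot {\sf s}_3^0{\sf c}_3y)$, relational composition routed through index $3$, together with the three-dimensional upper approximation $\tau(x,y)={\sf c}_1({\sf c}_0x\cdot {\sf s}_1^0{\sf c}_1y)\cdot {\sf c}_1x\cdot {\sf c}_0y$. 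Once these terms are substituted for your underspecified candidate, your second and third moves (monotonicity plus standard cylindric identities for the inequality, and clause-by-clause evaluation on pairs $\chi_u,\chi_v$ for the atom computation) go through as you outline.
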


\begin{proof}

\begin{enumarab}
\item For cylindric algebras $\tau_4(x)={}_3 {\sf s}(0,1)x$ and $\tau(x)={\sf s}_1^0c_1x\cdot {\sf s}_0^1{\sf c}_0x$.

\item For polyadic algebras, it is a little bit more complicated because the former term above is definable.
In this case we have $\tau(x,y)={\sf c}_1({\sf c}_0x\cdot {\sf s}_1^0{\sf c}_1y)\cdot {\sf c}_1x\cdot {\sf c}_0y$,
and $\tau_4(x,y)={\sf c}_3({\sf s}_3^1{\sf c}_3x\cdot {\sf s}_3^0{\sf c}_3y)$.

\end{enumarab}
\end{proof}

The terms $\tau(x)$ and $\tau(x,y)$ do not use any spare dimensions, and they approximate the terms that
do. The two former terms may be called {\it approximate witnesses}, while the last two may be called {\it $4$ witnesses}.

On the `global' level, namely, in the complex algebra of the finite
splitted atom structure which is a $3$ dimensional set algebra with base $3$, they are equal, the approximate witnesses
are $4$ witness. The small algebra does not see the $4$th dimension.

In the splitted algebras they are not, they only {\it strictly dominate} the terms that resort to one extra
dimension.  The latter terms will detect the cardinality twist that first order logic misses out on.
If term definable (in the case we have a full neat reduct of an algebra in only one extra dimension)
they carry one or more  uncountable component to an uncountable one,
and this is not possible if the smaller algebra were a neat
reduct, because in such algebras, the latter component is forced to be countable.

\begin{theorem}\label{neatreduct}
There exists an atomic $\A\in \Nr_3\QEA_{\omega}$
with an elementary equivalent polyadic equality  uncountable algebra $\B$
which is strongly representable, and its $\Sc$ reduct is not in $\Nr_3\Sc_4$.
Furthermore, the latter is a complete subalgebra of the former. In particular, for any class $\K$ between $\Sc$ and $\PEA$,
and for any $m>3$, the class $\Nr_3\K_m$
is not elementary.
\end{theorem}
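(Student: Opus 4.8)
The plan is to build the two algebras $\A$ and $\B$ explicitly by the double-splitting technique sketched in the preceding paragraphs, and then verify the four assertions of the theorem (that $\A\in\Nr_3\QEA_\omega$ is atomic, that $\B\equiv\A$, that $\B$ is strongly representable, and that $\Rd_{\Sc}\B\notin\Nr_3\Sc_4$) one at a time. First I would fix the finite cylindric atom structure $V=({}^33,\equiv_i,{\sf d}_{ij})_{i,j<3}$ and invoke the Lemma preceding the theorem to obtain an $L$-structure $\M$ with quantifier elimination, where $L$ has unary predicates $P_0,P_1,P_2$ and uncountably many ternary predicates, arranged so that the ternary predicates have pairwise disjoint interpretations and each is contained in $\bigvee_{u\in V}\chi_u$. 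From $\M$ one reads off a concrete (hence representable) polyadic equality set algebra: $\A$ is the subalgebra of the full set algebra on $\M$ generated by the interpretations $R^\M$ of the ternary symbols together with the $\chi_u$'s. Quantifier elimination of $\M$ is exactly what forces $\A$ to be a \emph{full} neat reduct: every element of an algebra in $\QEA_\omega$ built from $\M$ that is fixed by all cylindrifiers ${\sf c}_i$, $i\geq 3$, reduces by QE to a Boolean combination of atomic $3$-variable formulas, hence already lies in $\A$; so $\A=\Nr_3\C$ for the natural $\C\in\QEA_\omega$ associated to $\M$. Atomicity of $\A$ follows from the disjointness clause (clause 4 of the Lemma): the atoms are exactly the nonzero sets of the form $\chi_u\cap R^\M$ and $\chi_u\cap\bigcap_R -R^\M$, and the stronger disjointness hypothesis (as the excerpt stresses) is what makes these genuinely atoms rather than merely a separating family.

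Next I would construct $\B$ by the Feferman--Vaught move. Writing the Boolean reduct of $\A$ as the product $\prod_{u\in V}\A_u$ of its relativizations $\A_u=\Rl_{\chi_u}\A$ (each uncountable and atomic), I would expand the signature of Boolean algebras by the handful of extra operations needed so that the cylindric/polyadic structure of $\A$ becomes first-order interpretable in the expanded product structure $\P$ — this is the standard trick that lets one treat $\cyl i$, $\diag{i}{j}$ and the substitutions as definable relations on the product. Then $\B$ is obtained from $\P$ by replacing exactly one factor $\A_{u_0}$ by a countable elementary subalgebra $\A_{u_0}'\preceq\A_{u_0}$ (with the same induced operations), where $u_0$ is chosen to be an atom fixed by ${\sf c}_0$ (or whichever index the witness term will exploit). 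The Feferman--Vaught theorem gives $\P'\equiv\P$ and hence, pulling back through the interpretation, $\B\equiv\A$; in particular $\B$ is atomic, and since its operations are inherited the way they are, $\A$ is a complete subalgebra of $\B$ (the sums computed in $\B$ agree with those in $\A$ on the common elements). Strong representability of $\B$: $\B$ is still a concrete set algebra — it sits inside the set algebra on the disjoint union of the base sets, using the countable base for the $u_0$-block — so $\B\in\RQEA_3$, and atomicity together with the fact that the representation is complete on each block gives strong representability.

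The heart of the matter — and the step I expect to be the main obstacle — is showing $\Rd_{\Sc}\B\notin\Nr_3\Sc_4$. Here I would use the witness terms from Lemma~\ref{term}: the genuine $4$-dimensional witness $\tau_4(x)={}_3{\sf s}(0,1)x$ and its approximation $\tau(x)={\sf s}_1^0{\sf c}_1x\cdot{\sf s}_0^1{\sf c}_0x$, with $\Sc_4\models\tau_4(x)\leq\tau(x)$ and, crucially, $\tau^{\A}(\chi_u)=\chi_{\tau^{\wp({}^33)}(u)}$ — that is, on the "global" finite set algebra the approximate and the real witness agree. The argument is: if $\Rd_{\Sc}\B$ were $\Nr_3\D$ for some $\D\in\Sc_4$, then for each atom $\chi_u\cap R^\M$ below $\chi_u$ the element $\tau_4^{\D}$ would be definable inside $\B$ and would have to land inside the appropriate block $\chi_v$ with $v=\tau^{\wp}(u)$; tracing where the uncountably many ternary-predicate atoms of a generic block go, one sees that $\tau$ (hence any term dominating $\tau_4$) must carry that block \emph{onto} the block $\chi_v$, forcing $\chi_v$ to be split into uncountably many atoms as well. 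But by the construction $v$ can be chosen to be (or to force through the accessibility relation) the special atom $u_0$ whose block was deliberately thinned to countably many atoms — contradiction. Making this last "tracing" rigorous is the delicate part: one must check that no $3$-variable term (no approximate witness) can do the job $\tau_4$ does on a thinned block — precisely because on the small algebra $\tau$ and $\tau_4$ coincide but the countable/uncountable cardinality discrepancy is invisible to the $\Sc_3$ (indeed first-order) language — and this is exactly where the cardinality twist is exploited. Finally, the blanket statement "for any $\K$ between $\Sc$ and $\PEA$ and any $m>3$, $\Nr_3\K_m$ is not elementary" follows formally: $\A,\B\in\Nr_3\QEA_m$ would give a contradiction with $\Rd_{\Sc}\B\notin\Nr_3\Sc_4\supseteq\Nr_3\K_m$ for $m\geq4$ (using $\Nr_3\K_m\subseteq\Nr_3\K_4$ via cylindrifying away dimensions $4,\dots,m-1$, and $\Nr_3\K_4\subseteq\Nr_3\Sc_4$), while $\B\equiv\A$ and $\A\in\Nr_3\K_m$ exhibit the non-elementariness.
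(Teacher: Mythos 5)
Your overall architecture matches the paper's: the quantifier-eliminable structure $\M$ over the atom structure $V={}^33$, the product decomposition $\prod_{u\in V}\A_u$ with the algebra interpretable in an expanded Boolean product $\P$, the Feferman--Vaught replacement of a component by a countable elementary subalgebra, and the witness terms of Lemma~\ref{term} detecting the cardinality twist. However, there are two genuine gaps. First, you replace \emph{exactly one} factor $\A_{u_0}$ by a countable elementary subalgebra. This does not work for the algebra $\B$ you need, which must be a polyadic equality algebra: the signature contains the transposition substitutions ${\sf s}_{[i,j]}$, and these act on the product by permuting the components, ${\sf s}_{[i,j]}\langle x_u\rangle_{u\in V}=\langle x_{u\circ[i,j]}\rangle_{u\in V}$. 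If only the $v$-component is thinned while some $\A_{{\sf s}_{[i,j]}v}$ remains uncountable, then closing under ${\sf s}_{[i,j]}$ forces the $v$-component of the generated subalgebra back up to uncountable cardinality, and the contradiction evaporates. The paper therefore thins the entire orbit $\{v\}\cup\{{\sf s}_{[i,j]}v: i\neq j<3\}$ simultaneously (the set $J$ in its proof), and explicitly flags this as the reason. Your construction as stated would only establish the result for the $\Sc$/$\CA$ signature, not for the polyadic equality algebra $\B$ the theorem asserts.

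Second, the step you yourself identify as delicate --- that the $4$-witness term must carry an uncountable block onto the thinned block --- is left as a heuristic "tracing" argument, and the unary term ${}_3{\sf s}(0,1)x$ alone does not obviously deliver it. The paper closes this gap by imposing a group structure on the uncountable index set $I$ of the ternary relation symbols, with $R_i\circ R_j=R_{i+j}$, so that the binary $4$-witness term satisfies $\tau_4^{\D}(f(R_i),f(R_j))=f(R_{i+j})$; as $i,j$ range over $I$ the values $i+j$ exhaust $I$, which pins down uncountably many distinct elements that must lie in the thinned component of $\Nr_3\D$. Without some such explicit mechanism (group structure or an equivalent combinatorial device on the relation symbols), the claim that "$\tau$ must carry that block onto the block $\chi_v$" is not justified, since an abstract term need only produce \emph{some} elements below $\chi_v$, not uncountably many distinct ones.
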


\begin{proof} Let $\L$ and $\M$ as above. Let
$\A_{\omega}=\{\phi^M: \phi\in \L\}.$
Clearly $\A_{\omega}$ is a locally finite $\omega$-dimensional cylindric set algebra.

We can further assume that the relation symbols are indexed by an uncountable set $I$.
and that  there is a group structure on $I$, such that for distinct $i\neq j\in I$,
we have $R_i\circ R_j=R_{i+j}$.

Take $\At=({}^33, \equiv_i, \equiv_{ij}, {\sf d}_{ij})_{i,j\in 3}$, where
for $u,v\in \At$ and $i,j\in 3$,  $u\equiv_i v$ iff $u$ and $v$ agree off $i$ and $v\equiv_{ij}u$ iff $u\circ [i,j]=v$.
We denote $^33$ by $V$.

By the symmetry condition we have $\A$ is a $\PEA_3$, and
$\A\cong \Nr_3\A_{\omega}$, the isomorphism is given by
$\phi^{\M}\mapsto \phi^{\M}.$

In fact, $\A$ is not just a polyadic equality algebras, it is also closed under all first order definable
operations using extra dimensions for quantifier elimination in $\M$ guarantees that this map is onto, so that $\A$ is the full  neat reduct.

For $u\in {}V$, let $\A_u$ denote the relativization of $\A$ to $\chi_u^{\M}$
i.e $\A_u=\{x\in A: x\leq \chi_u^{\M}\}.$ Then $\A_u$ is a Boolean algebra.
Furthermore, $\A_u$ is uncountable and atomic for every $u\in V$
because by property (iv) of the above lemma,
the sets $(\chi_u\land R(x_0,x_1,x_2)^{\M})$, for $R\in L$
are disjoint of $\A_u$. It is easy to see that $\A_u$ is actually isomorphic to the finite co-finite Boolean algebra on  a set of cardinality $I$.

Define a map $f: \Bl\A\to \prod_{u\in {}V}\A_u$, by
$f(a)=\langle a\cdot \chi_u\rangle_{u\in{}V}.$
We expand the language of the Boolean algebra $\prod_{u\in V}\A_u$ by constants in
and unary operations, in such a way that
$\A$ becomes interpretable in the expanded structure.

Let $\P$ denote the
following structure for the signature of Boolean algebras expanded
by constant symbols $1_u$, $u\in {}V$ and ${\sf d}_{ij}$, and unary relation symbols
${\sf s}_{[i,j]}$ for $i,j\in 3$:

\begin{enumarab}
\item The Boolean part of $\P$ is the Boolean algebra $\prod_{u\in {}V}\A_u$,

\item $1_u^{\P}=f(\chi_u^{\M})=\langle 0,\cdots0,1,0,\cdots\rangle$
(with the $1$ in the $u^{th}$ place)
for each $u\in {}V$,

\item ${\sf d}_{ij}^{\P}=f({\sf d}_{ij}^{\A})$ for $i,j<\alpha$.

\item ${\sf s}_{[i,j]}^{\P}(x)= {\sf s}_{[i,j]}^{\P}\langle x_u: u\in V\rangle= \langle x_{u\circ [i,j]} : u\in V\rangle.$

\end{enumarab}

Define a map $f: \Bl\A\to \prod_{u\in {}V}\A_u$, by
$$f(a)=\langle a\cdot \chi_u\rangle_{u\in{}V}.$$

Then there are quantifier free formulas
$\eta_i(x,y)$ and $\eta_{ij}(x,y)$ such that
$$\P\models \eta_i(f(a), b)\text {  iff } b=f({\sf c}_i^{\A}a),$$
and
$$\P\models \eta_{ij}(f(a), b)\text { iff } b=f({\sf s}_{[i,j]}a).$$
The one corresponding to cylindrifiers is exactly like the $\CA$ case, the one corresponding to substitutions in $y={\sf s}_{[i,j]}x.$
Now, like the $\CA$ case, $\A$ is interpretable in $\P$, and indeed the interpretation is one dimensional and quantifier free.

For $v\in V$, let $\B_v$ be a complete countable elementary subalgebra of $\A_v$.
Then proceed like the $\CA$ case, except that we take a different product, since we have a different atom structure, with unary relations
for substitutions:
Let $u_1, u_2\in V$ and let $v=\tau(u_1,u_2)$, as given in the above lemma.
Let $J=\{u_1,u_2, {\sf s}_{[i,j]}v: i\neq  j<3\}$.
Let
$$\B=((\A_{u_1}\times \A_{u_2}\times \B_{v}\times \prod_{i,j<3, i\neq j} \B_{{\sf s}_{[i,j]}v}\times\prod_{u\in V\sim J} \A_u), 1_u, {\sf d}_{ij}, {\sf s}_{[i,j]}x)$$
inheriting the same interpretation. Then by the Feferman Vaught theorem,
which says that replacing a component in a possibly infinite product by  elementary equivalent
algebra, then the resulting new product is elementary equivalent to the original one, so that $\B\equiv \P$,
hence $\B\equiv \A$. (If a structure is interpretable in another structure then any structure
elementary equivalent to the former structure is elementary equivalent to the last).

In our new product we made all the permuted versions of $\B_v$ countable, so that $B_v$ {\it remains} countable,
because substitutions corresponding to transpositions
are present in our signature, so if one of the permuted components is uncountable, then $\B_{v}$ would be uncountable, and we do not want that.

The contradiction follows from the fact that had  $\B$ been a neat reduct, say $\B=\Nr_3\D$
then the term $\tau_3$ as in the above lemma, using $4$ variables, evaluated in $\D$ will force the component $\B_v$ to be uncountable,
which is not the case by construction,
indeed $\tau_3^{\D}(f(R_i), f(R_j))=f(R_{i+j})$.

\end{proof}

We now show that $\At\A\equiv_{\infty}\At\B$, we devise a pebble game similar to the rainbow pebble game
but each player has the option to choose an element from {\it both} structures,
and not just stick to one so that it is {\it a back and forth game} not just a forth game.

Pairs of pebbles are outside the board.
\pa\ as usual starts the game by placing a pebble on an element of one of the structures. \pe\
responds by placing the other pebble on the an element on the other structure.
Between them they choose an atom $a_i$ of $\At\A$
and an atom  $b_i$ of $\At\B$, under the restriction that player \pe\
must choose from the other structure from player \pa\ at each step.
A win for \pe\ if the binary relation resulting from the choices of the two players $R=\{(a,b): a\in \At(\A), b\in \At(\B)\}$ is a partial isomorphism.

At each step, if the play so far $(\bar{a}, \bar{b})$ and \pa\ chooses an atom $a$
in one of the structures, we have one of two case.
Either $a.1_u=a$ for some $u\neq Id$
in which case
\pe\ chooses the same atom in the other structure.
Else $a\leq 1_{Id}$
Then \pe\ chooses a new atom below $1_{Id}$
(distinct from $a$ and all atoms played so far.)
This is possible since there finitely many atoms in
play and there are infinitely many atoms below
$1_{u}$.
This strategy makes \pe\ win, since atoms below $1_u$ are cylindrically equivalent to $1_u$.
Let $J$ be a back and forth system which exists.
Order $J$ by reverse inclusion, that is $f\leq g$
if $f$ extends $g$. $\leq$ is a partial order on $J$.
For $g\in J$, let $[g]=\{f\in J: f\leq g\}$. Then $\{[g]: g\in J\}$ is the base of a
topology on
$J.$

Let $\C$ be the complete
Boolean algebra of regular open subsets of $J$ with respect to the topology
defined on $J.$
Form the Boolean extension $\M^{\C}.$
We want to define an isomorphism in $\M^{\C}$ of $\breve{\A}$ to
$\breve{\B}.$
Define $G$ by
$||G(\breve{a},\breve{b})||=\{f\in {J}: f(a)=b\}$
for $c\in \A$ and $d\in \B$.
If the right-hand side,  is not empty, that is it contains a function $f$, then let
$f_0$ be the restriction of $f$ to the substructure of $\A$ generated by $\{a\}$.
Then $f_0\in J.$ Also $\{f\in J:  f(c)=d\}=[f_0]\in \C.$
$G$ is therefore a $\C$-valued relation. Now let $u,v\in \M$.
Then
$||\breve{u}=\breve{v}||=1\text { iff }u=v,$
and
$||\breve{u}=\breve{v}||=0\text { iff } u\neq v$
Therefore
$||G(\breve{a},\breve{b})\land G(\breve{a},\breve{c})||\subseteq ||\breve{b}=\breve{c}||.$
for $a\in \A$ and $b,c\in \B.$
So ``$G$ is a function." is valid.
It is one to one because its converse is also a function.
(This can be proved the same way).
Also $G$ is surjective.

One can alternatively show that $\A\equiv_{\infty\omega}\B$ using "soft model theory" as follows:
Form a Boolean extension $\M^*$ of the universe $\M$
in which the cardinalities of $\A$ and $\B$ collapse to
$\omega$.  Then $\A$ and $\B$ are still back and forth equivalent in $\M^*.$
Then $\A\equiv_{\infty\omega}\B$ in $\M^*$, and hence also in $\M$
by absoluteness of $\models$.

Consider the two relations $\cong$ (isomorphism) and $\equiv$ (elementary equivalence)
between
structures. In one sense isomorphism is a more intrinsic property of structures,
because it is defined directly in terms of
structural properties. $\equiv$, on the other hand, involves a (first order) language.
But in another sense elementary equivalence is more
intrinsic because the existence of an isomorphism can depend on some
subtle questions about the surrounding universe of sets.

For example if $\M$ is a transitive model  of set theory containing vector
spaces $V$ and $W$ of dimensions $\omega$ and $\omega_1$ over
the same countable field, then $V$ and $W$ are not isomorphic in $\M$,
but they are isomorphic in an extension of $\M$
obtained by collapsing the cardinal $\omega_1$ to $\omega.$
By contrast, the question whether structures $\A$ and $\B$ are
elementary equivalent depends only on
$\A$ and $\B$, and not on sets around them.
Then $\equiv_{\infty}$ relation which coincides with so called Boolean-isomorphisms
as illustrated above.  This notion hovers between these two notions, and is
purely structural. It can be characterized by back and forth systems.

In \cite{r} Robin Hirsch
claims to prove that $\Ra\CA_k$ is not elementary for $k\geq 5$. The proof therein has a serious mistake, and the result it provides
is weaker, strictly so \footnote{This loophole in the proof was pointed out to Robin Hirsch by the author,
and an errata already accepted, will  appear soon in the Journal of Symbolic Logic recounting the whole story}.
We conjecture that our ideas above can also work for relation algebras,
but it seems to us that in this context delivering concrete witnesses
and a finite atom structure is more complicated.
So in our next theorem we stipulate the existence of both an approximate witness and witness that coincide
on an atom structure $\At$ that remains unknown to us so far.

However,
we hasten to add that we strongly conjecture that the $k$ witness can be found among Jonsson's $Q's$.
Such operations are generalizations of the operation of composition
and are {\it not term definable} in the language of $\RA$s, which is a necessary condition for the argument to
work.

What we can also say, is that such an atom structure $\At$, still finite and (strongly) representable,
will be much more complicated  than the simple $^33$,  but the idea remains the same.

{\bf Splitting the atoms
of $\At$ twice as we did before, obtaining two elementary equivalent relation algebras
one in $\Ra\CA_{\omega}$ and the other not in $\Ra\CA_k$, for some finite
$k\geq 5$, pending on the $k$ witness.}

\begin{theorem} Assume that there exists  a finite cylindric algebra atom structure $\At$
of dimension $3$, $\tau_k$ a $\CA_k$
term that is not term definable in $\RA$ so that $k\geq 4$, and
a $\CA_3$ term $\tau$ term (hence definable in $\RA$) such that
$\CA_k\models \tau_k(u_1,\ldots u_m)\leq \tau(u_1,\ldots u_m)$.

Assume further that there exist $u_1, \ldots, u_m\in \At$ such that
$$\tau^{\A}(\chi_{u_1}, \chi_{u_2},\ldots,  \chi_{u_m})=\chi_{\tau^{\At}(u_1, \ldots, u_m)}.$$
Then $\Ra\CA_k$ is not elementary.
\end{theorem}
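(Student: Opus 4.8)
The plan is to mimic the proof of Theorem~\ref{neatreduct}, with the relation algebra reduct operator $\Ra$ in place of the neat reduct operator $\Nr_3$, the $\CA_3$ term $\tau$ in place of the approximate witness, and the $\CA_k$ term $\tau_k$ in place of the $4$ witness $\tau_4$. Since $\tau$ is $\RA$ term definable it is a derived operation of every $\Ra\CA_3$ and so invisible to the cardinality twist; since $\tau_k$ maps $2$ dimensional elements to $2$ dimensional elements but is \emph{not} $\RA$ term definable (this is the role Jónsson's $Q$ operations are expected to play), $\tau_k^{\D}$ is a genuine extra operation on $\Ra\D$ for $\D\in\CA_k$, not expressible from composition, converse and identity, and it is this operation --- unavailable in the $\RA$ signature, hence in first order logic over that signature --- that will separate our two elementarily equivalent algebras.

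First I would fix, as in (a suitable generalisation of) the lemma preceding Theorem~\ref{neatreduct}, a base $L$ structure $\M$ with quantifier elimination built over the given $\At$ in place of ${}^33$, where $L$ has unary symbols $P_0,P_1,P_2$ and uncountably many relation symbols of the relevant rank whose index set $I$ carries an abelian group structure $(I,+)$ for which the composition term interprets the composite of $R_i$ and $R_j$ as $R_{i+j}$ whenever $i\neq j$. Let $\A_\omega=\{\phi^{\M}:\phi\in L\}\in\Lf_\omega$ and put $\R_1=\Ra\A_\omega$; quantifier elimination makes $\R_1$ the full $\Ra$ reduct of $\A_\omega$, so $\R_1$ is atomic with atom structure the ``uncountable splitting'' of $\Ra\At$ (each atom split into $|I|$ copies), $\R_1\in\Ra\CA_\omega\subseteq\Ra\CA_k$, and the term identities assumed for $\At$ transfer to $\R_1$ on the elements $\chi_{u_i}$ (since, as is standard, $\Ra\A_\omega=\Ra\Nr_3\A_\omega$ and $\Nr_3\A_\omega$ is the split of $\At$). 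Next, exactly as in Theorem~\ref{neatreduct}, write the Boolean reduct of $\R_1$ as a product $\prod_u\A_u$ over the atoms $u$ of $\Ra\At$, each $\A_u$ the uncountable atomic finite--cofinite algebra on $I$; expand the signature by the constants $1_u$, the identity ${\sf d}_{01}$, and unary operations coding converse and the substitutions used to define composition, making $\R_1$ one dimensional quantifier free interpretable in the resulting $\P$; then, for the single atom $v=\tau^{\At}(u_1,\dots,u_m)$, replace $\A_v$ together with its converse copy by a \emph{countable} complete elementary Boolean subalgebra, obtaining $\R_2$ with the inherited interpretation. By the Feferman--Vaught theorem $\R_2\equiv\R_1$ (running instead the pebble game of the proof of Theorem~\ref{neatreduct} even yields $\At\R_2\equiv_{\infty\omega}\At\R_1$), and since $\RRA$ is a variety containing $\R_1$, the algebra $\R_2$ is also representable.

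It remains to show $\R_2\notin\Ra\CA_k$. Suppose $\R_2=\Ra\D$ for some $\D\in\CA_k$. As $\tau_k$ sends $2$ dimensional elements to $2$ dimensional elements, $\tau_k^{\D}$ restricts to an $m$ ary operation on $\Ra\D=\R_2$; evaluating it at the atoms coming from $R_{u_1},\dots,R_{u_m}$ and using $\CA_k\models\tau_k(\bar u)\le\tau(\bar u)$ together with $\tau^{\R_1}(\chi_{u_1},\dots,\chi_{u_m})=\chi_v$ --- which may be read off on the small split algebra, where $\tau_k$ and $\tau$ coincide because that algebra does not see the $k$ th dimension --- would force $\tau_k^{\D}$ to act on the $v$ component exactly as the group operation acts on $I$, carrying the uncountable components $\A_{u_1},\A_{u_2}$ onto an uncountable subset of the $v$ component, contradicting its countability. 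Hence $\R_1\in\Ra\CA_k$, $\R_2\notin\Ra\CA_k$ and $\R_1\equiv\R_2$, so $\Ra\CA_k$ is not elementary. I expect this last step to be the main obstacle: pinning down the exact identity analogous to $\tau_4^{\D}(f(R_i),f(R_j))=f(R_{i+j})$ of Theorem~\ref{neatreduct} --- that $\tau_k$ is forced to be group composition on the split, hence blows up the $v$ component --- requires knowing $\tau_k$ (and the accompanying finite $\At$ and witnesses) concretely, which is exactly what the hypotheses only stipulate and what Jónsson's $Q$'s are conjectured to provide.
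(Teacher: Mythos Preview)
Your proposal is correct and follows essentially the same route as the paper's own proof: take the algebra $\A$ built from the quantifier-eliminable model $\M$ over the hypothesised atom structure $\At$, pass to its $\Ra$ reduct, interpret that reduct in a Boolean product structure $\P$ expanded by constants and unary operations coding converse and (the graph of) composition, twist the component at $v=\tau^{\At}(u_1,\dots,u_m)$ together with its converse copy $\breve v$ down to a countable elementary subalgebra, invoke Feferman--Vaught for elementary equivalence, and derive the contradiction from $\tau_k$ forcing the twisted component back up to uncountable cardinality. Your closing caveat --- that the identity $\tau_k^{\D}(f(R_{i_1}),\dots,f(R_{i_m}))=f(R_{i_1+\dots+i_m})$ below $\chi_v$ is precisely what needs the concrete witnesses the hypotheses only postulate --- matches the paper's own acknowledgement that this is where Jónsson's $Q$'s are expected to do the work.
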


\begin{proof} Let $\M$ and $\A$ be as above.
The $\Ra$ reduct of $\A$ is a generalized reduct of $\A$,
hence $\P$, as defined above, is first order interpretable in $\Ra\A$, too.
Then  there are closed terms,  a unary relation symbol, and formulas $\eta$ and $\mu$
built out of these closed terms and the unary
relation symbol such that
$\P\models \eta(f(a), b, c)\text { iff }b= f(a\circ^{\Ra\A} c),$ and $\P\models \mu(f(a),b)\text { iff }b=\breve{a}$
where the composition is taken in $\Ra\A$.
We can assume that such formulas are defined using the closed terms
$1_u$, a unary relation symbol $R$ corresponding to the operation of converse, and a nullary operation
corresponding to the constant
$Id$ (the identity element).

As before, for each $u\in \At$, choose any countable Boolean elementary
subalgebra of $\A_{u}$, $\B_{u}$ say.
Let $u_i: 1\leq i\leq m$ be elements in $\At$ as in the hypothesis
and let $v=\tau^{\At}(u_1,\ldots, u_m)$. Let
$$\B=((\prod_{u_i: i<m}\A_{u_i}\times \B_{v}\times \times \B_{\breve{v}}\times \prod_{u\in {}V\smallsetminus \{u_1,\ldots, u_m, v, \breve{v}\}}
\A_u), 1_u, R, Id) \equiv$$
$$(\prod_{u\in V} \A_u, 1_u, R, Id)=\P.$$

Let $\B$ be the result of applying the interpretation given above to $Q$.
Then $\B\equiv \Ra\A$ as relation  algebras.

Again $\B\in {\sf RRA}$, but it is not a full $\Ra$ reduct.
For if it were, then we use the above argument, forcing
the $\tau(u_1,\ldots, u_m)$
component together with its permuted versions (because we have converse) countable;
the resulting algebra will be a an elementary subalgebra of the original one, but $\tau_k$
will make twisted countable
component uncountable,
arriving at a contradiction.

\end{proof}

\newpage
\section*{Part 2}

\section{General system of varieties}

When generalizing Monk's  schema to include finite dimensions,
we had, in the infinite dimensional case, a two sorted situation one for the indices and one
for the first order situation.

The indices
are subscripts of the operations but they  are independent, in the sense that,
they do not interact on the level of the equational
axiomatization.

In polyadic algebras of infinite dimension the substitution operations are also indexed by transformations,
but the difference is, that these transformations
have an inner structure, they are not independent;
they interact via the operation of composition. Moreover, this interaction
is coded in the standard axiomatization due to Halmos.
It is a situation similar to axiomatizing modules over rings.

So in this case we need {\it more sorts}
to handle infinitary operations like cylindrifiers when allowed on infinitely many indices and
substitutions indexed by transformations that move infinitely many points.
For polyadic algebras of finite dimension one can give an equivalent formalism getting
rid of finite transformations as indices interacting, by replacing them by operations indexed
only by double indices, reflecting transpositions and replacements. Indeed, this was accomplished by Sain and Thompson.

The idea is that these are the generators of the semigroup of finite
transformations. (This idea is common in algebraic logic, frequently occurring under the name of the {\it semigroup approach},
witness \cite{Sain}, \cite{Sain2}.)
Indeed this task is implemented elegantly in \cite{ST}.)

But first, we fix some notation.
$\sf Ord$ is the class of all ordinals. For ordinals $\alpha<\beta$, $[\alpha,\beta]$ denotes the set of ordinals $\mu $ such that
$\alpha\leq \mu \leq \beta$. $I_{\alpha}$ is the class $\{\beta\in {\sf Ord}:\beta\geq \alpha\}$
By an interval of ordinals, or simply an interval, we either mean $[\alpha, \beta]$ or $I_{\alpha}$.

\begin{definition}\label{Halmos}
\begin{enumroman}
\item A type schema is a quintuple $t=(T, \delta, \rho,c, s)$ such that
$T$ is a set, $\delta$ maps $T$ into $\omega$, $c,s\in T$, and $\delta c=\rho c=\delta s=\rho s=1$.
\item A type schema as in (i) defines a similarity type $t_{\alpha}$ for each $\alpha$ as follows. Sets
$C_{\alpha}\subseteq \wp(\alpha)$, $G_{\alpha}\subseteq {} ^{\alpha}{\alpha}$ are fixed,
and the domain $T_{\alpha}$ of $t_{\alpha}$ is
$$T_{\alpha}=\{(f, k_0,\ldots, k_{\delta f-1}): f\in T\sim\{c,s\}, k\in {}^{\delta f}\alpha\}$$
$$\cup \{(c, r): r\in C_{\alpha}\}\cup \{(q,r): r\in C_{\alpha}\}\cup \{(s,\tau): \tau\in G_{\alpha}\}.$$
For each $(f, k_0,\ldots, k_{\delta f-1})\in T_{\alpha},$ we set $t_{\alpha}(f, k_0\ldots k_{\delta f-1})=\rho f$
and we set $\rho(c,r)=\rho(q,r)=\rho (s,\tau)=1.$
\item Let $\mu$ be an interval of ordinals. A system $(\K_{\alpha}: \alpha\in \mu)$ of classes
of algebras is of type schema $t$ if for each $\alpha\in \mu$, the class $\K_{\alpha}$ is a class of algebras of type
$t_{\alpha}$.
\end{enumroman}
\end{definition}
\begin{definition} Let $L_T$ be the first order language that consists of countably many unary relational symbols $(Rel)$,
countably many function symbols $(Func)$ and countably many
constants $(Cons)$, which we denote by  $r_1, r_2\ldots$ and $f_1, f_2\ldots$ and $n_1, n_2\ldots$,
respectively. We let $L_T=Rel\cup Cons\cup Func$.
\end{definition}
\begin{definition}
\begin{enumarab}
\item A schema is a pair $(s, e)$ where $s$ is a first order formula of $L_T$ and $e$ is an equation in the language $L_{\omega}$
of $\K_{\omega}$. We denote a schema $(s, e)$ by $s\to e$. We define $Ind(L_{\omega})=\omega\cup C_{\omega}\cup G_{\omega}$.
A function $h: L_T\to L_{\omega}$ is admissible if $h$ is an injection and
$h\upharpoonright Const\subseteq \omega, h\upharpoonright Rel\subseteq C_{\omega}$ and
$h\upharpoonright Func\subseteq G_{\omega}.$
\item Let $g$ be an equation in the language of $\K_{\alpha}$. Then $g$ is an $\alpha$ instance of a schema
$s\to e$ if there exist an admissible function $h$, sets, functions and constants
$$r_1^{M},r_2^{M}\ldots f_1^{M}, f_2^{M}\ldots \in {}G_{\alpha}, n_1^{M},n_2^{M}\ldots \in \alpha$$
such
$$M=(\alpha, r_1^M, r_2^M,\ldots f_1^M, f_2^M, n_1^{M}, n_2^{M},\ldots )\models s$$
and $g$ is obtained from $e$ by replacing $h(r_i), h(f_i)$ and $h(n_i)$ by $r_i^M$, $f_i^M$ and $n_i^M$, respectively.
\end{enumarab}
\end{definition}

\begin{definition}
A system of varieties is a {\it generalized system of varieties definable by a schema}, if there exists a strictly finite set of schemes,
such that for every $\alpha$,
$\K_{\alpha}$ is axiomatized  by  the $\alpha$ dimensional instances of such schemes.
\end{definition}

Given such a system of varieties, we denote algebras in $\K_{\alpha}$ by
$$\A=(\B, {\sf c}_{(r)}, {\sf q}_{(r)}, s_{\tau})_{r\in C_r, \tau\in G_{\alpha}},$$ that is, we highlight the operations of cylindrifiers and substitutions,
and the operations in $T\sim \{c,s\}$ (of the Monk's schema part, so to speak),
with indices from $\alpha$, are encoded in $\B$.

Indeed, Monk's definition is the special case, when we forget the sort of substitutions.
That is a system of varieties is definable by Monk's schemes if $G_{\alpha}=\emptyset$ for all $\alpha$ and each schema the form
by ${\sf True}\to e$.
\begin{example}
\begin{enumarab}

\item Tarski's cylindric algebras, Pinter's substitution algebras, Halmos' quasi-polyadic algebras and Halmos' quasi-polyadic algebras with equality,
all of infinite dimension; these are indeed instances of Monk's schema.
Here $G_{\alpha}=\emptyset$ for all $\alpha\geq \omega$, and $C_{\alpha}=\{\{r\}: r\in \alpha\}$.

\item Less obvious are Halmos' {\it polyadic algebras}, of infinite dimension, as defined in \cite{HMT2}.
Such algebras are  axiomatized by Halmos schemes; hence the form a generalized system of varieties
definable by a schema of equations. For example, see \cite{Sagi}, the $\omega$ instance of $(P_{11})$ is:

$[(\forall y) (r_2(y)\longleftrightarrow \exists z(r_1(z)\land y=f_1(z)\land (\forall y,z)(r_2(y)\land r_2(z)\land y\neq z\implies\\
f_1(y)\neq f_1(z), {\sf c}_{r_1}{\sf s}_{f_1}(x)={\sf s}_{f_1}{\sf c}_{r_2}(x)].$
\end{enumarab}

\end{example}

\begin{example} Cylindric-polyadic algebras \cite{Fer}.
These are reducts of polyadic algebras of infinite dimension, where we have all substitutions, but cylindrifiers
are  allowed only on finitely many
indices. Such algebras have become fashionable lately, with the important recent the work of Ferenczi. However,
Ferenczi deals with (non-classical) versions of such algebras, where commutativity of cylindrifiers is weakened,
substantially, and he proves strong representation theorems on generalized set algebras.
Here $C_{\alpha}$ is again the set of singletons, manifesting the cylindric spirit of the algebras, while
$G_{\alpha}= {}^{\alpha}\alpha$, manifesting, in turn, its polyadic reduct.
We shall show below that all such varieties have the super-amalgamation property.
\end{example}

\begin{example}

(a) Sain's algebras \cite{Sain}: Such algebras provide a solution to one of the most central problems in algebraic logic,
namely, the so referred to in the literature as the fintizability problem.
Those  are  countable reducts of polyadic algebras, and indeed of cylindric-polyadic algebras
(as far as their similarity type is concerned; for polyadic algebras their axiomatization is identical to polyadic algebras
restricted to their similarity types).
Cylindrifiers are finite, that is, they are defined only on finitely many indices, but at least  two infinitary substitutions are there.

Like polyadic algebras, and for that matter cylindric-polyadic algebras, such classes algebras, which happen to be varieties, can
be easily formulated as a generalized system of varieties definable by a schema on the
interval $[\alpha, \alpha+\omega]$, $\alpha$ a countable ordinal.

Here we only have substitutions coming from a {\it countable} semigroup $G_{\alpha},$ and $G_{\alpha+n}$, $n\leq \omega$, is the
sub-semigroup of $^{\alpha+n}\alpha+n$ generated by $\bar{\tau}=\tau\cup Id_{(\alpha+n)\sim \alpha}$, $\tau \in G_{\alpha}$.
Such algebras, were introduced by Sain, can be modified, in case the semigroups determining their similarity types
are finitely presented, providing first order logic without equality a {\it strictly finitely based} algebraization,
witness \cite{Bulletin} for an overview of such results and more.

(b) Sain's algebras with diagonal elements \cite{Sain2}; the results therein are summarized in \cite{Sain}.
These are investigated by Sain and Gyuris, in the context of
finitizing first order logic {\it with equality}. This problem turns out to be harder, and so
the results obtained are weaker, because the class of representable algebras $V$ is not elementary for it is not closed under ultraproducts.
The authors manage to provide, however, in this case a generalized finite schema, for the class of  ${\bf H}V$; this only implies weak completeness
for the corresponding infinitary
logics; that is $\models \phi$ implies $\vdash \phi$, relative to a finitary Hilbert style axiomatization, involving only type free valid schemes.
However, there are non-empty sets of formulas $\Gamma$, such that $\Gamma\models \phi$, but there is no proof of $\phi$ from
$\Gamma$. We will approach an intuitionistic version of such algebras below, with and without diagonals, proving a representation and an
amalgamation theorem
\end{example}

It is timely to also highlight the novelties in the above definition when compared to Monk's definition of a system definable by schemes.
\begin{enumarab}

\item First, the most striking addition, is that it allows dealing with infinitary substitutions coming from a set
$G_{\alpha}$, which is usually a semigroup. Also infinitary cylindrifiers are permitted.
This, as indicated in the above examples, covers polyadic algebras,
Heyting polyadic algebras, $MV$ polyadic algebras and Ferenczi's
cylindric-polyadic algebras, together with their important reducts studied by Sain.

\item Second thing, cylindrifiers are not mandatory;
this covers many algebraizations of multi dimensional modal logics, like for example modal logics of substitutions \cite{Sagi}.

\item We also have another (universal) quantifier $q$, intended to be the dual of cylindrifiers in the case of presence of negation;
representing  universal quantification. This is appropriate for logics where we do not have negation in the classical sense,
like intuitionistic logic,  expressed algebraically by
Heyting polyadic algebras \cite{s}.

\item Finally, the system could be definable only on an interval of ordinals of the form $[\alpha,\beta]$, while the usual definition of Monk's schema
defines systems of varieties on $I_{\omega}$; without this more general condition,
we would have not been able to approach Sain's algebras \cite{Sain}, \cite{Sain2}.

\end{enumarab}

\subsection{Interpolation for cylindric -polyadic algebras}

The notion of relativized representations constitute a huge topic in both algebraic and modal logic, see the introduction of 
\cite{1} for an overview.
Historically,  in \cite{HMT1} square units got all the attention and relativization  was treated as a side issue.
However, extending original classes of models for logics to manipulate their properties is common. 
This is no mere tactical opportunism, general models just do the right thing.

The famous move from standard models to generalized models is 
Henkin's turning round  second  order logic into an axiomatizable two sorted first
order logic. Such moves are most attractive 
when they get an independent motivation. 

The idea is that we want to find a semantics that gives just the right action 
while additional effects of square set theoretic representations are separated out as negotiable decisions of formulation 
that can threaten completeness, decidability, and interpolation. 
(This comes across very much in cylindric algebras, especially in finite variable fragments of first order logic,
and classical polyadic equality algebras, in the context of Keisler's logic with equality.)

Using relativized representations Ferenczi \cite{Fer}, proved that if we weaken commutativity of cylindrifiers
and allow  relativized representations, then we get a finitely axiomatizable variety of representable 
quasi-polyadic equality algebras (analogous to the Andr\'eka-Resek Thompson ${\sf CA}$ version, cf. \cite{Sayedneat} and \cite{Fer}, 
for a discussion of the Andr\'eka-Resek Thompson breakthrough for cylindric-like algebras); 
even more this can be done without the merry go round identities.

This is in sharp contrast with the long list of  complexity results proved for the commutative case.
Ferenczi's results can be seen as establishing a hitherto fruitful contact between neat embedding 
theorems 
and relativized representations, with enriching 
repercussions and insights for both notions.

This task was also implemented by Ferenczi for infinite dimensions sidestepping negtive results for polyadic equality algebras.
Here relativization, and weakening the axioms in a smaller signature 
are done simultaneously. 
We prove a strong representability
result for several  such classes of non- commutative cylindric-polyadic algebras.
The choice of `Henkin ultrafilters' that define the required 
representation here is more delicate because we do not have full fledged commutativity 
of cylindrifiers like in the classixcal case.

We present our proof for $\sf CPEA$;
as defined in \cite[definition 6.3.7]{Fer}. This is just, we think, a representative sample.
The proof applies to many (if not all) non-commutative cylindric-polyadic algebras introduced in \cite{Fer}.

Such algebras termed cylindric-polyadic algebras of infinite dimension, will be denoted by $\alpha$, $\alpha$
an infinite ordinal, has a strong Stone-like representability result. Abstract algebras defined by a strictly finite schema of equations are
representable.
This schema is similar to that of polyadic algebras {\it with equality} with the very
important difference that commutativity of cylindrifiers is weakened. Here all substitutions are available in the signature,
so that such algebras are, like polyadic algebras, are
also examples of {\it transformation systems} having a polyadic facet.
But, on the other hand, only ordinary cylindrifiers are allowed in their signature, so in this respect, they have a cylindric facet.

Here the representable algebras have units that are unions of weak spaces, that
unlike, analogous representable cylindric algebras may not be disjoint, and thats why cylindrifiers may not commute.

We denote the class of representable algebras
by ${\sf Gp}_{\alpha}$.

It is proved in \cite{conference} that atomic such algebras are completely representable.
Complete representability is stronger, in this context, than the usual representability result proved by
Ferenczi because, such varieties are canonical
and each algebra embeds into its canonical extension which is
complete and atomic.
The complete representabiliy of the latter implies the
representability of the former.

We show that
such varieties have the super-amalgamation property.

We should mention that ${\sf CPEA_{\alpha}}$ has the super-amalgamation property, can be also proved
using the duality theory in modal logic between Kripke frames and complex algebras.

This follows from the fact that the Kripke frames (atom structures) of the algebras considered
are closed under zig zag products,  because they can be axiomatized by Horn
sentences, hence are clausifiable, and that the variety is
canonical, because it is can be easily axiomatized
by  positive equations. The rest follows from a result of Marx.

However, we chose to give a proof of
the interpolation property similar to the Henkin constructions used above.
We infer immediately using standard `bridge theorems' in algebraic
logic that ${\sf CPEA}_{\alpha}$ has the superamalgamation
property. Ferenczi's representability result can also be discerned from
our proof. The proof has affinity with the proof in \cite{super}, and for that matter, to the proof
of the same result for many valued predicate logic given in theorem \ref{mv}
below. The proof of the next theorem involves a significant contribution from Mohammad Assem, namely, checking the induction step.

\begin{theorem}\label{Fer}
Let $\beta$ be a cardinal, and $\A=\Fr_{\beta}{}\CPEA_{\alpha}$ be the free algebra on $\beta$ generators.
Let $X_1, X_2\subseteq \beta$, $a\in \Sg^{\A}X_1$ and $c\in \Sg^{\A}X_2$ be such that $a\leq c$.
Then there exists $b\in \Sg^{\A}(X_1\cap X_2)$ such that $a\leq b\leq c.$ This is the algebraic version of the Craig
interpolation property.
\end{theorem}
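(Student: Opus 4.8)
The plan is to prove the interpolation property by a Henkin-style construction, closely following the template used for polyadic-type algebras and for the many-valued case referred to in the excerpt (the argument in \cite{super} and the proof of Theorem \ref{mv}). Assume, for contradiction, that $a \leq c$ but there is no interpolant $b \in \Sg^{\A}(X_1 \cap X_2)$ with $a \leq b \leq c$. First I would set $\B_1 = \Sg^{\A}X_1$, $\B_2 = \Sg^{\A}X_2$ and $\B_0 = \Sg^{\A}(X_1 \cap X_2)$, noting that all three are $\CPEA_{\alpha}$'s (subalgebras of a free algebra), and that $\B_0 = \B_1 \cap \B_2$ since the generating sets are free. The failure of interpolation means that the set $F = \{ b \in \B_0 : a \leq b \}$ and the ideal $I = \{ b \in \B_0 : b \leq c \}$ (equivalently, the filter generated by elements $\geq a$ and the ideal generated by elements $\leq c$, intersected with $\B_0$) are "disjoint" in the sense that no element of $\B_0$ sits between $a$ and $c$; I would repackage this as: the filter of $\B_0$ generated by $\{b \in \B_0 : a \le b\}$ together with the filter generated by $\{-b : b \in \B_0,\ b \le c\}$ generate a proper filter $G$ of $\B_0$. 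By Zorn's Lemma extend $G$ to a maximal filter (ultrafilter on the Boolean reduct) $H_0$ of $\B_0$ that is moreover "$\cyl{(r)}$-complete" in the appropriate sense — a Henkin ultrafilter witnessing cylindrifications within the finitely-many-index restriction proper to cylindric-polyadic algebras.

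The core of the argument is then to extend $H_0$ simultaneously to Henkin ultrafilters $H_1$ on $\B_1$ and $H_2$ on $\B_2$, each respecting cylindrifiers and substitutions in the $\CPEA$ signature, such that $H_1 \cap \B_0 = H_2 \cap \B_0 = H_0$, with $a \in H_1$ and $c \notin H_2$ (i.e. $-c \in H_2$). Each $H_i$ yields, via the standard representation theorem for $\CPEA_{\alpha}$ — the strong representability result of Ferenczi recalled in the excerpt (algebras are in $\sf Gp_{\alpha}$, with units unions of possibly non-disjoint weak spaces) — a representation $\mathrm{rep}_i : \B_i \to \wp(V_i)$ with $a \in \mathrm{rep}_1$ realized at some point $s$ and $c$ failing at the corresponding point under $\mathrm{rep}_2$. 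One then amalgamates the two representations over their common part (using that $H_1$ and $H_2$ agree on $\B_0$, so $\mathrm{rep}_1 \restriction \B_0$ and $\mathrm{rep}_2 \restriction \B_0$ are compatible) to obtain, on the generators, a single consistent assignment into a representation of $\A$ itself, under which $a$ holds at some sequence while $c$ fails there — contradicting $a \leq c$ in $\A$. The passage from this interpolation statement to the super-amalgamation property is then automatic by the standard bridge theorems in algebraic logic relating the amalgamation/super-amalgamation properties of a variety to the (super-)interpolation property for its free algebras, exactly as invoked in the discussion preceding the theorem.

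I expect the main obstacle to be precisely the step of building $H_1$ and $H_2$ coherently over $H_0$ while keeping all the Henkin (witnessing) conditions for cylindrifiers intact; this is where the non-commutativity of cylindrifiers in $\CPEA_{\alpha}$ — as the excerpt flags — makes the usual back-and-forth ultrafilter extension delicate, since the standard polyadic argument leans on full commutativity of cylindrifiers to permute witnesses freely. The remedy I would pursue is to exploit that in $\CPEA_{\alpha}$ cylindrifiers are only taken on finitely many indices at a time and that all substitutions are present, so one can enumerate the countably (or $\beta$-many, transfinitely) many "defects" $(\cyl{(r)} x \in H_i,\ x \text{ not yet witnessed})$ and, at each step, choose a fresh index outside the finite support of the relevant elements and outside the indices used so far, using a substitution to move an existing witness into place — carefully checking that each such move preserves membership in $H_0$ on the overlap. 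This enumeration-of-defects bookkeeping, and the verification of the induction step that the extended ultrafilters remain proper and mutually compatible on $\B_0$, is the technically demanding part; as the excerpt notes, this induction step is the contribution attributed to Mohammad Assem, and I would present it as the heart of the proof, with the representation and amalgamation steps following the now-routine pattern.
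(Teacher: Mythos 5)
Your overall strategy (reduce failure of interpolation to a proper filter, extend to compatible Henkin ultrafilters on the two subalgebras, represent, and paste by freeness) is the right template, but there is a concrete gap: you carry out the whole construction inside $\A$ itself, i.e.\ in the subalgebras $\Sg^{\A}X_1$, $\Sg^{\A}X_2$ of the $\alpha$-dimensional free algebra. There, the step ``choose a fresh index outside the finite support of the relevant elements'' fails: elements of a $\CPEA_{\alpha}$ (starting with the free generators) need not have finite, or even proper, dimension sets in $\alpha$, so there may be no unused index left to serve as a Henkin witness. The paper's proof avoids this by first passing to a minimal dilation: it fixes a regular cardinal $\mu>\max(|\alpha|,|A|)$, takes $\B\in \CPEA_{\mu}$ with $\A=\Nr_{\alpha}\B$ generated by $A$, checks that non-existence of an interpolant transfers from $\A$ to $\B$, and only then chooses the witnesses $u_i,v_i\in\mu$ outside all the dimension sets and the domains and ranges of the admitted substitutions encountered so far. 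The spare dimensions of the dilation are not an optional convenience here; they are what makes the witness-selection possible at all.

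A second, related divergence concerns the order of construction. You propose to fix a Henkin ultrafilter $H_0$ on the common subalgebra first and then extend it to $H_1$ and $H_2$; but an ultrafilter chosen on the overlap in advance can be incompatible with the witnessing requirements coming from elements of $\Sg^{\B}X_1\setminus \Sg^{\B}(X_1\cap X_2)$ and $\Sg^{\B}X_2\setminus \Sg^{\B}(X_1\cap X_2)$. The paper instead adjoins all the Henkin-witness elements $-{\sf s}_{\tau_i}{\sf c}_{k_i}x_i+{\sf s}_{\tau_i}{\sf s}_{u_i}^{k_i}x_i$ (together with $a$, resp.\ $-c$) to the generating sets of the two filters \emph{before} looking at the overlap, takes the filter $H$ generated on the common subalgebra by the two traces, and then proves by a long induction on the lengths of finite products that $H$ is proper -- the base case of that induction being exactly the non-existence of an interpolant in the dilation. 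Only after that is a common ultrafilter $H^*$ chosen and extended to perfect ultrafilters $F_1, F_2$ agreeing on the overlap. That properness induction (complicated here by the weakened commutativity of cylindrifiers, which forces careful bookkeeping of the admitted substitutions) is the heart of the proof and is missing from your outline; the representation and pasting steps at the end do match the paper's.
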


\begin{proof}

Assume that $\mu$ is a regular cardinal $>max(|\alpha|,|A|)$.
Let $\B\in \CPEA_{\mu}$, such that $\A=\Nr_{\alpha}\B$,
and $A$ generates $\B$. Such dilations exist.
Ultrafilters in dilations used to represent algebras in $\CPEA$ are  as before
defined via  the {\it admitted substitutions}, which
we denote by $adm.$ Recall that very admitted substitution has a domain $\dom\tau$ which is  subsets of $\alpha$ and a range,
$\rng\tau$ such that $\rng\tau\cap \alpha=\emptyset$.
One defines special  filters in the dilations $\Sg^{\B}X_1$ and in $\Sg^{\B}X_2$
like before; in particular, they have to be compatible on the common subalgebra. This needs some work.
Assume that no interpolant exists in $\Sg^{\A}(X_1\cap X_2)$.
Then, as above,
no interpolant exists in $\Sg^{\B}(X_1\cap X_2)$.
We eventually arrive at a contradiction.

Now we deal with triples since $adm$ has to come to the picture. Arrange $adm\times\mu \times \Sg^{\B}(X_1)$
and $adm\times\mu\times \Sg^{\B}(X_2)$
into $\mu$-termed sequences:
$$\langle (\tau_i,k_i,x_i): i\in \mu\rangle\text {  and  }\langle (\sigma_i,l_i,y_i):i\in \mu\rangle
\text {  respectively.}$$
is as desired.
Thus we can define by recursion (or step-by-step)
$\mu$-termed sequences of witnesses:
$$\langle u_i:i\in \mu\rangle \text { and }\langle v_i:i\in \mu\rangle$$
such that for all $i\in \mu$ we have:
$$u_i\in \mu\smallsetminus
(\Delta a\cup \Delta c)\cup \cup_{j\leq i}(\Delta x_j\cup \Delta y_j\cup \dom\tau_j\cup \rng\tau_j\cup \dom\sigma_j\cup \rng\sigma_j)$$
$$\cup \{u_j:j<i\}\cup \{v_j:j<i\}$$
and
$$v_i\in \mu\smallsetminus(\Delta a\cup \Delta c)\cup
\cup_{j\leq i}(\Delta x_j\cup \Delta y_j\cup \dom\tau_j\cup \rng\tau_j\cup \dom\sigma_j\cup \rng\sigma_j)$$
$$\cup \{u_j:j\leq i\}\cup \{v_j:j<i\}.$$
For an  algebra $\D$ we write $Bl\D$ to denote its Boolean reduct.
For a Boolean algebra $\C$  and $Y\subseteq \C$, we write
$fl^{\C}Y$ to denote the Boolean filter generated by $Y$ in $\C.$  Now let
$$Y_1= \{a\}\cup \{-{\sf s}_{\tau_i}{\sf  c}_{k_i}x_i+{\sf s}_{\tau_i}{\sf s}_{u_i}^{k_i}x_i: i\in \mu\},$$
$$Y_2=\{-c\}\cup \{-{\sf s}_{\sigma_i}{\sf  c}_{l_i}y_i+{\sf s}_{\sigma_i}{\sf s}_{v_i}^{l_i}y_i:i\in \mu\},$$
$$H_1= fl^{Bl\Sg^{\B}(X_1)}Y_1,\  H_2=fl^{Bl\Sg^{\B}(X_2)}Y_2,$$ and
$$H=fl^{Bl\Sg^{\B}(X_1\cap X_2)}[(H_1\cap \Sg^{\B}(X_1\cap X_2)
\cup (H_2\cap \Sg^{\B}(X_1\cap X_2)].$$
We claim that $H$ is a proper filter of $\Sg^{B}(X_1\cap X_2).$
To prove this it is sufficient to consider any pair of finite, strictly
increasing sequences of natural numbers
$$\eta(0)<\eta(1)\cdots <\eta(n-1)<\mu\text { and } \xi(0)<\xi(1)<\cdots
<\xi(m-1)<\mu,$$
and to prove that the following condition holds:

(1) For any $b_0$, $b_1\in \Sg^{B}(X_1\cap X_2)$ such that
$$a\cdot \prod_{i<n}(-{\sf s}_{\tau_{\eta(i)}}{\sf  c}_{k_{\eta(i)}}x_{\eta(i)}+{\sf s}_{\tau_{\eta(i)}}{\sf s}_{u_{\eta(i)}}^{k_{\eta(i)}}x_{\eta(i)})\leq b_0$$
and
$$(-c\cdot .\prod_{i<m}
(-{\sf s}_{\sigma_{\xi(i)}}{\sf  c}_{l_{\xi(i)}}y_{\xi(i)}+{\sf s}_{\sigma_{\xi(i)}}{\sf s}_{v_{\xi(i)}}^{l_{\xi(i)}}y_{\xi(i)})\leq b_1$$
we have
$$b_\cdot b_1\neq 0.$$
We prove this by a tedious induction on $n+m$.
If $n+m=0$, then (1) simply
expresses the fact that no interpolant of $a$ and $c$ exists in
$\Sg^{\B}(X_1\cap X_2).$
In more detail: if $n+m=0$, then $a_0\leq b_0$
and $-c\leq b_1$. So if $b_0\cdot b_1=0$, we get $a\leq b_0\leq -b_1\leq c.$
Now assume that $n+m>0$ and for the time being suppose that $\eta(n-1)>\xi(m-1)$.
Apply ${\sf  c}_{u_{\eta(n-1)}}$ to both sides of the first inclusion of (1).
By $u_{\eta(n-1)}\notin \Delta a$, i.e. ${\sf  c}_{u_{\eta(n-1)}}a=a$,
and by recalling that ${\sf  c}_i({\sf  c}_ix.y)={\sf  c}_ix.{\sf  c}_iy$, we get (2)
$$a\cdot {\sf  c}_{u_{\eta(n-1)}}\prod_{i<n}(-{\sf s}_{\tau_{\eta(i)}}{\sf  c}_{k_{\eta(i)}}x_{\eta(i)}+{\sf s}_{\tau_{\eta(i)}}{\sf s}_{u_{\eta(i)}}^{k_{\eta(i)}}x_{\eta(i)})\leq {\sf  c}_{u_{\eta(n-1)}}b_0.$$
Let ${\sf  c}_i^{\partial}(x)=-{\sf  c}_i(-x)$.  ${\sf  c}_i^{\partial}$
is the algebraic counterpart of the universal quantifier $\forall x_i$.
Now apply ${\sf  c}_{u_{\eta(n-1)}}^{\partial}$ to the second inclusion of (1).
By noting that ${\sf  c}_i^{\partial}$, the dual of ${\sf  c}_i$,
distributes over the Boolean meet and by
$u_{\eta(n-1)}\notin \Delta c=\Delta (-c)$ we get (3)
$$(-c)\cdot \prod_{j<m}{\sf  c}_{u_{\eta(n-1)}}^{\partial}(-{\sf s}_{\sigma_{\xi(i)}}{\sf  c}_{l_{\xi(i)}}y_{\xi(i)}+{\sf s}_{\sigma_{\xi(i)}}{\sf s}_{v_{\xi(i)}}^{l_{\xi(i)}}y_{\xi(i)})\leq {\sf  c}_{u_{\eta(n-1)}}^{\partial}b_1.$$
Before going on, we formulate (and prove) a claim that will enable us to eliminate
the quantifier ${\sf  c}_{u_{\eta(n-1)}}$ (and its dual) from (2) (and (3)) above.

For the sake of brevity set for each $i<n$ and each $j<m:$
$$z_i=-{\sf  c}_{k_{\eta(i)}}x_{\eta(i)}+{\sf s}_{u_{\eta(i)}}^{k_{\eta(i)}}x_{\eta(i)}$$  and
$$t_i=-{\sf  c}_{l_{\xi(i)}}y_{\xi(i)}+{\sf s}_{v_{\xi(i)}}^{l_{\xi(i)}}y_{\xi(i)}.$$
Then $(i)$ and $(ii)$ below hold:
$$(i)\  {\sf  c}_{u_{\eta(n-1)}}z_i=z_i\text { for }i<n-1 \text { and }{\sf  c}_{u_{\eta(n-1)}}z_{n-1}=1.$$
$$(ii)\ {\sf  c}_{u_{\eta(n-1)}}^{\partial}t_j=t_j\text { for all }j<m.$$

 {\it Proof of ${\sf  c}_{u_{\eta_{n-1}}}z_i=z_i$ for  $i<n-1$.}

Let $i<n-1$. Then by the choice of witnesses we have
$$u_{\eta(n-1)}\neq u_{\eta(i)}.$$
Also it is easy to see that for all $i,j\in \alpha$ we have
$$\Delta {\sf  c}_jx\subseteq \Delta x\text {  and that }
\Delta {\sf s}_j^ix\subseteq \Delta x\smallsetminus\{i\}\cup \{j\},$$
In particular,
$$u_{\eta(n-1)}\notin \Delta {\sf  c}_{k_{\eta(i)}}x_{\eta(i)}\text { and }
u_{\eta(n-1)}\notin \Delta ({\sf s}_{u_{\eta(i)}}^{k_{\eta(i)}}x_{\eta(i)}).$$
It thus follows that
$${\sf  c}_{u_{\eta(n-1)}}(-{\sf  c}_{k_{\eta(i)}}x_{\eta(i)})=-{\sf  c}_{k_{\eta(i)}}x_{\eta(i)}\text { and }
{\sf  c}_{u_{\eta(n-1)}} ({\sf s}_{u_{\eta(i)}}^{k_{\eta(i)}}x_{\eta(i)})={\sf s}_{u_{\eta(i)}}^{k_{\eta(i)}}
x_{\eta(i)}.$$
Finally, by ${\sf  c}_{u_{\eta(n-1)}}$ distributing over the Boolean join, we get
$${\sf  c}_{u_{\eta(n-1)}} z_i=z_i \text { for }  i<n-1.$$

{\it Proof of ${\sf  c}_{u_{\eta(n-1)}}z_{n-1}=1.$}
Computing we get, by $u_{\eta(n-1)}\notin \Delta x_{\xi(n-1)}$
and by \cite[1.5.8(i), 1.5.8(ii)]{HMT1}
the following:
\begin{align*}
&{\sf  c}_{u_{\eta(n-1)}}(-{\sf  c}_{k_{\eta(n-1)}}x_{\eta(n-1)}
+ {\sf s}_{u_{\eta(n-1)}}^{k_{\eta(n-1)}}x_{\eta(n-1)})\\
&={\sf  c}_{u_{\eta(n-1)}}-{\sf  c}_{k_{\eta(n-1)}}x_{\eta(n-1)}+ {\sf  c}_{u_{\eta(n-1)}}
{\sf s}_{u_{\eta(n-1)}}^{k_{\eta(n-1)}}x_{\eta(n-1)}\\
&=-{\sf  c}_{k_{\eta(n-1)}}x_{\eta(n-1)}+ {\sf  c}_{u_{\eta(n-1)}}{\sf s}_{u_{\eta(n-1)}}^{k_{\eta(n-1)}}
x_{\eta(n-1)}\\
&=-{\sf  c}_{k_{\eta(n-1)}}x_{\eta(n-1)}+ {\sf  c}_{u_{\eta(n-1)}}{\sf s}_{u_{\eta(n-1)}}^{k_{\eta(n-1)}}
{\sf  c}_{u_{\eta(n-1)}}x_{\eta(n-1)}\\
&=-{\sf  c}_{k_{\eta(n-1)}}x_{\eta(n-1)}+ {\sf  c}_{k_{\eta(n-1)}}{\sf s}_{k_{\eta(n-1)}}^{u_{\eta(n-1)}}
{\sf  c}_{u_{\eta(n-1)}}x_{\eta(n-1)}\\
&=-{\sf  c}_{k_{\eta(n-1)}}x_{\eta(n-1)}+ {\sf  c}_{k_{\eta(n-1)}}{\sf  c}_{u_{\eta(n-1)}}x_{\eta(n-1)}\\
&= -{\sf  c}_{k_{\eta(n-1)}}x_{\eta(n-1)}+ {\sf  c}_{k_{\eta(n-1)}}x_{\eta(n-1)}=1.
\end{align*}
With this the proof of (i) in our claim is complete.
Now we prove (ii).
Let $j<m$ . Then we have
$${\sf  c}_{u_{\eta(n-1)}}^{\partial}(-{\sf  c}_{l_{\xi(j)}}y_{\xi(j)})
=-{\sf  c}_{l_{\xi(j)}}y_{\xi(j)}$$ and
$${\sf  c}_{u_{\eta(n-1)}}^{\partial}
({\sf s}_{v_{\xi(j)}}^{l_{\xi(j)}}y_{\xi(j)})={\sf s}_{v_{\xi(j)}}^{l_{\xi(j)}}y_{\xi(j)}.$$
Indeed,  computing we get
$${\sf  c}_{u_{\eta(n-1)}}^{\partial}(-{\sf  c}_{l_{\xi(j)}}y_{\xi(j)})
=-{\sf  c}_{u_{\eta_{n-1}}}-(-{\sf  c}_{l_{\xi(j)}}y_{\xi(j)})
= -{\sf  c}_{u_{\eta(n-1)}}{\sf  c}_{l_{\xi(j)}}y_{\xi(j)}
=-{\sf  c}_{l_{\xi(j)}}y_{\xi(j)}.$$
Similarly,  we have
$${\sf  c}_{u_{\eta(n-1)}}^{\partial} ({\sf s}_{v_{\xi(j)}}^{l_{\xi(j)}}y_{\xi(j)})
=-{\sf  c}_{u_{\eta(n-1)}}- ({\sf s}_{v_{\xi(j)}}^{l_{\xi(j)}}y_{\xi(j)})$$
$$=-{\sf  c}_{u_{\eta(n-1)}} ({\sf s}_{v_{\xi(j)}}^{l_{\xi(j)}}-y_{\xi(j)})
=- {\sf s}_{v_{\xi(j)}}^{l_{\xi(j)}}-y_{\xi(j)}
= {\sf s}_{v_{\xi(j)}}^{l_{\xi(i)}}y_{\xi(j)}.$$
By ${\sf  c}_i^{\partial}({\sf  c}_i^{\partial}x+y)= {\sf  c}_i^{\partial}x+{\sf  c}_i^{\partial}y$
we get from the above that
$${\sf  c}_{u_{\eta(n-1)}}^{\partial}(t_j)={\sf  c}_{u_{\eta(n-1)}}^{\partial}({\sf  c}_{l_{\xi(j)}}y_{\xi(j)}+{\sf s}_{v_{\xi(j)}}^{l_{\xi(j)}}y_{\xi(j)})$$
$$={\sf  c}_{u_{\eta(n-1)}}^{\partial}{\sf  c}_{l_{\xi(j)}}y_{\xi(j)}+{\sf  c}_{u_{\eta(n-1)}}^{\partial}
{\sf s}_{v_{\xi(j)}}^{l_{\xi(j)}}y_{\xi(j)}$$
$$={\sf  c}_{l_{\xi(j)}}y_{\xi(j)}+ {\sf s}_{v_{\xi(j)}}^{l_{\xi(j)}}y_{\xi(j)}=t_j.$$

By the above proven claim we have
$${\sf  c}_{u_{\eta(n-1)}}\prod_{i<n}z_i={\sf  c}_{u_{\eta(n-1)}}[\prod_{i<n-1}z_i.z_n]$$
$$={\sf  c}_{u_{\eta(n-1)}}\prod_{i<n-1}z_i.
{\sf  c}_{u_{\eta(n-1)}}z_{n-1}=\prod_{i<n-1}z_i.$$
Here we are using that ${\sf  c}_i({\sf  c}_ix.y)={\sf  c}_ix.{\sf  c}_iy$.
Combined with (2) we obtain
$$a\cdot \prod_{i<n-1}(-{\sf  c}_{k_{\eta(i)}}x_{\eta(i)}+{\sf s}_{u_{\eta(i)}}^{k_{\eta(i)}}x_{\eta(i)})
 \leq {\sf  c}_{u_{\eta(n-1)}}b_0.$$
On the other hand, from our claim and (3),
 it follows that
$$(-c)\cdot \prod_{j<m}
(-{\sf  c}_{l_{\xi(j)}}y_{\xi(j)}+{\sf s}_{v_{\xi(j)}}^{l_{\xi(j)}}y_{\xi(j)})\leq {\sf  c}_{u_{\eta(n-1)}}^{\partial}b_1.$$
Now making use of the induction hypothesis we get
$${\sf  c}_{u_{\eta(n-1)}}b_0.{\sf  c}_{u_{\eta(n-1)}}^{\partial}b_1\neq 0;$$
and hence that
$$b_0.{\sf  c}_{u_{\eta(n-1)}}^{\partial}b_1\neq 0.$$
From
$$b_0.{\sf  c}_{u_{\eta(n-1)}}^{\partial}b_1\leq b_0.b_1$$
we reach the desired conclusion, i.e. that
$$b_0.b_1\neq 0.$$
The other case, when $\eta(n-1)\leq \xi(m-1)$ can be treated analogously
and is therefore left to the reader.
We have proved that $H$ is a proper filter.

Proving that $H$ is a proper filter of $\Sg^{\B}(X_1\cap X_2)$,
let $H^*$ be a (proper Boolean) ultrafilter of $\Sg^{\B}(X_1\cap X_2)$
containing $H.$
We obtain  ultrafilters $F_1$ and $F_2$ of $\Sg^{\B}(X_1)$ and $\Sg^{\B}(X_2)$,
respectively, such that
$$H^*\subseteq F_1,\ \  H^*\subseteq F_2$$
and (**)
$$F_1\cap \Sg^{\B}(X_1\cap X_2)= H^*= F_2\cap \Sg^{\B}(X_1\cap X_2).$$
Now for all $x\in \Sg^{\B}(X_1\cap X_2)$ we have
$$x\in F_1\text { if and only if } x\in F_2.$$
Also from how we defined our ultrafilters, $F_i$ for $i\in \{1,2\}$ satisfy the following
condition:

(*) For all $k<\mu$, for all $x\in \Sg^{\B}X_i$
if ${\sf  c}_kx\in F_i$ then ${\sf s}_l^kx$ is in $F_i$ for some $l\notin \Delta x.$
We obtain  ultrafilters $F_1$ and $F_2$ of $\Sg^{\B}X_1$ and $\Sg^{\B}X_2$,
respectively, such that
$$H^*\subseteq F_1,\ \  H^*\subseteq F_2$$
and (**)
$$F_1\cap \Sg^{\B}(X_1\cap X_2)= H^*= F_2\cap \Sg^{\B}(X_1\cap X_2).$$
Now for all $x\in \Sg^{\B}(X_1\cap X_2)$ we have
$$x\in F_1\text { if and only if } x\in F_2.$$
Also from how we defined our ultrafilters, $F_i$ for $i\in \{1,2\}$ are perfect.

Then define the homomorphisms, one on each subalgebra, like in \cite{Sayedneat} p. 128-129, using the perfect ultrafilters,
then freeness will enable pase these homomorphisms, to a single one defined to the set of free generators,
which we can assume to be, without any loss, to
be $X_1\cap X_2$ and it will satisfy  $h(a.-c)\neq 0$ which is a contradiction.

Then $H$ is a proper filter of $\Sg^{\B}(X_1\cap X_2)$. This can be proved by a tedious induction, with the base provided
by the fact that no interpolant exists in the dilation.
Proving that $H$ is a proper filter of $\Sg^{\B}(X_1\cap X_2)$,
let $H^*$ be a (proper Boolean) ultrafilter of $\Sg^{\B}(X_1\cap X_2)$
containing $H.$
We obtain  ultrafilters $F_1$ and $F_2$ of $\Sg^{\B}(X_1)$ and $\Sg^{\B}(X_2)$,
respectively, such that
$$H^*\subseteq F_1,\ \  H^*\subseteq F_2$$
and (**)
$$F_1\cap \Sg^{\B}(X_1\cap X_2)= H^*= F_2\cap \Sg^{\B}(X_1\cap X_2).$$
Now for all $x\in \Sg^{\B}(X_1\cap X_2)$ we have
$$x\in F_1\text { if and only if } x\in F_2.$$
Also from how we defined our ultrafilters, $F_i$ for $i\in \{1,2\}$ are {\it perfect}, a term introduced by Ferenczi.
Then one defines  homomorphisms, one on each subalgebra, like in \cite{Sayedneat} p. 128-129, using the
perfect ultrafilters to define a congruence relation on $\beta$ so that the defined homomorphisms respect diagonal elements.
Then freeness will enable paste these homomorphisms, to a single one defined to the set of free generators,
which we can assume to be, without any loss, to
be $X_1\cap X_2$ and it will satisfy  $h(a.-c)\neq 0$ which is a contradiction.
\end{proof}

\subsection{ A remark on interpolation}

Suppose we are in the classical case. For the interpolation property to hold for infinitary logics {\it with} equality
a necessary (and in some cases also sufficient)
condition is that algebraic terms
definable using the added spare dimensions are already term definable.

The typical situation (even for cylindric algebras with no restriction whatsoever, like local finiteness)
an interpolant  can {\it always} be found, but the problem is that  it might, and indeed there are situations where it must,
resort to extra dimensions (variables).

This happens in the case for instance of the so called finitary logics of infinitary relations \cite{typeless}.
This interpolant then becomes  term definable in higher dimensions, but when these terms
are actually coded by ones using only the original amount of dimensions,
we get the desired interpolant.

One way of getting around the unwarranted spare dimensions in the interpolant,
is to introduce new connectives that code extra dimensions, in which case in the original language any implication can be interpolated
by a formula using the same number of variables (and common symbols) but  possibly uses the new connectives.

In \cite{typeless} this is done to prove an interpolation
theorem for the severely incomplete typless logics studied in \cite{HMT2}.
This will also be implemented in a while in the intuitionistic context.

And here is where category theory to be addressed in a wider scope in a while,
offers a concise and for that matter an intuitive formulation.

{\it The dilation functor
(which takes an algebra to an algebra in $\omega$ extra dimensions), the interpolant is found in a dilation,
but its inverse, the neat reduct functor gets us back to our base, to our original algebra.}
This is an adjoint situation.

This statement will be proved rigorously below, witness theorem \ref{cat}.
We first formulate the statement  {\bf an interpolant can always be found} in the context of systems of varieties
definable by a schema, under certain mild conditions that cover many
cylindric-like algebras, and also non classical cases that we just encountered, namely in the context of Heyting polyadic algebras
without and indeed with diagonal elements.

In the next section, we approach in much more detail
the statement:

{\bf Neat reduct functor and its inverse the dilation functor}.

For the first bolded statement, we shall need the notion of
dimension-restricted free algebras, which is a form of relativisation
of the notion of freeness.

\begin{definition}\cite[definition 2.5.31]{HMT1}
Let $\delta$ be a cardinal.
Let $\alpha$ be an ordinal.
Let $_{\alpha} \Fr_{\delta}$ be the absolutely free algebra on $\delta$
generators and of type $t_{\alpha}.$
For an algebra
$\A,$ we write $R\in \Co\A$ if $R$ is a congruence relation on $\A.$
Let $\rho\in {}^{\delta}\wp(\alpha)$.
Let $L$ be a class having the same similarity type as
$t_{\alpha}.$
Let
$$Cr_{\delta}^{(\rho)}L=\bigcap\{R: R\in \Co_{\alpha}\Fr_{\delta},
{}_{\alpha}\Fr_{\delta}/R\in SL,
{\sf c}_k^{_{\alpha}\Fr_{\delta}}{\eta}/R=\eta/R \text { for each }$$
$$\eta<\delta \text
{ and each }k\in \alpha\smallsetminus
\rho{\eta}\}$$
and
$$\Fr_{\delta}^{\rho}L={}_{\alpha}\Fr_{\beta}/Cr_{\delta}^{(\rho)}L.$$
\end{definition}
The ordinal $\alpha$ does not figure out in $Cr_{\delta}^{(\rho)}L$ and $\Fr_{\delta}^{(\rho)}L$
though it is involved in their definition.
However, the dimension $\alpha$ will be clear from context so that no confusion is likely to ensue.
For  algebras $\A$, $\B$  and $X\subseteq \A$, $Hom(\A,\B)$ stands for the class of all
homomorphisms from $\A$ to $\B$
and $\Sg^{\A}X$, or simply $\Sg X$ when $\A$ is clear from context,
denotes the subalgebra of $\A$ generated by $X$. $\Ig^{\A}X$, or simply $\Ig X$ denotes the ideal generated by $X$.
Now free algebras, as defined, have the following universal property:

\begin{theorem} If $\A=\Fr_{\beta}^{\rho}\K_{\alpha}$ and $x=(\eta/Cr^{\rho}\K_{\alpha}: \eta<\beta)$. Then for any $\B\in \K_{\alpha}$
$y=(y_{\eta}:\eta<\beta)\in {}^{\beta}\B$, there is a unique $h\in Hom(\A,\B)$ such that $h\circ x=y$
\end{theorem}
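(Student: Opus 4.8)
The statement is the standard universal mapping property of dimension-restricted free algebras, and the plan is to reduce it to the universal property of the absolutely free algebra ${}_{\alpha}\Fr_{\beta}$ together with the second isomorphism theorem for congruences. First I would recall that ${}_{\alpha}\Fr_{\beta}$ is absolutely free on the $\beta$ generators $\bar\eta=(\eta:\eta<\beta)$, so for the given target $\B\in\K_{\alpha}$ and the given tuple $y=(y_\eta:\eta<\beta)\in{}^{\beta}\B$ there is a unique homomorphism $g\in Hom({}_{\alpha}\Fr_{\beta},\B)$ with $g(\eta)=y_\eta$ for all $\eta<\beta$. The task is then to show that $g$ factors (uniquely) through the quotient map $q:{}_{\alpha}\Fr_{\beta}\to{}_{\alpha}\Fr_{\beta}/Cr_{\beta}^{(\rho)}\K_{\alpha}=\A$, i.e. that $Cr_{\beta}^{(\rho)}\K_{\alpha}\subseteq\ker g$; once this is established, the induced map $h:\A\to\B$ defined by $h(a/Cr_{\beta}^{(\rho)}\K_{\alpha})=g(a)$ is a well-defined homomorphism, and $h\circ x=y$ because $h(x_\eta)=h(\eta/Cr_{\beta}^{(\rho)}\K_{\alpha})=g(\eta)=y_\eta$.

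The containment $Cr_{\beta}^{(\rho)}\K_{\alpha}\subseteq\ker g$ is the heart of the argument. Here I would use two facts about $g$. Since $\B\in\K_{\alpha}$ and $K$ (in the definition of $Cr$) is taken to be $\K_{\alpha}$, we have ${}_{\alpha}\Fr_{\beta}/\ker g\cong g[{}_{\alpha}\Fr_{\beta}]\leq\B$, so ${}_{\alpha}\Fr_{\beta}/\ker g\in S\K_{\alpha}$. Moreover, because $g$ carries each generator $\eta$ to $y_\eta\in B$, and — this is the point where the relativization $\rho$ enters — one must check that $\mathsf{c}_k^{\B}y_\eta=y_\eta$ for every $k\in\alpha\setminus\rho\eta$. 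But this is not automatic; it is exactly the hypothesis that should be (and, reading the definition, implicitly is) imposed: the universal property of $\Fr_{\beta}^{\rho}\K_{\alpha}$ holds with respect to those $y\in{}^{\beta}\B$ satisfying $\mathsf{c}_k^{\B}y_\eta=y_\eta$ for all $\eta<\beta$ and all $k\in\alpha\setminus\rho\eta$. I would flag this as the subtle point: the statement as typeset omits this side condition on $y$, and I would either reinstate it or (in the cases of interest, e.g. locally finite-dimensional or $\beta=1$) verify it holds. Granting it, we get $\mathsf{c}_k^{{}_{\alpha}\Fr_{\beta}}\eta\;/\ker g=\eta/\ker g$ for each $\eta<\beta$ and each $k\in\alpha\setminus\rho\eta$, because $g(\mathsf{c}_k\eta)=\mathsf{c}_k^{\B}g(\eta)=\mathsf{c}_k^{\B}y_\eta=y_\eta=g(\eta)$. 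Thus $\ker g$ is one of the congruences $R$ over which the intersection defining $Cr_{\beta}^{(\rho)}\K_{\alpha}$ is taken, whence $Cr_{\beta}^{(\rho)}\K_{\alpha}\subseteq\ker g$, as required.

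Finally, uniqueness of $h$: if $h'\in Hom(\A,\B)$ also satisfies $h'\circ x=y$, then $h$ and $h'$ agree on the generating set $\{x_\eta:\eta<\beta\}$ of $\A$ (note $\A$ is generated by this set, being a quotient of ${}_{\alpha}\Fr_{\beta}$ by $q$), and two homomorphisms agreeing on a generating set are equal. This last step is routine.

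The main obstacle, as indicated, is not any deep computation but rather making precise the side condition on the tuple $y$ under which the universal property is valid; without the constraint $\mathsf{c}_k^{\B}y_\eta=y_\eta$ for $k\notin\rho\eta$ the factorization through $q$ can fail. Everything else — the absolute freeness of ${}_{\alpha}\Fr_{\beta}$, the factorization lemma, and uniqueness on generators — is standard universal algebra, essentially \cite[theorem 2.5.35 and its neighbourhood]{HMT1} transcribed to the present signature $t_{\alpha}$.
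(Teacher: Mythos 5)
Your proof is correct and follows essentially the same route as the paper's: lift $y$ to a homomorphism from the absolutely free algebra ${}_{\alpha}\Fr_{\beta}$, observe that its kernel is one of the congruences in the intersection defining $Cr_{\beta}^{(\rho)}\K_{\alpha}$, and factor through the quotient. The side condition you rightly flag is indeed assumed in the paper's own proof (which begins ``let $y\in{}^{\beta}\B$ where $\Delta y_{\eta}\subseteq \Delta\eta$ for every $\eta<\beta$'') even though it is omitted from the theorem statement, so your reading is the intended one.
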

\begin{demo}{Proof} Let $\B\in \K_{\alpha}$ and $y\in {}^{\beta}\B$ where $\Delta y_{\eta}\subseteq \Delta\eta$ for every $\eta<\beta$.
Then there is an $f\in Hom(\Fr_{\beta}, \B)$ such that $f\eta=y_{\eta}$ for each $\eta<\beta$. Then $\Fr_{\beta}/ker f\cong \Sg^{\B}Rgy\in \K_{\alpha}$
and $\delta(\eta/ker f)=\Delta y_{\eta}\subseteq \Delta \eta$; this shows that $Cr_{\beta}^{\rho)}\K_{\alpha}\subseteq ker f$. Then
$h(\eta/Cr_{\beta}^{\rho}\K_{\alpha})=f(\eta)$
is well defined and as required.
\end{demo}
Recall that:
\begin{definition} An algebra $\A$ has the {\it strong interpolation theorem}, $SIP$ for short, if for all $X_1, X_2\subseteq A$, $a\in \Sg^{\A}X_1$,
$c\in \Sg^{\A}X_2$ with $a\leq c$, there exist $b\in \Sg^{\A}(X_1\cap X_2)$ such that $a\leq b\leq c$.
\end{definition}

Apiece of notation:
\begin{athm}{Notation} For a term ${\tau}$, let  ${\sf var}({\tau})$ denote the set of variables in $\tau$ and $({\sf ind})\tau$ denote
its set of
indices.
\end{athm}

\begin{theorem} Let $\alpha$ be an ordinal and $(\K_{\alpha}: \alpha\geq \omega)$ be a system of varieties
definable by a schema. Let ${\sf c}_i^{\partial}x=-{\sf c}_i-x.$ Assume that the signature of $\K_{\omega}$ contains a
unary operation ${\sf s}_i^j$, $i,j\in \omega$ satisfying the following equations
\begin{enumarab}

\item ${\sf c}_i^{\partial}x\leq {\sf s}_i^jx\leq {\sf c}_i x$
\item ${\sf c}_i{\sf s}_i^j={\sf s}_i^jx$ if $i\neq j$
\item ${\sf s}_i^j{\sf c}_ix={\sf c}_ix$.

\item ${\sf s}_i^j{\sf c}_kx={\sf c}_k{\sf s}_i^jx$ whenever $k\notin \{i,j\}$.

\item ${\sf c}_i{\sf s}_j^ix={\sf c}_j{\sf s}_i^jx$.

\end{enumarab}

Let $K$ be a class of algebras such that $S\Nr_{\alpha}\K_{\alpha+\omega}\subseteq K \subseteq \K_{\alpha}$.
Assume  that $\Fr_{\beta}^{\rho}\K_{\alpha+\omega}$ has the interpolation property.
Then for any terms of the language of $\K_{\alpha}$, $\sigma, \tau$ say,  if
$K\models \sigma\leq \tau$, then there exists a term $\pi$ with ${\sf var}(\pi)\subseteq {\sf var}(\sigma)\cap {\sf var}(\tau)$ and
$${\sf c}_{(\Delta)}^{(\partial)}\sigma\leq \pi\leq {\sf c}_{(\Delta)}\tau$$
\end{theorem}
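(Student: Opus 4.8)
The plan is to reduce the syntactic interpolation statement for terms to the algebraic strong interpolation property ($SIP$) of the dimension-restricted free algebra $\Fr_{\beta}^{\rho}\K_{\alpha+\omega}$, exactly mirroring the relationship between "interpolation holds in a dilation'' and "an interpolant can be found in the base, possibly using spare dimensions.'' Concretely, suppose $\sigma,\tau$ are terms in the language of $\K_{\alpha}$ with $K\models \sigma\leq\tau$. Let $X_1={\sf var}(\sigma)$ and $X_2={\sf var}(\tau)$, and let $\beta=|X_1\cup X_2|$. First I would pass to the free algebra $\A=\Fr_{\beta}^{\rho}\K_{\alpha+\omega}$ on the generators $X_1\cup X_2$, where $\rho$ is chosen so that each generator has the full $\alpha$-dimensional support available but the extra $\omega$ dimensions are free to be used; since $S\Nr_{\alpha}\K_{\alpha+\omega}\subseteq K$, the inequality $\sigma\leq\tau$ is respected in the $\alpha$-neat reduct of $\A$, hence $\sigma^{\A}\leq\tau^{\A}$ after applying the outermost cylindrifiers ${\sf c}_{(\Delta)}$ that close the terms up to $\alpha$-dimensional elements. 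More precisely I would first replace $\sigma$ by ${\sf c}_{(\Delta)}^{(\partial)}\sigma$ and $\tau$ by ${\sf c}_{(\Delta)}\tau$, using the equations (1)--(5) on ${\sf s}_i^j$ to verify that these closures land in $\Nr_{\alpha}$ and that the inequality is preserved (this is where ${\sf c}_i^{\partial}x\leq {\sf s}_i^j x\leq {\sf c}_i x$ and the commutation laws do their work).

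Next, I would apply the hypothesis that $\Fr_{\beta}^{\rho}\K_{\alpha+\omega}$ has the interpolation property — which, unwound via the universal property of free algebras stated in the theorem just above, gives $SIP$ for $\A$: since $\sigma^{\A}\in\Sg^{\A}X_1$ and $\tau^{\A}\in\Sg^{\A}X_2$ with $\sigma^{\A}\leq\tau^{\A}$, there is $b\in\Sg^{\A}(X_1\cap X_2)$ with $\sigma^{\A}\leq b\leq\tau^{\A}$. The element $b$ is therefore represented by a term $\pi$ over the common variables $X_1\cap X_2$, but \emph{a priori} $\pi$ may use operation symbols indexed by the $\omega$ spare dimensions in $[\alpha,\alpha+\omega)$. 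The key final step is to argue that because $b$ lies in the neat reduct $\Nr_{\alpha}\A$ (it is squeezed between two $\alpha$-dimensional elements, and by the equations the closures ${\sf c}_{(\Delta)}$ act as the identity on it), $b$ can in fact be written by a term $\pi$ whose indices all lie in $\alpha$; this is the "coding extra dimensions back into the base'' move, and it uses that the relevant substitution/cylindrifier manipulations collapse the spare indices — exactly the phenomenon discussed in the remark preceding the theorem and realized category-theoretically by the neat-reduct functor being inverse to dilation. Having produced such a $\pi$ with ${\sf var}(\pi)\subseteq {\sf var}(\sigma)\cap{\sf var}(\tau)$, the chain ${\sf c}_{(\Delta)}^{(\partial)}\sigma\leq\pi\leq {\sf c}_{(\Delta)}\tau$ is precisely what was claimed.

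I would organize the write-up as: (i) a lemma that $SIP$ for $\Fr_{\beta}^{\rho}\K_{\alpha+\omega}$ follows from its interpolation property via the universal mapping property; (ii) a lemma using equations (1)--(5) that for any term $t$, ${\sf c}_{(\Delta)}t^{\A}\in\Nr_{\alpha}\A$ and that if $a\leq c$ with $a,c$ arbitrary then ${\sf c}_{(\Delta)}^{(\partial)}a\leq {\sf c}_{(\Delta)}c$; (iii) the neat-reduct argument that an element of $\Sg^{\A}Y$ lying in $\Nr_{\alpha}\A$ is term-definable over $Y$ using only $\alpha$-indexed operations; (iv) assembling these. The main obstacle I anticipate is step (iii): while the interpolant always exists in a dilation, showing that an element of the neat reduct which is in the subalgebra generated by $Y$ (in the big algebra) is actually generated by $Y$ \emph{within the neat reduct} — equivalently, that $\Sg^{\A}Y\cap\Nr_{\alpha}\A=\Sg^{\Nr_{\alpha}\A}Y$ — is the delicate point; this is false for $\CA$s in general (the failure of neat reducts to commute with generation is the entire content of problem 2.11), so the argument must exploit the specific structure of \emph{free} dimension-restricted algebras and the explicit substitution equations (1)--(5), perhaps by showing $\Fr_{\beta}^{\rho}\K_{\alpha}\cong\Nr_{\alpha}\Fr_{\beta}^{\rho'}\K_{\alpha+\omega}$ for a suitable $\rho'$, after which step (iii) becomes automatic. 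That isomorphism, or a direct term-reduction lemma standing in for it, is the crux.
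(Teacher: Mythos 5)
There is a genuine gap, and it sits exactly where you located the "crux." Your step (iii) asks that the interpolant element $b\in\Sg^{\A}(X_1\cap X_2)$, being squeezed between two elements with indices in $\alpha$, be definable by a term over $X_1\cap X_2$ using only $\alpha$-indexed operations. That is precisely the statement $\Sg^{\A}Y\cap\Nr_{\alpha}\A=\Sg^{\Nr_{\alpha}\A}Y$ (equivalently $\Nr_{\alpha}\Sg^{\B}Y=\Sg^{\Nr_{\alpha}\B}Y$), and the paper shows elsewhere (theorem \ref{Sc}) that this fails for $\CA$, $\Sc$, $\QA$ and $\QEA$ — it holds only in the polyadic paradigm. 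The isomorphism $\Fr_{\beta}^{\rho}\K_{\alpha}\cong\Nr_{\alpha}\Fr_{\beta}^{\rho'}\K_{\alpha+\omega}$ you propose as a repair would force exactly this commutation and would deliver a genuine interpolant $\sigma\leq\pi\leq\tau$, which is strictly stronger than what the theorem asserts and is known to be unobtainable in the intended generality (the remark preceding the theorem stresses that the interpolant \emph{must} sometimes resort to extra dimensions). So the plan proves too much, and its key lemma is false for the main examples the theorem is meant to cover.

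The paper's route avoids this entirely, and in doing so uses the set $\Delta$ for a different purpose than you assign it. You read ${\sf c}_{(\Delta)}$ as closing $\sigma,\tau$ up to $\alpha$-dimensional elements; but $\sigma,\tau$ are already terms of $\L_{\alpha}$, so there is nothing to close off. In the actual proof one first gets, from the interpolation property of $\Fr_{\beta}^{\rho}\K_{\alpha+\omega}$, a term $\pi$ over the common variables that may use spare indices in $[\alpha,\alpha+\omega)$; a compactness argument (with fresh constants $a_k$ and the identities ${\sf c}_{\mu}a_v=a_v$) pins the spare indices down to a finite number $\delta$. One then chooses \emph{fresh} finite sets $\Gamma,\Delta\subseteq\alpha$ of size $\delta$ disjoint from all indices of $\sigma,\tau,\pi$, renames the spare indices of $\pi$ into $\Gamma$, replaces each variable $v_k$ in $\sigma,\tau$ by ${\sf s}^{\mu_0}_{v_0}\cdots{\sf s}^{\mu_{\delta-1}}_{v_{\delta-1}}v_k$, and verifies via a reduct $\Rd^{(\rho)}$ along a bijection $\alpha+\delta\to\alpha$ that the resulting inequality $\bar\sigma\leq\bar\pi\leq\bar\tau$ holds in $\K_{\alpha}$. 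The cylindrifiers in the conclusion then come from equation (1), ${\sf c}_i^{\partial}x\leq{\sf s}_i^jx\leq{\sf c}_ix$, applied to the string of substitutions: ${\sf c}^{\partial}_{\mu_0}\cdots{\sf c}^{\partial}_{\mu_{\delta-1}}\sigma\leq\bar\sigma$ and $\bar\tau\leq{\sf c}_{\mu_0}\cdots{\sf c}_{\mu_{\delta-1}}\tau$. This is the only place the hypotheses (1)--(5) on ${\sf s}_i^j$ are really consumed, which your outline never does; the price of relocating the spare dimensions into $\alpha$ is exactly the weakening of $\sigma$ and $\tau$ by ${\sf c}_{(\Delta)}^{(\partial)}$ and ${\sf c}_{(\Delta)}$. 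If you reorganize your argument around this renaming step in place of your step (iii), the rest of your outline (steps (i) and (ii)) is compatible with the paper's proof.
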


\begin{demo}{Proof}
Assume that
such that $K\models \sigma \leq \tau$.
We want to find an interpolant, i.e a $\pi$ as in the conclusion. Let
${\L}_{\alpha}$ be the language of $\K_{\alpha}$ and for
$n\leq \omega$, let ${\L}_{\alpha}^{n}$ or simply ${\L}^{n}$ be the language of
$\K_{\alpha+n}$. We write ${\L}$ for  ${\L}^{(0)}$.For an assignment $s:\omega\to \B$, $\B\in \K_{\alpha}$, we write $\bar{s}$ for its extension to
all terms of $\L_{\alpha}$.
Now since $\Nr_\alpha \K_{\alpha + \omega} \subseteq K$, then for every $\B\in \K_{\alpha+\omega}$,
for every $s:\omega\to \B$ such that $\rng(s)\subseteq \Nr_{\alpha}{\B}$
we have $\B\models (\sigma \leq \tau)[\bar{s}]$.

Hence, by hypothesis, there is a term $\pi$ of ${\L}^{(\omega)} $
which contains only occurrences of variables which occur in both $\sigma$ and $\tau$
and which satisfies that for all $\B\in \K_{\alpha+\omega}$, for every
$s:\omega\to \B$, such that $\rng(s)\subseteq \Nr_{\alpha}\B$,
$$\B\models (\sigma \leq \pi)[\bar{s}]\text { and } (\pi \leq\tau)[\bar{s}].$$

Extend each language ${\L}^{n)} $, $n <\omega$, by adjoining
to its signature a fixed
$\omega$-terms sequence
$ a = \langle a_0, a_1, a_2,\ldots \rangle  $ of distinct individual constants symbols.
Let $\sigma',\tau'$ and $\pi'$ be the terms of the language extending ${\L}^{(\omega)} $
that are obtained respectively from $\sigma,\tau$ and $\pi$ by replacing each variable $v_k$
in all of its occurrences by the constant symbol $a_k$.
For each $ k< \omega$ let $\Pi^{(k)}$ be the set of all identities of the form
${\sf c}_\mu a_v = a_v$.
where $ \alpha \leq \mu < \alpha + k $ and $ v < \omega.$
Now we have
$$\Sigma^{(\omega)} \cup \bigcup_{k < \omega} \Pi^{(k)} \models (\sigma' \leq  \pi') \wedge (\pi' \leq \tau')$$
where $\Sigma^{(\omega)}={\sf Eq}(\K_{\alpha+\omega})$.
Therefore, by the compactness theorem  is a finite subset $\theta$ of $\Sigma^{(\omega)} \cup \bigcup_{k <\omega} \Pi^{(k)} $
such that
$$\theta \models ( \sigma' \leq  \pi') \wedge (\pi' \leq\tau').$$
Then there there is a finite ordinal, $\delta$, say
such that $ \theta \subseteq \Sigma^{(\delta)} \bigcup\Pi^{(\delta)}$.
Now ${\sf ind}(\pi)\subseteq \alpha +\delta$.
Choose two distinct sets $ \Gamma, \triangle \subseteq \alpha$ such that $|\Gamma| = |\triangle | = \delta$
and such that neither $\Gamma$ nor $ \triangle$
contains any index occurring in
$\sigma, \tau$, or $\pi$.
Let $\mu, v$ be two sequences of length $\delta$
which enumerate the elements of $\triangle$, respectively ( and hence are necessarily one-one).
Let $\bar{\sigma}$ and $\bar{\tau}$ be the terms of ${\bf L}$ that are obtained from $\sigma$ and $\tau$, respectively,
by replacing each variable $v_k$ in all its occurrences by the
term$$ {\sf s}^{\mu_0}_{v_0} {\sf s}^{\mu_1}_{v_1}\ldots{\sf s}^{\mu_{\delta - 1}}_{v_{\delta -1}} v_k. $$
Let $\bar{\pi}$
be obtained from $\pi$ by making these same replacements
and also by replacing
every index $\lambda$ in $\pi$ such that $\lambda \in (\alpha + \delta) \sim \alpha$  to
an  ordinal in $\Gamma$; so that different ordinals are substituted for different ordinals.
Notice that $\bar{\pi}$, as well as $\bar{\sigma}$ and $\bar{\tau}$, is a term of ${\bf L}$.
Finally, let $\rho$ be any one-one function from $ \alpha + \delta$ onto $\alpha$ such that $\delta\xi = \xi$
for every $ \xi < \alpha$, $\xi\in {\sf ind} \sigma\cup {\sf ind} {\tau} \cup {\sf ind {\pi}}$
such that
$$\rho(k + k) = \mu_k ~~\textrm{for every} ~~k < \delta.$$
Then for every $ {\C} \in \K_\alpha$ we have,
$\Rd^{(\rho)} {\C} \in  \K_{\alpha+ \delta}$
and by hypothesis, we get

\begin{equation*}{\sf s}^{\mu_0}_{v_0} {\sf s}^{\mu_1}_{v_1}\ldots {\sf s}^{\mu_{\delta - 1}}_{v_{\delta- 1}}x \in \Nr_\alpha \Rd^{(\rho)} {\C}
~~\textrm{for every}~~ x\in C\end{equation*}
We  can now readily conclude that
$$\K_{\alpha}\models \bar{\sigma} \leq\bar{\pi}\text { and }\K_{\alpha}\models \bar{\pi} \leq \bar{\tau}.$$
Now neither the $\Gamma$ nor $\triangle$ contains an index  occurring in
$ \sigma, \tau$, or $\pi$
then
$$\K_{\alpha}\models \bar{\sigma} = {\sf s}^{\mu_0}_{v_0} {\sf s}^{\mu_1}_{v_1}\ldots{\sf s}^{\mu_{\delta - 1}}_{v_{\delta- 1}} \sigma
\text { and }
\K_{\alpha}\models  \bar{\tau} = {\sf s}^{\mu_0}_{v_0} {\sf s}^{\mu_1}_{v_1}\ldots {\sf s}^{\mu_{\delta - 1}}_{v_{\delta- 1}} \tau$$
Combining these results we get that
$$\K_{\alpha}\models
{\sf s}^{\mu_0}_{v_0} {\sf s}^{\mu_1}_{v_1}\ldots {\sf s}^{\mu_{\delta - 1}}_{v_{\delta- 1}} \sigma \leq \bar{\pi} \leq  {\sf s}^{\mu_0}_{v_0} {\sf s}^{\mu_1}_{v_1}....
{\sf s}^{\mu_{\delta - 1}}_{v_{\delta - 1}} \tau$$
in particular, in every member of $K$;
and the  same is true of
$$ {\sf c}^\partial_{\mu_0} {\sf c}^\partial_{\mu_1}\dots {\sf c}^\partial_{\mu_{\delta-1}} \sigma \leq \bar{\pi}
\leq {\sf c}_{\mu_0}{\sf c}_{\mu_1} \dots {\sf c}_{\mu_{\delta-1}} \tau.$$
Therefore, since $ \bar{\pi}$ is a term of ${\L}$ and it contains like $\pi$ only occurrences of variables
which occur at the same time in both $\sigma$ and $ \tau$
we have shown that the inclusion $\sigma \leq \tau$
can indeed be interpolated relative to $\sf K$.
\end{demo}

In \cite{typeless} it is shown that if we have finitely many infinitary substitutions and the language is countable,
then we can get rid of the quantifies variables, so that a genuine interpolant can be found.
However, the formulas to be interpolated are taken in the original language, while the interpolant can (and in some cases must) contain the new
unary connectives corresponding to the added substitutions; the latter operations {\it code, hence eliminate,
the finitely many quantifies variables that appear
in the double inequality of the previous theorem}.

\section{Interpolation and amalgamation in non-classical versions}


\subsection{$MV$ polyadic  algebras}

An instance of our notion of generalized systems of varieties,
here we show that $MV$ polyadic algebras (corresponding to many valued quantifier logic)
have the superamalgamation property.

Our result addresses infinitary logics.

Here the relation symbols in the signature are allowed to be infinite and quantification
can be taken on infinitely many variables. The usual case when formulas contain only finitely many variables is a special case
of our formalism. Our next theorem shows that Henkin construction, reflected algebraically by
neat embedding theorems, apply to many valued logics. Here we {\it  do not} have equality.
In our next two cases we shall deal with equality (allowing diagonal elements in our signature).

The propositional part of our algebras, are $MV$ algebras. They are Boolean algebras without idempotency of
the conjuncts and disjuncts (but we have negation that behaves classically),
which indeed is a major difference, but one to be expected since we are dealing
with many valued logic. The values can be infinite, and indeed, uncountable.

When we impose idempotency we recover the analogous results
for the classical case, proved in many publications, dating back to the sixties of
the 20 century.

Such investigations started  with the pioneering paper of Daigneault
\cite{D}
(who was a student of Halmos the inventor of polyadic algebras)
followed by the paper of  of Daigneault and Monk \cite{DM}
proving a strong representation (completeness) theorem, proved independently also by Keisler \cite{K},
and finally,
Johnson's paper \cite{J}
proving the strong amalgamation property for polyadic (Boolean) algebras.
This algebraic result implies that Craig interpolation and Beth definability properties hold
for Keisler's logics \cite{K}.

Johnsson's proof, however,  is horribly complicated taking more than thirty pages in the Transactions of
the $AMS$.
This proof was considerably simplified and streamlined by Sayed-Ahmed,
proving even a stronger result, namely, that the class of polyadic algebras have
the super-amalgamation property \cite{super}. It is the ideas and techniques in the latter reference
that we use. We show that the technique survives lack of idempotency.

Some basics.
\begin{definition} An $MV$ algebra is an algebra
$$\A=(A, \oplus, \odot, \neg, 0,1)$$
where $\oplus$, $\odot$ are binary operations, $\neg$ is a unary operation and $0,1\in A$, such that the following identities hold:
\begin{enumerate}
\item $a\oplus b=b\oplus a,\ \ \  a\odot b=b\odot a.$
\item $a\oplus (b \oplus c)=(a\oplus b)\oplus c$,\  \  \ $a\odot (b \odot c)=(a\odot b)\odot c.$

\item $a\oplus 0=a$ ,\ \ \ $a\odot 1=a.$
\item $a\oplus 1=1$, \ \ \ $a\odot 0=a.$
\item  $a\oplus \neg a=1$,\ \ \  $a\odot \neg a=0.$
\item $\neg (a\oplus b)=\neg a\odot \neg b,$\ \ \ $\neg (a\odot b)=\neg a\oplus \neg b.$
\item $a=\neg \neg a$\ \ \ $\neg 0=1.$
\item $\neg(\neg a\oplus b)\oplus b=\neg(\neg b\oplus a)\oplus a.$
\end{enumerate}
\end{definition}
$MV$ algebras form a variety that is a subvariety of the variety of  $BL$ algebras introduced by Hajek,
in fact $MV$ algebras coincide with those $BL$ algebras satisfying double negation law,
namely that $\neg\neg x=x$, and contains all  Boolean algebras.
\begin{example}
A simple numerical example is $A=[0,1]$ with operations $x\oplus y=min(x+y, 1)$, $x\odot y=max(x+y-1, 0)$,  and $\neg x=1-x$.
In mathematical fuzzy logic, this $MV$-algebra is called the standard $MV$ algebra,
as it forms the standard real-valued semantics of Lukasiewicz logic.
\end{example}

$MV$ algebras also arise from the study of continuous $t$ norms.
\begin{definition}A $t$ norm is a binary operation $*$ on $[0,1]$, i.e $(t:[0,1]^2\to [0,1]$) such that
\begin{enumroman}
\item  $*$ is commutative and associative,
that is for all $x,y,z\in [0,1]$,
$$x*y=y*x,$$
$$(x*y)*z=x*(y*z).$$
\item $*$ is non decreasing in both arguments, that is
$$x_1\leq x_2\implies x_1*y\leq x_2*y,$$
$$y_1\leq y_2\implies x*y_1\leq x*y_2.$$
\item $1*x=x$ and $0*x=0$ for all $x\in [0,1].$

\end{enumroman}
\end{definition}
The following are the most important (known) examples of continuous $t$ norms.

\begin{enumroman}
\item Lukasiewicz $t$ norm: $x*y=max(0,x+y-1),$
\item Godel $t$ norm $x*y=min(x,y),$
\item Product $t$ norm $x*y=x.y$.
\end{enumroman}
We have the following known result \cite[lemma 2.1.6]{H}

\begin{theorem} Let $*$ be a continuous $t$ norm.
Then there is a unique binary operation $x\to y$ satisfying for all $x,y,z\in [0,1]$, the condition $(x*z)\leq y$ iff $z\leq (x\to y)$, namely
$x\to y=max\{z: x*z\leq y\}.$
\end{theorem}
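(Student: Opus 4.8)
The statement to be proved is the standard fact that for a continuous $t$-norm $*$ on $[0,1]$ there is a unique residuum, that is, a unique binary operation $\to$ satisfying the adjunction condition $(x*z)\leq y \iff z\leq(x\to y)$, and that it is given by $x\to y=\max\{z\in[0,1]: x*z\leq y\}$. The plan is first to verify that the set $S_{x,y}=\{z\in[0,1]:x*z\leq y\}$ actually attains its supremum, so that the $\max$ in the displayed formula is meaningful; then to check that the operation so defined satisfies the residuation equivalence; and finally to argue uniqueness.

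\textbf{Step 1: the supremum is attained.} Fix $x,y\in[0,1]$ and set $m=\sup S_{x,y}$, noting $S_{x,y}\neq\emptyset$ since $0\in S_{x,y}$ (because $0*x=0\leq y$ by the boundary condition $0*x=0$). I would take a sequence $z_n\in S_{x,y}$ with $z_n\uparrow m$; by monotonicity of $*$ in the second argument, $x*z_n\leq x*m$, but more importantly continuity of $*$ gives $x*z_n\to x*m$, and since each $x*z_n\leq y$ and $y$ is closed-below (the interval $[0,y]$ is closed), we get $x*m\leq y$. Hence $m\in S_{x,y}$, so $\max S_{x,y}$ exists and equals $m$. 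This is the step where continuity of $*$ is genuinely used, and I expect it to be the crux of the argument — the boundary conditions and monotonicity are used only for trivialities, whereas the attainment of the max is exactly what would fail for a merely left-continuous or discontinuous $t$-norm's behavior at this point (though for residuation one usually only needs left-continuity; here full continuity is assumed so the argument is clean).

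\textbf{Step 2: the residuation equivalence.} Define $x\to y=\max S_{x,y}$ as in Step 1. For the forward direction, suppose $x*z\leq y$; then by definition $z\in S_{x,y}$, so $z\leq\max S_{x,y}=x\to y$. For the converse, suppose $z\leq x\to y$; then by monotonicity of $*$ in the second argument (condition (ii) of the definition of $t$-norm), $x*z\leq x*(x\to y)$, and by Step 1 we have $x*(x\to y)\leq y$, whence $x*z\leq y$. This establishes $(x*z)\leq y\iff z\leq(x\to y)$.

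\textbf{Step 3: uniqueness.} Suppose $\Rightarrow$ is another operation with $(x*z)\leq y\iff z\leq(x\Rightarrow y)$ for all $x,y,z$. Fix $x,y$. Taking $z=x\Rightarrow y$ in the equivalence (using reflexivity $z\leq z$) yields $x*(x\Rightarrow y)\leq y$, so $x\Rightarrow y\in S_{x,y}$ and hence $x\Rightarrow y\leq\max S_{x,y}=x\to y$. Conversely, $x\to y$ satisfies $x*(x\to y)\leq y$ by Step 1, so applying the defining equivalence of $\Rightarrow$ in the direction $(x*z)\leq y\Rightarrow z\leq(x\Rightarrow y)$ with $z=x\to y$ gives $x\to y\leq x\Rightarrow y$. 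Therefore $x\to y=x\Rightarrow y$, and the residuum is unique. The only real obstacle is Step 1, and even that is routine once one invokes continuity; the rest is a formal manipulation of the adjunction, essentially the argument that a Galois connection determines each adjoint from the other.
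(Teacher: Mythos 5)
Your proof is correct. Note, however, that the paper does not actually prove this statement: it records it as a known result and cites it to Hajek's monograph (lemma 2.1.6 of \cite{H}), so there is no in-paper argument to compare against. Your three steps are exactly the standard argument one finds there: attainment of the supremum of $S_{x,y}=\{z: x*z\leq y\}$ via (left-)continuity and monotonicity, the two directions of the adjunction, and uniqueness by the usual Galois-connection argument. Your parenthetical remark is also accurate — Step 1 as you wrote it only uses left-continuity (you approximate $m$ from below by an increasing sequence), which is the sharp hypothesis for residuation; the full continuity assumed in the statement is simply more than is needed at that point. The only cosmetic slip is writing $0*x=0\leq y$ where you mean $x*0\leq y$, but commutativity of $*$ makes this harmless.
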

The operation $x\to y$ is called the residuam of the $t$ norm. The residuam $\to$
defines its corresponding unary operation of precomplement
$\neg x=(x\to 0)$.
Abstracting away from $t$ norms, we get

\begin{definition} A residuated lattice is an algebra
$$(L,\cup,\cap, *, \to 0,1)$$
with four binary operations and two constants such that
\begin{enumroman}
\item $(L,\cup,\cap, 0,1)$ is a lattice with largest element $1$ and the least element $0$ (with respect to the lattice ordering defined the usual way:
$a\leq b$ iff $a\cap b=a$).
\item $(L,*,1)$ is a commutative semigroup with largest element $1$, that is $*$ is commutative, associative, $1*x=x$ for all $x$.
\item Letting $\leq$ denote the usual lattice ordering, we have $*$ and $\to $ form an adjoint pair, i.e for all $x,y,z$
$$z\leq (x\to y)\Longleftrightarrow x*z\leq y.$$
\end{enumroman}
\end{definition}
A result of Hajek, is that an $MV$ algebra is  a prelinear commutative bounded integral residuated lattice
satisfying the additional identity $x\cup y=(x\to y)\to y.$ In case of an $MV$ algebra, $*$ is the so-called strong conjunction which we
denote here following standard notation in the
literature by $\odot$. $\cap$ is called weak conjunction. The other operations are defined by
$\neg a=a\to 0$ and $a\oplus b=\neg(\neg a\odot \neg b).$ The operation $\cup$ is called weak disjunction, while $\oplus$ is called
strong disjunction. The presence of weak and strong conjunction is a common feature
of substructural logics without the rule of contraction, to which Lukasiewicz
logic belongs.

For an algebra $\A$, $End(\A)$ denotes the set of endomorphisms of $\A$, i.e homomorphisms from $\A$ into itself.

\begin{definition} A transformation system is a quadruple $(\A, I, G, S)$ where $\A$ is an algebra, $I$ is a set, $G$ is a subsemigroup of $^II$
and $S$ is a homomorphism from
$G$ into $End(\A)$
\end{definition}
We shall deal with three cases of $G$. When $G$ is the semigroup of finite transformations, $G$ is a countable subset of $^II$
satisfying certain conditions but containing infinitary substitutions and $G={}^II.$
$\A$ will always be an $MV$ algebra.
If we want to study predicate $MV$ logic, then we are naturally led to expansions of $MV$ algebras allowing quantification.
\begin{definition} Let $\A$ be an $MV$ algebra. An existential quantifier on $\A$ is a function
$\exists:A\to A$ that satisfies the following conditions for all $a, b\in A$:
\begin{enumerate}
\item $\exists 0=0.$
\item $a\leq \exists a.$
\item $\exists(a\odot \exists b)=\exists a\odot \exists b.$
\item $\exists(a\oplus \exists b)=\exists a\oplus \exists b.$
\item  $\exists (a\odot a)=\exists a\odot  \exists a.$
\item $\exists (a\oplus a)=\exists a\oplus \exists a.$
\end{enumerate}
\end{definition}
 Let $\A$ be an $MV$ algebra, with existential quantifier $\exists$. For $a\in A$, set $\forall a=\neg \exists \neg a$.
Then $\forall$ is a unary operation on $\A$ called  a universal quantifier and it satisfies all properties
of the existential quantifier, except for (2) which takes the form: $\forall a\leq a$. This follows directly from the axioms.
Now we define our algebras. Their similarity type depends on a fixed in advance semigroup.
We write $X\subseteq_{\omega} Y$ to denote that $X$ is a finite subset
of $Y$, and we write $\wp_{\omega}(Y)$ for $\{X: X\subseteq_{\omega}Y\}$.\footnote{There should be  no conflict with the notation $S_{\mathfrak{m}}V$
introduced earlier.}

\begin{definition} Let $\alpha$ be an infinite set. Let $G\subseteq {}^{\alpha}\alpha$ be a semigroup under the operation of composition of maps. Let
$T\subseteq \wp(\alpha)$.
An $\alpha$ dimensional polyadic $MV$ algebra of type $(G,T)$, an $MV_{G,T}$ for short,
is an algebra of the following
type
$$(A,\oplus,\odot,\neg, 0, 1, {\sf s}_{\tau}, {\sf c}_{(J)} )_{\tau\in G, J\in T}$$
where
$(A,\oplus,\odot, \neg, 0, 1)$ is an $MV$ algebra, ${\sf s}_{\tau}:\A\to \A$ is an endomorphism of $MV$ algebras,
${\sf c}_{(J)}$ is an existential quantifier, such that the following hold for all
$p\in A$, $\sigma, \tau\in G$ and $J,J'\in T:$
\begin{enumarab}
\item ${\sf s}_{Id}p=p,$
\item ${\sf s}_{\sigma\circ \tau}p={\sf s}_{\sigma}{\sf s}_{\tau}p$ (so that $S:\tau\mapsto {\sf s}_{\tau}$ defines a homomorphism from $G$ to $End(\A)$;
that is $(A, \oplus, \odot, \neg, 0, 1, G, S)$ is a transformation system),
\item ${\sf c}_{(J\cup J')}p={\sf c}_{(J)}{\sf c}_{(J')}p,$
\item If $\sigma\upharpoonright \alpha\sim J=\tau\upharpoonright \alpha\sim J$, then
${\sf s}_{\sigma}{\sf c}_{(J)}p={\sf s}_{\tau}{\sf c}_{(J)}p,$
\item If $\sigma\upharpoonright \sigma^{-1}(J)$ is injective, then
${\sf c}_{(J)}{\sf s}_{\sigma}p={\sf s}_{\sigma}{\sf c}_{\sigma^{-1}(J)}p.$
\end{enumarab}
\end{definition}
\begin{theorem} Let $\A=(A,\oplus, \odot, \neg, 0, 1, {\sf s}_{\tau}, {\sf c}_{(J)})_{\tau\in G, J\in T}$ be an $MV_{G,T}$.
For each $J\in T$ and $x\in A$, set ${\sf q}_{(J)}x=\neg {\sf c}_{(J)}\neg x.$
Then the following hold for each $p\in A$, $\sigma, \tau\in G$ and $J,J'\in T$.
\begin{enumarab}
\item ${\sf q}_{(J)}$ is a universal quantifier,
\item ${\sf q}_{(J\cup J')}p={\sf q}_{(J)}{\sf c}_{(J')}p,$
\item ${\sf c}_{(J)}{\sf q}_{(J)}p={\sf q}_{(J)}p , \ \  {\sf q}_{(J)}{\sf c}_{(J)}p={\sf c}_{(J)}p,$
\item If $\sigma\upharpoonright \alpha\sim J=\tau\upharpoonright \alpha\sim J$, then
${\sf s}_{\sigma}{\sf q}_{(J)}p={\sf s}_{\tau}{\sf q}_{(J)}p,$
\item If $\sigma\upharpoonright \sigma^{-1}(J)$ is injective, then
${\sf q}_{(J)}{\sf s}_{\sigma}p={\sf s}_{\sigma}{\sf q}_{\sigma^{-1}(J)}p.$
\end{enumarab}
\end{theorem}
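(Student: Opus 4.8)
The plan is to verify each of the five asserted properties of $\mathsf q_{(J)}$ by a routine dualization argument, pushing everything through the single defining equation $\mathsf q_{(J)}x = \neg \mathsf c_{(J)}\neg x$ and using the fact that $\neg$ is an involution (axiom (7) of $MV$ algebras) which, together with the De Morgan laws (axiom (6)), lets one transport any statement about $\mathsf c_{(J)}$ to its mirror statement about $\mathsf q_{(J)}$. First I would record the elementary consequences of the definition of an existential quantifier on an $MV$ algebra that we will use repeatedly: from $0 \leq \mathsf c_{(J)}$ monotone-ish behavior (which follows from $a \leq \mathsf c_{(J)}a$ and the absorption axioms), from $\mathsf c_{(J)}0 = 0$ and $a \leq \mathsf c_{(J)}a$, and from the idempotency-type law $\mathsf c_{(J)}(a \odot \mathsf c_{(J)}b) = \mathsf c_{(J)}a \odot \mathsf c_{(J)}b$ with its $\oplus$-analogue. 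Dualizing $a \leq \mathsf c_{(J)}a$ gives $\mathsf q_{(J)}a = \neg\mathsf c_{(J)}\neg a \leq \neg \neg a = a$, which is the correct inequality for a universal quantifier; dualizing $\mathsf c_{(J)}0=0$ gives $\mathsf q_{(J)}1 = \neg \mathsf c_{(J)}0 = \neg 0 = 1$; and dualizing the two distributivity laws, using $\neg(a\odot b) = \neg a \oplus \neg b$ and $\neg(a \oplus b) = \neg a \odot \neg b$, yields exactly the $\odot$- and $\oplus$-stability of $\mathsf q_{(J)}$ in the statement. This establishes item~(i), that $\mathsf q_{(J)}$ is a universal quantifier.

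For item~(ii), I would compute $\mathsf q_{(J\cup J')}p = \neg \mathsf c_{(J\cup J')}\neg p = \neg \mathsf c_{(J)}\mathsf c_{(J')}\neg p$ using axiom~(3) of the $MV_{G,T}$ definition, then insert $\neg\neg$ in the middle to get $\neg \mathsf c_{(J)}\neg(\neg \mathsf c_{(J')}\neg p) = \mathsf q_{(J)}\mathsf c_{(J')}p$, noting the asymmetry in the claim (a $\mathsf q$ on the outside, a $\mathsf c$ on the inside) is exactly what this calculation produces. For item~(iii), the first equation $\mathsf c_{(J)}\mathsf q_{(J)}p = \mathsf q_{(J)}p$ amounts to $\mathsf q_{(J)}p$ being a fixed point of $\mathsf c_{(J)}$; one gets this by writing $\mathsf q_{(J)}p = \neg\mathsf c_{(J)}\neg p$ and using that $\mathsf c_{(J)}\neg p$ is a fixed point of $\mathsf c_{(J)}$ (a standard consequence of the existential-quantifier axioms: $\mathsf c_{(J)}\mathsf c_{(J)}a = \mathsf c_{(J)}(0 \oplus \mathsf c_{(J)}a)\cdots$, or more directly from $a \leq \mathsf c_{(J)}a$ applied to $\mathsf c_{(J)}a$ together with $\mathsf c_{(J)}(\mathsf c_{(J)}a \odot \mathsf c_{(J)}1)=\cdots$), combined with the observation that $\mathsf c_{(J)}$ commutes with $\neg$ on elements of the form $\neg(\text{fixed point})$ — more precisely, $\mathsf c_{(J)}\neg\mathsf c_{(J)}\neg p = \neg\mathsf c_{(J)}\neg p$ because the complement of a $\mathsf c_{(J)}$-fixed point is a $\mathsf c_{(J)}$-fixed point in this setting. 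The second equation of~(iii) is the literal dual. Items~(iv) and~(v) are the cleanest: item~(iv) follows by applying $\neg$ to both sides of axiom~(4) of $MV_{G,T}$ with $\neg p$ in place of $p$, using that each $\mathsf s_\sigma$ is an $MV$-endomorphism and hence commutes with $\neg$; item~(v) likewise follows from axiom~(5) with $\neg p$ for $p$ and the commutation of $\mathsf s_\sigma$ with $\neg$.

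The only place requiring genuine care — and the step I expect to be the main obstacle — is the fixed-point lemma needed for item~(iii), namely that $\mathsf c_{(J)}\mathsf c_{(J)}a = \mathsf c_{(J)}a$ and, dually, that the De~Morgan complement interacts correctly so that $\mathsf c_{(J)}\mathsf q_{(J)}p = \mathsf q_{(J)}p$. In the Boolean case this is immediate, but for $MV$ algebras the existential quantifier is only axiomatized via the $\odot/\oplus$-distribution laws rather than by outright idempotency, so one must derive idempotency of $\mathsf c_{(J)}$ first. I would obtain it by specializing the law $\mathsf c_{(J)}(a \odot \mathsf c_{(J)}b) = \mathsf c_{(J)}a \odot \mathsf c_{(J)}b$ with $a = 1$, giving $\mathsf c_{(J)}(1 \odot \mathsf c_{(J)}b) = \mathsf c_{(J)}1 \odot \mathsf c_{(J)}b$; using $1 \odot x = x$ and the fact that $\mathsf c_{(J)}1 = 1$ (from $1 \leq \mathsf c_{(J)}1 \leq 1$), this collapses to $\mathsf c_{(J)}\mathsf c_{(J)}b = \mathsf c_{(J)}b$. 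Once idempotency is in hand, the rest of~(iii) and indeed all five items are bookkeeping with $\neg$, $\odot$, $\oplus$, and the five structural axioms; the argument is a faithful many-valued transcription of the corresponding classical polyadic-algebra computation, and the idempotency derivation is exactly the point where one checks that the absence of conjunction-idempotency in the propositional part does no harm.
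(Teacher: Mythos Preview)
Your approach --- routine dualization via $\neg\neg = \mathrm{id}$ and the De~Morgan laws --- is exactly what the paper has in mind; its proof is simply the word ``Routine.'' Your derivation of idempotency of ${\sf c}_{(J)}$ from the axiom ${\sf c}_{(J)}(a\odot {\sf c}_{(J)}b)={\sf c}_{(J)}a\odot{\sf c}_{(J)}b$ by setting $a=1$ is the right move, and the fact that the complement of a ${\sf c}_{(J)}$-fixed element is again ${\sf c}_{(J)}$-fixed (needed for item~(iii)) does hold in $MV$ algebras: from ${\sf c}_{(J)}(\neg{\sf c}_{(J)}a\odot{\sf c}_{(J)}a)={\sf c}_{(J)}0=0$ and ${\sf c}_{(J)}(\neg{\sf c}_{(J)}a\oplus{\sf c}_{(J)}a)={\sf c}_{(J)}1=1$ one gets ${\sf c}_{(J)}(\neg{\sf c}_{(J)}a)\odot{\sf c}_{(J)}a=0$ and ${\sf c}_{(J)}(\neg{\sf c}_{(J)}a)\oplus{\sf c}_{(J)}a=1$, and in an $MV$ algebra these two equations force ${\sf c}_{(J)}(\neg{\sf c}_{(J)}a)=\neg{\sf c}_{(J)}a$ by residuation.

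There is, however, a slip in your treatment of item~(ii). Your computation
\[
{\sf q}_{(J\cup J')}p=\neg{\sf c}_{(J)}{\sf c}_{(J')}\neg p=\neg{\sf c}_{(J)}\neg\bigl(\neg{\sf c}_{(J')}\neg p\bigr)
\]
is correct, but the right-hand side is ${\sf q}_{(J)}{\sf q}_{(J')}p$, not ${\sf q}_{(J)}{\sf c}_{(J')}p$ as you write. The printed statement appears to contain a typo (the ${\sf c}_{(J')}$ should be ${\sf q}_{(J')}$, as the straightforward dual of axiom~(3)); your own calculation proves the correct version, so you should not force it to match the misprinted claim.
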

\begin{demo}{Proof} Routine
\end{demo}
Here we depart from \cite{HMT2} by defining polyadic algebras on sets rather than on ordinals. In this way we follow the tradition of Halmos.
We refer to $\alpha$ as the dimension of $\A$ and we write $\alpha=dim\A$.
Borrowing terminology from cylindric algebras, we refer to ${\sf c}_{(\{i\})}$ by ${\sf c}_i$ and ${\sf q}_{(\{i\})}$ by ${\sf q}_i$
\begin{definition}
Let $\A$ be an $MV$ algebra. The $MV$ algebras of the form $\F(V, \A)$ with universe
$\{f: V\to \A$, $V\subseteq {}^{\alpha}U \text { for some set $U$ }\}$
and   $MV$ operations defined pointwise and the extra non Boolean operations are defined the
usual way, are called set algebras. When $V={}^{\alpha}U$ and $\A=[0,1]$ this is called a standard
$MV$ set algebra.
We can recover the classical case by taking $\A=2$, where $2$ is the two element Boolean $(MV)$ algebra.
\end{definition}

Our next theorem works when $G$ consists of finite transformations; except that, in this case,  we replace the notion of free algebras
by dimension restricted free
algebras \cite[theorems 2.5.35, 2.5.36, 2.5.37]{HMT1},
and when $G$ is strongly rich in the sense of \cite{AU}, and $T$ is the set of singletons. Here we prove
the most difficult case, namely, when $G$ is the set of all transformations, and $T$ the set of
all subsets of $\alpha$, so that we are dealing with full fledged polyadic $MV$ algebras.

\begin{theorem}\label{mv} Let $\alpha$ be infinite, $G={}^{\alpha}\alpha$ and $T=\wp(\alpha)$.
Then $\Fr_{\beta}MV_{G,T}$ has the interpolation property.
\end{theorem}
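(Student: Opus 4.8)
The plan is to mimic, step by step, the Henkin-style argument for the super-amalgamation property of classical polyadic algebras from \cite{super}, as it was already adapted above to the non-commutative cylindric-polyadic setting in the proof of Theorem \ref{Fer}, and to check that none of the steps use idempotency of the propositional conjuncts/disjuncts. Concretely, let $\beta$ be a cardinal and $\A=\Fr_{\beta}MV_{G,T}$ with $G={}^{\alpha}\alpha$, $T=\wp(\alpha)$. Given $X_1,X_2\subseteq\beta$, $a\in\Sg^{\A}X_1$, $c\in\Sg^{\A}X_2$ with $a\leq c$, we want an interpolant $b\in\Sg^{\A}(X_1\cap X_2)$ with $a\leq b\leq c$. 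First I would pass to a dilation: fix a regular cardinal $\mu>\max(|\alpha|,|A|)$ and a $\B\in MV_{G',T'}$ of dimension $\mu$ (with $G'$ the full transformation semigroup on $\mu$, $T'=\wp(\mu)$) such that $\A\cong\Nr_{\alpha}\B$ and $A$ generates $\B$; such minimal dilations exist for generalized systems of varieties definable by a schema. It then suffices to find an interpolant inside $\Sg^{\B}(X_1\cap X_2)$, since any element of that subalgebra lying below $a$ and above $c$ with the appropriate dimension-support will lie in $\Nr_{\alpha}\B=\A$.

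Next I would run the step-by-step (transfinite recursion of length $\mu$) construction of two ``rich'' filters. Assuming for contradiction that no interpolant exists in $\Sg^{\B}(X_1\cap X_2)$, enumerate $adm\times\mu\times\Sg^{\B}(X_1)$ and $adm\times\mu\times\Sg^{\B}(X_2)$ in order type $\mu$, choose witness ordinals $u_i,v_i\in\mu$ avoiding all relevant finitely-many supports and earlier witnesses, and form $Y_1=\{a\}\cup\{-{\sf s}_{\tau_i}{\sf c}_{k_i}x_i\oplus{\sf s}_{\tau_i}{\sf s}_{u_i}^{k_i}x_i:i\in\mu\}$, $Y_2=\{-c\}\cup\{\ldots\}$, with $H_i$ the (Boolean-lattice) filter generated by $Y_i$ in $\Sg^{\B}(X_i)$ and $H$ the filter generated in $\Sg^{\B}(X_1\cap X_2)$ by $(H_1\cup H_2)\cap\Sg^{\B}(X_1\cap X_2)$. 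The core is the claim that $H$ is proper, proved by induction on $n+m$ where $n,m$ count how many ``$Y$-generators'' appear on each side; the base case $n+m=0$ is exactly the non-existence of an interpolant, and the inductive step uses the quantifier-elimination computations ${\sf c}_{u}z_i=z_i$ ($i<n-1$), ${\sf c}_{u}z_{n-1}=1$, ${\sf c}_{u}^{\partial}t_j=t_j$, together with ${\sf c}_i({\sf c}_ix\cdot y)={\sf c}_ix\cdot{\sf c}_iy$ and the commutativity/substitution axioms for $MV_{G,T}$ listed above. Here I would carry over verbatim the computation block displayed in the proof of Theorem \ref{Fer}, simply replacing Boolean $+,\cdot,-$ by the $MV$ operations $\oplus,\cdot$ (weak meet) and $\neg$ where appropriate, and noting that the only lattice/Boolean facts invoked are distributivity of meet over join and of ${\sf c}_i^{\partial}$ over meet, both of which hold in the Boolean-lattice reduct of an $MV$ algebra, without any recourse to $x\odot x=x$.

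Having shown $H$ is proper, extend it to a Boolean(-lattice prime) ultrafilter $H^{*}$ of $\Sg^{\B}(X_1\cap X_2)$, and use the richness built in via the $Y_i$ to lift $H^{*}$ to ultrafilters $F_1\supseteq H^{*}$ of $\Sg^{\B}(X_1)$ and $F_2\supseteq H^{*}$ of $\Sg^{\B}(X_2)$ that are ``perfect'' (witness property: if ${\sf c}_kx\in F_i$ then ${\sf s}_l^kx\in F_i$ for some $l\notin\Delta x$) and agree on the common subalgebra: $F_1\cap\Sg^{\B}(X_1\cap X_2)=H^{*}=F_2\cap\Sg^{\B}(X_1\cap X_2)$. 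From each perfect ultrafilter one builds a set-algebra representation/homomorphism on $\Sg^{\B}(X_i)$ in the standard Henkin manner (the $MV$ analogue of the Daigneault--Monk representation, cf.\ \cite{DM,D}); since $F_1$ and $F_2$ restrict to the same ultrafilter on the common part and both have the witness property, the two homomorphisms agree on $\Sg^{\B}(X_1\cap X_2)$, and freeness of $\A$ (hence of $\B$ over $A$) lets us amalgamate them into a single homomorphism $h$ on $\Sg^{\B}(X_1\cup X_2)$ with $a\in F_1$, $-c\in F_2$, whence $h(a\cdot\neg c)\neq 0$, contradicting $a\leq c$. This yields the interpolant and hence the interpolation property; the super-amalgamation property for $MV_{G,T}$ then follows by the usual bridge theorem between the (strong) interpolation property for free algebras and (super-)amalgamation, exactly as in \cite{super}.

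The main obstacle I anticipate is not conceptual but bookkeeping: verifying that every Boolean identity used inside the induction-on-$n+m$ computation and inside the ``perfect ultrafilter $\to$ representation'' construction survives the weakening to $MV$ algebras. The delicate points are (i) that the filters $H_i,H$ should be taken in the lattice (equivalently, $\{\,\cdot\,,\neg\}$-Boolean) reduct, where negation still behaves classically so that ``$-c\in F_2$ and $c\in F_1\cap\Sg(X_1\cap X_2)$ is contradictory'' still works; (ii) that the witness equations ${\sf c}_u z_i=z_i$ etc.\ only ever use additivity of ${\sf c}_i$, the substitution-commutation axioms, and the identity ${\sf c}_i({\sf c}_ix\cdot y)={\sf c}_ix\cdot{\sf c}_iy$, none of which mention $\oplus/\odot$ idempotency; and (iii) that the Henkin representation of an $MV_{G,T}$ from a perfect ultrafilter goes through — this is where one leans on the known strong completeness theorem for polyadic $MV$ logic and on the absence of equality (so no congruence-on-$\beta$ subtlety as in the diagonal cases). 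I expect (iii) to require the most care, since the classical construction passes through Boolean prime filters, and one must check that prime filters of an $MV$ algebra, together with Łukasiewicz-valued (or $[0,1]$-valued) evaluations, suffice to separate points compatibly across the two subalgebras; but all of this is routine in light of \cite{D,DM,K} and the streamlined presentation in \cite{super}.
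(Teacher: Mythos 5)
Your proposal follows essentially the same route as the paper's proof: pass to a minimal dilation, assume no interpolant exists there, build the sets $Y_1,Y_2$ with transfinitely chosen witnesses, prove by induction that the generated filter on the common subalgebra is proper, extend to a pair of perfect (witness-saturated) maximal filters agreeing on $\Sg^{\B}(X_1\cap X_2)$, represent via ${\sf s}_{\bar x}p/F_i$, and paste by freeness. The one bookkeeping point where the paper differs from your sketch is exactly the one you flag: it works with $MV$ filters (closed under $\odot$), so the properness induction is carried out on $n+m$ \emph{together with} the $\odot$-exponents of the generators (the terms $a^{l-1}\odot(\cdots)^{l_i}$), rather than on lattice filters with induction on $n+m$ alone; this is needed so that the quotients $\D_i/F_i$ are $MV$ algebras.
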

\begin{demo}{Proof}
\begin{enumarab}
\item The first part of the proof is identical to that in \cite{MLQ}, but we include it for the sake of completeness referring to op cit for detailed arguments.
Let $\mathfrak{m}$ be the local degree of $\A=\Fr_{\beta}MV_{G,T}$, $\mathfrak{c}$ its effective cardinality and $\mathfrak{n}$
be any cardinal such that $\mathfrak{n}\geq \mathfrak{c}$
and $\sum_{s<m}\mathfrak{n}^s=\mathfrak{n}$. Let $X_1, X_2\subseteq A$ $a\in \Sg^{\A}X_1$ and $b\in \Sg^{\A}X_2$ such that $a\leq b$.
We want to find an interpolant.
Then there exists $\B\in MV_{G_{\mathfrak{n}},\bar{T}}$  where $\bar{T}=\wp(\kappa)$ such that $\A\subseteq \Nr_{\alpha}\B$ and $A$ generates $\B$.
(It can be proved that $\B=\Fr_{\beta}MV_{G_{\mathfrak{n},\bar{T}}}).$
Being a minimal dilation of $\A$, the local degree of $\B$ is the same as that of $\A$, in particular each $x\in \B$ admits
a support of cardinality $<\mathfrak{m}$.
Also for all $X\subseteq A$, $\Sg^{\A}X=\Nr_{\alpha}\Sg^{\B}X$, this can be proved exactly like the polyadic case
\cite[theorems 3.1, 3.2 and p. 166]{DM}.

 Assume for contradiction
such an interpolant does not exist.
Then there exists no interpolant in
$\Sg^{\B}(X_1\cap X_2)$. Indeed, let $c$ be an interpolant in $\Sg^{\B}(X_1\cap X_2)$.
Let $\Gamma=(\beta\sim \alpha)\cap \Delta c$.
Let $c'={\sf c}_{(\Gamma)}c$. Then $a\leq c'$. Also $b={\sf c}_{(\Gamma)}b$, so that $c'\leq b$.
hence $a\leq c'\leq b$. But $$c'\in \Nr_{\alpha}\Sg^{\B}(X_1\cap X_2)=\Sg^{\Nr_{\alpha}\B}(X_1\cap X_2)
=\Sg^{\A}(X_1\cap X_2).$$
Let $\A_1=\Sg^{\B}X_1$ and $\A_2=\Sg^{\B}X_2$.
Let $Z_1=\{(J, p): J\subseteq \mathfrak{n}, |J|<\mathfrak{m}, p\in A_1\}$ and define $Z_2$ similarly with $A_2$ replacing $\A_1$.
Then $|Z_1|=|Z_2|\leq \mathfrak{n}$.
To show that $|Z_1|\leq \mathfrak{n}$, let $K$ be a subset of $\mathfrak{n}$ of cardinality $\mathfrak{e}$, the effective degree of $\A_1$.
Then every element $p$ of $\A_1$ is of the form ${\sf s}_{\sigma}q$ with $q\in A_{1K}$ and $\sigma\in {}^{\mathfrak{n}}\mathfrak{n}$.
The number of subsets $J$ of $\mathfrak{n}$ such that $|J|<\mathfrak{m}$ is at most $\sum_{s<\mathfrak{m}}\mathfrak{n}^s=\mathfrak{n}$.
Let $q\in A_{1K}$ have a support of cardinality $s<\mathfrak{m}$. Then the number of distinct
elements
${\sf s}_{\sigma}q$ with $\sigma\in {}^{\mathfrak{n}}\mathfrak{n}$ is at most $\mathfrak{n}^s\leq \mathfrak{n}$.
Hence there is at most $\mathfrak{n}\cdot\mathfrak{c}$ elements $s_{\sigma}q$ with $\sigma\in {}^{\mathfrak{n}}\mathfrak{n}$ and $q\in A_{1K}$.
Hence $|Z_1|\leq \mathfrak{n}\cdot\mathfrak{n}\cdot\mathfrak{c}=\mathfrak{n}.$
Let $$\langle (k_i,x_i): i\in \mathfrak{n}\rangle\text {  and  }\langle (l_i,y_i):i\in \mathfrak{n}\rangle$$
be enumerations of $Z_1$ and $Z_2$ respectively, possibly with repititions.
Now there are two functions $u$ and $v$ such that for each $i<\mathfrak{n}$ $u_i, v_i$ are elements of $^\mathfrak{n}\mathfrak{n}$
with
$$u_i\upharpoonright \mathfrak{n} \sim k_i=Id$$
$$u_i\upharpoonright k_i \text { is one to one }$$
$$x_j\text { and $y_j$ and $a$ and $c$ are independent of $u_i(k_i)$ for all $j\leq i$}$$
$${\sf s}_{u_j}x_j\text {  is independent of $u_i(k_i)$ for all $j<i$}$$
$$v_i\upharpoonright \mathfrak{n}\sim l_i=Id$$
$$v_i\upharpoonright l_i \text { is one to one }$$
$$x_j\text { and $y_j$ and $a$ and $c$ are independent of $v_i(l_i)$ for all $j\leq i$}$$
$${\sf s}_{v_j}y_j\text {  is independent of $v_i(l_i)$ for all $j<i$}$$
and
$$v_i(l_i)\cap u_j(k_j)=\emptyset\text {  for all  }j\leq i.$$
The existence of such $u$ and $v$ can be proved by by transfinite recursion \cite{MLQ}. Let
$$Y_1= \{a\}\cup \{-{\sf  c}_{(k_i)}x_i\oplus{\sf s}_{u_i}x_i: i\in \mathfrak{n}\},$$
$$Y_2=\{-b\}\cup \{-{\sf  c}_{(l_i)}y_i\oplus {\sf s}_{v_i}y_i:i\in \mathfrak{n}\},$$
$$H_1= fl^{\Rd_{MV}\Sg^{\B}(X_1)}Y_1,\  H_2=fl^{\Rd_{MV}\Sg^\B(X_2)}Y_2,$$ and
$$H=fl^{\Rd_{MV}\Sg^{\B}(X_1\cap X_2)}[(H_1\cap \Sg^{\B}(X_1\cap X_2)
\cup (H_2\cap \Sg^{\B}(X_1\cap X_2)].$$
Then $H$ is a proper filter of $\Sg^{\B}(X_1\cap X_2).$
To prove this it is sufficient to consider any pair of finite, strictly
increasing sequences of ordinals
$$\eta(0)<\eta(1)\cdots <\eta(n-1)<\mathfrak{n}\text { and } \xi(0)<\xi(1)<\cdots
<\xi(m-1)<\mathfrak{n},$$
and to prove that the following condition holds:
\begin{equation}\label{x0-9}
\begin{split}
\textrm{ For any} ~~&b_0, b_1\in \Sg^{\B}(X_1\cap X_2) ~~\textrm{such
that} \\
&a\odot \Pi_{i<n}[a^{l-1}\odot (-{\sf  c}_{(k_{\eta(i)})}x_{\eta(i)}\oplus {\sf s}_{u_{\eta(i)}}x_{\eta(i)})^{l_i}]\leq b_0 \\
\textrm{and}\\
&(-b)\odot \Pi_{i<m}[(-b)^{k-1}\odot (-{\sf c}_{(l_{\xi(i)})}y_{\xi(i))}\oplus{\sf
s}_{v_{\xi(i)}}y_{\xi(i)})^{k_i}]\leq b_1\\
\textrm{we have}\\
& b_0\odot b_1\neq 0.
\end{split}
\end{equation}
By induction on $n+m+l-1+k-1=n+m+l+k-2\geq 0$. This can be done by the above argument together with those in \cite{MLQ}.

Let $\mathfrak{m}$ be the local degree of $\A=\Fr_{\beta}PA_{\alpha}$, $\mathfrak{c}$ its effective
cardinality and $\mathfrak{n}$ be any cardinal such that $\mathfrak{n}\geq \mathfrak{c}$
and $\sum_{s<\mathfrak{m}}\mathfrak{n}^s=\mathfrak{n}$.
Then, by Theorem 7,  there exists $\B\in PA_{\mathfrak{n}}$ such that $\A\subseteq \Nr_{\alpha}\B$ and $\A$ generates $\B$.
Being a minimal dilation of $\A$, the local degree of $\B$ is the same as that of $\A$,
in particular each $x\in \B$ admits a support of cardinality $<\mathfrak{m}$.
By Theorem 9(i), for all $X\subseteq \A$, $\Sg^{\A}X=\Nr_{\alpha}\Sg^{\B}X$.
Let $X_1, X_2\subseteq \beta$, $a\in \Sg^{\A}X_1$ and $b\in \Sg^{\A}X_2$ such that $a\leq b$.
We want to find an interpolant. We assume that such an interpolant does not exist
and we eventually arrive at a contradiction.
Suppose that there exists no interpolant in $\Sg^{\A}(X_1\cap X_2)$. Then there exists no interpolant in
$\Sg^{\B}(X_1\cap X_2)$. Indeed, let $c$ be an interpolant in $\Sg^{\B}(X_1\cap X_2)$.
Let $\Gamma=\mathfrak{n}\sim\alpha$.
Let $c'={\sf c}_{(\Gamma)}c$. Then $a\leq c'$. Also $b={\sf c}_{(\Gamma)}b$, so that $c'\leq b$.
Hence $a\leq c'\leq b$. But $c'\in \Nr_{\alpha}\Sg^{\B}(X_1\cap X_2)=\Sg^{\Nr_{\alpha}\B}(X_1\cap X_2)
=\Sg^{\A}(X_1\cap X_2).$ That is, assuming the existence of an interpolant in $\Sg^{\B}(X_1\cap X_2)$
implies the existence of one in $\Sg^{\A}(X_1\cap X_2)$.

Now we prepare for constructing the Boolean ultrafilters $F_1$ and $F_2$ as indicated in the outline of proof.
Let $\A_1=\Sg^{\B}X_1$ and $\A_2=\Sg^{\B}X_2$. $F_1$ will be a Boolean ultrafilter of $\A_1$ and $F_2$ will be a Boolean ultrafilter of $\A_2$
such that $F_1$ and $F_2$ agree on the common part $\Sg^{\B}(X_1\cap X_2)$, see (\ref{tarek4}) below.

Let $Z_1=\{(J, p): J\subseteq \mathfrak{n}, |J|<\mathfrak{m}, p\in \A_1\}$ and define $Z_2$ similarly with $\A_2$ replacing $\A_1$.
Then $|Z_1|,|Z_2|\leq \mathfrak{n}$.
To show that $|Z_1|\leq \mathfrak{n}$, let $K$ be a subset of $\mathfrak{n}$ of cardinality $\mathfrak{e}$, the effective degree of $\A_1$.
Then every element $p$ of $\A_1$ is of the form ${\sf s}_{\sigma}q$ with $q\in \A_{1K}$ and $\sigma\in {}^{\mathfrak{n}}\mathfrak{n}$.
$\A_{1K}$ is defined as in definition 5 (i).
The number of subsets $J$ of $\mathfrak{n}$ such that $|J|<\mathfrak{m}$ is at most $\sum_{s<\mathfrak{m}}\mathfrak{n}^s=\mathfrak{n}$.
Let $q\in \A_{1K}$ have a support of cardinality $s<\mathfrak{m}$. Then the number of distinct
elements
${\sf s}_{\sigma}q$ with $\sigma\in {}^{\mathfrak{n}}\mathfrak{n}$ is at most $\mathfrak{n}^s\leq \mathfrak{n}$.
Hence there are at most $\mathfrak{n}\cdot\mathfrak{c}$ elements $s_{\sigma}q$ with $\sigma\in {}^{\mathfrak{n}}\mathfrak{n}$ and $q\in \A_{1K}$.
Here we are using that $\A_1$ is a minimal dilation of $\A_{1K}$.
Hence $|Z_1|\leq \mathfrak{n}\cdot\mathfrak{n}\cdot\mathfrak{c}=\mathfrak{n}.$
The proof that $|Z_2|\leq \mathfrak{n}$ is completely analogous.
Let $$\langle (k_i,x_i): i\in \mathfrak{n}\rangle\text {  and  }\langle (l_i,y_i):i\in \mathfrak{n}\rangle$$
be enumerations of $Z_1$ and $Z_2$ respectively, possibly with repetitions.
Now there are two functions $u$ and $v$ such that for each $i<\mathfrak{n},$ $u_i, v_i$ are elements of $^\mathfrak{n}\mathfrak{n}$
with
\begin{itemize}
\item $u_i\upharpoonright \mathfrak{n} \sim k_i=Id$
\item $u_i\upharpoonright k_i \text { is one to one }$
\item $x_j\text { and $y_j$ and $a$ and $b$ are independent of $u_i(k_i)$ for all $j\leq i$}$
\item ${\sf s}_{v_j}y_j\text {  is independent of $u_i(k_i)$ for all $j<i$}$
\item $v_i\upharpoonright \mathfrak{n}\sim l_i=Id$
\item $v_i\upharpoonright l_i \text { is one to one }$
\item $x_j\text { and $y_j$ and $a$ and $b$ are independent of $v_i(l_i)$ for all $j\leq i$}$
\item ${\sf s}_{u_j}x_j\text {  is independent of $v_i(l_i)$ for all $j\leq i$}$

and finally

\item $v_i(l_i)\cap u_j(k_j)=\emptyset\text {  for all  }j\leq i.$

\end{itemize}

We prove the existence of such $u$ and $v$ by transfinite recursion.
This depends on the following set theoretic fact.

\begin{athm}{Claim 1} If $\mathfrak{n}$ and $\mathfrak{m}$ are cardinals such that $\sum_{s<\mathfrak{m}}\mathfrak{n}^s=\mathfrak{n}$,
if $\mu<\mathfrak{n}$ and if for each $\eta<\mu,$ $a_{\eta}$ is a cardinal less than $\mathfrak{m}$, then
$$\sum_{\eta<\mu}a_{\eta}<\mathfrak{n}.$$
\end{athm}

We first prove the existence of $u_i$ and $v_i$ by recursion. Then we prove Claim 1. Let $\mu<\mathfrak{n}$ and assume inductively
that $h,k:\mu\to {}^{\mathfrak{n}}\mathfrak{n}$ are functions satisfying the induction hypothesis.
For each $j\leq \mu$, let $K_j$ be a support of $x_j$ and $M_j$ be a support of $y_j$ each of cardinality $<\mathfrak{m}$.
$a$ and $b$ admit such supports as well, call them $sup(a)$ and $sup(b).$
Similarly, for all $j<\mu$,
let $L_j$ be a support of ${\sf s}_{u_j}x_j$ of cardinality $<\mathfrak{m}$ and $J_j$ be a support of ${\sf s}_{v_j}y_j$ of cardinality $<\mathfrak{m}.$
Then by Claim 1, $\sum_{\eta<\mu+1} |K_{\eta}|$, $\sum_{\eta<\mu+1} |M_{\eta}|$, $\sum_{\eta<\mu}|L_{\eta}|$,
$\sum_{\eta<\mu}|J_{\eta}|<\mathfrak{n}$.
Hence $\mathfrak{n}\sim sup(a)\cup \sup(b)\cup \bigcup_{\eta<\mu+1} K_{\eta}
\cup \bigcup_{\eta<\mu} L_{\eta}\cup \bigcup_{\eta<\mu+1}M_{\eta}\cup \bigcup_{\eta<\mu}J_{\eta}$
has cardinality equal to
$\mathfrak{n}$. It follows that  $h,k$ can be extended to maps from $\mu+1\to {}^\mathfrak{n}\mathfrak{n}$ satisfying the required.
The rest follows from Zorn's Lemma.

\begin{demo}{Proof of Claim 1}
We have $\mathfrak{m}\leq \mathfrak{n}$, for otherwise we would have,
setting $s=\mathfrak{n}$, $\mathfrak{n}^{\mathfrak{n}}\leq \mathfrak{n}$ which is absurd.
If $\mathfrak{m}<\mathfrak{n}$ then
$$\sum_{\eta<\mu}a_{\eta}\leq \mathfrak{m}\cdot |\mu|<\mathfrak{n}.$$
Now suppose $\mathfrak{m}=\mathfrak{n}$ and $\sum_{\eta<\mu} a_{\eta}=\mathfrak{n}$. Then
$$\mathfrak{n}<2^{\mathfrak{n}}=\mathfrak{n}^{\mathfrak{n}}=\mathfrak{n}^{\sum_{\eta<\mu}a_{\eta}}=
\Pi_{\eta<\mu}\mathfrak{n}^{a_{\eta}}\leq \mathfrak{n}^{|\mu|}\leq \mathfrak{n}$$
which is impossible. We have proved Claim 1.
\end{demo}

Nw we actually construct our desired ultrafilters. For a polyadic algebra $\F$ we write $Bl\F$ to denote its Boolean reduct.
For a Boolean algebra $\B$  and $Y\subseteq \B$, we write
$fl^{\B}Y$ to denote the Boolean filter generated by $Y$ in $\B.$  Now let
$$Y_1= \{a\}\cup \{-{\sf  c}_{(k_i)}x_i+{\sf s}_{u_i}x_i: i\in \mathfrak{n}\},$$
$$Y_2=\{-b\}\cup \{-{\sf  c}_{(l_i)}y_i+{\sf s}_{v_i}y_i:i\in \mathfrak{n}\},$$
$$H_1= fl^{Bl\Sg^{\B}(X_1)}Y_1,\  H_2=fl^{Bl\Sg^\B(X_2)}Y_2,$$ and
$$H=fl^{Bl\Sg^{\B}(X_1\cap X_2)}[(H_1\cap \Sg^{\B}(X_1\cap X_2)
\cup (H_2\cap \Sg^{\B}(X_1\cap X_2)].$$

Then
\begin{athm}{Claim 2}
$H$ is a proper filter of $\Sg^{\B}(X_1\cap X_2).$
\end{athm}

\begin{demo}{Proof of Claim 2} The proof of Claim 2 is rather long, for indeed it is the heart of the proof. It is very similar to the corresponding part
in \cite{IGPL}.
To prove Claim 2, it is sufficient to consider any pair of finite, strictly
increasing sequences of ordinals
$$\eta(0)<\eta(1)\cdots <\eta(n-1)<\mathfrak{n}\text { and } \xi(0)<\xi(1)<\cdots
<\xi(m-1)<\mathfrak{n},$$
and to prove that the following condition holds:
\begin{equation}\label{x0-9}
\begin{split}
\textrm{ For any} ~~&b_1, b_2\in \Sg^{\B}(X_1\cap X_2) ~~\textrm{such
that} \\
&a\odot \Pi_{i<n}(a^{l-1}\odot-{\sf  c}_{(k_{\eta(i)})}x_{\eta(i)}(\oplus){\sf s}_{u_{\eta(i)}}x_{\eta(i)})\leq b_1 \\
\textrm{and}\\
&(-b)\odot\Pi_{i<m} ((-b)^{k-1}\odot-{\sf c}_{(l_{\xi(i)})}y_{\xi(i)}\oplus {\sf
s}_{v_{\xi(i)}}y_{\xi(i)})\leq b_2\\
\textrm{we have}\\
& b_1\odot b_2\neq 0.
\end{split}
\end{equation}
We prove this by induction on $n+m$. If $n+m=0$, then (\ref{x0-9})
simply expresses the fact that no interpolant of $a$ and $b$ exists
in $\Sg^{\B}(X_1\cap X_2).$ In more detail: if $n+m=0$, then $a\leq
b_1$ and $-b\leq b_2$. So if $b_1\cdot b_2=0$, we get $a\leq b_1\leq
-b_2\leq b.$ Now assume that $n+m>0$ and for the time being suppose
that $\eta(n-1)>\xi(m-1)$. Let $i<\mathfrak{n}$. Then $u_i\in {}^{\mathfrak{n}}\mathfrak{n}$, and $u_i(k_i)\subseteq \mathfrak{n}$.
Let $k_{\eta(n-1)}'=u_{\eta(n-1)}(k_{\eta(n-1)}).$
Apply ${\sf  c}_{(k_{\eta(n-1)}')}$ to both
sides of the first inclusion of (\ref{x0-9}). By
$a$ is independent of $k_{\eta(n-1)}'$ we have ${\sf  c}_{(k_{\eta(n-1)}')}a=a$,
and by noting that ${\sf c}_{(\Gamma)}({\sf  c}_{(\Gamma)}x\odot y)={\sf  c}_{(\Gamma)}x\odot {\sf c}_{(\Gamma)}y$, we get
\begin{equation}\label{x0-10}
\begin{split}
a\odot {\sf  c}_{(k_{\eta(n-1)}')}\Pi_{i<n}[a^{l-1}\odot -{\sf
c}_{(k_{\eta(i)})}x_{\eta(i)}\oplus{\sf
s}_{u_{\eta(i)}}^{l_i}x_{\eta(i)})]\leq {\sf
c}_{(k_{\eta(n-1)}')}b_1.
\end{split}
\end{equation}
Let ${\sf  c}_{(\Gamma)}^{\partial}(x)=-{\sf
c}_{(\Gamma)}(-x)$.  ${\sf  c}_{(\Gamma)}^{\partial}$ is the algebraic counterpart of
the universal quantifier. Now apply
${\sf c}_{(k_{\eta(n-1)}')}^{\partial}$ to the second inclusion of
(\ref{x0-9}). By noting that ${\sf  c}_{(\Gamma)}^{\partial}$, the dual of
${\sf  c}_{(\Gamma)}$, distributes over the Boolean meet and by
$b$ independent of $k_{\eta(n-1)}'$ we get
\begin{equation}\label{x0-11}
\begin{split}
(-b)\cdot \Pi_{j<m}((-b)^{k-1}\odot {\sf  c}_{(k_{\eta(n-1)}')}^{\partial}(-{\sf
c}_{(l_{\xi(j)})}y_{\xi(j)}\oplus {\sf s}_{v_{\xi(j)}}
y_{\xi(j)})^{k_i})\leq {\sf  c}_{(k_{\eta(n-1)}')}^{\partial}b_2.
\end{split}
\end{equation}
Before proceeding, we formulate (and prove) a subclaim that will enable us to
eliminate the generalized quantifier ${\sf  c}_{(k_{\eta(n-1)}')}$ (and its dual)
from (\ref{x0-9}) (and (\ref{x0-11})) above.

For the sake of brevity set for each $i<n$ and each $j<m:$
$$z_i=-{\sf  c}_{(k_{\eta(i)})}x_{\eta(i)}\oplus{\sf s}_{u_{\eta(i)}}x_{\eta(i)}$$  and
$$t_j=-{\sf  c}_{(l_{\xi(j)})}y_{\xi(j)}\oplus{\sf s}_{v_{\xi(j)}}y_{\xi(j)}.$$
Then (\ref{tarek10}) and (\ref{tarek11}) below hold:

\begin{equation}\label{tarek10}
\begin{split}{\sf  c}_{(k_{\eta(n-1)}')}z_i=z_i\text { for }i<n-1 \text { and }{\sf  c}_{(k_{\eta(n-1)}')}z_{n-1}=1
\end{split}
\end{equation}
\begin{equation}\label{tarek11}
\begin{split}{\sf  c}_{(k_{\eta(n-1)}')}^{\partial}t_j=t_j\text { for all }j<m.
\end{split}
\end{equation}

{\it Proof of ${\sf  c}_{(k_{\eta_{n-1}}')}z_i=z_i$ for  $i<n-1$.}

Both ${\sf  c}_{(k_{\eta(i)})}x_{\eta(i)}$ and ${\sf s}_{u_{\eta(i)}}^{k_{\eta(i)}}x_{\eta(i)}$ are independent of $k_{\eta(n-1)}'$, it thus follows that
$${\sf  c}_{(k_{\eta(n-1)}')}(-{\sf  c}_{(k_{\eta(i)})}x_{\eta(i)})=-{\sf  c}_{(k_{\eta(i)})}x_{\eta(i)}$$
and
$${\sf  c}_{(k_{\eta(n-1)}')} ({\sf
s}_{u_{\eta(i)}}x_{\eta(i)})={\sf s}_{u_{\eta(i)}}x_{\eta(i)}.$$

Finally, by ${\sf c}_{(k_{\eta(n-1)}')}$ distributing over the Boolean join, we get
$${\sf  c}_{(k_{\eta(n-1)}')} z_i=z_i \text { for }  i<n-1.$$

{\it Proof of ${\sf  c}_{(k_{\eta(n-1)}')}z_{n-1}=1.$}

Computing we get, by $x_{\xi(n-1)}$ independent of $k_{\eta(n-1)}'=u_{\eta(n-1)}(k_{\eta(n-1)}),$ the following:
\begin{equation*}
\begin{split}
&d={\sf  c}_{(k_{\eta(n-1)}')}(-{\sf  c}_{(k_{\eta(n-1)})}x_{\eta(n-1)}\oplus
{\sf s}_{u_{\eta(n-1)}}x_{\eta(n-1)})\\
&={\sf  c}_{(k_{\eta(n-1)}')}-{\sf  c}_{(k_{\eta(n-1)})}x_{\eta(n-1)}\oplus
{\sf  c}_{(k_{\eta(n-1)}')} {\sf
s}_{u_{\eta(n-1)}}x_{\eta(n-1)}\\
&=-{\sf  c}_{(k_{\eta(n-1)})}x_{\eta(n-1)}\oplus {\sf
c}_{(k_{\eta(n-1)}')}{\sf s}_{u_{\eta(n-1)}}
x_{\eta(n-1)}\\
\end{split}
\end{equation*}
To carry on with this computation, we let for the sake of brevity and better readability
$X=k_{\eta(n-1)}$, $u=u_{\eta(n-1)}$, $Y=u(X)=k_{\eta(n-1)}'$, and $x=x_{\eta(n-1)}.$
Choose $t\in {}^\mathfrak{n}\mathfrak{n}$ with $t\upharpoonright (\mathfrak{n}\sim Y)\cup X=u\upharpoonright (\mathfrak{n}\sim Y)\cup X$ and
$Y\cap t(Y\sim X)=\emptyset$.
Then $t^{-1}(Y)=X$ and $t\upharpoonright X$ is one to one. Now
\begin{equation*}
\begin{split}
&{\sf c}_{(Y)}{\sf s}_ux={\sf c}_{(Y)}{\sf s}_u{\sf c}_{(Y)}x\\
&={\sf c}_{(Y)}{\sf s}_t{\sf c}_{(Y)}x\\
&={\sf s}_t{\sf c}_{(X)}{\sf c}_{(Y)}x\\
&={\sf s}_{Id}{\sf c}_{(X\cup Y)}x\\
&={\sf c}_{(X)}x.\\
\end{split}
\end{equation*}

It follows that
$$d= -{\sf  c}_{k_{\eta(n-1)}}x_{\eta(n-1)}\oplus {\sf
c}_{k_{\eta(n-1)}}x_{\eta(n-1)}=1.$$

With this the proof of (\ref{tarek10}) in our subclaim is complete. Now we prove
(\ref{tarek11}). Let $j<m$ . Then we have
$${\sf  c}_{(k_{\eta(n-1)}')}^{\partial}(-{\sf  c}_{(l_{\xi(j)})}y_{\xi(j)})
=-{\sf  c}_{(l_{\xi(j)})}y_{\xi(j)}$$ and
$${\sf  c}_{(k_{\eta(n-1)}')}^{\partial}
({\sf s}_{v_{\xi(j)}}y_{\xi(j)})={\sf
s}_{v_{\xi(j)}}y_{\xi(j)}.$$ Indeed,  computing we get
\begin{equation*}
\begin{split}
{\sf  c}_{(k_{\eta(n-1)}')}^{\partial}(-{\sf c}_{(l_{\xi(j)})}y_{\xi(j)})
&=-{\sf  c}_{(k_{\eta_{n-1}}')}-(-{\sf c}_{(l_{\xi(j)})}y_{\xi(j)})\\
& = -{\sf  c}_{k_{\eta(n-1)}')}{\sf c}_{(l_{\xi(j)})}y_{\xi(j)} \\
&=-{\sf c}_{(l_{\xi(j)})}y_{\xi(j)}.
\end{split}
\end{equation*}

Similarly,  we have
 \begin{equation*}
\begin{split}
{\sf  c}_{(k_{\eta(n-1)}')}^{\partial} ({\sf
s}_{v_{\xi(j)}}y_{\xi(j)})& =-{\sf  c}_{(k_{\eta(n-1)}')}-
({\sf s}_{v_{\xi(j)}}y_{\xi(j)})\\
&=-{\sf  c}_{(k_{\eta(n-1)}')} ({\sf
s}_{v_{\xi(j)}}-y_{\xi(j)}) \\
&=- {\sf s}_{v_{\xi(j)}}-y_{\xi(j)} \\
&= {\sf s}_{v_{\xi(j)}}y_{\xi(j)}.
\end{split}
\end{equation*}
By ${\sf c}_{(J)}^{\partial}({\sf  c}_{(J)}^{\partial}x\oplus y)= {\sf
c}_{(J)}^{\partial}x\oplus{\sf  c}_{(J)}^{\partial}y$ we get from the above that
 \begin{equation*}
\begin{split}
{\sf  c}_{(k_{\eta(n-1)}')}^{\partial}(t_j) &= {\sf
c}_{(k_{\eta(n-1)}')}^{\partial}(-{\sf  c}_{(l_{\xi(j)})}y_{\xi(j)}\oplus{\sf
s}_{v_{\xi(j)}}y_{\xi(j)})\\
&={\sf  c}_{(k_{\eta(n-1)}')}^{\partial}-{\sf
c}_{(l_{\xi(j)})}y_{\xi(j)}\oplus{\sf  c}_{(k_{\eta(n-1)}')}^{\partial} {\sf
s}_{v_{\xi(j)}}y_{\xi(j)}\\
&=-{\sf  c}_{(l_{\xi(j)})}y_{\xi(j)}\oplus {\sf
s}_{v_{\xi(j)}}y_{\xi(j)}\\
&=t_j.
\end{split}
\end{equation*}
We have proved (\ref{tarek11}).
By the above proven subclaim, i.e. by (\ref{tarek10}), (\ref{tarek11}), we have
\begin{equation*}
\begin{split}
{\sf  c}_{(k_{\eta(n-1)}')}a^{l-1}\odot\Pi_{i<n}z_i
&=a^{l-1}\odot {\sf c}_{(k_{\eta(n-1)}')}[\Pi_{i<n-1}z_i^{l_i}\odot z_{n-1}^{l_{n-1}}]\\
&=a^{l-1}\odot {\sf  c}_{(k_{\eta(n-1)}')}\Pi_{i<n-1}z_i^{l_i}\odot
{\sf c}_{(k_{\eta(n-1)}')}z_{n-1}^{n-1}\\
&=a^{l-1}\odot\Pi_{i<n-1}z_i.
\end{split}
\end{equation*}

Combined with
(\ref{x0-10}) we obtain
$$a\odot  \Pi_{i<n-1}(a^{l-1}\odot-{\sf  c}_{(k_{\eta(i)})}x_{\eta(i)}\oplus{\sf s}_{u_{\eta(i)}}x_{\eta(i)})
 \leq {\sf  c}_{(k_{\eta(n-1)}')}b_1.$$
On the other hand, from (\ref{tarek10}), (\ref{tarek11}) and (\ref{x0-11}),
it follows that
$$(-b)\cdot \Pi_{j<m}
(-b)^{k-1}\odot (-{\sf  c}_{(l_{\xi(j)})}y_{\xi(j)}\oplus{\sf
s}_{v_{\xi(j)}}y_{\xi(j)}^{k_i})\leq {\sf c}_{(k_{\eta(n-1)}')}^{\partial}b_2.$$ Now making use of the induction
hypothesis we get
$${\sf  c}_{(k_{\eta(n-1)}')}b_1\odot {\sf  c}_{(k_{\eta(n-1)}')}^{\partial}b_2\neq 0;$$
and hence that
$$b_1\odot {\sf  c}_{(k_{\eta(n-1)}')}^{\partial}b_2\neq 0.$$
From
$$b_1\odot {\sf  c}_{(k_{\eta(n-1)}')}^{\partial}b_2\leq b_1\cdot b_2$$
we reach the desired conclusion, i.e. that
$$b_1\odot b_2\neq 0.$$
The other case, when $\eta(n-1)\leq \xi(m-1)$ can be treated
in a similar manner and is therefore left to the reader. We have proved that
$H$ is a proper filter, that is the proof of Claim 2 is complete.
\end{demo}

We proceed exactly as above. Proving that $H$ is a proper filter of $\Sg^{\B}(X_1\cap X_2)$,
let $H^*$ be a (proper $MV$) maximal filter of $\Sg^{\B}(X_1\cap X_2)$
containing $H.$
We obtain  maximal filters $F_1$ and $F_2$ of $\Sg^{\B}(X_1)$ and $\Sg^{\B}(X_2)$,
respectively, such that
$$H^*\subseteq F_1,\ \  H^*\subseteq F_2$$
and (*)
$$F_1\cap \Sg^{\B}(X_1\cap X_2)= H^*= F_2\cap \Sg^{\B}(X_1\cap X_2).$$
Now for all $x\in \Sg^{\B}(X_1\cap X_2)$ we have
$$x\in F_1\text { if and only if } x\in F_2.$$
Let $i\in \{1,2\}$. Then $F_i$ by construction satisfies the following:
for each $p\in \B$ and each subset $J\subseteq \mathfrak{n}$ with $|J|<\mathfrak{m}$,  there exists $\rho\in {}^{\mathfrak{n}}\mathfrak{n}$ such that
$$\rho\upharpoonright \mathfrak{n}\sim J=Id_{\mathfrak{n}-J}$$
and $$-{\sf c}_{(J)}p \oplus {\sf s}_{\rho}p\in F_i.$$
Since $${\sf s}_{\rho}p\leq {\sf c}_{(J)}p,$$
we have (**)
$${\sf c}_{(J)}p\in F_i\Longleftrightarrow{\sf s}_{\rho}p\in F_i.$$
Let $\D_i=\Sg^{\A_i}X_i$, $i=1, 2$.
Let $$\psi_i:\D_i\to \F(^{\alpha}\mathfrak{n}, \D_i/F_i)$$ be defined as follows:
$$\psi_i(a)(x)=s_{\bar{x}}^{\B}a/F$$
Note that  $\bar{\tau}=\tau\cup Id_{\beta\sim \alpha}$ is in $^{\mathfrak{n}}\mathfrak{n}$, so that substitutions are evaluated in the big algebra $\B$.
Then, we claim that  $\psi_i$ is a homomorphism. Then using freeness we paste the two maps $\psi_1$ $\psi_2$, obtaining that $\psi=\psi_1\cup \psi_2$
is homomorphism from the free algebra to
$\F(^{\alpha}\mathfrak{n}, [0,1])$ such that $\psi(a-b)\neq 0$ which is a contradiction. As usual,
we check cylindrifiers, and abusing notation for a while, we omit superscripts, in particular, we write $\psi$ instead of $\psi_1$.
Let $x\in {}^{\alpha}\mathfrak{n}$,
$M\subseteq \alpha,$ $p\in D$.  Then
$$\psi({\sf c}_{(M)}p)(x)= {\sf s}_x{\sf c}_{(M)}p/F.$$
Let $K$ be a support of $p$ such that  $|K|<\mathfrak{m}$
and let $J=M\cap K$. Then
$${\sf c}_{(J)}p={\sf c}_{(M)}p.$$
Let $$\sigma\in {}^\mathfrak{n}\mathfrak{n}, \sigma\upharpoonright \mathfrak{n}\sim J=\bar{\tau}\upharpoonright \mathfrak{n}\sim J,$$
$$\sigma J\cap \tau(K\sim J)=\emptyset$$
and
$$\sigma\upharpoonright J\text { is one to one }.$$
Then $${\sf s}_{x}{\sf c}_{(J)}p={\sf c}_{(\sigma J)}{\sf s}_{\sigma}p.$$
By (**), let $\rho$ be such that
$$\rho\upharpoonright \sigma J=Id_{\mathfrak{n}\sim \sigma J}$$
and
$${\sf c}_{(\sigma J)}{\sf s}_{\sigma}p\in F\Longleftrightarrow {\sf s}_{\rho}{\sf s}_{\sigma}p\in F
\Longleftrightarrow {\sf s}_{\rho\circ \sigma}p\in F.$$
It follows that
$${\sf c}_{(\sigma J)}{\sf s}_{\sigma}p/F={\sf s}_{\rho}{\sf s}_{\sigma}p/F={\sf s}_{\rho\circ \sigma}/F.$$
Let $y\in {}^{\mathfrak{n}}\mathfrak{n}$ such that
$$y\upharpoonright M=\rho\circ \sigma \upharpoonright M$$
and (***)
$$y\upharpoonright \mathfrak{n}\sim M=\bar{\tau}\upharpoonright \mathfrak{n}\sim M.$$
If $k\in K\sim M$, $\rho\circ \sigma(k)=\rho\circ \tau(k)$ and since $\tau k\notin \sigma J$ we have
$$\rho \tau(k)=\tau(k)=y(k).$$ So
$$y\upharpoonright K=\rho \circ \sigma \upharpoonright K.$$
Now we have using (**) and (***):
\begin{equation*}
\begin{split}
&\psi({\sf c}_{M}p)x\\
&={\sf s}_x{\sf c}_{(M)}p/F\\
&={\sf s}_x{\sf c}_{(J)}p/F\\
&={\sf c}_{(\sigma J)}{\sf s}_{\sigma}p/F\\
&={\sf s}_{\rho}{\sf s}_{\sigma}p/F\\
&={\sf s}_{\rho\circ \sigma}p/F\\
&={\sf s}_yp/F\\
&\leq {\sf c}_{M}(\psi(p)x\\
\end{split}
\end{equation*}
\end{enumarab}
The other inclusion is left to the reader.
The proof is complete.
\end{demo}

When the algebras considered are Boolean algebras, then we get  the results in
\cite{K, DM,D, AUamal, MLQ,  J}.

\subsection{Reducts of Heyting polyadic algebras}

Now we address an interpolation property for an intuitionistic quantifier logic,
the existential quantifiers (and their duals) are as usual, its signature is countable,
but it allows infinitary relation symbols, and infinitary substitutions as connectives,
but only countable.

We follow closely \cite{s}. For Heyting polyadic algebras, see \cite[definition 3]{s},  witness
\cite[definition 4]{s} for the algebraisation of a Kripke frame $\K$ and the concrete set algebra $\F_{\K}$, based
on $\K$. (In this context these are the representable algebras).
For dilations (defined like classical polyadic algebras) and their properties see
\cite[definition15]{s}, \cite[definition 11, theorems 12-13]{s}.

The proof of the next theorem is very close to that of \cite[theorem 21]{s};
the notation and concepts adopted are the same. But here we have diagonal elements, so
our obtained result will be weaker. It is a both a completeness and a Robinson's joint consistency theorem,
but not in the full strength as in the case
of first order logic. (The strong form is known {\it not to hold} in the intuitionistic context even without equality).

A novelty here, is that our  representation theorem has to respect diagonal elements,
and this seems to be an impossible task in the presence of infinitary substitutions,
unless we make a compromise, that is, from our point of view, acceptable.

The interaction of substitutions based on infinitary transformations,
together with the existence of diagonal elements tends to make matters `blow up'; indeed this even happens in the classical case,
when the class of (ordinary) set algebras ceases to be closed under ultraproducts \cite{Sain}.
The natural thing to do is to avoid those infinitary substitutions at the start, while finding the interpolant possibly using such substitutions.
It can also be shown that in some cases the interpolant has to use infinitary substitutions,
even if the original implication uses only finite transformations. This idea is used in \cite{typless}.

In our next theorem $G$ is a strongly rich semigroup, as defined in \cite{AUamal},
and $GPHAE_{\alpha}$ denotes the class of Heyting
polyadic equality algebras of dimension$\alpha$, a countable ordinal,
substitution operators are restricted to substitutions in $G$,
and we have the usual axioms for diagonal elements, interacting with cylindrifiers,
their duals and substitutions the standard way.

For an algebra $\A$, we let $\Rd\A$ denote its reduct when we discard infinitary substitutions. In particular, $\Rd\A$ satisfies
cylindric algebra axioms. Dilations for such algebras can be defined exactly like the classical case \cite[remark 2.8 p. 327]{AU}
and  \cite[p. 323-336]{AU}.

\begin{theorem}\label{heyting}
Let $\alpha$ be an infinite set. Let $G$ be a semigroup on $\alpha$ containing at least one infinitary transformation.
Let $\A\in GPHAE_{\alpha}$ be the free $G$ algebra generated by $X$, and suppose that $X=X_1\cup X_2$.
Let $(\Delta_0, \Gamma_0)$, $(\Theta_0, \Gamma_0^*)$ be two consistent theories in $\Sg^{\Rd\A}X_1$ and $\Sg^{\Rd\A}X_2,$ respectively.
Assume that $\Gamma_0\subseteq \Sg^{\A}(X_1\cap X_2)$ and $\Gamma_0\subseteq \Gamma_0^*$.
Assume, further, that
$(\Delta_0\cap \Theta_0\cap \Sg^{\A}X_1\cap \Sg^{\A}X_2, \Gamma_0)$ is complete in $\Sg^{\Rd\A}X_1\cap \Sg^{\Rd\A}X_2$.
Then there exist $\mathfrak{K}=(K,\leq \{X_k\}_{k\in K}\{V_k\}_{k\in K}),$ a homomorphism $\psi:\A\to \mathfrak{F}_K,$
$k_0\in K$, and $x\in V_{k_0}$,  such that for all $p\in \Delta_0\cup \Theta_0$ if $\psi(p)=(f_k)$, then $f_{k_0}(x)=1$
and for all $p\in \Gamma_0^*$ if $\psi(p)=(f_k)$, then $f_{k_0}(x)=0$.
\end{theorem}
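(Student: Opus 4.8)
The plan is to follow the Henkin-style argument for Heyting polyadic algebras from \cite{s}, adapting the two places where the presence of diagonal elements forces extra care: the construction of the amalgamated (disjoint) Kripke base, and the verification that the pasted map respects diagonals. First I would pass to dilations: let $\mu$ be a regular cardinal with $\mu>\max(|\alpha|,|A|)$, choose $\B\in GPHAE_{\mu}$ (in fact the free $G_\mu$-algebra) with $\A=\Nr_\alpha\B$ and $A$ generating $\B$, and correspondingly dilate $\Sg^{\Rd\A}X_1$, $\Sg^{\Rd\A}X_2$ and their common reduct. The point of moving to $\mu$ extra dimensions is the standard one: in the dilation every element has a small support, and, crucially, every cylindrifier ${\sf c}_{(J)}$ on a small $J$ can be ``witnessed'' by a substitution ${\sf s}_\sigma$ with $\sigma\restriction(\mu\setminus J)=Id$, so that Henkin-type (``perfect'' / rich) ultrafilters can be built in the style of the proof of Theorem \ref{Fer} and Theorem \ref{mv}.

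Next I would build, by the same step-by-step recursion used above (enumerating $adm\times\mu\times\Sg^{\B}X_i$ into $\mu$-sequences and choosing fresh witness indices at each stage), a proper filter $H$ on $\Sg^{\B}(X_1\cap X_2)$ from the two theories $(\Delta_0,\Gamma_0)$ and $(\Theta_0,\Gamma_0^*)$; the completeness hypothesis on $(\Delta_0\cap\Theta_0\cap\Sg^{\A}X_1\cap\Sg^{\A}X_2,\Gamma_0)$ in the common reduct is exactly what plays, here, the role played by ``no interpolant exists'' in the interpolation proofs, guaranteeing the base case $n+m=0$ of the tedious induction that shows $b_0\cdot b_1\neq 0$. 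Extending $H$ to a maximal (Heyting-prime) filter $H^{*}$ and then lifting to prime filters $F_1\supseteq H^{*}$ on $\Sg^{\B}X_1$ and $F_2\supseteq H^{*}$ on $\Sg^{\B}X_2$ with $F_1\cap\Sg^{\B}(X_1\cap X_2)=H^{*}=F_2\cap\Sg^{\B}(X_1\cap X_2)$, one obtains compatible rich prime filters. From the rich filter on the $\Rd\B$-part (no infinitary substitutions) I would then build, exactly as in \cite[definition 4, theorem 21]{s}, a Kripke frame $\mathfrak{K}=(K,\leq,\{X_k\},\{V_k\})$ whose worlds are the prime filters extending the given ones, together with homomorphisms $\psi_i$ from $\Sg^{\Rd\A}X_i$ into $\mathfrak{F}_{\mathfrak K}$; the distinguished world $k_0$ is the one corresponding to $H^{*}$ and $x\in V_{k_0}$ the evaluation picked out by the witnesses. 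Because $F_1,F_2$ agree on the common subalgebra and the frame is built from the common part, $\psi_1$ and $\psi_2$ agree on $\Sg^{\Rd\A}(X_1\cap X_2)$, and freeness of $\A$ over $X=X_1\cup X_2$ lets me paste them into a single $\psi:\A\to\mathfrak{F}_{\mathfrak K}$; the forcing conditions on $\Delta_0\cup\Theta_0$ and on $\Gamma_0^{*}$ at $(k_0,x)$ come from $\Delta_0\cup\Theta_0\subseteq F_1\cup F_2$ and $\Gamma_0^{*}\cap F_2=\emptyset$.

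The main obstacle is making $\psi$ respect the diagonal elements in the presence of infinitary substitutions. As the discussion preceding the theorem already flags, once $G$ contains an infinitary transformation one cannot in general represent ${\sf d}_{ij}$ concretely while also interpreting all of $G$; the compromise I would adopt (and this is why the hypotheses restrict the theories to the cylindric reducts $\Sg^{\Rd\A}X_i$) is to carry out the \emph{representation} only for $\Rd\A$ — where diagonals interact only with cylindrifiers, their duals, and \emph{finite} substitutions, so the classical cylindric Henkin construction, with the congruence on $\mu$ induced by the prime filter chosen so that $i\sim j$ iff ${\sf d}_{ij}\in F$, respects diagonals — and then to let $\psi$ act on the full $\A$ (including the infinitary ${\sf s}_\tau$) by the formula $\psi_i(a)(k)(x)={\sf s}^{\B}_{\bar x}a/F_k$, checking as in \cite{s} that this is a homomorphism for the polyadic operations. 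Concretely the delicate verification is: (i) that the witness-filter construction can be done so that $i\mapsto j$ collapses are compatible between $F_1$ and $F_2$ over the common part, which is why we need $\Gamma_0\subseteq\Gamma_0^{*}$ and the completeness assumption; and (ii) the cylindrifier computation $\psi({\sf c}_{(M)}p)=\bigvee\{\psi({\sf s}_\sigma p)\}$ at $(k_0,x)$, handled by the support/witness trick exactly as in the cylindrifier case of Theorem \ref{mv}. Everything else — properness of $H$, the inductive inequality, the Zorn/recursion bookkeeping for witnesses, and the soundness that $\psi$ is a Heyting-polyadic homomorphism into $\mathfrak{F}_{\mathfrak K}$ — is routine adaptation of the arguments already given above and in \cite{s}, and I would cite them rather than reproduce them.
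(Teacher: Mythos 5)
Your route is genuinely different from the one the paper takes. The paper's proof is modelled on \cite{s}: it builds an increasing chain of minimal dilations $\A=\A_0\subseteq \A_1\subseteq\cdots$ with pairwise disjoint blocks of fresh dimensions, takes the worlds of the Kripke system to be \emph{all} matched pairs of \emph{saturated} theories living in some $\Sg^{\Rd\A_n}X_1$, $\Sg^{\Rd\A_n}X_2$, obtains the distinguished world $k_0$ by invoking the saturation lemma of \cite{s} (which is where the completeness hypothesis on the common part is consumed), and only then quotients the coordinate sets by the diagonal-induced congruence and pastes by freeness --- the last two steps being essentially identical to yours. You instead transplant the filter-theoretic machinery of theorems \ref{Fer} and \ref{mv}: a single large dilation of regular cardinality, a step-by-step witness recursion producing a proper filter $H$ on the common subalgebra, and a lift to a compatible pair of rich prime filters. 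That would have the merit of being self-contained where the paper leans on \cite[lemma 18]{s}, but it runs into two obstacles specific to the intuitionistic setting.

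First, the filter inductions of theorems \ref{Fer} and \ref{mv} encode the ``refuted'' theory by placing $-c$ in a filter and exploiting classical negation throughout the computation of the base case and of the dual quantifier ${\sf c}_i^{\partial}x=-{\sf c}_i-x$. In a Heyting algebra this encoding is unavailable: consistency of $(\Theta_0,\Gamma_0^*)$ cannot be rephrased as properness of a filter containing complements, and one must carry a filter/ideal (equivalently, a theory pair $(\Delta,\Gamma)$) through the whole recursion, which is precisely the formalism of \cite{s} that the paper adopts. Second, your recursion makes only the one pair $F_1,F_2$ --- hence only the world $k_0$ --- rich. But in $\F_{\mathfrak K}$ the Heyting implication and the universal quantifier are evaluated by quantifying over all worlds $l\geq k$, so for $\psi$ to be a homomorphism \emph{every} world must be a saturated matched pair: refuting a non-theorem $p\to q$ at $k$ requires a world above $k$ containing $p$ and omitting $q$, and that world must itself have existential witnesses for the cylindrifier clause to hold there. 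Arbitrary prime-filter extensions will not be saturated, and saturating them costs fresh dimensions at each world --- which is exactly why the paper ranges $K$ over an infinite chain of dilations rather than one fixed $\B$. (Minor point: you want prime, not maximal, filters; maximality collapses the Heyting structure to a Boolean one.) Once these repairs are made your argument essentially reconstructs the paper's frame, so I would follow the saturated-theory formulation from the outset.
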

\begin{demo}{Proof}
The first half of the proof is close to that of \cite[theorem 2.1]{s}. We highlight the main steps,
for the convenience of the reader, except that we only deal with the case
when $G$ is strongly rich.
Assume, as usual, that $\alpha$, $G$, $\A$ and $X_1$, $X_2$, and everything else in the hypothesis are given.
Let $I$ be  a set such that  $\beta=|I\sim \alpha|=max(|A|, |\alpha|).$
Let $(K_n:n\in \omega)$ be a family of pairwise disjoint sets such that $|K_n|=\beta.$
Define a sequence of algebras
$\A=\A_0\subseteq \A_1\subseteq \A_2\subseteq \A_2\ldots \subseteq \A_n\ldots$
such that
$\A_{n+1}$ is a minimal dilation of $\A_n$ and $dim(\A_{n+1})=\dim\A_n\cup K_n$.We denote $dim(\A_n)$ by $I_n$ for $n\geq 1$.

Now we prove the theorem when $G$ is a strongly rich semigroup.
Let $$K=\{((\Delta, \Gamma), (T,F)): \exists n\in \omega \text { such that } (\Delta, \Gamma), (T,F)$$
$$\text { is a a matched pair of saturated theories in }
\Sg^{\Rd\A_n}X_1, \Sg^{\Rd\A_n}X_2\}.$$
We have $((\Delta_0, \Gamma_0)$, $(\Theta_0, \Gamma_0^*))$ is a matched pair but the theories \cite[definition 19]{s},
are not saturated \cite[item (4), definition 15]{s}, hence \cite[lemma 18]{s}
there are $T_1=(\Delta_{\omega}, \Gamma_{\omega})$,
$T_2=(\Theta_{\omega}, \Gamma_{\omega}^*)$ extending
$(\Delta_0, \Gamma_0)$, $(\Theta_0, \Gamma_0^*)$, such that $T_1$ and $T_2$ are saturated in $\Sg^{\Rd\A_1}X_1$ and $\Sg^{\Rd\A_1}X_2,$
respectively. Let $k_0=((\Delta_{\omega}, \Gamma_{\omega}), (\Theta_{\omega}, \Gamma_{\omega}^*)).$ Then $k_0\in K,$
and   $k_0$ will be the desired world and $x$ will be specified later; in fact $x$ will be the identity map on some specified
domain.

If $i=((\Delta, \Gamma), (T,F))$ is a matched pair of saturated theories in $\Sg^{\Rd\A_n}X_1$ and $\Sg^{\Rd\A_n}X_2$, let $M_i=dim \A_n$,
where $n$ is the least such number, so $n$ is unique to $i$.
Let $${\bf K}=(K, \leq, \{M_i\}, \{V_i\})_{i\in \mathfrak{K}},$$
where $V_i=\bigcup_{p\in G_n, p\text { a finitary transformation }}{}^{\alpha}M_i^{(p)}$
(here we are considering only substitutions that move only finitely many points),
and $G_n$
is the strongly rich semigroup determining the similarity type of $\A_n$, with $n$
the least number such $i$ is a saturated matched pair in $\A_n$, and $\leq $ is defined as follows:
If $i_1=((\Delta_1, \Gamma_1)), (T_1, F_1))$ and $i_2=((\Delta_2, \Gamma_2), (T_2,F_2))$ are in $\bold K$, then set
$$i_1\leq i_2\Longleftrightarrow  M_{i_1}\subseteq M_{i_2}, \Delta_1\subseteq \Delta_2, T_1\subseteq T_2.$$
We are not yet there, to preserve diagonal elements we have to factor out $\bold K$
by an infinite family equivalence relations, each defined on the dimension of $\A_n$, for some $n$, which will actually turn out to be
a congruence in an exact sense.
As usual, using freeness of $\A$, we will  define two maps on $\A_1=\Sg^{\Rd\A}X_1$ and $\A_2=\Sg^{\Rd\A}X_2$, respectively;
then those will be pasted
to give the required single homomorphism.

Let $i=((\Delta, \Gamma), (T,F))$ be a matched pair of saturated theories in $\Sg^{\Rd\A_n}X_1$ and $\Sg^{\Rd\A_n}X_2$, let $M_i=dim \A_n$,
where $n$ is the least such number, so $n$ is unique to $i$.
For $k,l\in dim\A_n=I_n$, set $k\sim_i l$ iff ${\sf d}_{kl}^{\A_n}\in \Delta\cup T$. This is well defined since $\Delta\cup T\subseteq \A_n$.
We omit the superscript $\A_n$.
These are infinitely many relations, one for each $i$, defined on $I_n$, with $n$ depending uniquely on $i$,
we denote them uniformly by $\sim$ to
avoid complicated unnecessary notation.
We hope that no confusion is likely to ensue. We claim that $\sim$ is an equivalence relation on $I_n.$
Indeed,  $\sim$ is reflexive because ${\sf d}_{ii}=1$ and symmetric
because ${\sf d}_{ij}={\sf d}_{ji};$
finally $E$ is transitive because for  $k,l,u<\alpha$, with $l\notin \{k,u\}$,
we have
$${\sf d}_{kl}\cdot {\sf d}_{lu}\leq {\sf c}_l({\sf d}_{kl}\cdot {\sf d}_{lu})={\sf d}_{ku},$$
and we can assume that $T\cup \Delta$ is closed upwards.
For $\sigma,\tau \in V_k,$ define $\sigma\sim \tau$ iff $\sigma(i)\sim \tau(i)$ for all $i\in \alpha$.
Then clearly $\sigma$ is an equivalence relation on $V_k$.

Let $W_k=V_k/\sim$, and $\mathfrak{K}=(K, \leq, M_k, W_k)_{k\in K}$, with $\leq$ defined on $K$ as above.
We write $h=[x]$ for $x\in V_k$ if $x(i)/\sim =h(i)$ for all $i\in \alpha$; of course $X$ may not be unique, but this will not matter.
Let $\F_{\mathfrak K}$ be the set algebra based on the new Kripke system ${\mathfrak K}$ obtained by factoring out $\bold K$.

Set $\psi_1: \Sg^{\Rd\A}X_1\to \mathfrak{F}_{\mathfrak K}$ by
$\psi_1(p)=(f_k)$ such that if $k=((\Delta, \Gamma), (T,F))\in K$
is a matched pair of saturated theories in $\Sg^{\Rd\A_n}X_1$ and $\Sg^{\Rd\A_n}X_2$,
and $M_k=dim \A_n$, with $n$ unique to $k$, then for $x\in W_k$
$$f_k([x])=1\Longleftrightarrow {\sf s}_{x\cup (Id_{M_k\sim \alpha)}}^{\A_n}p\in \Delta\cup T,$$
with $x\in V_k$ and $[x]\in W_k$ is define as above.

To avoid cumbersome notation, we
write ${\sf s}_{x}^{\A_n}p$, or even simply ${\sf s}_xp,$ for
${\sf s}_{x\cup (Id_{M_k\sim \alpha)}}^{\A_n}p$.  No ambiguity should arise because the dimension $n$ will be clear from context.

We need to check that $\psi_1$ is well defined.
It suffices to show that if $\sigma, \tau\in V_k$ if $\sigma \sim \tau$ and $p\in \A_n$,
with $n$ unique to $k$,
then $${\sf s}_{\tau}p\in \Delta\cup T\text { iff } {\sf s}_{\sigma}p\in \Delta\cup T.$$

This can be proved by induction on the cardinality of
$J=\{i\in I_n: \sigma i\neq \tau i\}$, which is finite since we are only taking finite substitutions.
If $J$ is empty, the result is obvious.
Otherwise assume that $k\in J$. We recall the following piece of notation.
For $\eta\in V_k$ and $k,l<\alpha$, write
$\eta(k\mapsto l)$ for the $\eta'\in V$ that is the same as $\eta$ except
that $\eta'(k)=l.$
Now take any
$$\lambda\in \{\eta\in I_n: \sigma^{-1}\{\eta\}= \tau^{-1}\{\eta\}=\{\eta\}\}\smallsetminus \Delta x.$$
This $\lambda$ exists, because $\sigma$ and $\tau$ are finite transformations and $\A_n$ is a dilation with enough spare dimensions.
We have by cylindric axioms (a)
$${\sf s}_{\sigma}x={\sf s}_{\sigma k}^{\lambda}{\sf s}_{\sigma (k\mapsto \lambda)}p.$$
We also have (b)
$${\sf s}_{\tau k}^{\lambda}({\sf d}_{\lambda, \sigma k}\cdot {\sf s}_{\sigma} p)
={\sf d}_{\tau k, \sigma k} {\sf s}_{\sigma} p,$$
and (c)
$${\sf s}_{\tau k}^{\lambda}({\sf d}_{\lambda, \sigma k}\cdot {\sf s}_{\sigma(k\mapsto \lambda)}p)$$
$$= {\sf d}_{\tau k,  \sigma k}\cdot {\sf s}_{\sigma(k\mapsto \tau k)}p.$$
and (d)
$${\sf d}_{\lambda, \sigma k}\cdot {\sf s}_{\sigma k}^{\lambda}{\sf s}_{{\sigma}(k\mapsto \lambda)}p=
{\sf d}_{\lambda, \sigma k}\cdot {\sf s}_{{\sigma}(k\mapsto \lambda)}p$$

Then by (b), (a), (d) and (c), we get,
\begin{align*}
{\sf d}_{\tau k, \sigma k}\cdot {\sf s}_{\sigma} p&={\sf s}_{\tau k}^{\lambda}({\sf d}_{\lambda,\sigma k}\cdot {\sf s}_{\sigma}p)\\
&={\sf s}_{\tau k}^{\lambda}({\sf d}_{\lambda, \sigma k}\cdot {\sf s}_{\sigma k}^{\lambda}
{\sf s}_{{\sigma}(k\mapsto \lambda)}p)\\
&={\sf s}_{\tau k}^{\lambda}({\sf d}_{\lambda, \sigma k}\cdot {\sf s}_{{\sigma}(k\mapsto \lambda)}p)\\
&= {\sf d}_{\tau k,  \sigma k}\cdot {\sf s}_{\sigma(k\mapsto \tau k)}p.
\end{align*}

The conclusion follows from the induction hypothesis.
Now $\psi_1$ respects all quasi-polyadic equality operations, that is finite substitutions (with the proof as before;
recall that we only have finite substitutions since we are considering
$\Sg^{\Rd\A}X_1$)  except possibly for diagonal elements.
We check those:

Recall that for a concrete Kripke frame $\F_{\bold W}$ based on ${\bold W}=(W,\leq ,V_k, W_k),$ we have
the concrete diagonal element ${\sf d}_{ij}$ is given by the tuple $(g_k: k\in K)$ such that for $y\in V_k$, $g_k(y)=1$ iff $y(i)=y(j)$.

Now for the abstract diagonal element in $\A$, we have $\psi_1({\sf d}_{ij})=(f_k:k\in K)$, such that if $k=((\Delta, \Gamma), (T,F))$
is a matched pair of saturated theories in $\Sg^{\Rd\A_n}X_1$, $\Sg^{\Rd\A_n}X_2$, with $n$ unique to $i$,
we have $f_k([x])=1$ iff ${\sf s}_{x}{\sf d}_{ij}\in \Delta \cup T$ (this is well defined $\Delta\cup T\subseteq \A_n).$

But the latter is equivalent to ${\sf d}_{x(i), x(j)}\in \Delta\cup T$, which in turn is equivalent to $x(i)\sim x(j)$, that is
$[x](i)=[x](j),$ and so  $(f_k)\in {\sf d}_{ij}^{\F_{\mathfrak K}}$.
The reverse implication is the same.

We can safely assume that $X_1\cup X_2=X$ generates $\A$.
Let $\psi=\psi_1\cup \psi_2\upharpoonright X$. Then $\psi$ is a function since, by definition, $\psi_1$ and $\psi_2$
agree on $X_1\cap X_2$. Now by freeness $\psi$ extends to a homomorphism,
which we denote also by $\psi$ from $\A$ into $\F_{\mathfrak K}$.
And we are done, as usual, by $\psi$, $k_0$ and $Id\in V_{k_0}$.
\end{demo}

Now let ${\mathfrak L}_G$ be the corresponding logic (defined the usual way as in abstract algebraic logic).
Then we can now infer that:

\begin{theorem} The logic ${\mathfrak L}$ has the weak interpolation property.
\end{theorem}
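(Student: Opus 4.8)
The plan is to derive Theorem (the weak interpolation property for $\mathfrak{L}_G$) directly from Theorem \ref{heyting} via the standard ``bridge'' between a model-theoretic (Robinson-style) joint consistency statement and a syntactic interpolation statement, exactly as is routine in abstract algebraic logic. First I would set up the Lindenbaum--Tarski algebra: given formulas $\phi,\psi$ of $\mathfrak{L}_G$ in the restricted signature (only substitutions from the semigroup $G$, but allowing diagonals), with $\vdash \phi\to\psi$, I pass to the free algebra $\A\in GPHAE_\alpha$ generated by the set $X$ of (equivalence classes of) atomic formulas, writing $X_1$ for those occurring in $\phi$, $X_2$ for those occurring in $\psi$, so $X=X_1\cup X_2$ and $\phi\in\Sg^{\Rd\A}X_1$, $\psi\in\Sg^{\Rd\A}X_2$. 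An interpolant in the logic is then precisely an element $\theta\in\Sg^{\A}(X_1\cap X_2)$ with $\phi\le\theta\le\psi$; ``weak'' refers to the fact that $\theta$ is found in a dilation using possibly infinitary substitutions from $G$, i.e. in the $G$-algebra, not merely among the finitary-substitution terms.

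Next I would argue by contraposition: suppose no such interpolant $\theta$ exists in $\Sg^{\A}(X_1\cap X_2)$. The aim is to produce a Kripke model of $\mathfrak{L}_G$, a world $k_0$ and a point $x$, validating $\phi$ (hence $\bigwedge$ of a suitable consistent extension) and refuting $\psi$ there, contradicting $\vdash\phi\to\psi$. To invoke Theorem \ref{heyting} I must manufacture the two matched consistent theories: take $\Delta_0$ to be (the upward closure of) $\{\phi\}$ in $\Sg^{\Rd\A}X_1$ together with $\Gamma_0$ the consequences forced in the common part, $\Theta_0$ to be built from $\neg\psi$ (equivalently from $\Gamma_0^*\ni\psi$) in $\Sg^{\Rd\A}X_2$, with $\Gamma_0\subseteq\Sg^{\A}(X_1\cap X_2)$ and $\Gamma_0\subseteq\Gamma_0^*$; the non-existence of an interpolant is exactly what guarantees that $(\Delta_0,\Gamma_0)$ and $(\Theta_0,\Gamma_0^*)$ are each consistent \emph{and} that one can arrange the completeness hypothesis ``$(\Delta_0\cap\Theta_0\cap\Sg^{\A}X_1\cap\Sg^{\A}X_2,\Gamma_0)$ is complete in $\Sg^{\Rd\A}X_1\cap\Sg^{\Rd\A}X_2$'' by a Zorn's-lemma maximalization in the small common algebra (a decision about each element of $\Sg^{\A}(X_1\cap X_2)$ that keeps both sides consistent, possible precisely because no interpolant separates $\phi$ from $\psi$). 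Then Theorem \ref{heyting} delivers $\mathfrak{K}$, $\psi:\A\to\mathfrak{F}_\mathfrak{K}$, $k_0\in K$ and $x\in V_{k_0}$ with $\psi(\phi)_{k_0}(x)=1$ and $\psi(\psi)_{k_0}(x)=0$; translating back, $\mathfrak{K},k_0,x\models\phi$ and $\mathfrak{K},k_0,x\not\models\psi$, so $\not\vdash\phi\to\psi$, the desired contradiction. Hence an interpolant exists, which is the weak interpolation property.

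I would also record the two routine ingredients this uses: the soundness/completeness of $\mathfrak{L}_G$ with respect to Kripke frames of the form $\mathfrak{F}_\mathfrak{K}$ (so that derivability corresponds to validity in all such set algebras, the ``representable'' algebras here), and the translation dictionary between formulas-in-context and elements of the free $GPHAE_\alpha$-algebra; both are standard and were fixed in \cite{s}, to which I would refer. The main obstacle I anticipate is the careful verification that one can choose $\Gamma_0,\Delta_0,\Theta_0,\Gamma_0^*$ so that \emph{all} the hypotheses of Theorem \ref{heyting} hold simultaneously --- in particular the completeness of the common theory together with the containments $\Gamma_0\subseteq\Sg^{\A}(X_1\cap X_2)$ and $\Gamma_0\subseteq\Gamma_0^*$ --- since the presence of diagonal elements and the asymmetry between the finitary reduct $\Rd\A$ (where the theories live) and the full $G$-algebra $\A$ (where the interpolant is allowed to live) means one has to be scrupulous about which algebra each object belongs to; getting the ``weak'' qualifier to exactly match what the theorem gives, rather than over-claiming a full (finitary) interpolant, is the delicate point.
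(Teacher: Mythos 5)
Your proposal is correct and follows essentially the same route as the paper: contraposition, using the non-existence of an interpolant in $\Sg^{\A}(X_1\cap X_2)$ to build a matched pair of consistent theories satisfying the hypotheses of Theorem \ref{heyting}, and then reading off a world and point where $\phi$ holds and $\psi$ fails, contradicting $\phi\le\psi$. The only (inessential) difference is mechanical: where you maximalize directly in the common subalgebra by Zorn's lemma, the paper first completes $(\Delta_0\cap\Sg^{\Rd\A}X_2,\theta_2)$ to a complete theory in $\Sg^{\Rd\A}X_2$ and then intersects with $\Sg^{\A}(X_1\cap X_2)$ to obtain the required complete common theory with $\Gamma\subseteq\Gamma_2$.
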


\begin{demo}{Proof}  Assume that $\theta_1\in \Sg^{\Rd\A}X_1$ and $\theta_2\in \Sg^{\Rd\A}X_2$ such that $\theta_1\leq \theta_2$.
Let $\Delta_0=\{\theta\in \Sg^{\A}(X_1\cap X_2): \theta_1\leq \theta\}.$
If for some $\theta\in \Delta_0$ we have $\theta\leq \theta_2$, then we are done.
Else $(\Delta_0, \{\theta_2\})$ is consistent, hence $(\Delta_0\cap \Sg^{\Rd\A}X_2,\theta_2)$ is consistent.
 Extend this to a complete theory $(\Delta_2, \Gamma_2)$ in $\Sg^{\Rd\A}X_2$; this is possible since $\theta_2\in \Sg^{\Rd\A}X_2$.
Consider $(\Delta, \Gamma)=(\Delta_2\cap \Sg^{\A}(X_1\cap X_2), \Gamma_2\cap \Sg^{\A}(X_1\cap X_2))$.
It is complete in the `common language',  that is, in $\Sg^{\A}(X_1\cap X_2)$.
Then $(\Delta\cup \{\theta_1\}), \Gamma)$ is consistent in $\Sg^{\Rd\A}X_1$ and  $(\Delta_2, \Gamma_2)$
is consistent in $\Sg^{\Rd\A}X_2$, and
$\Gamma\subseteq \Gamma_2.$
Applying the previous theorem, we get $(\Delta_2\cup \{\theta_1\}, \Gamma_2)$ is satisfiable.
Let $\psi_1, \psi_2$ and $\psi$ and $k_0$ be as in the previous proof. Then
$\psi\upharpoonright \Sg^{\Rd\A}X_1=\psi_1$ and $\psi\upharpoonright \Sg^{\Rd\A}X_2=\psi_2$.
But $\theta_1\in \Sg^{\Rd\A}X_1$, then $\psi_1(\theta_1)=\psi(\theta_2)$.
Similarly, $\psi_2(\theta_2)=\psi(\theta_2).$
So, it readily follows that  $(\psi(\theta_1))_{k_0}(Id)=1$ and $(\psi(\theta_2))_{k_0}(Id)=0$.
This contradicts that $\psi(\theta_1)\leq \psi(\theta_2),$
and we are done.
\end{demo}
When $G$ is the semigroup of finite transformations then we get a usual interpolation theorem,
for here $\Rd\A$ is just $\A$ (we do not have infinitary substitutions,
as above). In particular, usual quantifier intuitionistic logic has the interpolation property.

\section{The use of category theory}

On the algebraic level, the dichotomy between cylindric world and polyadic world
manifests itself blatantly in the following theorem:

\begin{theorem}\label{Sc}
\begin{enumarab}
\item \label{two} For all $\alpha>2$ for all $r\in \omega, k\geq 1$,
there exists $\B^r\in S\Nr_{\alpha}\QEA_{\alpha+k}$ such that $\Rd_{sc}\B^r\notin S\Nr_{\alpha}\Sc_{\alpha+k+1}$,
but $\Pi \B^r/F\in S\Nr_{\alpha}\RQEA_{\alpha+k+1}$. If  $\K\in \{\CA, \QEA\}$, then $\B^r$ can be chosen so that
$\Rd_{ca}\B^r\notin S\Nr_{\alpha}\CA_{\alpha+k+1}$, and
$\Pi_r\B^r\in \RQEA_{\alpha}$. In particular, for any $\K$ of $\CA$ and $\QEA$, and for any finite $k$,
$\sf RK_{\alpha}$ is not axiomatizable by a finite schema over
${\sf S}\Nr_{\alpha}\K_{\alpha+k}$.

\item Let $\alpha$ be an infinite ordinal, then there exists an algebra $\A\in {\RQEA}_{\alpha}$ such that
$\Rd_{\Sc}\A\subseteq \Nr_{\alpha}\B_i$, $\B_i\in \Sc_{\alpha+\omega}$ for $i\in \{1,2\}$, $\A$ generates $\B_i$ using
the $\Sc$ operations, but
there is no $\Sc$ isomorphism from $\B_1$ to $\B_2$ that fixes $\A$ pointwise.
Furthermore, for $\K\in \{\Sc, \CA, \QEA, \QA\}$,
${\sf K}_{\alpha}$ does not have the amalgamation property

\item Let $\alpha$ be an infinite ordinal.  For $\K=\PA_{\alpha}$ or $\K=\PEA_{\alpha}$,
$\alpha$ an infinite ordinal and $\beta>\alpha$, $S\Nr_{\alpha}\K_{\beta}=\Nr_{\alpha}\K_{\beta}=\K_{\alpha}$
and any $\A\in \K_{\alpha}$ has the neat  amalgamation property. Furthermore,  $\PA_{\alpha}$ has the superamalgamation property
\end{enumarab}
\end{theorem}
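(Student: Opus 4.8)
\textbf{Plan of proof for Theorem \ref{Sc}.}

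The three parts are quite different in nature, so I would treat them separately. Part (\ref{two}) is essentially a repackaging of Theorems \ref{2.12}, \ref{thm:cmnr1}, \ref{thm:cmnr} and \ref{new}: for $\CA$ and $\QEA$ one reads off $\B^r$ directly from Theorem \ref{new} (with $\Pi_r\B^r\in {\sf RK}_\alpha$ there), and for the $\Sc$-reduct non-membership and the weaker ultraproduct statement one invokes Theorem \ref{thm:cmnr}(\ref{en:two})--(\ref{en:four}) lifted via Theorem \ref{2.12}. So the only thing to write here is the reduction: assemble the $\Gamma$-indexed ultraproduct $\B^r=\Pi_{\Gamma/F}\C^r_\Gamma$ exactly as in the proof of \ref{2.12}, note that $\Rd_{\Sc}\B^r\notin S\Nr_\alpha\Sc_{\alpha+k+1}$ because this already fails at each finite coordinate (Theorem \ref{thm:cmnr}(\ref{en:two})) and $S\Nr_\alpha\Sc_{\alpha+k+1}$ is closed under subalgebras, and then quote \ref{new} for $\Pi_r\B^r\in {\sf RK}_\alpha$; the final ``not finitely schema axiomatizable'' clause is then immediate from \ref{2.12}. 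I would be explicit that for the plain $\Sc$ statement one only needs the strictly weaker Theorem \ref{thm:cmnr}, while for $\CA,\QEA$ the representability of the ultraproduct (Theorem \ref{new}) gives the sharper conclusion $\Pi_r\B^r\in{\sf RK}_\alpha$.

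For part (2), the non-uniqueness of minimal dilations, I would follow the construction already referenced as coming from \cite{neat}, \cite{conference} and elaborated in Section ``First order definability of the class of neat reducts'': take the base model $\M$ and the algebra $\A\cong\Nr_3\A_\omega$ of Theorem \ref{neatreduct}, but now keep \emph{two} copies of the ``twisted'' dilation — one honest full dilation $\B_1$ in $\Sc_{\alpha+\omega}$ and one $\B_2$ obtained by the Feferman--Vaught replacement of a single Boolean component by a countable elementary subalgebra, so that $\Rd_{\Sc}\A$ generates both but the distinguished term $\tau_k$ (the $k$-witness of Lemma \ref{term}) is forced to take an uncountable value in $\B_1$ and a countable value in $\B_2$; hence no $\Sc$-isomorphism fixing $\A$ pointwise can exist, since such an isomorphism would have to commute with $\tau_k$ and with the embeddings of $\A$. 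The failure of amalgamation for $\K\in\{\Sc,\CA,\QEA,\QA\}$ then follows by the standard ``bridge'' argument: if amalgamation held, minimal dilations over a common neat-reduct base would be unique up to the relevant isomorphism (this is the algebraic translation of amalgamation in the target category, cf.\ the discussion preceding Theorem \ref{cat}), contradicting what was just built; alternatively one cites Pigozzi's classical non-amalgamation for $\RCA_\alpha$ and pushes it up the signature. I would phrase this carefully since the same $\A$, $\B_1$, $\B_2$ serve both sub-claims.

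Part (3) is the positive polyadic result. The equalities $S\Nr_\alpha\K_\beta=\Nr_\alpha\K_\beta=\K_\alpha$ for $\K\in\{\PA,\PEA\}$, $\alpha$ infinite, are the Daigneault--Monk-style dilation theorem: every $\A\in\K_\alpha$ embeds (indeed equals) the $\alpha$-neat reduct of a $\beta$-dimensional algebra generated by $\A$, and conversely every neat reduct is already full, because in the presence of \emph{all} substitutions $\s_\tau$, $\tau\in{}^\alpha\alpha$, an $\alpha$-dimensional element of a dilation is moved by $\s_{[\beta\setminus\alpha\,/\,\cdot]}$-type substitutions back into $\A$ — one uses property (P11) and the axiom ${\sf c}_{(J)}{\sf s}_\sigma={\sf s}_\sigma{\sf c}_{\sigma^{-1}(J)}$ together with the fact that spare dimensions can be substituted away; I would organize this as: (a) construct the minimal dilation $\B$ of $\A$ in dimension $\beta$ with $\Nr_\alpha\B=\A$, (b) show $S\Nr_\alpha\K_\beta\subseteq\K_\alpha$ is trivial and the reverse is (a). Neat amalgamation (the superamalgamation property for $\PA_\alpha$, $\PEA_\alpha$) I would get from Theorem \ref{mv} (or its Boolean specialization, which is exactly Johnson's / Sayed-Ahmed's super-amalgamation theorem for polyadic algebras) by passing to the free algebra: given $\A_0\hookrightarrow\A_1$, $\A_0\hookrightarrow\A_2$, realize all three as quotients of a free $\PEA_\alpha$ on the union of generator sets, apply the interpolation property of $\Fr_\beta\PEA_\alpha$ to obtain the amalgam, and use the monotone preservation of the order in the interpolant to upgrade ordinary amalgamation to super-amalgamation via the standard bridge theorem. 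The main obstacle — and the step I would spend the most care on — is the full-neat-reduct claim $\Nr_\alpha\K_\beta=\K_\alpha$ in part (3): this is precisely where polyadic algebras diverge from cylindric ones (for $\CA_n$, $2<n$, the analogous statement is false, as stressed in the text), and making the ``substitute away the spare dimensions'' argument airtight requires the infinitary substitution operators and a careful bookkeeping of supports, so I would import the argument essentially verbatim from \cite{DM}/\cite[Ch.\ 6]{AU} rather than re-derive it.
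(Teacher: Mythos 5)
Your treatment of parts (1) and (3) is essentially the paper's: part (1) is indeed just the assembly of Theorem \ref{thm:cmnr} (for the $\Sc$-reduct claim) and Theorem \ref{new} (for the representable ultraproduct in the $\CA$/$\QEA$ case) through the lifting machinery of Theorem \ref{2.12}, and part (3) is dispatched by citing the Daigneault--Monk dilation theorem and the superamalgamation result for polyadic algebras, exactly as the paper does.

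Part (2) is where there is a genuine gap, in two places. First, your construction of the two dilations does not work as described: the Feferman--Vaught twist of Theorem \ref{neatreduct} is applied to the \emph{low-dimensional} algebra (decomposed as a product of relativizations $\A_u$), and its output is a low-dimensional algebra $\B\equiv\A$ that \emph{fails to be} a neat reduct — it is not a second $(\alpha+\omega)$-dimensional dilation of a fixed $\A$. If you instead twist a component of the dilation $\B_1$ itself, you have no argument that the result lies in $\Sc_{\alpha+\omega}$, contains $\A$ in its $\alpha$-neat reduct, and is generated by $\A$; and the whole construction lives in dimension $3$, whereas part (2) concerns infinite $\alpha$. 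Second, your ``bridge'' runs in the wrong direction. The implication available (and the one the paper uses, citing \cite{Sayedneat}) is: if $\A_0$ has the unique neat embedding property then $\A_0$ lies in the amalgamation base. The paper therefore argues the \emph{other} way round: it exhibits, in $\D=\Fr_4\RCA_{\alpha}$, explicit elements $r\leq s\cdot t$ with no interpolant in the common subalgebra, concludes that $\A_0=\D^{(X_1\cap X_2)}/L$ admits no amalgam (so ${\sf K}_\alpha$ fails AP directly), and only then deduces by contraposition that $\A_0$ lacks the unique neat embedding property, i.e.\ has two non-isomorphic minimal dilations. Your proposed inference ``non-unique dilations $\Rightarrow$ failure of AP'' is the converse of the cited implication and is not established anywhere in the paper; as written, your plan leaves the first sub-claim of part (2) unproved and derives the second from an unavailable implication (your fallback to Pigozzi covers only the AP failure, not the dilation statement).
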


\begin{proof}
\begin{enumarab}

\item For $\CA$ and $\QEA$ as before. For any diagonal
free reduct of $\PEA$ containing $\Sc$ follows from the proof of the main theorem in
\cite{t}, sketched after theorem \ref{thm:cmnr}, where algebras $\C(m,n, r)\in \Nr_n\PEA_m$
are constructed,  such that $\Rd_{sc}\C(m,n,r)\notin S\Nr_{n}\Sc_m$ but
$\Pi_r \C(m,,n,r)\in \Nr_n\QEA_m$. Then use the lifting argument in \ref{2.12}
\item  Here we will be even more sketchy. Details can be found in \cite{conference}.
Let $\A=\Fr_4\CA_{\alpha}$ with $\{x,y,z,w\}$ its free generators. Let $X_1=\{x,y\}$ and $X_2=\{x,z,w\}$.
Let $r, s$ and $t$ be defined as follows:
$$ r = {\sf c}_0(x\cdot {\sf c}_1y)\cdot {\sf c}_0(x\cdot -{\sf c}_1y),$$
$$ s = {\sf c}_0{\sf c}_1({\sf c}_1z\cdot {\sf s}^0_1{\sf c}_1z\cdot -{\sf d}_{01}) + {\sf c}_0(x\cdot -{\sf c}_1z),$$
$$ t = {\sf c}_0{\sf c}_1({\sf c}_1w\cdot {\sf s}^0_1{\sf c}_1w\cdot -{\sf d}_{01}) + {\sf c}_0(x\cdot -{\sf c}_1w),$$
where $ x, y, z, \text { and } w$ are the first four free generators
of $\A$.
Then $r\leq s\cdot t$.
Let $\D=\Fr_4\RCA_{\alpha}$ with free generators $\{x', y', z', w'\}$.
Let  $\psi: \A\to \D$ be defined by the extension of the map $t\mapsto t'$, for $t\in \{x,y,x,w\}$.
For $i\in \A$, we denote $\psi(i)\in \D$ by $i'$.
Let $I=\Ig^{\D^{(X_1)}}\{r'\}$ and $J=\Ig^{\D^{(X_2)}}\{s'.t'\}$, and let
$$L=I\cap \D^{(X_1\cap X_2)}\text { and }K =J\cap \D^{(X_1\cap X_2)}.$$
Then $L=K$, and $\A_0=\D^{(X_1\cap X_2)}/L$  can be embedded into
$\A_1=\D^{(X_1)}/I$ and $\A_2=\D^{(X_2)}/J$,
but there is no amalgam even in $\CA_{\omega}.$ In particular,
$\CA_{\omega}$ with $MGR$ does not have $AP$.
If $\A_0$ has the unique neat embedding property that it lies in the amalgamation
base of $\RCA_{\alpha}$ \cite{Sayedneat}.
Hence the algebra $\A_0$ does not have the unique neat embedding property.
More generally, if $\D_{\beta}$ is taken as the free
$\RCA_{\alpha}$ on $\beta$ generators, so that our algebra in the previous theorem is just $\D_4$,
where $\beta\geq 4$, then the algebra constructed from $\D_{\beta}$ as above,
will not have the unique neat embedding property.

\item From \cite{SL} and \cite{super}.
\end{enumarab}

\end{proof}

The theme of using category theory to approach problems in algebraic logic, and in particular, on neat embeddings
was initiated by the author in \cite{conference},
viewing neat embeddings as adjoint situation. But there each paradigm (cylindric-polaydic)
was approached separately. In particular (2) above was proved
equivalent to that the neat reduct functor has no right adjoint while (3) ie equivalent
to invertibility of the neat reduct functor.

Now that we have the notion of Hamos' general schema at hand,
we have the opportunity to give a categorial adjoint situation for both
cylindric and polyadic algebras simultaneously.

In our categorial notation
we follow \cite{conference}, which in turn follows \cite{cat}.
We start by well known categorial definitions and results.

\begin{definition} Let $L$ and $K$ be two categories.
Let $G:K\to L$ be a functor and let $\B\in Ob(L)$. A pair $(u_B, \A_B)$ with $\A_B\in Ob(K)$ and $u_B:\B\to G(\A_B)$ is called a universal map
with respect to $G$
(or a $G$ universal map) provided that for each $\A'\in Ob(K)$ and each $f:\B\to G(\A')$ there exists a unique $K$ morphism
$\bar{f}: \A_B\to \A'$ such that
$$G(\bar{f})\circ u_B=f.$$
\end{definition}

\begin{displaymath}
    \xymatrix{
        \mathfrak{B} \ar[r]^{u_B} \ar[dr]_f & G(\mathfrak{A}_\mathfrak{B}) \ar[d]^{G(f)}  &\mathfrak{A}_\mathfrak{B} \ar[d]^{\hat{f}} \\
             & G(\mathfrak{A}')  & \mathfrak{A}'}
\end{displaymath}

The above definition is strongly related to the existence of adjoints of functors.
Functors are often defined by universal properties examples are the tensor product, the direct sum, and direct product of groups or vector spaces,
construction of free groups and modules, direct and inverse. The concepts of limit and colimit  generalize several of the above.
Universal constructions often give rise to pairs of adjoint functors.
And indeed, the above definition is strongly related to the existence of adjoints of functors, as we proceed to show.
For undefined notions in the coming definition, the reader is referred to \cite{cat}

For undefined notions in the coming definition, the reader is referred to \cite[theorem 27.3, p. 196]{cat}.

\begin{theorem} Let $G:K\to L$.
\begin{enumarab}
\item If each $\B\in Ob(K)$ has a $G$ universal map $(\mu_B, \A_B)$, then there exists a unique adjoint situation $(\mu, \epsilon):F\to G$
such that $\mu=(\mu_B)$ and for each $\B\in Ob(L),$
$F(\B)=\A_B$.
\item Conversely, if we have an adjoint situation $(\mu,\epsilon):F\to G$ then for each $\B\in Ob(K)$ $(\mu_B, F(\B))$ have a $G$ universal map.
\end{enumarab}
\end{theorem}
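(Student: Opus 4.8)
The statement to be proved is the standard equivalence between ``$G$ admits $G$-universal maps at every object'' and ``$G$ has a left adjoint''. The plan is to prove the two directions separately, treating (1) as the substantive construction and (2) as an essentially bookkeeping exercise. Throughout I would fix the convention that an adjoint situation $(\mu,\epsilon):F\to G$ consists of functors $F:L\to K$, $G:K\to L$, a unit natural transformation $\mu:\mathrm{Id}_L\to GF$ and a counit $\epsilon:FG\to \mathrm{Id}_K$ satisfying the two triangle identities, exactly as in \cite{cat}.

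\textbf{Direction (1): from universal maps to an adjunction.}
First I would use the hypothesis to pick, for each $\B\in Ob(L)$, a chosen $G$-universal map $(\mu_\B,\A_\B)$ with $\mu_\B:\B\to G(\A_\B)$, and set $F(\B)=\A_\B$ on objects. Next I would define $F$ on morphisms: given $g:\B\to\B'$ in $L$, the composite $\mu_{\B'}\circ g:\B\to G(\A_{\B'})$ is a morphism into the target of a $G$-universal map based at $\B$, so by the universal property there is a \emph{unique} $K$-morphism $F(g):\A_\B\to\A_{\B'}$ with $G(F(g))\circ\mu_\B=\mu_{\B'}\circ g$. Uniqueness in the universal property is exactly what gives functoriality of $F$ (it forces $F(\mathrm{id})=\mathrm{id}$ and $F(g'\circ g)=F(g')\circ F(g)$, each checked by verifying that both sides satisfy the defining equation of the relevant universal map). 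The identity $G(F(g))\circ\mu_\B=\mu_{\B'}\circ g$ says precisely that $\mu=(\mu_\B)_{\B\in Ob(L)}$ is natural from $\mathrm{Id}_L$ to $GF$. Then I would construct the counit $\epsilon:FG\to\mathrm{Id}_K$: for $\A\in Ob(K)$, apply the universal property of $(\mu_{G(\A)},\A_{G(\A)}=FG(\A))$ to the morphism $\mathrm{id}_{G(\A)}:G(\A)\to G(\A)$, obtaining a unique $\epsilon_\A:FG(\A)\to\A$ with $G(\epsilon_\A)\circ\mu_{G(\A)}=\mathrm{id}_{G(\A)}$; naturality of $\epsilon$ and the two triangle identities then follow by the usual diagram chases, each reduced to a uniqueness assertion coming from a universal map. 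Finally I would address the uniqueness clause of the theorem: any adjoint situation whose unit is $\mu$ and with $F(\B)=\A_\B$ must have its counit determined by the triangle identity $G(\epsilon_\A)\circ\mu_{G(\A)}=\mathrm{id}$, hence equals the $\epsilon$ just constructed, and $F$ on morphisms is likewise forced by naturality of $\mu$; so the adjoint situation is unique.

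\textbf{Direction (2): from an adjunction to universal maps.}
Conversely, suppose $(\mu,\epsilon):F\to G$ is an adjoint situation. I would claim that for each $\B\in Ob(L)$ the pair $(\mu_\B,F(\B))$ is a $G$-universal map. Given any $\A'\in Ob(K)$ and $f:\B\to G(\A')$, define $\hat f=\epsilon_{\A'}\circ F(f):F(\B)\to\A'$. Then $G(\hat f)\circ\mu_\B=G(\epsilon_{\A'})\circ GF(f)\circ\mu_\B=G(\epsilon_{\A'})\circ\mu_{G(\A')}\circ f=f$, using naturality of $\mu$ for the middle step and a triangle identity for the last; this gives existence. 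For uniqueness, if $h:F(\B)\to\A'$ also satisfies $G(h)\circ\mu_\B=f$, then $h=\epsilon_{\A'}\circ F(G(h))\circ F(\mu_\B)=\epsilon_{\A'}\circ F(G(h)\circ\mu_\B)=\epsilon_{\A'}\circ F(f)=\hat f$, again by the triangle identity and functoriality of $F$. This completes the second direction.

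\textbf{Main obstacle.}
There is no deep obstacle here; the content is entirely the careful orchestration of the two triangle identities and the repeated invocation of uniqueness in the universal property. The one point requiring genuine care is \emph{functoriality of the constructed $F$ in direction (1)} together with the naturality squares for $\mu$ and $\epsilon$ --- all of these must be extracted from uniqueness clauses rather than proved by direct computation, and it is easy to apply the universal property ``at the wrong object'' if one is not careful about which morphism is the base of which universal map. I would therefore be explicit at each step about which $(\mu_\B,\A_\B)$ is being used. Since the paper only needs the correspondence as a black box (to reformulate the neat-reduct-functor results of \cite{conference} in the combined cylindric--polyadic setting), I would state the result with a reference to \cite[theorem 27.3]{cat} and include only the sketch above, without grinding through every diagram.
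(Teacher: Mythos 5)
Your proof is correct and is exactly the standard argument; the paper itself gives no proof of this statement, deferring entirely to the cited reference \cite[theorem 27.3]{cat}, whose proof proceeds along the same lines as yours (construct $F$ on morphisms via uniqueness in the universal property, extract the counit from $\mathrm{id}_{G(\A)}$, and verify the triangle identities by uniqueness chases). Your decision to present it as a sketch with the reference is consistent with how the paper treats it.
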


The neat reduct operator can be viewed as functor from a
certain subcategory of $\sf L$ of $\CA_{\alpha+\omega}$
to $\RCA_{\alpha}$, where $\sf L=\{\A\in \CA_{\alpha+\omega}: \A=\Sg^{\A}\Nr_{\alpha}\A\},$
in the natural way. Each object is taken to its neat $\alpha$ reduct and
morphisms are restrictions of injective homomorphisms between algebras
in ${\sf L}$ to their images. Note here that morphisms are restricted
only to injective maps.
The following theorem is proved in \cite{conference}.

\begin{theorem} Let $\bold L=\{\A\in \RCA_{\alpha+\omega}: \A=\Sg^{\A}\Nr_{\alpha}\A\}$. Then the neat reduct functor $\Nr_{\alpha}$
from $\bold L$ to $\RCA_{\alpha}$ with morphisms restricted to injective homomorphisms
does not have a right adjoint. Conversely, the neat reduct functor from
$\{\A\in \PA_{\beta}: \Sg^{\A}\Nr_{\alpha}\A=\A\}$
to $\PA_{\alpha}$, $\beta>\alpha\geq \omega$, with the same restrictions,  is strongly invertible.
\end{theorem}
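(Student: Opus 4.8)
The statement has two halves, and they are proved by entirely different mechanisms reflecting the cylindric/polyadic dichotomy. For the first half (no right adjoint in the cylindric case), the plan is to argue by contradiction using the characterization recalled just above: if the neat reduct functor $\Nr_{\alpha}:\bold L\to\RCA_{\alpha}$ had a right adjoint, then by the cited adjoint-situation theorem every $\B\in\RCA_{\alpha}$ would possess a $\Nr_{\alpha}$-universal map $(\mu_B,\A_B)$ with $\A_B\in\bold L$, i.e. a \emph{minimal dilation} that is universal among all dilations in $\bold L$. I would then instantiate $\B$ as the free algebra $\Fr_4\RCA_{\alpha}$ (or the algebra $\A_0$) appearing in Theorem~\ref{Sc}(2): that theorem produces, from $\B$, two algebras $\B_1,\B_2\in\Sc_{\alpha+\omega}$ (which we can arrange to be $\RCA_{\alpha+\omega}$-algebras lying in $\bold L$, generated by the image of $\B$), into which $\B$ neatly embeds, but with no isomorphism fixing $\B$ pointwise. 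Universality of $\A_B$ would force compatible morphisms $\A_B\to\B_1$ and $\A_B\to\B_2$, and since everything is generated by $\B$ and the maps are injective homomorphisms, one deduces $\B_1\cong\B_2$ over $\B$ — contradicting Theorem~\ref{Sc}(2). The delicate point, and the step I expect to cost the most care, is checking that the two dilations $\B_i$ genuinely sit in the domain category $\bold L$ and that the universal morphisms really are forced to be the given neat embeddings up to the identification of generators; this is precisely the failure of the \emph{unique neat embedding property} discussed after Theorem~\ref{Sc}.

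\textbf{The polyadic half.} For the converse — strong invertibility of $\Nr_{\alpha}$ on $\{\A\in\PA_{\beta}:\Sg^{\A}\Nr_{\alpha}\A=\A\}$ — the plan is to exhibit an explicit inverse functor, the \emph{dilation} functor $D:\PA_{\alpha}\to\{\A\in\PA_{\beta}:\Sg^{\A}\Nr_{\alpha}\A=\A\}$, and verify that $\Nr_{\alpha}\circ D$ and $D\circ\Nr_{\alpha}$ are naturally isomorphic to the respective identities. The key input is Theorem~\ref{Sc}(3): for $\K=\PA$ one has $S\Nr_{\alpha}\K_{\beta}=\Nr_{\alpha}\K_{\beta}=\K_{\alpha}$, so every $\PA_{\alpha}$ algebra \emph{is} a neat reduct, and moreover — this is the substance of the polyadic neat embedding theory, cf.\ \cite{DM}, \cite{super} — the minimal dilation is unique up to an isomorphism fixing the base algebra pointwise (the polyadic analogue of the unique neat embedding property, which \emph{holds} here). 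This uniqueness is exactly what makes $D$ well-defined on objects; on morphisms, an injective homomorphism $f:\A\to\A'$ in $\PA_{\alpha}$ lifts uniquely to the dilations because $\Sg^{D\A}\Nr_{\alpha}D\A=D\A$ and $f(\A)$ generates its image. Functoriality and the natural isomorphisms then follow from this uniqueness-of-lift by a standard diagram chase.

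\textbf{Organization and main obstacle.} Concretely I would proceed in the order: (i) recall the adjoint-situation criterion and the definition of the two functors; (ii) do the polyadic positive part first, since it is the cleaner of the two — establish well-definedness of $D$ via the uniqueness of minimal dilations from Theorem~\ref{Sc}(3), then functoriality, then $\Nr_{\alpha}D\cong\mathrm{Id}$ (immediate, as $\Nr_{\alpha}D\A=\A$) and $D\Nr_{\alpha}\cong\mathrm{Id}$ on the subcategory (using $\Sg^{\A}\Nr_{\alpha}\A=\A$); (iii) then the cylindric negative part by the contradiction sketched above. I expect the main obstacle to be in step (iii): one must package the counterexample of Theorem~\ref{Sc}(2) — which is stated as a failure of amalgamation / uniqueness — precisely as the non-existence of a $\Nr_{\alpha}$-universal map, being careful that the category $\bold L$ is closed under the relevant constructions and that morphisms (restricted to injections) are still enough to force the contradiction; a secondary subtlety is ensuring the algebras $\B_1,\B_2$ produced there, a priori only in $\Sc_{\alpha+\omega}$, can be taken in $\RCA_{\alpha+\omega}\cap\bold L$, which is where one invokes that they are generated over $\B$ by the neat reduct together with the closure properties of $S\Nr_{\alpha}\CA_{\alpha+\omega}=\RCA_{\alpha}$.
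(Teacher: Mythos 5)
Your proposal is correct and is essentially the distillation the paper itself points to: the paper's proof is only a citation of \cite{conference} together with the remark that the argument follows from Theorem \ref{cat}, whose mechanism (universal maps $\leftrightarrow$ amalgamation, strong invertibility $\leftrightarrow$ SUPAP together with the $NS$ property) is exactly what you deploy. Your only deviation is cosmetic — in the negative half you contradict the universal property directly via the two non-isomorphic dilations of Theorem \ref{Sc}(2) rather than passing through the failure of $AP$, but this is the same counterexample and the same underlying argument.
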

\begin{proof} \cite{conference}. Can also
be easily distilled from the proof of our next theorem \ref{cat}.
\end{proof}

Category theory has the supreme advantage of putting
many existing results scattered in the literature, in their proper perspectives highlighting interconnections, illuminating differences and similarities,
despite the increasing tendencies toward fragmentation and specialization, in mathematical logic  in general,
and in even more specialized fields like algebraic logic. Our next thorem emphasizes this viewpoint, and unifies
the categorial results proved for both cylindric and polyadic algebras of infinite
dimension \cite{conference}, in the context of generalized systems of varieties.

For the well-known definition of amalgamation and super-amalgamation properties, the reader is referred to \cite{MS},
\cite{conference}.
Call a class $\K$ of algebras is (very) nice if it has the (super) amalgamation property.
We write $AP$ for the amalgamation property and $SUPAP$ for super amalgamation property.

Our final theorem says:
\begin{theorem}\label{cat} Let $\K=(\K_{\alpha}: \alpha\in \mu)$ be a system of varieties, such that $\omega$ and $\omega+\omega\in \mu$.
Assume that $\bold M =\{\A\in \K_{\omega+\omega}: \A=\Sg^{\A}\Nr_{\omega}\A\}$
has $SUPAP$. Assume further that for any injective homomorphism $f:\Nr_{\alpha}\B\to \Nr_{\alpha}\B'$, there exists an injective
homomorphism $g:\B\to \B'$ such that $f\subseteq g$. Then the following two conditions are equivalent.
\begin{enumarab}
\item $\Kn_{\omega}$ is (very) nice.
\item $\Nr_{\omega}$ is (strongly) invertible
\end{enumarab}
\end{theorem}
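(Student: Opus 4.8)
\textbf{Proof proposal for Theorem \ref{cat}.}

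The plan is to translate the categorical statements into the language of universal maps, using the general adjoint-functor machinery recalled just before the theorem, and then to run the two implications separately. Throughout, let $G=\Nr_{\omega}$, regarded as a functor from $\bold M$ (objects) to $\K_{\omega}$ with morphisms restricted to injective homomorphisms; the dilation hypothesis — every injective $f:\Nr_{\omega}\B\to\Nr_{\omega}\B'$ lifts to an injective $g:\B\to\B'$ with $f\subseteq g$ — is exactly what makes $G$ a well-behaved functor on morphisms and will be invoked repeatedly. Recall that $G$ has a right adjoint iff every $\A\in\K_{\omega}$ carries a $G$-universal map $(\mu_{\A},\D_{\A})$ with $\D_{\A}\in\bold M$; \emph{strongly} invertible will mean that $G$ is an equivalence of categories with the dilation functor as (quasi-)inverse, i.e. each $\mu_{\A}$ is an isomorphism. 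The first observation is a factorisation-of-dilations lemma: for $\A\in\K_{\omega}$ take any $\B\in\K_{\omega+\omega}$ with $\A\subseteq\Nr_{\omega}\B$ and $A$ generating $\B$ (such minimal dilations exist, as used in the proof of Theorem \ref{Fer} and Theorem \ref{mv}); then $\B\in\bold M$ and the inclusion $\mu_{\A}:\A\to\Nr_{\omega}\B$ is the candidate universal map. One checks the universal property: given $h:\A\to\Nr_{\omega}\B'$ with $\B'\in\bold M$, dilate $h$ (here one needs $h$ injective, matching the morphism restriction) to $\hat h:\B\to\B'$; uniqueness follows because $A$ generates $\B$. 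Thus $G$ always has a right adjoint; the content of the theorem is the \emph{invertibility} of the unit, which is where amalgamation enters.

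For the implication $\text{(1)}\Rightarrow\text{(2)}$: assume $\K_{\omega}$ has $AP$ (resp. $SUPAP$). One must show each $\mu_{\A}:\A\to\Nr_{\omega}\D_{\A}$ is onto (resp. onto and order-reflecting in the appropriate sense). Suppose not; pick $b\in\Nr_{\omega}\D_{\A}\setminus\mu_{\A}[A]$. Build two copies $\B_1,\B_2$ of $\D_{\A}$ amalgamated over $\A$ — more precisely, form the minimal dilation twice and use the fact, witnessed in Theorem \ref{Sc}(2) and its proof via the free algebra construction of \cite{conference}, that failure of surjectivity of the neat-reduct map yields two dilations $\B_1,\B_2\in\bold M$ of $\A$ with no $\bold M$-isomorphism fixing $\A$ pointwise, hence by the hypothesis ``$\bold M$ has $SUPAP$'' together with the generation property, a contradiction with the amalgam being forced to identify the two images of $b$. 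The cleanest route is: $AP$ for $\K_{\omega}$ plus the dilation-lifting hypothesis transports to an amalgamation statement inside $\bold M$ for objects of the form $\Sg^{\B}\Nr_{\omega}\B$; combined with the assumed $SUPAP$ of $\bold M$ this pins down $\D_{\A}$ up to iso fixing $\A$, which is equivalent to $\mu_{\A}$ being iso. The superamalgamation refinement is handled by tracking the order: $SUPAP$ of $\K_{\omega}$ gives that the interpolant-style inequalities $a\le b$ in $\Sg^{\A}X_1$, $\Sg^{\A}X_2$ are reflected, which is precisely the ``strong'' in strongly invertible.

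For $\text{(2)}\Rightarrow\text{(1)}$: assume $G$ is (strongly) invertible, i.e. every $\mu_{\A}$ is an isomorphism, so $\K_{\omega}\cong\bold M$ as categories (with matching order structure in the strong case). Amalgamation is preserved by equivalence of categories — given $\A_0\hookrightarrow\A_1$, $\A_0\hookrightarrow\A_2$ in $\K_{\omega}$, dilate to $\D_{\A_0}\hookrightarrow\D_{\A_1}$, $\D_{\A_0}\hookrightarrow\D_{\A_2}$ in $\bold M$ using the lifting hypothesis, amalgamate there by the assumed ($SUP$)$AP$ of $\bold M$ to get $\D\in\bold M$, then apply $G=\Nr_{\omega}$ to obtain an amalgam $\Nr_{\omega}\D\in\K_{\omega}$; the embeddings commute because $G$ is faithful and the $\mu$'s are natural. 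For the super-version one additionally checks that the zig-zag / order-reflection property passes through $\Nr_{\omega}$, which is immediate since $\Nr_{\omega}$ is a relativisation and preserves and reflects the Boolean order on the neat reduct.

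\textbf{Main obstacle.} The delicate point is not either implication in the abstract but the hypothesis-checking that glues them: verifying that the two ad hoc assumptions — ``$\bold M$ has $SUPAP$'' and ``injective neat-reduct maps dilate to injective maps'' — genuinely let one move amalgams back and forth between dimension $\omega$ and dimension $\omega+\omega$ without losing injectivity or the generation condition $\A=\Sg^{\A}\Nr_{\omega}\A$. Concretely, the hard work is showing that the amalgam produced in $\bold M$ can be chosen inside the subcategory $\{\A:\A=\Sg^{\A}\Nr_{\omega}\A\}$ (so that $G$ applies and the universal property is available), and dually that the amalgam manufactured in $\K_{\omega}$ via $G$ of a $\bold M$-amalgam has its embeddings fixing the base — this is exactly the subtle pasting-of-homomorphisms step that appears, in concrete form, on pp. 128–129 of \cite{Sayedneat} and in the proofs of Theorems \ref{Fer}, \ref{mv}, \ref{heyting} above, and it is where the generalized-schema framework (as opposed to treating $\CA$ and $\PA$ separately as in \cite{conference}) must be shown to carry enough structure.
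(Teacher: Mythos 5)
There is a genuine gap at the foundation of your argument: the claim that ``$G$ always has a right adjoint'' (equivalently, that every $\A\in\K_{\omega}$ carries a universal map $(\mu_\A,\D_\A)$ into the neat reduct of a minimal dilation) is false, and it is contradicted by results stated earlier in the paper. If such a universal map existed, any two minimal dilations of $\A$ would be isomorphic over $\A$: for $i_j:\A\to\Nr_{\omega}\B_j$ with $i_j(A)$ generating $\B_j$ ($j=1,2$), the induced morphisms $\D_\A\to\B_j$ are injective and have range containing $i_j(A)$, hence are onto by generation, hence isomorphisms, and composing them yields an isomorphism $\B_1\to\B_2$ carrying $i_1$ to $i_2$. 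Theorem \ref{Sc}(2) exhibits exactly such a pair of dilations with \emph{no} isomorphism fixing $\A$ pointwise, and the theorem quoted from \cite{conference} states outright that the neat reduct functor on $\RCA$ has no right adjoint. Your verification of the universal property also misapplies the dilation hypothesis: that hypothesis lifts injections defined on the \emph{whole} of $\Nr_{\omega}\B$, whereas your $h$ is defined only on $\A$, which may be a proper subalgebra of $\Nr_{\omega}\B$ --- properness of that inclusion is precisely the failure of the property $NS$ that the paper has to derive separately, and only from $SUPAP$. For the unparenthesised version of the theorem, ``invertible'' means precisely that universal maps exist (equivalently, the neat amalgamation property: uniqueness of minimal dilations over $\A$), so the entire content of $(1)\Rightarrow(2)$ is that $AP$ of $\K_{\omega}$ \emph{creates} the adjoint: one amalgamates $\Nr_{\omega}\B_1$ and $\Nr_{\omega}\B_2$ over $\A$ inside $\K_{\omega}$, factors the amalgam through $\Nr_{\omega}\D^+$ with $\D^+\in\bold M$, lifts both legs to the dilations by the stated hypothesis, and uses generation to identify $\B_1$ with $\B_2$. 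It is not, as you have it, that the adjoint comes for free and only invertibility of the unit is at stake.

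A secondary consequence is that both of your implications aim at the wrong targets in the plain case. In $(1)\Rightarrow(2)$ you try to prove $\mu_\A$ surjective from mere $AP$; surjectivity of the unit is the statement $\A=\Nr_{\omega}\D_\A$ when $\A$ generates $\D_\A$, which the paper obtains only from $SUPAP$ (the argument needs the strong intersection condition $m_1(\C)\cap m_2(\C)=m_1(i(\A))$ to separate the two images of an element $y\in\Nr_{\omega}\B\setminus A$), and it belongs to the parenthesised half of the theorem. In $(2)\Rightarrow(1)$, ``amalgamation is preserved by equivalence of categories'' is only available in the strong case; in the plain case one must derive $AP$ of $\K_{\omega}$ from the neat amalgamation property together with $SUPAP$ of $\bold M$, and the universal property is needed there to produce the maps $\bar f:\C^+\to f(C)^+$ and $\bar g:\C^+\to g(C)^+$ before any amalgam in $\bold M$ can be formed. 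Your closing paragraph correctly flags the pasting of homomorphisms between dimensions as delicate, but the obstacle you actually needed to clear is the one you declared trivial.
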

\begin{demo}{Proof}

\begin{enumarab}
\item Assume that $\Kn_{\omega}$ has the amalgamation property. We first show that
$\Kn_{\omega}$ has the following {\it neat amalgamation property}:
If $i_1:\A\to \Nr_{\alpha}\B_1$, $i_2:\A\to \Nr_{\alpha}\B_1$
are such that $i_1(A)$ generates $\B_1$ and $i_2(A)$ generates
$\B_2$, then there is an isomorphism $f:\B_1\to \B_2$ such that $f\circ i_1=i_2$.

By assumption, there is an amalgam, that is there is  $\D\in \Kn_{\omega}$, $m_1:\Nr_{\alpha}\B_1\to \D$, $m_2:\Nr_{\alpha}\B_2\to \D$ such that
$m_1\circ i_1=m_2\circ i_2$. We can assume that $m_1:\Nr_{\alpha}\B\to \Nr_{\alpha}\D^+$ for some $\D^+\in \bold M$, and similarly for $m_2$.
By hypothesis, let $\bar{m_1}:\B_1\to \D^+$ and $\bar{m_2}:\B_2\to \D^+$ be isomorphisms extending $m_1$ and $m_2$.
Then since $i_1A$ generates $\B_1$ and $i_2A$ generates $\B_2$, then $\bar{m_1}\B_1=\bar{m_2}\B_2$. It follows that
$f=\bar{m}_2^{-1}\circ \bar{m_1}$ is as desired.
From this it easily follows that $\Nr$ has universal maps and we are done.

Since the neat amalgamation is equivalent to existence of universal maps; this can be proved without difficulty \cite{conference},
to  prove the converse, we assume the neat amalgamation property, and we set out
to prove that
$\Kn_{\omega}$ has the amalgamation property

Let $\A,\B\in \Kn_{\omega}$. Let $f:\C\to \A$ and $g:\C\to \B$ be injective homomorphisms.
Then there exist $\A^+, \B^+, \C^+\in \K_{\alpha+\omega}$, $e_A:\A\to \Nr_{\alpha}\A^+$
$e_B:\B\to  \Nr_{\alpha}\B^+$ and $e_C:\C\to \Nr_{\alpha}\C^+$.
We can assume that $\Sg^{\A^+}e_A(A)=\A^+$ and similarly for $\B^+$ and $\C^+$.
Let $f(C)^+=\Sg^{A^+}e_A(f(C))$ and $g(C)^+=\Sg^{B^+}e_B(g(C)).$
By the neat amalgamation property, there exist $\bar{f}:\C^+\to f(C)^+$ and $\bar{g}:\C^+\to g(C)^+$ such that
$(e_A\upharpoonright f(C))\circ f=\bar{f}\circ e_C$ and $(e_B\upharpoonright g(C))\circ g=\bar{g}\circ e_C$.
Now $\bold M$ as $SUPAP$, hence there is a $\D^+$ in $\bold M$ and $k:\A^+\to \D^+$ and $h:\B^+\to \D^+$ such that
$k\circ \bar{f}=h\circ \bar{g}$. Then $k\circ e_A:\A\to \Nr_{\alpha}\D^+$ and
$h\circ e_B:\B\to \Nr_{\alpha}\D^+$ are one to one and
$k\circ e_A \circ f=h\circ e_B\circ g$.

\item  Now for the second equivalence. Assume that $\Kn_{\omega}$ has $SUPAP$. Then, {\it a fortiori}, it has $AP$
hence, by the above argument, it has neat amalgamation property.
We first show that if $\A\subseteq \Nr_{\alpha}\B$ and $\A$ generates $\B$ then equality holds,
we call this property $NS$, short for neat reducts commuting with forming
subalgebras.

If not, then $\A\subseteq \Nr_{\alpha}\B$, $\B\in K,$ $A$ generates $\B$ and $\A\neq \Nr_{\alpha}\B$.
Then $\A$ embeds into $\Nr_{\alpha}\B$ via the inclusion map $i$ . Let $\C=\Nr_{\alpha}\B$.
By $SUPAP$, there exists $\D\in \Kn_{\omega}$ and $m_1$, $m_2$ monomorphisms
from $\C$ to $\D$ such that $m_1(\C)\cap m_2(\C)=m_1\circ i(\A)$. Let $y\in \C\sim A$.
Then $m_1(y)\neq m_2(y)$ for else $d=m_1(y)=m_2(y)$ will be in
$m_1(\C)\cap m_2(\C)$ but not in $m_1\circ i(\A)$. Assume that
$\D\subseteq \Nr_{\alpha}\D^+$ with $\D^+\in K$. By hypothesis, there exist injections
$\bar{m_1}:\B\to \D^+$ and $\bar{m_2}:\B\to \D^+$ extending $m_1$ and $m_2$.
But $A$ generates $\B$ and so $\bar{m_1}=\bar{m_2}$.
Thus $m_1y=m_2y$ which is a contradiction.

Now let  $\beta=\alpha+\omega$. Let $\bold M=\{\A\in \K_{\beta}: \A=\Sg^{\A}\Nr_{\omega}\A$\}.
Let $\Nr:\bold M\to \Kn_{\omega}$ be the neat reduct functor.
We show that  $\Nr$ is strongly invertible, namely there is a functor $G:\Kn_{\omega}\to \bold M$ and natural isomorphisms
$\mu:1_{\bold M}\to G\circ \Nr$ and $\epsilon: \Nr\circ G\to 1_{\Kn_{\omega}}$.
Let $L$ be a system of representatives for isomorphism on $Ob(\bold M)$.
For each $\B\in Ob(\Kn_{\omega})$ there is a unique $G(\B)$ in $\bold M$ such that $\Nr(G(\B))\cong \B$.
Then $G:Ob(\Kn_{\omega})\to Ob(\bold M)$ is well defined.
Choose one isomorphism $\epsilon_B: \Nr(G(\B))\to \B$. If $g:\B\to \B'$ is a $\Kn_{\omega}$  morphism, then the square

\begin{displaymath}
    \xymatrix{ \Nr(G(B)) \ar[r]^{\epsilon_B}\ar[d]_{\epsilon_B^{-1}\circ g\circ \epsilon_{B'}} & B \ar[d]^g \\
               \Nr(G(B'))\ar[r]_{\epsilon_{B'}} & B' }
\end{displaymath}
commutes. There is a unique morphism $f:G(\B)\to G(\B')$ such that $\Nr(f)=\epsilon_{\B}^{-1}\circ g\circ \epsilon$.
We let $G(g)=f$. Then it is easy to see that $G$ defines a functor. Also, by definition $\epsilon=(\epsilon_{\B})$
is a natural isomorphism from $\Nr\circ G$ to $1_{\Kn_{\omega}}$.
To find a natural isomorphism from $1_{\bold M}$ to $G\circ \Nr,$ observe that that for each $\A\in Ob(\bold M)$,
$\epsilon_{\Nr\A}:\Nr\circ G\circ \Nr(\A)\to \Nr(\A)$ is an isomorphism.
Then there is a unique $\mu_A:\A\to G\circ \Nr(\A)$ such that $\Nr(\mu_{\A})=\epsilon_{\Nr\A}^{-1}.$
Since $\epsilon^{-1}$ is natural for any $f:\A\to \A'$ the square
\bigskip
\bigskip
\begin{displaymath}
    \xymatrix{ \Nr(A) \ar[r]^{\epsilon_{\Nr(A)}^{-1}=\Nr(\mu_A)}\ar[d]_{\Nr(f)} & \Nr\circ G\circ \Nr(A) \ar[d]^{\Nr\circ G\circ \Nr(f)} \\
               \Nr(A')\ar[r]_{\epsilon_{\Nr\A}^{-1}=\Nr(\mu_{A'})} & \Nr\circ G\circ \Nr(A') }
\end{displaymath}

commutes, hence the square

\bigskip
\begin{displaymath}
    \xymatrix{ A \ar[r]^{\mu_A}\ar[d]_f & G\circ \Nr(A) \ar[d]^{G\circ \Nr(f)} \\
               A'\ar[r]_{\mu_{A'}} & G\circ \Nr(A') }
\end{displaymath}

commutes, too. Therefore $\mu=(\mu_A)$ is as required.

Conversely, assume that the functor $\Nr$ is invertible. Then we have the neat amalgamation property
and the $NS$.

The  former
follows from the fact that the functor has
a right adjoint, and so it has universal maps. To prove that it has $NS$ assume for contradiction
that there exists $\A$ generating subreduct of $\B$ and
$\A$ is not isomorphic to $\Nr_{\alpha}\B$. This means that $Nr$ is not invertible, because
had it been invertible, with inverse $Dl$, then  $Dl(\A)=Dl(\Nr_{\alpha}\B)$ and this cannot happen.

Now we prove that $\Kn_{\omega}$ has $SUPAP$.
We obtain (using the notation in the first part)
$\D\in \Nr_{\alpha}\K_{\alpha+\omega}$
and $m:\A\to \D$ $n:\B\to \D$
such that $m\circ f=n\circ g$.
Here $m=k\circ e_A$ and $n=h\circ e_B$.  Denote $k$ by $m^+$ and $h$ by $n^+$.
Suppose that $\C$ has $SNEP$. We further want to show that if $m(a) \leq n(b)$,
for $a\in A$ and $b\in B$, then there exists $t \in C$
such that $ a \leq f(t)$ and $g(t) \leq b$.
So let $a$ and $b$ be as indicated.
We have  $m^+ \circ e_A(a) \leq n^+ \circ e_B(b),$ so
$m^+ ( e_A(a)) \leq n^+ ( e_B(b)).$
Since $\bold M$ has $SUPAP$, there exist $ z \in C^+$ such that $e_A(a) \leq \bar{f}(z)$ and
$\bar{g}(z) \leq e_B(b)$.
Let $\Gamma = \Delta z \sim \alpha$ and $z' =
{\sf c}_{(\Gamma)}z$. So, we obtain that
$e_A({\sf c}_{(\Gamma)}a) \leq \bar{f}({\sf c}_{(\Gamma)}z)~~ \textrm{and} ~~ \bar{g}({\sf c}_{(\Gamma)}z) \leq
e_B({\sf c}_{(\Gamma)}b).$ It follows that $e_A(a) \leq \bar{f}(z')~~\textrm{and} ~~ \bar{g}(z') \leq e_B(b).$ Now by hypothesis
$$z' \in \Nr_\alpha \C^+ = \Sg^{\Nr_\alpha \C^+} (e_C(C)) = e_C(C).$$
So, there exists $t \in C$ with $ z' = e_C(t)$. Then we get
$e_A(a) \leq \bar{f}(e_C(t))$ and $\bar{g}(e_C(t)) \leq e_B(b).$ It follows that $e_A(a) \leq e_A \circ f(t)$ and
$e_B \circ g(t) \leq
e_B(b).$ Hence, $ a \leq f(t)$ and $g(t) \leq b.$

\end{enumarab}
\end{demo}

\section{Another adjoint situation for finite dimensions, cylindric like algebras that have the polyadic spirit}

The pairing technique due to Alfred Tarski, and substantially generalized by Istv\'an N\'emeti, consists of  defining
a pair of quasi-projections.
$p_0$ and $p_1$
so that in a model $\cal M$ say of  a certain sentence $\pi$, where $\pi$ is built out of these quasi-projections, $p_0$ and $p_1$
are functions and for any element $a,b\in {\cal M}$, there is a $c$
such that $p_0$ and $p_1$
map $c$ to $a$ and $b,$ respectively.
We can think of $c$ as representing the ordered pair $(a,b)$
and $p_0$ and $p_1$ are the functions that project the ordered pair onto
its first and second coordinates.

Such a technique, ever since introduced by Tarski, to formalize, and indeed successfully so, set theory, in the calculus of relations
manifested itself in several re-incarnations in the literature some of which are quite subtle and
sophisticated.
One is Simon's proof of the representability of quasi-relation algebras $\QRA$
(relation algebras with quasi projections) using a neat embedding theorem for cylindric algebras \cite{Andras}.
The proof consists of stimulating a neat embedding theorem
via the quasi-projections, in short it is actually a
{\it a completeness proof}.

The idea implemented  is that quasi-projections, on the one hand, generate extra dimensions, and on the other it has
control over such a stretching. The latter property does not come across very much in Simon's proof, but below we will give an exact rigorous
meaning to such property. This method can is used by Simon to
apply a Henkin completeness construction.

We shall use Simon's technique to further show that $\QRA$ has the superamalgamation property;
this is utterly unsurprising because Henkin constructions also prove interpolation theorems. This is the case, e.g.
for first order logics and several of its non-trivial extensions arising from the process of algebraizing first order logic,
by dropping the condition of local finiteness reflecting the fact
that formulas contain only finitely many (free) variables. A striking example in this connection is the algebras studied by Sain and Sayed Ahmed
\cite{Sain}, \cite{Sayed}. This last condition of local finiteness is  unwarranted from the algebraic point of view,
because it prevents an equational formalism of first order logic.

The view, of capturing extra dimensions, using also quasi-projections comes along also very much so,
in N\'emetis directed cylindric algebras (introduced as a $\CA$ counterpart of
$\QRA$). In those, S\'agi defined quasi-projections also to achieve a completeness theorem for higher order logics.
The technique used is similar to Maddux's proof of representation of ${\QRA}$s, which further emphasizes the correlation.These algebras were
studied  by many authors
including Andr\'eka, Givant, N\'emeti, Maddux, S\'agi, Simon, and others.
The reference \cite{Andras} is recommended for other references in the topic.
It also has reincarnations in Computer Science literature
under the name of Fork algebras.

We have already made the notion of extra dimensions explicit. Its dual notion (in an exact categorial sense presented above)
that  of compressing dimensions, or taking neat reducts.

The definition of neat reducts in the standard definition adopted by Henkin, Monk and Tarski in their monograph,
deals only with initial segments, but it proves useful to widen the definition a little allowing
arbitrary subsets of $\alpha$ not just initial segments. This is no more than a notational tactic.

However, it will enable us, using a deep result of Simon, to present an equivalence between algebras that are finite dimensional.
We infer from our definition
that such algebras, referred to in the literature as {\it directed cylindric algebras}, actually
belong to the polyadic paradigm, in this context the neat reduct functor establishes an equivalence between algebras in
every  dimension $>2$.
To achieve this equivalent we use a transient category, namely, that of quasi-projective relation algebras.

\begin{definition} Let ${\C}\in \CA_{\alpha}$ and $I\subseteq \alpha$, and let $\beta$ be the order type of $I$. Then
$$Nr_IC=\{x\in C: {\sf c}_ix=x \textrm{ for all } i\in \alpha\sim I\}.$$
$$\Nr_{I}{\C}=(Nr_IC, +, \cdot ,-, 0,1, {\sf c}_{\rho_i}, {\sf d}_{\rho_i,\rho_j})_{i,j<\beta},$$
where $\beta$ is the unique order preserving one-to-one map from $\beta$ onto $I$, and all the operations
are the restrictions of the corresponding operations on $C$. When $I=\{i_0,\ldots i_{k-1}\}$
we write $\Nr_{i_0,\ldots i_{k-1}}\C$. If $I$ is an initial segment of $\alpha$, $\beta$ say, we write $\Nr_{\beta}\C$.
\end{definition}
Similar to taking the $n$ neat reduct of a $\CA$, $\A$ in a higher dimension, is taking its $\Ra$ reduct, its relation algebra reduct.
This has universe consisting of the $2$ dimensional elements of $\A$, and composition and converse are defined using one spare dimension.
A slight generalization, modulo a reshufflig of the indices:

\begin{definition}\label{RA} For $n\geq 3$, the relation algebra reduct of $\C\in \CA_n$ is the algebra
$$\Ra\C=(Nr_{n-2, n-1}C, +, \cdot,  1, ;, \breve{}, 1').$$
where $1'={\sf d}_{n-2,n-1}$, $\breve{x}={\sf s}_{n-1}^0{\sf s}_{n-1}^{n-2}{\sf s}_0^{n-1}x$ and $x;y={\sf c}_0({\sf s}_0^{n-1}x. {\sf s}_0^{n-2}y)$.
Here ${\sf s}_i^j(x)={\sf c}_i(x\cdot {\sf d}_{ij})$ when $i\neq q$ and ${\sf s}_i^i(x)=x.$
\end{definition}
But what is not obvious at all is that an $\RA$ has a $\CA_n$ reduct for $n\geq 3$.
But Simon showed that certain relations algebras do; namely the $\QRA$s.

\begin{definition} A relation algebra $\B$ is a $\QRA$
if there are elements $p,q$ in $\B$ satisfying the following equations:
\begin{enumarab}
\item $\breve{p};p\leq \Id, \breve{q}; q\leq \Id;$
\item  $\breve{p};q=1.$
\end{enumarab}
\end{definition}
We say that $\B$ is a $\QRA$  with quasi-projections $p$ and $q$.
We give a deep application of quasi projections; concerning  G\"odel's first incompleteness theorem in finite variable fragments of
first order logic.
Let $\pi$ be the formula that says that the partial functions $p_0$ and $p_1$ form a pair of quasi-projections.
$$\forall x\forall y \forall z[(p_0(x,y)\land p_0(x, z)\to y=z\land
(p_1(x,y)\land p_1(x,z)\to y=z \land$$
$$\exists z(p_0(z,x)\land p_1(z,y))]$$

\begin{definition} $\pi$ is the pairing formula using partial function $p_0$ and $p_1$ as defined above,
$\pi'$ is saying that the witness of the quasi projections is unique (it uses $4$ variables), and $\pi_{\mathsf{RA}}$ is the quasi projective axiom
formulated in the language of relation algebras.
\end{definition}

A sentence $\lambda$ is {\it inseparable} iff there is no set of sentences which recursively separates the theorems
of $\lambda$ from the refutable sentences of $\lambda$.

\begin{theorem}There exists an inseparable formula $\lambda$ and quasi projections
$p_0, p_1$ such that $\lambda \land \pi'$ is a valid in Peano arithmetic (these can be formulated in the language of set theory, that is we need
only one binary relation),
and there is a translation function $\tr: \Fm_{\omega}\to \Fm_4$
such that for any formulas $\psi$ and $\phi$,
whenever $\psi\models \phi$, then $\tr(\psi)\vdash_4 \tr(\phi)$. Furthermore, there are no atoms below $\tr(\lambda)/\vdash_4$ in
$\Fm/_{{\vdash_4}}$.
\end{theorem}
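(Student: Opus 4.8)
The plan is to build the required objects by combining a classical incompleteness phenomenon with the quasi-projection machinery, and then reading off the algebraic consequence. First I would fix an inseparable $\lambda$ in a language with a single binary relation symbol: the standard choice is (a finite axiomatization of) a weak arithmetic such as Robinson's $Q$, re-coded in the language of set theory via a definable pairing, so that $\lambda$ is essentially Peano arithmetic restricted enough to be finitely presented but still \emph{recursively inseparable} in the sense that no recursive set separates its theorems from its refutable sentences — this is exactly the kind of sentence produced by the usual G\"odel--Rosser argument applied to $Q$. I would then choose the partial functions $p_0,p_1$ to be the Tarski--N\'emeti quasi-projections: in the language of set theory these are definable binary relations, and the pairing formula $\pi'$ (which additionally asserts uniqueness of the pairing witness, using four variables) is provable in Peano arithmetic. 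Thus $\lambda\wedge\pi'$ is valid in Peano arithmetic as required, and moreover $\pi_{\mathsf{RA}}$, the relation-algebra reformulation of the quasi-projective axiom, holds in the Lindenbaum--Tarski relation algebra of the theory $\lambda\wedge\pi'$.

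Next I would produce the translation $\tr:\Fm_\omega\to\Fm_4$. This is where the $\QRA$ theory from Section~9 does the work: since $\lambda\wedge\pi'$ supplies quasi-projections $p,q$ satisfying $\breve p;p\leq\Id$, $\breve q;q\leq\Id$, $\breve p;q=1$, the relation algebra $\B=\Fm_{\lambda\wedge\pi'}/{\vdash}$ (the $\Ra$-reduct of the appropriate cylindric algebra, or directly the relation-algebra Lindenbaum algebra) is a $\QRA$. By Simon's theorem such a $\B$ has a $\CA_3$-reduct, equivalently a $\CA_4$-structure, so one can interpret the full first-order (i.e.\ $L_\omega$) theory inside the $4$-variable fragment: the pairing lets one simulate arbitrarily many variables by iterated pairing, compressing an $n$-ary predicate into a binary one and a quantifier block into a single quantifier over coded tuples. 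Concretely, $\tr$ sends a formula $\psi$ to the formula obtained by replacing each variable beyond the fourth with a $p_0,p_1$-decoding term applied to the fourth variable, and I would check by induction on $\psi$ that provability is preserved: $\psi\models\phi$ (equivalently $\psi\vdash_\omega\phi$ by completeness) implies $\tr(\psi)\vdash_4\tr(\phi)$. This is the Tarski--Maddux--N\'emeti ``formalizing syntax via pairing'' argument, carried out relative to the extra axiom $\lambda\wedge\pi'$; I would cite the interpretation facts from the $\QRA$ discussion rather than redo them.

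Finally I would deduce the atomicity (rather, anti-atomicity) statement. The element $\tr(\lambda)/{\vdash_4}\in\Fm/{\vdash_4}=\Fm_4/{\vdash_4}$ is nonzero because $\lambda$ is consistent. Suppose some atom $a$ lay below it; pulling $a$ back through $\tr$ and using that $\tr$ reflects the Boolean order on the relevant congruence quotient, $a$ would correspond to a formula $\sigma$ with $\sigma\vdash_\omega\lambda$ and such that for every formula $\theta$ either $\sigma\vdash_\omega\theta$ or $\sigma\vdash_\omega\neg\theta$ — i.e.\ $\sigma$ would generate, modulo $\lambda$, a \emph{complete and consistent recursively enumerable} extension of $\lambda$. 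Its set of consequences would then be a recursive (r.e.\ and co-r.e.) set separating the theorems of $\lambda$ from the refutable sentences of $\lambda$, contradicting inseparability of $\lambda$. Hence no atom lies below $\tr(\lambda)/{\vdash_4}$. The main obstacle, and the step I would spend the most care on, is making the translation $\tr$ genuinely provability-preserving with only four variables: one has to verify that the pairing/decoding terms needed to collapse quantifier alternations never force a fifth variable into play, which requires the \emph{uniqueness} clause of $\pi'$ (so that decoding is functional and can be done ``in place'') and Simon's precise $\CA_4$-reduct construction; without those, the natural interpretation leaks into $\Fm_5$.
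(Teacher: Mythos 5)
Your proposal is correct in substance and rests on the same two pillars as the paper's argument: the Tarski--Givant pairing technique for compressing full first-order logic into a bounded-variable fragment, and recursive inseparability of $\lambda$ to rule out atoms. The genuine difference is in how the translation $\tr$ is built. You propose a direct interpretation that replaces every variable beyond the fourth by a $p_0,p_1$-decoding term, and you correctly identify the danger that this "leaks into $\Fm_5$." The paper sidesteps exactly that danger by factoring $\tr$ as $\phi\mapsto h[r(f(\phi))\cdot\pi_{\mathsf{RA}}]$: first Tarski's function $f:\Fm_{\omega}^0\to\Fm_3^0$ reduces to three variables modulo the pairing axiom $\pi$, then a recursive meaning-preserving map $r$ turns three-variable formulas into variable-free $\mathsf{RA}$ terms, and finally the homomorphism $h$ from the one-generated free relation algebra into $\Fm_4$ (sending the generator to the sole binary relation $E$) re-expands each $\mathsf{RA}$ term into a four-variable formula. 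That detour through relation-algebra terms is precisely the device that caps the variable count at four, so the step you flag as the main obstacle is the one the paper's decomposition handles for free; if you keep your direct interpretation you must verify the four-variable bound by hand. On the other side of the ledger, your closing argument -- that an atom below $\tr(\lambda)/{\vdash_4}$ would yield a recursive set separating the theorems of $\lambda$ from its refutable sentences, contradicting inseparability -- is the standard justification of the final claim and is a welcome addition, since the paper's own proof states the construction of $\lambda$, $p_0$, $p_1$ and $\tr$ but leaves the atomlessness deduction implicit.
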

\begin{demo}{Proof}  $\lambda$ is Robinson's arithmetic interpreted in set theory, so it is equivalent to $ZFC$ without the axiom of infinity,
 and $p_0$ and $p_1$ are the genuine projection function defined
also (by brute force) in the language of set theory. The translation function is implemented using three
functions $h$, $r$ and $f$; it is defined at $\phi$
by $h[rf(\phi).\pi_{\mathsf{RA}}]$. Here $f$ is
Tarski function $f:\Fm_{\omega}^0\to \Fm_3^0$, so that $\pi\models \phi \leftrightarrow f(\phi)$
and $f$ preserves meaning,
$r$ is a recursive function translating $3$ formulas to $\mathsf{RA}$ terms
recursively, also preserving meaning, and $h$ is the natural
homomorphism from $\Fr_1\mathsf{RA} \to \Fm_4$, taking the generator to $E$ the only binary relation.
\end{demo}

\begin{theorem} Let $\sf K$ be any class such that $\mathsf{RCA}_n\subseteq \sf K\subseteq \mathsf{CA}_n$.
Then for any finite $n>3$, $\mathsf{EqK}$ is undecidable.
\end{theorem}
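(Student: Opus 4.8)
The plan is to reduce undecidability of the equational theory of $\sf K$ to the undecidability of the word problem for finitely presented semigroups (or groups), routed through the pairing technique exactly as it appears in the preceding section. The starting point is the translation function $\tr:\Fm_\omega\to\Fm_4$ constructed above out of the three pieces $h$, $r$, $f$, together with the observation that for $n\geq 4$ the class $\sf K$ sits between $\RCA_n$ and $\CA_n$, so validity of an equation in $\sf K$ is squeezed between $\RCA_n$-validity and $\CA_n$-validity. The key point is that the pairing sentence $\pi'$ (using only $4$ variables) lets one \emph{internally encode} arithmetic, hence an undecidable theory, inside the Tarski--Lindenbaum algebra $\Fm/_{\vdash_n}$; the encoding is uniform and effective, so an algorithm deciding $\sf{EqK}$ would decide the encoded theory.

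First I would fix $n$ with $3<n<\omega$ and recall from the discussion of quasi-projections that there is a formula $\lambda\wedge\pi'$, valid in (a finitely axiomatized fragment of) set theory, which is \emph{inseparable}: no recursive set separates its theorems from its refutable consequences. Using the translation $\tr$ and the fact, stated above, that $\psi\models\phi$ implies $\tr(\psi)\vdash_4\tr(\phi)$, together with $\vdash_4\subseteq\vdash_n$ for $n\geq 4$, I would transport this inseparability into $\Fm_n/_{\vdash_n}$: the set of $n$-variable consequences of $\tr(\lambda\wedge\pi')$ and the set of its refutable $n$-variable formulas are recursively inseparable. Then I would algebraize: an $n$-variable formula $\phi$ is a consequence of $\tr(\lambda\wedge\pi')$ in the $n$-variable calculus iff the corresponding equation $\sigma_\phi=1$ holds in all algebras in $\RCA_n$ satisfying the equational translate of $\tr(\lambda\wedge\pi')$ — and by the squeezing $\RCA_n\subseteq\sf K\subseteq\CA_n$ one arranges (this is where the relative non-finite-axiomatizability results and the $\Ra$/$\Nr$ machinery are not needed; what is needed is only that $\sf K$ is a variety of $\CA_n$'s) that the relevant implications between equations have the same truth value in $\sf K$ as in $\RCA_n$, because the hypothesis equations force enough projective structure to pin down a representation locally.

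Concretely, the second main step is to show that a decision procedure for $\sf{EqK}$ yields a decision procedure separating the two recursively inseparable sets, contradicting the previous theorem. Given a candidate $n$-variable formula $\phi$, one forms the single equation $e_\phi:\ \tau_{\tr(\lambda\wedge\pi')}\ \le\ \tau_\phi$ in the language of $\CA_n$ (using that the conjunction of finitely many premises is one term, since $\lambda$ and $\pi'$ are finitely axiomatized). If $\sf K\models e_\phi$ then $\phi$ is a theorem of the encoded theory; if $\sf K\not\models e_\phi$ then, because $\sf K\supseteq\RCA_n$ and the premise term is strong enough (via $\pi_{\sf RA}$, i.e. the quasi-projective axiom in relation-algebra form lifted to $n$ dimensions) to guarantee a representable witness, $\phi$ is refutable. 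Deciding $\sf K\models e_\phi$ for all $\phi$ would therefore decide the separation, which is impossible. The uniformity and effectivity of $h$, $r$, $f$ and of the passage from a formula to its term guarantee the reduction is recursive.

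\textbf{Main obstacle.} The delicate step is not the logical encoding but ensuring that the \emph{same} implication $e_\phi$ behaves identically across the whole interval $\RCA_n\subseteq\sf K\subseteq\CA_n$: one must verify that the premise equation (the algebraic translate of $\tr(\lambda\wedge\pi')$, which packs in the quasi-projection axioms) is strong enough that any $\CA_n$ satisfying it, hence a fortiori any algebra in $\sf K$, already satisfies exactly those consequences forced in $\RCA_n$ — i.e. on the `cone' cut out by the pairing equations, $\sf K$ and $\RCA_n$ agree. This is precisely the content of Simon's representability argument for $\QRA$'s read backwards (quasi-projections generate the spare dimensions and, crucially, control the stretching), adapted to finite dimension $n$; the bookkeeping of reshuffled indices in $\Nr_I$ and $\Ra$ as set up in Definition~\ref{RA} is where the real work lies, and I would carry it out by invoking that adaptation rather than redoing the Henkin construction from scratch. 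Everything else — inseparability, the translation $\tr$, the reduction to a single equation — is already available in the excerpt.
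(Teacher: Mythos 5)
Your overall route --- push the inseparable formula $\lambda\wedge\pi'$ through the translation, form the inequality $\tau(k(\lambda))\leq\tau(k(\phi))$, and exploit the sandwich $\RCA_n\subseteq{\sf K}\subseteq\CA_n$ --- is the paper's route. But two things go wrong in the execution. First, your separation step has the implications backwards: you assert ``if ${\sf K}\models e_\phi$ then $\phi$ is a theorem'' and ``if ${\sf K}\not\models e_\phi$ then $\phi$ is refutable,'' which taken together would force the encoded theory to be complete, contradicting its inseparability. What is actually needed (and all that is needed) is the pair of converses: if $\lambda\models\phi$ then ${\sf K}\models e_\phi$, and if $\lambda\models\neg\phi$ then ${\sf K}\not\models e_\phi$. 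Then $T=\{\phi:{\sf K}\models e_\phi\}$ is a set, recursive relative to $\mathsf{EqK}$, separating the theorems of $\lambda$ from its refutable sentences, and inseparability of $\lambda$ finishes the argument.

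Second, the ``main obstacle'' you identify --- that on the cone cut out by the pairing equations ${\sf K}$ and $\RCA_n$ must agree, which you propose to obtain from Simon's representability argument read backwards --- is not needed, and you do not prove it. The sandwich is used asymmetrically. For the theorem direction you only need the upper bound: $\lambda\models\phi$ gives $k(\lambda)\vdash_4 k(\phi)$, and provability in the $4$-variable (hence $n$-variable) calculus gives validity of the corresponding inequality in every $\CA_n$, a fortiori in ${\sf K}$. For the refutable direction you only need the lower bound together with a single concrete witness: take a model $\M$ of Peano arithmetic with $\M\models\lambda\wedge\pi'$; the set algebra ${\sf Cs}_n^{\M}$ lies in $\RCA_n\subseteq{\sf K}$, the term $\tau(k(\lambda))$ is nonzero there, and $\tr(\lambda)\vdash_4\neg\tr(\phi)$ forces the inequality to fail in ${\sf Cs}_n^{\M}$, hence in ${\sf K}$. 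No representability theorem for abstract algebras satisfying the pairing equations enters at any point, so the portion of your proposal that you flag as carrying ``the real work'' is a detour whose correctness you would still have to establish.
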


\begin{proof} Let $\sf K$ be any of the above classes.
Let $\lambda$ be an inseparable formula such that $\M\models \lambda\land \pi'$, where $\M$ is a model of Peano
arithmetic.
Then $\M\models k(\lambda)$. Let $\tau$ be the translation function of $n$ variable formulas to $\mathsf{CA}_n$ terms.
Let $T=\{\phi \in \Fm_{\omega}^0: \sf K\models \tau(k(\lambda))\leq \tau(k(\phi))\}$.
Assume that $\lambda \models \phi$. Then $k(\lambda)\vdash_4 k(\phi).$
Hence $\phi\in T$.
Assume now that $\lambda\models \neg \phi$. Then $\tr(\lambda)\vdash _4 \neg \tr(\phi)$.
By $\M\models \tr(\lambda)$ and ${\sf Cs}_n^{\M}\in \sf K$, we have
it is not the case that $\sf K\models \tau(k(\lambda)=0$,
hence $\phi\notin T$.
We have seen that $T$ separates the theorems of $\lambda$ from the refutable
sentences of $\lambda$, hence $T$ is not recursive, but
$\tau$ and $k$ are recursive, hence ${\sf Eq K}$ is not recursive.
\end{proof}

To construct cylindric algebras of higher dimensions 'sitting' in a $\QRA$,
we need to define certain terms. Seemingly rather complicated, their intuitive meaning
is not so hard to grasp.

\begin{definition} Let $x\in\B\in \RA$, then $\dom(x)=1';(x;\breve{x})$ and $\rng(x)=1';(\breve{x}; x)$, $x^0=1'$, $x^{n+1}=x^n;x$. $x$
is a functional element if $x;\breve{x}\leq 1'$.
\end{definition}
Given a $\QRA$, which we denote by $\bold Q$, we have quasi-projections $p$ and $q$ as mentioned above.
Next we define certain terms in ${\bf Q}$, cf. \cite{Andras}:
\begin{align*}
\epsilon^{n}&=\dom q^{n-1}\\
\pi_i^n&=\epsilon^{n};q^i;p,  i<n-1, \pi_{n-1}^{(n)}\\
&=q^{n-1}\\
\xi^{(n)}&=\pi_i^{(n)}; \pi_i^{(n)}\\
t_i^{(n)}&=\Pi_{i\neq j<n}\xi_j^{(n)}, t^{(n)}\\
&=\Pi_{j<n}\xi_j^{(n)}\\
 {\sf c}_i^{(n)}x&=x;t_i^{(n)}\\
{\sf d}_{ij}^{(n)}&=1;(\pi_i^{(n)}.\pi_j^{(n)})\\
1^{(n)}&=1;\epsilon^{(n)}.
\end{align*}and let
$$\B_n=(B_n, +, \cdot, -, 0,1^{(n)}, {\sf c}_i^{(n)}, {\sf d}_{ij}^{(n)})_{i,j<n},$$
where $B_n=\{x\in B: x=1;x; t^{(n)}\}.$
The intuitive meaning of those terms is explained in {Andras}, right after their definition on p. 271.

\begin{theorem} Let $n>1$
\begin{enumerate}
\item Then ${\B}_n$ is closed under the operations.
\item ${\B}_n$ is a $\CA_n$.
\end{enumerate}
\end{theorem}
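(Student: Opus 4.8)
The plan is to cite Simon's original work, since the statement that $\B_n$ is closed under the displayed operations and forms a $\CA_n$ is exactly \cite[Theorems on p. 271--278]{Andras} (or the corresponding results in the literature on $\QRA$s and directed cylindric algebras). But for the benefit of the reader I would first indicate the \emph{shape} of the argument rather than simply deferring. The intuitive content is that the quasi-projections $p,q$ allow one to encode $n$-tuples as single elements of $\bold Q$: the element $\epsilon^{(n)}$ is (the domain of) the set of $n$-tuples, $\pi_i^{(n)}$ is the $i$-th projection, $\xi_i^{(n)}$ records that two encoded tuples agree in the $i$-th coordinate, and $t_i^{(n)}$, $t^{(n)}$ are the partial and total ``identity on tuples'' elements. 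Once this dictionary is in place, the cylindric operations $\cyl i^{(n)}$ and $\diag ij^{(n)}$ on $\B_n$ are literally the relation-algebraic transcriptions of cylindrification and the diagonal on the space of encoded $n$-tuples.

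For the first part (closure), I would argue by direct computation in $\RA$. The key sub-lemmas are: (i) $\epsilon^{(n)}$, $\pi_i^{(n)}$, $\xi_i^{(n)}$ are functional (or sub-identity as appropriate) elements, which follows from $\breve p;p\le \Id$, $\breve q;q\le \Id$ and the standard calculus of $\dom$, $\rng$, and powers $x^k$; (ii) $1;x;t^{(n)}$ is idempotent as a ``closure'' operator on $B$, so that $B_n=\{x: x=1;x;t^{(n)}\}$ is a well-defined subset closed under $+,\cdot,-$ relative to the unit $1^{(n)}=1;\epsilon^{(n)}$; and (iii) the images $\cyl i^{(n)}x = x;t_i^{(n)}$ and $\diag ij^{(n)}=1;(\pi_i^{(n)}\cdot\pi_j^{(n)})$ again lie in $B_n$, which reduces to absorption identities like $t_i^{(n)};t^{(n)}=t^{(n)}$ and $t^{(n)};t_i^{(n)}=t^{(n)}$ (the latter because $t^{(n)}=\prod_j\xi_j^{(n)}\le\xi_i^{(n)};$\,$\xi_i^{(n)}\le t_i^{(n)}$ after a short manipulation). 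These are the ``routine but not short'' calculations of Simon's paper, and I would present the bookkeeping lemmas and then state that (i)--(iii) are verified by iterated use of the RA axioms together with $\breve p;q=1$.

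For the second part, that $\B_n\in\CA_n$, I would check the finitely many cylindric axioms (definition 1.1.1 in \cite{HMT1}) one at a time against the translated operations. The commutative-additive and Boolean axioms are immediate from part (i). The genuinely cylindric axioms ($\cyl i 0=0$, $x\le\cyl ix$, $\cyl i(x\cdot\cyl iy)=\cyl ix\cdot\cyl iy$, commutativity $\cyl i\cyl j=\cyl j\cyl i$, the diagonal axioms $\diag ii=1$, $\diag jk\le\cyl i(\diag ij\cdot\diag ik)$ for $i\notin\{j,k\}$, and $\cyl i(\diag ij\cdot x)\cdot\cyl i(\diag ij\cdot -x)=0$) each translate, via the tuple-encoding dictionary, into a relation-algebraic identity about $t_i^{(n)}$, $t^{(n)}$ and the $\pi_i^{(n)}$; the crucial point is that because $p,q$ really are quasi-projections, the element $\xi_i^{(n)}$ behaves like an equivalence relation (``agree in coordinate $i$''), so $t_i^{(n)}$ and $t^{(n)}$ satisfy exactly the multiplicativity and commutation laws needed for the $\cyl i^{(n)}$ to commute and be closure operators. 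The one subtlety --- and the step I expect to be the main obstacle to write cleanly --- is verifying commutativity of the translated cylindrifiers, $\cyl i^{(n)}\cyl j^{(n)}x=\cyl j^{(n)}\cyl i^{(n)}x$, because this requires the ``independence'' of distinct coordinates under the encoding, which in turn rests on $\breve p;q=1$ together with the functionality equations and is where Simon's argument is most delicate. Since all of this is carried out in full in \cite{Andras}, I would record the statement here, sketch the dictionary and the role of $\breve p;q=1$ in securing commutativity, and refer to that source for the remaining diagram-chase-style RA computations.

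\begin{demo}{Proof} See \cite{Andras}. The elements $\epsilon^{(n)},\pi_i^{(n)},\xi_i^{(n)}$ are functional (using $\breve p;p\le\Id$, $\breve q;q\le\Id$), the operator $x\mapsto 1;x;t^{(n)}$ is a closure operator so $B_n$ is closed under the Boolean operations relative to $1^{(n)}$, and the absorption laws $t_i^{(n)};t^{(n)}=t^{(n)}=t^{(n)};t_i^{(n)}$ show $B_n$ is closed under the $\cyl i^{(n)}$ and $\diag ij^{(n)}$, giving (1). For (2), each cylindric axiom translates, via the $n$-tuple encoding furnished by the quasi-projections, into a valid $\RA$-identity; commutativity $\cyl i^{(n)}\cyl j^{(n)}=\cyl j^{(n)}\cyl i^{(n)}$ uses $\breve p;q=1$ to secure coordinate-independence.\end{demo}
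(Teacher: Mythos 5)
Your proposal is correct and takes essentially the same route as the paper: both simply defer to Simon's results in \cite{Andras} (the paper cites lemma 3.4 for closure and theorem 3.9 for the $\CA_n$ axioms), and your added sketch of the tuple-encoding dictionary and the role of $\breve p;q=1$ is consistent with that source. The paper gives no further detail beyond the citation, so your expanded outline is a superset of what appears there.
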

\begin{proof} (1) is  proved in \cite[lemma 3.4, p. 273-274]{Andras} where the terms are definable in a $\QRA$.
 That it is a $\CA_n$ can be proved as \cite[theorem 3.9]{Andras}.
\end{proof}

\begin{definition} Consider the following terms.
$${\sf suc} (x)=1; (\breve{p}; x; \breve{q})$$
and
$${\sf pred}(x)=\breve{p}; \rng x; q.$$
\end{definition}
It is proved in \cite{Andras} that $\B_n$ neatly embeds into $\B_{n+1}$ via ${\sf suc}$. The successor function thus codes
extra dimensions. The thing to observe here is that  we will see that $pred$; its inverse;
guarantees a condition of commutativity of two operations: forming neat reducts and forming subalgebras;
it does not make a difference which operation we implement first, as long as we implement both one after the other.
So the function ${\sf suc}$ {\it captures the extra dimensions added.}. From the point of view of {\it definability} it says
that terms definable in extra dimensions add nothing, they are already term definable.
And this indeed is a definability condition, that will eventually lead to strong interpolation property we want.
Worthy of note that the same phenomena can be discerned in \cite{AUamal}, but in a more explicit setting; it is closer to
the surface of the proof.
In such a context the successor
and predecessor functions, have the same role, they faithfully code extra dimensions, but they are
more explicit appearing in the signature of the algebras in question. They are not term definable, as is the case here, and in a while with
the case of directed cylindric algebras. In the last two cases, it is indeed more difficult to discern their existence from the surface of the formalism, but
they are there, very much so,  lurking in the background.

\begin{theorem}\label{neat} Let $n\geq 3$. Then ${\sf suc}: {\B}_n\to \{a\in {\B}_{n+1}: {\sf c}_0a=a\}$
is an isomorphism into a generalized neat reduct of ${\B}_{n+1}$.
Strengthening the condition of surjectivity,  for all $X\subseteq \B_n$, $n\geq 3$, we have (*)
$${\sf suc}(\Sg^{\B_n}X)\cong \Nr_{1,2,\ldots, n}\Sg^{\B_{n+1}}{\sf suc}(X).$$
\end{theorem}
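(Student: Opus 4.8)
The plan is to deduce the statement from the already-established fact that $\mathsf{suc}$ embeds $\B_n$ isomorphically onto the $0$-dimension-free part of $\B_{n+1}$ (the previous theorem, together with \cite[lemma 3.4]{Andras}), combined with a careful bookkeeping of the reshuffled indices encoded in the definition of $\Nr_{1,2,\ldots,n}$ and the availability of $\mathsf{pred}$ as a left inverse on the appropriate subset. First I would fix notation: write $\theta=\mathsf{suc}\restr{\B_n}$, so by the previous theorem $\theta:\B_n\to\B_{n+1}$ is an isomorphism onto $\{a\in\B_{n+1}:{\sf c}_0^{(n+1)}a=a\}=\Nr_{1,2,\ldots,n}\B_{n+1}$ (the generalized neat reduct, in the widened sense of $\Nr_I$ introduced just above, with $I=\{1,\dots,n\}$). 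Since $\theta$ is a $\CA_n$-isomorphism, it maps $\Sg^{\B_n}X$ isomorphically onto $\Sg^{\theta(\B_n)}\theta(X)=\Sg^{\Nr_{1,\dots,n}\B_{n+1}}\theta(X)$. Hence the content of (*) reduces to the single inclusion
\[
\Nr_{1,\dots,n}\,\Sg^{\B_{n+1}}\theta(X)\;\subseteq\;\Sg^{\Nr_{1,\dots,n}\B_{n+1}}\theta(X),
\]
the reverse inclusion being trivial; equivalently, $\Nr_{1,\dots,n}$ commutes with forming the subalgebra generated by $\theta(X)$.

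Second, I would prove this commutation using $\mathsf{pred}$. The key algebraic identity to extract from \cite{Andras} is that $\mathsf{pred}$ is a two-sided inverse of $\mathsf{suc}$ on the relevant domain: $\mathsf{pred}(\mathsf{suc}(x))=x$ for all $x\in\B_n$, and $\mathsf{suc}(\mathsf{pred}(a))=a$ for every $a\in\Nr_{1,\dots,n}\B_{n+1}$. Now take an arbitrary $a\in\Nr_{1,\dots,n}\Sg^{\B_{n+1}}\theta(X)$. Then $a=\tau^{\B_{n+1}}(\bar b)$ for some $\B_{n+1}$-term $\tau$ and a tuple $\bar b$ from $\theta(X)$. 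Applying $\mathsf{pred}$ and using that $\mathsf{pred}$ is a term-expressible operation that sends $\theta(X)$ into $X$ (since $\mathsf{pred}(\theta(x))=x$) and that ${\sf c}_0^{(n+1)}a=a$ guarantees $a$ lies in the image of $\mathsf{suc}$, I would show $\mathsf{pred}(a)\in\Sg^{\B_n}X$: this is where one must check that $\mathsf{pred}$ intertwines each $(n+1)$-dimensional operation appearing in $\tau$ with a corresponding $n$-dimensional operation, i.e. $\mathsf{pred}$ is a homomorphism from $\{a:{\sf c}_0 a=a\}$ onto $\B_n$ of the reduct type — which is exactly the inverse-isomorphism statement. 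Then $a=\mathsf{suc}(\mathsf{pred}(a))=\theta(\mathsf{pred}(a))\in\theta(\Sg^{\B_n}X)=\Sg^{\Nr_{1,\dots,n}\B_{n+1}}\theta(X)$, as required.

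Third, I would assemble the pieces: $\mathsf{suc}(\Sg^{\B_n}X)=\theta(\Sg^{\B_n}X)\cong\Sg^{\B_n}X$ via $\theta$, and by the previous paragraph this equals $\Nr_{1,\dots,n}\Sg^{\B_{n+1}}\mathsf{suc}(X)$ as a subalgebra of $\B_{n+1}$; the displayed isomorphism in (*) is then literally $\theta$ restricted, and it is an isomorphism because $\theta$ is. Care must be taken that the $\CA_n$-structure carried on the left-hand side $\mathsf{suc}(\Sg^{\B_n}X)$ — namely $({\sf c}_i^{(n)},{\sf d}_{ij}^{(n)})$ transported by $\mathsf{suc}$ — agrees index-for-index with the $\CA_n$-structure that $\Nr_{1,\dots,n}$ induces on a subalgebra of $\B_{n+1}$, i.e. that the $i$-th cylindrifier of the neat reduct is the $(i{+}1)$-st of $\B_{n+1}$; this is precisely the content of the index-shift built into $\mathsf{suc}$ and was verified in the previous theorem, so I would cite it rather than redo it.

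\textbf{Main obstacle.} The delicate point is not the abstract diagram-chasing but verifying that $\mathsf{pred}$ really is the inverse homomorphism on $\{a\in\B_{n+1}:{\sf c}_0 a=a\}$ — that it preserves \emph{all} of the reduct operations (Boolean, cylindrifiers with the shifted indices, and diagonals) and maps into $\B_n$, not merely that it inverts $\mathsf{suc}$ set-theoretically. This requires unwinding the defining relational-algebra terms $\epsilon^{(n)},\pi_i^{(n)},t^{(n)}$ and the quasi-projection equations $\breve p;p\leq 1'$, $\breve q;q\leq 1'$, $\breve p;q=1$, much in the spirit of \cite[lemmas 3.4--3.9]{Andras}. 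I expect this to be routine-but-lengthy given that book, so in the write-up I would isolate it as a lemma of the form ``$\mathsf{pred}\restr{\Nr_{1,\dots,n}\B_{n+1}}$ is the inverse of $\theta$ as $\CA_n$-algebras'' and reduce it to computations already present in \cite{Andras}.
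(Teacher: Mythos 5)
Your overall strategy matches the paper's: the paper likewise disposes of the homomorphism part by citing Simon and derives (*) from the presence of the functional element $\mathsf{pred}$ with $\mathsf{pred}(\mathsf{suc}\,x)=x$ and $\mathsf{suc}(\mathsf{pred}\,x)=x$ when ${\sf c}_0x=x$. But your proposed derivation of (*) from that inverse property has a genuine gap, and you have located the difficulty in the wrong place. The step that needs work is not whether $\mathsf{pred}$ is a homomorphism on $\{a\in\B_{n+1}:{\sf c}_0a=a\}$ --- that is automatic, since the set-theoretic inverse of a bijective homomorphism between algebras of the same type is again a homomorphism, so your ``main obstacle'' costs nothing. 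The real difficulty is the inclusion $\Nr_{1,\ldots, n}\Sg^{\B_{n+1}}\mathsf{suc}(X)\subseteq \mathsf{suc}(\Sg^{\B_n}X)$, and your argument for it --- write $a=\tau^{\B_{n+1}}(\bar b)$ with $\bar b$ from $\mathsf{suc}(X)$ and ``apply $\mathsf{pred}$'' --- does not go through: to push $\mathsf{pred}$ through $\tau$ you need it to intertwine the $\B_{n+1}$-operations at the values of \emph{all subterms} of $\tau$, and those values need not satisfy ${\sf c}_0x=x$. Off the neat reduct, $\mathsf{pred}(x)=\breve{p};\mathrm{rng}(x);q$ is not in general even a Boolean homomorphism (the ranges of $x$ and $-x$ can overlap, so complementation is not preserved), and the termwise computation collapses. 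This is exactly the ``neat reducts commute with generated subalgebras'' issue that the paper elsewhere shows can fail for cylindric algebras, so it cannot be dismissed as routine bookkeeping.

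What makes the inclusion true here is specific to the quasi-projective setting: Simon's lemmas 4.6--4.10, which the paper cites at precisely this point, use the quasi-projections to code the whole $(n+1)$-dimensional term computation over $\mathsf{suc}(X)$ by an $n$-dimensional one over $X$, so that $\Sg^{\B_{n+1}}\mathsf{suc}(X)$ meets the neat reduct only in $\mathsf{suc}(\Sg^{\B_n}X)$. Your write-up should import that coding argument explicitly rather than reduce to ``$\mathsf{pred}$ is an inverse isomorphism on the neat reduct'', which yields only the trivial inclusion. A smaller point: the theorem asserts that $\mathsf{suc}$ is an isomorphism \emph{into} the generalized neat reduct and explicitly offers (*) as a \emph{strengthening of surjectivity}; your first step assumes $\mathsf{suc}$ is already onto $\{a\in\B_{n+1}:{\sf c}_0a=a\}$, which is not among the hypotheses and, even if granted, would not by itself give (*).
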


\begin{proof} The operations are respected \cite[theorem 5.1]{Andras}.
The last condition follows  because of the presence of the
functional element ${\sf pred}$, since we have ${\sf suc}({\sf pred} x)=x$ and ${\sf pred}({\sf suc}x)=x$, when ${\sf c}_0x=x$
[lemmas 4.6-4.10] \cite{Andras}.

\end{proof}
\begin{theorem}
Let $n\geq 3$. Let ${\C}_n$ be the algebra obtained from ${\B}_n$ by reshuffling the indices as follows;
set ${\sf c}_0^{{\C}_n}={\sf c} _n^{{\B}_n}$ and ${\sf c}_n^{{\C}_n}={\sf c}_0^{{\cal B}_n}$. Then ${\C}_n$ is a cylindric algebra,
and ${\sf suc}: {\C}_n\to \Nr_n{\C}_{n+1}$ is an isomorphism for all $n$.
Furthermore, for all $X\subseteq \C_n$ we have
$${\sf suc}(\Sg^{\C_n}X)\cong \Nr_n\Sg^{\C_{n+1}}{\sf suc}(X).$$
\end{theorem}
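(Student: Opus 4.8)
The statement is a ``re-indexed'' corollary of the previous theorem (Theorem~\ref{neat}). The plan is to transport the two assertions about $\B_n$ --- namely that $\mathsf{suc}$ is an isomorphism onto $\{a\in\B_{n+1}:\cyl 0 a=a\}$, and the sharper commutation identity $\mathsf{suc}(\Sg^{\B_n}X)\cong\Nr_{1,2,\ldots,n}\Sg^{\B_{n+1}}\mathsf{suc}(X)$ --- across the index permutation $\rho$ that swaps $0$ and $n$. First I would record precisely the relabelling: $\C_n$ has the same Boolean reduct as $\B_n$, the same diagonals, and cylindrifiers given by $\cyl i^{\C_n}=\cyl i^{\B_n}$ for $0<i<n$, while $\cyl 0^{\C_n}=\cyl n^{\B_n}$ and (vacuously, since $\B_n$ has no index $n$) the role of the top index is interchanged. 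I would note that such a bijective relabelling of dimension indices carries a cylindric algebra to a cylindric algebra --- this is just the observation used in Definition~\ref{RA} and throughout, that $\Rd^\rho$ of a $\CA$ (along an injection $\rho$) is again a $\CA$ --- so $\C_n\in\CA_n$ follows immediately from the fact that $\B_n\in\CA_n$ (the previous theorem).

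Next I would compute what the condition ``$\cyl 0 a=a$'' becomes after the swap. In $\B_{n+1}$ the relevant generalized neat reduct picked out by $\mathsf{suc}$ is $\{a:\cyl 0^{\B_{n+1}}a=a\}=\Nr_{1,2,\ldots,n}\B_{n+1}$, and the stronger identity is stated for $\Nr_{1,\ldots,n}$. Under the index permutation of $\B_{n+1}$ that swaps $0$ and $n+1$ (extending the one used to pass from $\B_n$ to $\C_n$), the set $\{1,2,\ldots,n\}$ is mapped to $\{0,1,\ldots,n-1\}$, i.e. to an initial segment of length $n$; so $\Nr_{1,\ldots,n}\B_{n+1}$ becomes exactly $\Nr_n\C_{n+1}$ in the re-indexed algebra. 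Concretely: if $\theta\colon\B_{n+1}\to\C_{n+1}$ denotes the relabelling isomorphism, then $\theta$ restricts to an isomorphism $\Nr_{1,\ldots,n}\B_{n+1}\xrightarrow{\ \sim\ }\Nr_n\C_{n+1}$, because neat reducts are defined purely in terms of which cylindrifiers fix an element, and this property is preserved under any bijection of indices. I would then define the $\mathsf{suc}$ for $\C_n$ to be (or to be conjugate to) the composite $\theta\circ\mathsf{suc}\circ\theta_0^{-1}$ where $\theta_0\colon\B_n\to\C_n$ is the relabelling on level $n$; since $\mathsf{suc}$ is built from the $\QRA$ terms $p,q$ which do not refer to any dimension indices of the $\C$/$\B$ hierarchy, one checks that this composite is literally the same term operation $\mathsf{suc}$ read in $\C_n$, so no new definition is needed --- the term $\mathsf{suc}$ is ``index-blind''.

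With these identifications in place the two claims are pushed forward verbatim. From Theorem~\ref{neat}, $\mathsf{suc}\colon\B_n\to\Nr_{1,\ldots,n}\B_{n+1}$ is an isomorphism; conjugating by $\theta_0$ and $\theta$ gives $\mathsf{suc}\colon\C_n\to\Nr_n\C_{n+1}$ an isomorphism. Similarly the commutation identity $\mathsf{suc}(\Sg^{\B_n}X)\cong\Nr_{1,\ldots,n}\Sg^{\B_{n+1}}\mathsf{suc}(X)$, which holds by Theorem~\ref{neat} thanks to the functional element $\mathsf{pred}$ satisfying $\mathsf{suc}(\mathsf{pred}\,x)=x$ and $\mathsf{pred}(\mathsf{suc}\,x)=x$ on the relevant subalgebra, transports to $\mathsf{suc}(\Sg^{\C_n}X)\cong\Nr_n\Sg^{\C_{n+1}}\mathsf{suc}(X)$ for every $X\subseteq\C_n$; here I would use that $\theta$ is an isomorphism of $\CA_{n+1}$s, hence commutes with forming subalgebras generated by a set and with forming neat reducts, and that $\mathsf{pred}$, like $\mathsf{suc}$, is an index-blind term. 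I expect the only genuinely delicate point to be bookkeeping: making sure that the single permutation of $\{0,\ldots,n+1\}$ used on level $n+1$ restricts consistently to the permutation of $\{0,\ldots,n\}$ used on level $n$, so that the embeddings $\mathsf{suc}$ at successive levels remain coherent and the diagram $\C_n\hookrightarrow\C_{n+1}\hookrightarrow\cdots$ is the relabelled image of $\B_n\hookrightarrow\B_{n+1}\hookrightarrow\cdots$. Once that is nailed down the proof is a one-line application of the previous theorem, and I would simply write ``the operations are respected as in Theorem~\ref{neat}, after the obvious reshuffling of indices; the sharper identity follows from the presence of $\mathsf{pred}$ exactly as there.''
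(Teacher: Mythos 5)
Your proposal is correct and follows exactly the route the paper takes: the paper's own proof is the single line ``immediate from \ref{neat}'', and what you have written is precisely the bookkeeping (the index swap carrying $\Nr_{1,\ldots,n}\B_{n+1}$ to $\Nr_n\C_{n+1}$, the index-blindness of the terms ${\sf suc}$ and ${\sf pred}$, and the compatibility of the permutations at successive levels) that justifies that ``immediate''. No gap; your write-up just makes explicit what the paper leaves to the reader.
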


\begin{proof} immediate from \ref{neat}
\end{proof}
\begin{theorem} Let ${\C}_n$ be as above. Then $succ^{m}:{\C_n}\to \Nr_n\C_m$ is an isomorphism, such that
for all $X\subseteq A$, we have
$${\sf suc}^{m}(\Sg^{\C_n}X)=\Nr_n\Sg^{\C_m}{\sf suc}^{n-1}(X).$$
\end{theorem}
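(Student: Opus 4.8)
The plan is to iterate the single-step isomorphism from the preceding theorem, namely that $\mathsf{suc}:\C_n\to\Nr_n\C_{n+1}$ is an isomorphism and that it commutes with forming subalgebras in the sense that $\mathsf{suc}(\Sg^{\C_n}X)=\Nr_n\Sg^{\C_{n+1}}\mathsf{suc}(X)$. Writing $\mathsf{suc}^{(m)}$ for the $(m-n)$-fold composite $\mathsf{suc}\circ\mathsf{suc}\circ\cdots\circ\mathsf{suc}$ (from $\C_n$ up to $\C_m$), I would prove by induction on $m\geq n$ the two conjoined claims: (i) $\mathsf{suc}^{(m)}:\C_n\to\Nr_n\C_m$ is an isomorphism; and (ii) for all $X\subseteq \C_n$, $\mathsf{suc}^{(m)}(\Sg^{\C_n}X)=\Nr_n\Sg^{\C_m}\mathsf{suc}^{(m)}(X)$. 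The base case $m=n$ is trivial (the empty composite is the identity, and $\Nr_n\C_n=\C_n$), and the case $m=n+1$ is exactly the content of the preceding theorem.

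For the induction step, assume (i) and (ii) hold for $m$. First, $\mathsf{suc}^{(m+1)}=\mathsf{suc}\circ\mathsf{suc}^{(m)}$ is a composite of isomorphisms $\C_n\xrightarrow{\mathsf{suc}^{(m)}}\Nr_n\C_m$ and $\C_m\xrightarrow{\mathsf{suc}}\Nr_m\C_{m+1}$; the one subtlety is to check that the second map, restricted to the image $\Nr_n\C_m$ of the first, lands inside $\Nr_n\C_{m+1}$. This follows because $\mathsf{suc}:\C_m\to\C_{m+1}$ is in particular a homomorphism of the common cylindric operations with indices $<n$ — more precisely, from $\mathsf{c}_i x=x$ in $\C_m$ for all $n\leq i<m$ together with $\mathsf{c}_{m}(\mathsf{suc}(x))=\mathsf{suc}(x)$ (the image of $\mathsf{suc}$ consists of elements fixed by the top cylindrifier in $\C_{m+1}$) we get $\mathsf{suc}(x)\in\Nr_n\C_{m+1}$; so the composite is an isomorphism onto $\Nr_n\Nr_m\C_{m+1}=\Nr_n\C_{m+1}$, using that neat reducts iterate ($\Nr_n\Nr_m\D=\Nr_n\D$ for $n\leq m$, which holds at once from the definition of the neat reduct as the set of elements fixed by a set of cylindrifiers). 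That proves (i) for $m+1$.

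For (ii) at $m+1$, I would compute, for $X\subseteq\C_n$:
\begin{align*}
\mathsf{suc}^{(m+1)}(\Sg^{\C_n}X)
&=\mathsf{suc}\big(\mathsf{suc}^{(m)}(\Sg^{\C_n}X)\big)\\
&=\mathsf{suc}\big(\Nr_n\Sg^{\C_m}\mathsf{suc}^{(m)}(X)\big)\\
&=\mathsf{suc}\big(\Sg^{\Nr_n\C_m}\mathsf{suc}^{(m)}(X)\big)\\
&=\Nr_n\Sg^{\C_{m+1}}\mathsf{suc}\big(\mathsf{suc}^{(m)}(X)\big)\\
&=\Nr_n\Sg^{\C_{m+1}}\mathsf{suc}^{(m+1)}(X),
\end{align*}
where the second equality is the induction hypothesis (ii), the third uses that $\Nr_n$ commutes with generation inside a fixed algebra (generating inside the neat reduct equals taking the neat reduct of what is generated — this is the standard fact $\Sg^{\Nr_n\B}Y=\Nr_n\Sg^{\B}Y$ for $Y\subseteq\Nr_n\B$, available for these cylindric-like algebras and already invoked implicitly in the single-step theorem), and the fourth applies the single-step commutation property of $\mathsf{suc}$ (property $(*)$ of the earlier theorem) to the set $\mathsf{suc}^{(m)}(X)\subseteq\Nr_n\C_m\subseteq\C_m$.

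The main obstacle — really the only place where care is needed rather than bookkeeping — is verifying that $\mathsf{suc}$ applied to $\Nr_n\C_m$ stays inside $\Nr_n\C_{m+1}$ and that the images compose correctly, i.e. that the "extra dimension" added at each stage is genuinely the $(m{+}1)$st one and does not disturb the lower indices; this is exactly where the functional element $\mathsf{pred}$ and the identities $\mathsf{suc}(\mathsf{pred}\,x)=x$, $\mathsf{pred}(\mathsf{suc}\,x)=x$ (for $\mathsf{c}_0 x=x$) from the cited lemmas of \cite{Andras} do the work, guaranteeing both surjectivity onto the neat reduct and the commutation with $\Sg$. Everything else is a routine induction, and I would state explicitly at the outset that $\mathsf{suc}^{(m)}$ denotes the iterated composite and that neat reducts iterate, so the reader is not puzzled by the notation $\Nr_n\C_m$ for $m>n+1$.
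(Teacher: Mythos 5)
Your proof is correct and takes essentially the same route as the paper, whose entire argument here is the single line ``by induction on $n$'': you iterate the one-step isomorphism and its commutation property $(*)$. One caution: the ``standard fact'' $\Sg^{\Nr_n\B}Y=\Nr_n\Sg^{\B}Y$ that you invoke for the third equality is emphatically \emph{not} standard --- its failure for general cylindric algebras is a central theme of this very paper --- but the instance you need holds anyway, since it follows from the induction hypothesis together with the fact that ${\sf suc}^{(m)}$ is an isomorphism (isomorphisms commute with $\Sg$ and carry $\Nr_n$ onto $\Nr_n$, and $\Nr_n\Nr_m=\Nr_n$), or alternatively from the presence of the functional element ${\sf pred}$, so the argument stands once that step is rejustified.
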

\begin{proof} By induction on $n$.
\end{proof}
Now we want to neatly embed our $\QRA$ in $\omega$ extra dimensions. At the same time,
we do not want to lose our control over the stretching;
we still need the commutativing of taking, now $\Ra$  reducts with forming subalgebras; we call this property the $\Ra S$ property.
To construct the big $\omega$ dimensional algebra, we use a standard ultraproduct construction.
So here we go.
For $n\geq 3$, let  ${\C}_n^+$ be an algebra obtained by adding ${\sf c}_i$ and ${\sf d}_{ij}$'s for $\omega>i,j\geq n$ arbitrarity and with
$\Rd_n^+\C_{n^+}={\B}_n$. Let ${\C}=\Pi_{n\geq 3} {\C}_n^+/G$, where $G$ is a non-principal ultrafilter
on $\omega$.
In our next theorem, we show that the algebra $\A$ can be neatly embedded in a locally finite algebra $\omega$ dimensional algebra
and we retain our $\Ra S$ property.

\begin{theorem} Let $$i: {\A}\to \Ra\C$$
be defined by
$$x\mapsto (x,  {\sf suc}(x),\ldots {\sf suc}^{n-1}(x),\dots n\geq 3, x\in B_n)/G.$$
Then $i$ is an embedding ,
and for any $X\subseteq A$, we have
$$i(\Sg^{\A}X)=\Ra\Sg^{\C}i(X).$$
\end{theorem}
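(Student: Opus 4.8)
The plan is to transfer, via an ultraproduct, the dimension-capturing machinery established for the finite-dimensional algebras $\C_n$ to the $\Ra$ reduct of the ultraproduct $\C=\Pi_{n\geq 3}\C_n^+/G$. First I would check that $i$ is well-defined: for $x\in B_n$, the sequence $(x,{\sf suc}(x),{\sf suc}^2(x),\dots)$ is a legitimate element of $\C$ because, by the previous theorem, ${\sf suc}^m$ carries $\C_n$ isomorphically onto $\Nr_n\C_m$, so each component lies in the appropriate $\C_m^+$ and the equivalence class modulo $G$ makes sense; here one uses that $x$ sits in $B_n$ for all large enough $n$, so cofinitely many components are defined, which is all that is needed modulo the non-principal ultrafilter $G$. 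That $i$ lands in $\Ra\C$ follows because ${\sf suc}^m(x)$ is a $2$-dimensional (in the reshuffled indices) element of $\C_m^+$, hence the image is fixed by all cylindrifiers ${\sf c}_i$ with $i\notin\{n-2,n-1\}$ in the relevant reindexed sense; \L os's theorem then gives that the ultraproduct element is $2$-dimensional in $\C$, i.e. lies in $\Ra\C$.

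Next I would verify that $i$ is an embedding of relation algebras. Injectivity is immediate since the first (or any) coordinate already recovers $x$, and the $\C_n^+$ have been arranged with $\Rd_n^+\C_n^+=\B_n\supseteq$ a copy of $\A$. For homomorphism: the relation-algebra operations $+,\cdot,-,;,\breve{\ },1'$ on $\Ra\C$ are computed componentwise by \L os, and in each component ${\sf suc}^m$ is an isomorphism onto a neat reduct (hence respects all these operations, the $\Ra$ operations being term-definable from the $\CA_m$ operations via definition \ref{RA}), so $i$ respects each operation. This is the routine part and I would only sketch it, pointing to the previous theorems on ${\sf suc}^m$ being neat-reduct isomorphisms.

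The substantive step is the equality $i(\Sg^{\A}X)=\Ra\Sg^{\C}i(X)$ for every $X\subseteq A$. The inclusion $\subseteq$ is easy: $i$ is a homomorphism, so $i(\Sg^{\A}X)\subseteq\Sg^{\Ra\C}i(X)\subseteq\Ra\Sg^{\C}i(X)$. For the reverse inclusion I would exploit the strengthened surjectivity statements, namely that ${\sf suc}^m(\Sg^{\C_n}Y)=\Nr_n\Sg^{\C_m}{\sf suc}^{m-1}(Y)$ — the ``commuting of neat reducts with forming subalgebras'' ($\Ra S$-type property) that the predecessor term ${\sf pred}$ guarantees. Concretely, take $a\in\Ra\Sg^{\C}i(X)$; represent $a=(a_n)/G$ with $a_n\in\C_n^+$, and ${\sf c}_ia=a$ for all $i\neq$ the two distinguished indices, so by \L os $a_n\in\Ra\Sg^{\C_n^+}(\text{components of }i(X))$ for cofinitely many $n$. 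Using that $\Sg^{\C_n^+}$ restricted to the original signature is $\Sg^{\B_n}$ and the commuting property, $a_n$ must already be of the form ${\sf suc}^{\,n-3}(b_n)$ with $b_n\in\Sg^{\A}X$ pulled back along the isomorphism; then a standard argument (again via \L os and the fact that $G$ is non-principal) shows the $b_n$'s stabilize to a single $b\in\Sg^{\A}X$ with $i(b)=a$.

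\textbf{Main obstacle.} The delicate point is the reverse inclusion: I must show that an element of $\Ra\C$ lying in the subalgebra generated by $i(X)$ does not ``use'' the extra dimensions in an essential way, i.e. that being a $2$-dimensional element of $\Sg^{\C}i(X)$ forces it to come from $\A$ itself rather than merely from some $\Sg^{\C_m}$ at finite stages whose pullbacks fail to cohere across the ultraproduct. This requires carefully combining (i) the term ${\sf pred}$ and the identities ${\sf suc}({\sf pred}\,x)=x$, ${\sf pred}({\sf suc}\,x)=x$ on $0$-dimensional elements, which give the finite-stage commuting property, with (ii) a \L os-theorem argument ensuring that the stage-$n$ witnesses $b_n\in\Sg^{\A}X$ can be chosen compatibly so that $(b_n)/G$ is (the image under $i$ of) a genuine element of $\A$. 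Making (ii) precise — essentially showing the relevant pullback map is uniformly defined on a $G$-large set of indices and its values agree — is where the real work lies; everything else reduces to invoking the already-established isomorphism properties of ${\sf suc}^m$ together with \L os's theorem.
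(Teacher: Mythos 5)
Your proposal is correct and follows essentially the same route as the paper, whose entire argument is the one-line observation that any failure of the embedding or of the equality $i(\Sg^{\A}X)=\Ra\Sg^{\C}i(X)$ would involve only finitely many elements and indices, hence would reflect into a finite reduct $\C_n^{+}$, contradicting the finite-dimensional theorems on ${\sf suc}^m$ and ${\sf pred}$. You in fact supply considerably more detail (the \L os\ reduction, the role of the $\Ra S$/$NS$ property, and the coherence of the finite-stage witnesses) than the paper's sketch does.
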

\begin{proof} The idea is that if this does not happen, then it will not happen in a finite reduct, and this impossible.
\end{proof}

Note that Simon's theorem, actually says that in every $\QRA$, there sits an $\RCA_n$ for evry $n\geq 3$.

\begin{theorem} Let $\bold Q\in {\QRA}$. Then for all $n\geq 4$, there exists a unique
$\A\in S\Nr_3\CA_n$ such that $\bold Q=\Ra\A$,
such that for all $X\subseteq A$, $\Sg^{\bf Q}X=\Ra\Sg^{\A}X.$
\end{theorem}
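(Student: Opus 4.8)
The statement to be proved is: given $\bold Q\in \QRA$ and $n\geq 4$, there is a unique $\A\in S\Nr_3\CA_n$ with $\bold Q=\Ra\A$ and, moreover, $\Sg^{\bold Q}X=\Ra\Sg^{\A}X$ for all $X\subseteq A$. The strategy is to package together the constructions already laid out in this section. First I would recall the reshuffled cylindric algebras $\C_m$ built from the quasi-projections $p,q$ in $\bold Q$, and the key fact established above that $\mathsf{suc}^{m}:\C_n\to \Nr_n\C_m$ is an isomorphism with the commutation property $\mathsf{suc}^{m}(\Sg^{\C_n}X)=\Nr_n\Sg^{\C_m}\mathsf{suc}^{n-1}(X)$. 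Truncating at level $n$: take $\A$ to be (an isomorphic copy of) $\C_3$ sitting inside $\C_n$ via $\mathsf{suc}$, so that $\A\in S\Nr_3\CA_n$ by construction. Then I would check that $\Ra\A=\bold Q$: the relation-algebra reduct (definition \ref{RA}) picks out the two-dimensional elements with composition and converse defined using one spare dimension, and by the way the terms $\epsilon^{(n)},\pi_i^{(n)},\dots$ were set up, the $\Ra$ reduct of $\C_3$ reproduces exactly the original relation-algebra operations of $\bold Q$. This is a finite computation with the defining terms and I would only indicate it, citing \cite{Andras}.

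Next, for the $\Ra S$-type property $\Sg^{\bold Q}X=\Ra\Sg^{\A}X$: the inclusion $\Ra\Sg^{\A}X\supseteq \Sg^{\bold Q}X$ is automatic since $\Ra\Sg^{\A}X$ is a relation algebra containing $X$ (and contained in $\Ra\A=\bold Q$); for the reverse inclusion I would invoke precisely the commutation identity above — $\mathsf{suc}$-images of subalgebras generated in $\C_3$ are neat reducts of subalgebras generated in $\C_n$ — which, after applying the $\Ra$ functor (a further compression of two more dimensions, handled by the same predecessor/successor bookkeeping), gives that any element of $\bold Q$ that lies in $\Sg^{\A}X$ already lies in $\Sg^{\bold Q}X$. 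The presence of the functional element $\mathsf{pred}$ (with $\mathsf{suc}(\mathsf{pred}\,x)=x$ and $\mathsf{pred}(\mathsf{suc}\,x)=x$ on the relevant elements) is what makes this go through, exactly as in the proof of Theorem \ref{neat}.

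Finally, uniqueness. Suppose $\A'\in S\Nr_3\CA_n$ also satisfies $\Ra\A'=\bold Q$. Here I would argue that $\A'$ is forced: since $\A'$ is a subalgebra of some $\Nr_3\D$ with $\D\in\CA_n$, every element of $\A'$ is obtained from elements of $\bold Q=\Ra\A'$ by the cylindric operations, and the reshuffling/term definitions show that these operations on the two-dimensional part are term-definable from the quasi-projection data already present in $\bold Q$; hence the map sending the canonical generators of $\C_3$ to the corresponding elements of $\A'$ extends to an isomorphism $\C_3\cong\A'$ that is the identity on $\bold Q$. More carefully, one uses that $\bold Q$ generates $\A'$ (as a $\CA_3$) — which follows from $\A'\subseteq\Nr_3\D$ together with the fact that the $\Ra$ reduct determines the cylindric structure via the projection terms — and then the two candidate dilations agree on a generating set and respect the same operations, so they coincide. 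The main obstacle I expect is this last point: pinning down rigorously that a member of $S\Nr_3\CA_n$ with prescribed $\Ra$ reduct must be generated by that reduct, and that the isomorphism type is thereby determined; this is where one genuinely needs Simon's analysis that the cylindric operations in dimensions $3,\dots,n$ are captured by the quasi-projections, and it is the step I would develop most carefully rather than wave at.
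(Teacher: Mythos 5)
Your argument assembles exactly the lemmas the paper relies on, but routes the existence step differently: you realize $\A$ as the copy of $\C_3$ inside $\Nr_3\C_n$ via the finite successor chain ${\sf suc}^{n-3}$, whereas the paper's one-sentence proof invokes the immediately preceding theorem — the embedding $i:\bold Q\to \Ra\C$ into the $\omega$-dimensional ultraproduct dilation $\C=\Pi_{m\geq 3}\C_m^+/G$, together with the $\Ra S$ property $i(\Sg^{\bold Q}X)=\Ra\Sg^{\C}i(X)$ — and then cuts down to the subalgebra generated by $i(Q)$, whose three-dimensional neat reduct lies in $S\Nr_3\CA_{\omega}\subseteq S\Nr_3\CA_n$. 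For a fixed finite $n$ the two routes are interchangeable, since both reduce to the ${\sf suc}/{\sf pred}$ bookkeeping and the commutation of $\Sg$ with $\Nr$ (equivalently with $\Ra$); your finite chain is the more economical choice here, while the $\omega$-dilation is what the paper needs anyway for the ensuing corollary and for the $SUPAP$ theorem, which is why it is the cited ingredient. On uniqueness the paper's proof is silent, so your sketch supplies something the text omits, and you correctly locate the work in showing that $\bold Q$ generates any candidate $\A'$ and that the three-dimensional cylindric operations are term-definable from the quasi-projections. One caution: generation of $\A'$ by $\bold Q$ does not follow from $\Ra\A'=\bold Q$ together with $\A'\in S\Nr_3\CA_n$ alone; it is the second hypothesis $\Sg^{\bold Q}X=\Ra\Sg^{\A'}X$ (the $\Ra S$ property), combined with the quasi-projective coding of higher-dimensional elements by two-dimensional ones, that forces it — compare the corollary immediately following the theorem, where generation is an explicit assumption — so your uniqueness step should be run through that hypothesis rather than through mere membership in $S\Nr_3\CA_n$.
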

\begin{proof} This follows from the previous theorem together with $\Ra S$ property.
\end{proof}

\begin{corollary} Let $\bf Q\in \QRA$ be such that $\bf Q=\Ra\A\cong \Ra\B$, where $\A$ and $\B$ are locally finite, and are generated by $\bf Q$,
then this isomorphism lifts to an isomorphism from $\A$ to $\B$
that fixes $\bf Q$ pointwise.
\end{corollary}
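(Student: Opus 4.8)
The plan is to deduce this corollary directly from the uniqueness theorem immediately preceding it, namely that for $\bf Q\in\QRA$ and $n\geq 4$ there is a \emph{unique} $\A\in S\Nr_3\CA_n$ with $\bf Q=\Ra\A$ and $\Sg^{\bf Q}X=\Ra\Sg^{\A}X$ for all $X$. The subtlety is that the corollary is about locally finite algebras of dimension $\omega$, not about fixed finite dimension $n$; but the hypothesis that $\A$ (resp. $\B$) is locally finite \emph{and} generated by $\bf Q$ is exactly what pins each of $\A,\B$ down: a locally finite $\omega$-dimensional algebra generated by its $\Ra$-reduct is determined, level by level, by its finite-dimensional neat reducts, and these in turn are governed by the $\Ra S$ property ($\Ra$ reducts commute with forming subalgebras) established just above. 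So the first step is to reduce the global statement to a coherent family of finite-dimensional statements, using local finiteness to say that every element of $\A$ lies in $\Nr_n\A$ for some finite $n\geq 4$, and that $\Nr_n\A$ is generated (as a $\CA_n$) by $\Sg^{\bf Q}$ of a suitable set — here using that $\bf Q=\Ra\A$ generates $\A$.

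\textbf{Key steps, in order.} First I would fix the given isomorphism $h:\bf Q=\Ra\A\to\Ra\B=\bf Q$ — but note the statement says $\bf Q=\Ra\A\cong\Ra\B$ with $\bf Q$ itself as the common $\Ra$-reduct, so in fact $h$ is the identity on $\bf Q$ and the content is that $\A\cong\B$ by a map fixing $\bf Q$ pointwise. So step one: observe that $\A,\B\in S\Nr_3\CA_\omega$ (locally finite of dimension $\omega$) and that $\Ra\A=\Ra\B=\bf Q$. Step two: for each finite $n\geq 4$ apply the uniqueness theorem to $\Nr_n\A$ and $\Nr_n\B$ — both are in $S\Nr_3\CA_n$, both have $\Ra$-reduct equal to $\bf Q$ (since $\Ra$ only uses two dimensions plus one spare, $\Ra\Nr_n\A=\Ra\A=\bf Q$), and both are generated over $\bf Q$ in the appropriate sense because $\A,\B$ are generated by $\bf Q$ and local finiteness lets one push generation down to each finite level via the $\Ra S$ / $NS$-type commutation. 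Hence by uniqueness there is a unique isomorphism $f_n:\Nr_n\A\to\Nr_n\B$ fixing $\bf Q$ pointwise, and in fact $f_n$ restricted to $\Nr_m\A$ for $m<n$ must equal $f_m$, again by uniqueness (both are isomorphisms of $\Nr_m$-reducts fixing $\bf Q$). Step three: take $f=\bigcup_{n\geq 4}f_n$; the coherence just established makes this a well-defined bijection $\A\to\B$ (since $\A=\bigcup_n\Nr_n\A$, $\B=\bigcup_n\Nr_n\B$ by local finiteness), it is a homomorphism because each $f_n$ is and every finitely many elements and every operation live inside some $\Nr_n$, and it fixes $\bf Q$ pointwise since each $f_n$ does. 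That gives the desired isomorphism.

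\textbf{Main obstacle.} The delicate point is the generation hypothesis at each finite level: the uniqueness theorem demands that $\Sg^{\bf Q}X=\Ra\Sg^{\A}X$ style control hold, and more precisely that $\Nr_n\A$ be recovered from $\bf Q$ inside $\CA_n$. One must verify that "$\bf Q$ generates $\A$ in dimension $\omega$" together with local finiteness really does force "$\bf Q$ generates $\Nr_n\A$ in dimension $n$" for each $n$ — this is where the $\Ra S$ property (and the $NS$ property, that neat reducts commute with subalgebra generation) is indispensable, exactly as it was used to prove the uniqueness theorem itself. A secondary, more bookkeeping obstacle is checking the coherence $f_{n+1}\restriction\Nr_n\A=f_n$: this follows from the uniqueness clause but one must make sure that $f_{n+1}\restriction\Nr_n\A$ is indeed a $\CA_n$-isomorphism onto $\Nr_n\B$ — it is, because $f_{n+1}$ is a $\CA_{n+1}$-isomorphism, neat reducts are preserved by isomorphisms, and $f_{n+1}$ fixes $\bf Q$, so it takes $\Nr_n\A$ onto $\Nr_n\B$ and then uniqueness at level $n$ finishes it. Once these commutation facts are in hand the rest is routine directed-union patching.
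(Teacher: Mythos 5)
The paper gives no proof of this corollary at all: it is stated immediately after the uniqueness theorem for dilations and is clearly intended to be read off from that theorem together with the $\Ra S$ property (the surrounding text just remarks that $\Ra$ is an equivalence onto a reflective subcategory of $\Lf_{\omega}$). Your directed-union elaboration through the finite neat reducts is a legitimate way to make that precise, and your reading of the hypothesis (the given isomorphism is the identity on $\bf Q$, so the content is an isomorphism $\A\to\B$ over $\bf Q$) is the right one.

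Two remarks on where the real content sits. First, the step you flag as the main obstacle is indeed the only nontrivial one, but be careful about how you justify it: the statement ``$\bf Q$ generates $\A$ in dimension $\omega$ plus local finiteness forces $\bf Q$ to generate $\Nr_n\A$ in dimension $n$'' is \emph{not} a formal consequence of local finiteness --- it is exactly the commutation of $\Nr_n$ with $\Sg$ that theorem \ref{Sc}(2) of this paper shows can fail for cylindric algebras, and which is responsible for non-unique minimal dilations in general. What rescues it here is the presence of the quasi-projections: the ${\sf suc}/{\sf pred}$ results (that ${\sf suc}(\Sg^{\B_n}X)\cong \Nr_{1,\ldots,n}\Sg^{\B_{n+1}}{\sf suc}(X)$, i.e.\ that terms using spare dimensions are already term-definable without them) are what let you pull an element $\tau^{\A}(\bar q)$ of $\Nr_n\A$, where $\tau$ a priori uses indices up to some $m>n$, back down to a term in $n$ indices. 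Your proof should cite this explicitly rather than the generic ``$NS$-type commutation.'' Second, once generation at each level is secured, your coherence check $f_{n+1}\restriction\Nr_n\A=f_n$ is automatic for a cheaper reason than re-invoking uniqueness of the algebra: a homomorphism fixing a generating set pointwise is unique, so there is at most one isomorphism $\Nr_n\A\to\Nr_n\B$ over $\bf Q$. In fact this observation lets you bypass the directed union altogether: define $f(\tau^{\A}(\bar q))=\tau^{\B}(\bar q)$ directly on all of $\A$ and verify well-definedness by pushing $\tau^{\A}(\bar q)\oplus\sigma^{\A}(\bar p)$ down to $\Ra\Sg^{\A}(\rng\bar q\cup\rng\bar p)=\Sg^{\bf Q}(\rng\bar q\cup\rng\bar p)$ via the $\Ra S$ property, where the computation is the same in $\A$ and $\B$ because both reduce to the same $\RA$-term evaluated in $\bf Q$. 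Either route is fine; yours just moves the same appeal to quasi-projections into the level-by-level generation claim.
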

The previous theorem says that $\Ra$ as a functor establishes an equivalence between ${\QRA}$
and a reflective subcategory of $\Lf_{\omega}.$
We say that $\A$ is the $\omega$ dilation of ${\bf Q}$.
Now we are ready for:

\begin{theorem}\label{SUPAP} $\QRA$ has $SUPAP$.
\end{theorem}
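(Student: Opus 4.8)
The plan is to transfer the super-amalgamation property from $\Lf_\omega$ (locally finite cylindric algebras of dimension $\omega$) to $\QRA$ via the functor $\Ra$, exactly in the spirit of the adjoint-situation argument already carried out in Theorem~\ref{cat}. The starting point is the equivalence of categories just established: $\Ra$ restricted to the reflective subcategory $\bold L$ of $\Lf_\omega$ consisting of algebras generated (as cylindric algebras in $\omega$ dimensions) by their $\Ra$ reducts is an equivalence onto $\QRA$, with inverse the "$\omega$-dilation" functor $Dl$, and moreover this equivalence is \emph{strong} in the sense that $\Sg^{\bold Q}X=\Ra\Sg^{\A}X$ whenever $\A=Dl(\bold Q)$ — this is the $\Ra S$ property proved above. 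Finally I would invoke the classical fact (Pigozzi; see also the bridge theorems in \cite{MS}, \cite{super}) that $\Lf_\omega$, equivalently the variety $\RCA_\omega$ read locally-finitely, has the super-amalgamation property.

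\medskip

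First I would set up the data: let $\bold Q_0,\bold Q_1,\bold Q_2\in\QRA$ together with monomorphisms $f_i:\bold Q_0\to\bold Q_i$ ($i=1,2$). Applying $Dl$ gives $\A_i=Dl(\bold Q_i)\in\bold L\subseteq\Lf_\omega$ and monomorphisms $g_i:\A_0\to\A_i$ with $\Ra(g_i)=f_i$ (here one uses that $\Ra$ is faithful and full on the relevant morphisms, and that injective $\Ra$-morphisms lift to injective cylindric morphisms — the hypothesis analogous to the lifting assumption in Theorem~\ref{cat}, which holds here by the explicit $\mathsf{pred}$/$\mathsf{suc}$ machinery). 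Since $\Lf_\omega$ has $SUPAP$, there is $\D\in\Lf_\omega$ and monomorphisms $h_i:\A_i\to\D$ with $h_1\circ g_1=h_2\circ g_2$, and the super-amalgamation inequality: if $h_1(a)\le h_2(b)$ for $a\in A_1$, $b\in A_2$ then there is $c\in A_0$ with $a\le g_1(c)$ and $g_2(c)\le b$. I may replace $\D$ by the subalgebra generated by $h_1(A_1)\cup h_2(A_2)$, and then by its $\omega$-dilation-closure so as to land back in $\bold L$ (or simply take $\Ra\D$ and its $\omega$-dilation, using that $\Lf_\omega$ is closed under the relevant subalgebra formation). Now take $\Ra$ of the whole picture: $\Ra\D\in\QRA$, and $\Ra(h_i):\bold Q_i\to\Ra\D$ are monomorphisms with $\Ra(h_1)\circ f_1=\Ra(h_2)\circ f_2$, so $\QRA$ has $AP$.

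\medskip

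The super-amalgamation clause is where the $\Ra S$ property does the real work, paralleling the last displayed computation in the proof of Theorem~\ref{cat}. Suppose $\Ra(h_1)(a)\le\Ra(h_2)(b)$ with $a\in\bold Q_1=\Ra\A_1$, $b\in\bold Q_2=\Ra\A_2$. Viewing $a,b$ inside $\A_1,\A_2$ respectively, we have $h_1(a)\le h_2(b)$ in $\D$, so by $SUPAP$ in $\Lf_\omega$ there is $z\in A_0$ with $a\le g_1(z)$ and $g_2(z)\le b$. The point is that $z$ is a priori only an $\omega$-dimensional element of $\A_0$, whereas we need an element of $\bold Q_0=\Ra\A_0$, i.e.\ a $2$-dimensional one. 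One cylindrifies away the extra dimensions: put $\Gamma=\Delta z\setminus\{n-2,n-1\}$ (the relevant two coordinates of the $\Ra$ reduct) and $z'=\mathsf{c}_{(\Gamma)}z$; since $a$ and $b$ are already in the $\Ra$ reducts they are closed under these cylindrifications, so $a\le g_1(z')$ and $g_2(z')\le b$ persist, while now $z'\in Nr_{\,\{n-2,n-1\}}\A_0$. By the $\Ra S$ property, $Nr_{\{n-2,n-1\}}\A_0=\Ra\A_0=\Sg^{\bold Q_0}(\text{generators})=\bold Q_0$ — more precisely $z'\in\Ra\Sg^{\A_0}(\bold Q_0)=\Sg^{\Ra\A_0}(\bold Q_0)=\bold Q_0$. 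Hence $z'\in\bold Q_0$ is the required interpolant, and $\QRA$ has $SUPAP$.

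\medskip

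The main obstacle I anticipate is the careful bookkeeping at the lifting step: one must be sure that the amalgam $\D$ produced by $SUPAP$ in $\Lf_\omega$ can be arranged inside $\bold L$ (generated by its $\Ra$ reduct) so that $Dl$ and $\Ra$ behave as honest mutually inverse equivalences on it, and that the morphisms $h_i$ genuinely are $\Ra$-images of cylindric monomorphisms — this is exactly the content of the hypothesis "injective homomorphisms between $\Ra$ reducts extend" and the uniqueness-of-dilation corollary stated just above. Everything else is a transcription of the Theorem~\ref{cat} argument with "$\Nr_\omega$" replaced by "$\Ra$" and "$\K_{\omega+\omega}$" replaced by "$\Lf_\omega$", together with the classical super-amalgamation of $\RCA_\omega$ / $\Lf_\omega$ which I am taking as known. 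As a remark, one could alternatively re-run the Henkin-style interpolation construction directly on $\QRA$, imitating the proof of Theorem~\ref{Fer}; but the categorial route via Simon's neat embedding is shorter and makes transparent why it works — the $\omega$-dilation supplies the spare dimensions in which an interpolant can always be found, and the $\Ra S$ property pushes that interpolant back down into $\QRA$.
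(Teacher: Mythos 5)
Your proposal is correct and follows essentially the same route as the paper's proof: pass to the $\omega$-dilations, lift the span via the unique-neat-embedding/extension property, apply $SUPAP$ in $\Lf_{\omega}$, take the $\Ra$ reduct of the amalgam, and recover the interpolant by cylindrifying away the extra dimensions and invoking the $\Ra S$ property to land back in $\bold Q_0$. The only difference is presentational (you phrase the dilation step through the categorical equivalence rather than through explicit embeddings $e_A, e_B, e_C$), which does not change the argument.
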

\begin{proof}  We have encountered the idea before.
We form the unique dilations of the given algebras required to be superamalgamated.
These are locally finite so we can find a superamalgam $\D$.
Then $\Ra\D$ will be required superamalgam; it contains quasiprojections because the base algebras
does.
In more detail, let $\A,\B\in \QRA$. Let $f:\C\to \A$ and $g:\C\to \B$ be injective homomorphisms .
Then there exist $\A^+, \B^+, \C^+\in \CA_{\alpha+\omega}$, $e_A:\A\to \Ra{\alpha}\A^+$
$e_B:\B\to  \Ra\B^+$ and $e_C:\C\to \Ra\C^+$.
We can assume, without loss,  that $\Sg^{\A^+}e_A(A)=\A^+$ and similarly for $\B^+$ and $\C^+$.
Let $f(C)^+=\Sg^{\A^+}e_A(f(C))$ and $g(C)^+=\Sg^{\B^+}e_B(g(C)).$
Since $\C$ has $UNEP$, there exist $\bar{f}:\C^+\to f(C)^+$ and $\bar{g}:\C^+\to g(C)^+$ such that
$(e_A\upharpoonright f(C))\circ f=\bar{f}\circ e_C$ and $(e_B\upharpoonright g(C))\circ g=\bar{g}\circ e_C$. Both $\bar{f}$ and $\bar{g}$ are
monomorphisms.
Now $\Lf_{\omega}$ has $SUPAP$, hence there is a $\D^+$ in $K$ and $k:\A^+\to \D^+$ and $h:\B^+\to \D^+$ such that
$k\circ \bar{f}=h\circ \bar{g}$. $k$ and $h$ are also monomorphisms. Then $k\circ e_A:\A\to \Ra\D^+$ and
$h\circ e_B:\B\to \Ra\D^+$ are one to one and
$k\circ e_A \circ f=h\circ e_B\circ g$.
Let $\D=\Ra\D^+$. Then we obtained $\D\in \QRA$
and $m:\A\to \D$ $n:\B\to \D$
such that $m\circ f=n\circ g$.
Here $m=k\circ e_A$ and $n=h\circ e_B$.
Denote $k$ by $m^+$ and $h$ by $n^+$.
Now suppose that $\C$ has $NS$. We further want to show that if $m(a) \leq n(b)$,
for $a\in A$ and $b\in B$, then there exists $t \in C$
such that $ a \leq f(t)$ and $g(t) \leq b$.
So let $a$ and $b$ be as indicated.
We have  $(m^+ \circ e_A)(a) \leq (n^+ \circ e_B)(b),$ so
$m^+ ( e_A(a)) \leq n^+ ( e_B(b)).$
Since ${\sf Lf}_{\omega}$ has $SUPAP$, there exist $z \in C^+$ such that $e_A(a) \leq \bar{f}(z)$ and
$\bar{g}(z) \leq e_B(b)$.
Let $\Gamma = \Delta z \sim \alpha$ and $z' =
{\sf c}_{(\Gamma)}z$. (Note that $\Gamma$ is finite.) So, we obtain that
$e_A({\sf c}_{(\Gamma)}a) \leq \bar{f}({\sf c}_{(\Gamma)}z)~~ \textrm{and} ~~ \bar{g}({\sf c}_{(\Gamma)}z) \leq
e_B({\sf c}_{(\Gamma)}b).$ It follows that $e_A(a) \leq \bar{f}(z')~~\textrm{and} ~~ \bar{g}(z') \leq e_B(b).$ Now by hypothesis
$$z' \in \Ra\C^+ = \Sg^{\Ra\C^+} (e_C(C)) = e_C(C).$$
So, there exists $t \in C$ with $ z' = e_C(t)$. Then we get
$e_A(a) \leq \bar{f}(e_C(t))$ and $\bar{g}(e_C(t)) \leq e_B(b).$ It follows that $e_A(a) \leq (e_A \circ f)(t)$ and
$(e_B \circ g)(t) \leq
e_B(b).$ Hence, $ a \leq f(t)$ and $g(t) \leq b.$
We are done.
\end{proof}

\subsection{ Pairing functions in N\'emetis directed $\CA$s}

We recall the definition of what is called weakly higher order cylindric algebras, or directed cylindric algebras invented by N\'emeti
and further studied by S\'agi and Simon \cite{Andras}.
Weakly higher order cylindric algebras are natural expansions of cylindric algebras.
They have extra operations that correspond to a certain kind of bounded existential
quantification along a binary relation $R$. The relation $R$ is best thought of as the `element of relation' in a model of some set theory.
It is an abstraction of the membership relation. These cylindric-like algebras
are the cylindric counterpart of quasi-projective relation algebras, introduced by Tarski.
We start by recalling the concrete versions of directed cylindric algebras from \cite{Sagi}:

\begin{definition}(P--structures and extensional structures.) \\
Let $U$ be a set and let $R$ be a binary relation on $U$. The structure
$\langle U; R \rangle$ is defined to be a P--structure\footnote{``P'' stands for ``pairing'' or ``pairable''.} iff for every
elements $a,b \in U$ there exists an element $c \in U$ such that $R(d,c)$ is
equivalent with $d=a$ or $d=b$ (where $d \in U$ is arbitrary) , that is, \\
\\
\centerline{ $\langle U; R \rangle \models (\forall x,y)(\exists z)(\forall w)( R(w,z) \Leftrightarrow (w=x$ or $w=y))$.} \\
\\
The structure $\langle U; R \rangle $ is defined to be a weak P--structure iff \\
\\
\centerline{ $ \langle U; R \rangle \models (\forall x,y)(\exists z)(R(x,z) $ and $ R(y,z))$.} \\
\\
The structure $\langle U; R \rangle$ is defined to be {extensional}
iff every two points $a,b \in U$ coincide whenever they have the same
``$R$--children'', that is, \\
\\
\centerline{ $\langle U; R \rangle \models (\forall x,y)(((\forall z) R(z,x) \Leftrightarrow R(z,y)) \Rightarrow x=y) $.}
\end{definition}

\noindent
We will see that if $\langle U; R \rangle$ is a P--structure then one can
``code'' pairs of elements of $U$ by a single element of $U$ and whenever
$\langle U; R \rangle$ is extensional then this coding is ``unique''. In fact,
in $\RCA_{3}^{\uparrow}$ (see the definition below) one can define terms similar
to quasi--projections and, as with the class of $\QRA$'s, one can equivalently
formalize many theories of first order logic as equational theories of certain
$\RCA_{3}^{\uparrow}$'s. Therefore $\RCA_{3}^{\uparrow}$ is in our main interest.
$\RCA_{\alpha}^{\uparrow}$ for bigger $\alpha$'s behave in the same way, an
explanation of this can be found in \cite{Sagi} and can be deduced from our proof, which shows that $\RCA_{3}^{\uparrow}$ has implicitly $\omega$
extra dimensions.
\begin{definition}
\label{canyildef}
(${\sf Cs}^{\uparrow}_{\alpha}$, $\RCA^{\uparrow}_{\alpha}$.) \\
Let $\alpha$ be an ordinal. Let $U$ be a set and let $R$ be a binary relation on $U$
such that $\langle U; R \rangle$ is a weak P--structure.
Then the
{full w--directed cylindric set algebra} of dimension $\alpha$ with base
structure $\langle U; R \rangle$ is the algebra: \\
\\
\centerline{$\langle {\cal P}({}^{\alpha}U); \cap, -, {\sf C}_{i}^{\uparrow(R)}, {\sf C}_{i}^{\downarrow(R)}, {\sf D}_{i,j}^{U} \rangle_{i,j \in \alpha}$,} \\
\\
where $\cap$ and $-$ are set theoretical intersection and complementation (w.r.t. ${}^{\alpha}U$),
respectively, ${\sf D}^{U}_{i,j} = \{ s \in {}^{\alpha}U: s_{i}=s_{j} \}$ and
$ {\sf C}_{i}^{\uparrow(R)}, {\sf C}_{i}^{\downarrow(R)}$ are defined as follows. For every
$X \in {\wp}({}^{\alpha}U)$: \\
\\
\indent $ {\sf C}_{i}^{\uparrow(R)}(X) = \{ s \in {}^{\alpha}U: (\exists z \in X)( R(z_{i},s_{i})$ and $(\forall j \in \alpha)(j \not=i \Rightarrow s_{j}=z_{j})) \},$ \\
\indent $ {\sf C}_{i}^{\downarrow(R)}(X) = \{ s \in {}^{\alpha}U: (\exists z \in X)( R(s_{i},z_{i})$ and $(\forall j \in \alpha)(j \not=i \Rightarrow s_{j}=z_{j})) \}.$ \\
\\
The class of {w--directed cylindric set algebras} of dimension $\alpha$
and the class of {directed cylindric set algebras} of dimension $\alpha$
are defined as follows. \\
\\
\centerline{$ w-{\sf Cs}^{\uparrow}_{\alpha} ={\bf S} \{ {\cal A}: \ {\cal A}$ is a full
w--directed cylindric set algebra of dimension $\alpha$} \\
\centerline{ \indent \indent \indent \indent with base structure $\langle U; R \rangle$,
for some weak P--structure $\langle U; R \rangle \}$.} \\
\\
\centerline{$ {\sf Cs}^{\uparrow}_{\alpha} ={\bf S} \{ {\cal A}: \ {\cal A}$ is a full
w--directed cylindric set algebra of dimension $\alpha$} \\
\centerline{ \indent \indent \indent \indent with base structure $\langle U; R \rangle$,
for some extensional P--structure $\langle U; R \rangle \}$.} \\
\\
The class $\RCA^{\uparrow}_{\alpha}$ of {representable directed cylindric algebras} of
dimension $\alpha$ is defined to be $\RCA^{\uparrow}_{\alpha} = {\bf SP}{\sf Cs}^{\uparrow}_{\alpha}$.
\end{definition}

The main result of S\'agi in \cite{Sagi} is a direct proof for the following: \\
\\
\begin{theorem}\label{rep}
{\em $\RCA^{\uparrow}_{\alpha}$ is a finitely axiomatizable
variety whenever $\alpha \geq 3$ and $\alpha$ is finite}
\end{theorem}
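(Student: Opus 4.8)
\textbf{Proof proposal for Theorem \ref{rep}.} The plan is to obtain finite axiomatizability of $\RCA^{\uparrow}_{\alpha}$ for finite $\alpha\geq 3$ by mimicking, in the directed-$\CA$ setting, exactly the pairing-function machinery developed above for $\QRA$ and for N\'emeti's directed cylindric algebras, reducing everything to the case $\alpha=3$ and then lifting. The key observation, already signalled in the excerpt (``$\RCA_3^{\uparrow}$ has implicitly $\omega$ extra dimensions''), is that inside a $w$-directed cylindric set algebra of dimension $3$ the weak pairing relation $R$ lets one define, by brute force, terms that behave like Tarski's quasi-projections $p,q$ together with the successor/predecessor operations ${\sf suc}$, ${\sf pred}$ of the $\QRA$ section. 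So the skeleton is: (i) write down a finite equational theory $\Sigma_3$ in the similarity type of ${\sf Cs}^{\uparrow}_3$ (Boolean axioms, the $\CA_3$-like axioms for ${\sf D}_{ij}$ and the two families of cylindrifiers ${\sf C}_i^{\uparrow}$, ${\sf C}_i^{\downarrow}$, plus equations forcing $\langle U;R\rangle$ to be a weak $P$-structure via the defined projection terms); (ii) show $\RCA^{\uparrow}_3\models\Sigma_3$, which is the routine soundness direction; (iii) show conversely that any $\A\models\Sigma_3$ is representable, by a Henkin/neat-embedding construction parallel to Simon's proof for $\QRA$.

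Concretely, for step (iii) I would: first, using the quasi-projection terms definable from $R$, define for each finite $n\geq 3$ a term-definable $n$-dimensional cylindric reduct $\B_n$ of $\A$ exactly as the terms $\epsilon^{(n)},\pi_i^{(n)},\xi^{(n)},t_i^{(n)},{\sf c}_i^{(n)},{\sf d}_{ij}^{(n)}$ were defined from a $\QRA$; the axioms in $\Sigma_3$ are chosen precisely so that the analogues of the $\QRA$ lemmas (``$\B_n$ is closed under the operations'' and ``$\B_n$ is a $\CA_n$'') go through. Second, establish the neat-embedding chain ${\sf suc}:\B_n\hookrightarrow\Nr_n\B_{n+1}$ together with the commutation property (*) of the earlier theorems, using the functional element ${\sf pred}$ to get ${\sf suc}({\sf pred}\,x)=x$ and ${\sf pred}({\sf suc}\,x)=x$. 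Third, form the ultraproduct $\C=\Pi_{n\geq 3}\C_n^+/G$ over a non-principal ultrafilter, obtaining that $\A$ neatly embeds into a locally finite $\omega$-dimensional algebra $\C$ with the $\Ra S$/$\Nr S$ property, so that $\A\in S\Nr_3\CA_\omega=\RCA_3$ as a plain cylindric algebra; finally, transfer the representation of this cylindric reduct back to a directed representation, i.e. build a base structure $\langle U;R\rangle$ that is a weak $P$-structure and witnesses ${\sf C}_i^{\uparrow(R)},{\sf C}_i^{\downarrow(R)}$, using the pairing terms to define $R$ on the representing set. For general finite $\alpha>3$ one either repeats the construction with $\alpha$ base dimensions, or notes that $\RCA^{\uparrow}_\alpha$ is term-definitionally equivalent (via the reshuffling-of-indices trick used for the $\C_n$ above) to an expansion controlled by $\RCA^{\uparrow}_3$, so finite axiomatizability propagates upward; in all cases only finitely many schema instances are needed because $\alpha$ is finite.

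The main obstacle I expect is step (iii) done \emph{purely equationally in dimension $3$} — that is, pinning down a genuinely \emph{finite} set of equations $\Sigma_3$ that is strong enough to force the defined projection terms to behave like honest quasi-projections (so that all the $\B_n$ really are $\CA_n$'s and the ${\sf suc}$-maps really are neat embeddings) yet weak enough to be sound over $\RCA^{\uparrow}_3$. In the $\QRA$ case Simon already has the two-equation definition of a $\QRA$ plus the $\RA$ axioms to lean on; here one must simultaneously encode the interaction of $R$ with both an up-cylindrifier and a down-cylindrifier, and verify that the ``implicit $\omega$ extra dimensions'' are genuinely captured by $\Sigma_3$ rather than needing a schema. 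A secondary delicate point is the transfer back from the cylindric representation of $\C$ to a \emph{directed} set representation respecting $R$: one must check that the relation defined on the base by the pairing terms is indeed a weak $P$-structure and that ${\sf C}_i^{\uparrow(R)}$, ${\sf C}_i^{\downarrow(R)}$ are computed correctly, which is where the extensionality-versus-weak-$P$ distinction of Definition \ref{canyildef} has to be handled with care. Everything else — soundness, the ultraproduct step, the index reshuffling for $\alpha>3$ — I expect to be a faithful, if laborious, transcription of the $\QRA$ arguments already in the excerpt, so I would cite those lemmas wherever the proofs coincide rather than reproving them.
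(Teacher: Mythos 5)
The paper contains no proof of this statement to compare yours against: Theorem \ref{rep} is introduced with the words ``The main result of S\'agi in \cite{Sagi} is a direct proof for the following,'' and the axiom set itself is only pointed to (\cite[definition 8.6.8]{Sagi}). So the theorem is imported, not proved here, and any assessment of your proposal has to be against what the paper \emph{says} about S\'agi's method rather than against a written-out argument.

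With that caveat, your plan is in the right spirit but diverges from the cited proof on the central step. The paper describes S\'agi's technique as ``similar to Maddux's proof of representation of $\QRA$s,'' i.e.\ a direct step-by-step construction of a base structure --- and it stresses elsewhere that this is precisely \emph{unlike} Simon's neat-embedding proof. Your step (iii) is Simon's route: define term-reducts $\B_n$, chain them by ${\sf suc}$/${\sf pred}$, pass to an ultraproduct, land in $S\Nr_3\CA_\omega=\RCA_3$, then pull the representation back. That route buys you a clean reduction to known cylindric machinery, but it leaves the genuinely hard work unlocated in exactly the two places you flag: (a) you never exhibit the finite equation set $\Sigma_3$ that makes the defined projection terms behave, which is the entire content of finite axiomatizability --- without it the argument is a reduction of the theorem to itself; and (b) the back-transfer from a square representation of the cylindric reduct to a \emph{directed} set representation is not a routine decoding. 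Note in particular that $\RCA^{\uparrow}_{\alpha}={\bf SP}{\sf Cs}^{\uparrow}_{\alpha}$ is defined over \emph{extensional} $P$-structures, not merely weak ones, so the relation $R$ you define on the base from the pairing terms must be forced extensional (uniqueness of the pair code), and the up- and down-cylindrifiers must be verified against that $R$; this is where a Maddux-style step-by-step construction earns its keep, since it builds $\langle U;R\rangle$ directly with the required properties rather than recovering it after the fact. So: acceptable as a research plan, and plausibly completable, but as it stands it is a programme with the two decisive steps deferred, and it follows Simon's paradigm where the cited proof follows Maddux's.
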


$\CA^{\uparrow}_3$ denotes the variety of directed cylindric algebras of dimension $3$
as defined in \cite[definition 3.9]{Sagi}. In \cite{Sagi}, it is proved that
$\CA^{\uparrow}_3=\RCA^{\uparrow}_3.$ A set of axioms is postulated  \cite[dfinition 8.6.8]{Sagi}.
Let $\A\in \CA^{\uparrow}_3$.
Then we have quasi-projections
$p,q$ defined on $\A$ as defined in \cite[p.878, 879]{Sagi}. We recall their definition, which is a little bit complicated because
they are defined as formulas in the corresponding second
order logic.
Let $\cal L$ denote the untyped logic corresponding to directed $\CA_3$'s as  defined \cite[ p.876-877]{Sagi}. It has only $3$ variables.
There is a correspondence between formulas (or formula schemes)  in this language and $\CA^{\uparrow}_3$ terms.
This is completely analogous to the correspondence between $\RCA_n$ terms and first order formulas containing only $n$ variables.
For example $v_i=v_j$ corresponds to ${\sf d}_{ij}$, $\exists^{\uparrow} v_i(v_i=v_j)$ correspond to ${\sf c}^{\uparrow}_i {\sf d}_{ij}$.
In \cite{Sagi} the following formulas (terms) are defined:

\begin{definition} Let $i,j,k \in 3$ distinct elements.
We define variable--free $\RCA^{\uparrow}_{3}$ terms as follows:
\begin{tabbing}
\indent \= $ v_{i} = \{ \{ v_{j} \}_{R} \}_{R}$ \ \ \= is \indent \= $\exists v_{k}( v_{k} = \{ v_{j} \}_{R} \wedge v_{i} = \{ v_{k} \}_{R})$, \kill
\> $ v_{i} \in_R v_{j}$ \indent \> is \> $\exists^{\uparrow}v_{j}(v_{i}=v_{j})$, \\
\> $ v_{i} = \{ v_{j} \}_{R}$ \> is \> $\forall v_{k}( v_{k} \in_R v_{j} \Leftrightarrow v_{k}=v_{j}) $, \\
\> $ \{ v_{i} \}_{R} \in_R v_{j}$ \> is \> $\exists v_{k}( v_{k} \in_R v_{j} \wedge v_{k} = \{ v_{i} \}_{R})$, \\
\> $ v_{i} = \{ \{ v_{j} \}_{R} \}_{R}$ \> is \> $\exists v_{k}( v_{k} = \{ v_{j} \}_{R} \wedge v_{i} = \{ v_{k} \}_{R})$ ,\\
\> $ v_{i} \in_R \cup v_{j}$ \> is \> $\exists v_{k}(v_{i} \in_R v_{k} \wedge v_{k} \in_R v_{j})$.
\end{tabbing}
\end{definition}

Therefore $pair_{i}$ (a pairing function) can be defined as follows: \\

\indent $\exists v_{j} \forall v_{k}( \{ v_{k} \}_{R} \in_R v_{i} \Leftrightarrow v_{j} = v_{k}) \ \wedge $ \\
\indent $\forall v_{j} \exists v_{k}( v_{j} \in_R v_{i} \Rightarrow v_{k} \in_R v_{j} ) \ \wedge $ \\
\indent $\forall v_{j} \forall v_{k}( v_{j} \in_R \cup v_{i} \ \wedge \ \{ v_{j} \} \not\in_{R} v_{i} \ \wedge \ v_{k} \in_R \cup v_{i} \ \wedge \ \{ v_{k} \} \not\in_{R} v_{i} \Rightarrow v_{j} = v_{k} )$. \\

It is clear that this is a term built up of diagonal elements and directed cylindrifications.
The first quasi-projection  $v_{i} = P(v_{j})$ can be chosen as: \\
\\
\indent $pair_{j} \ \wedge \ \forall^{\downarrow} v_{j} \exists^{\downarrow} v_{j} (v_{i} = v_{j})$. \\
\\
and the second quasiprojection  $v_{i} = Q(v_{j})$ can be chosen as: \\
\\
\centerline{$pair_{j} \ \wedge \ (( \forall v_{i} \forall v_{k} ( v_{i} \in_R v_{j} \ \wedge \ v_{k} \in_R v_{j}  \Rightarrow v_{i} = v_{k} )) \Rightarrow v_{i} = P(v_{j})) \ \wedge $} \\
\centerline{ $(\exists v_{i} \exists v_{k}( v_{i} \in_R v_{j} \ \wedge \ v_{k} \in_R v_{j} \ \wedge \  v_{i} \not= v_{k}) \Rightarrow (v_{i} \not= P(v_{j}) \ \wedge \ \exists^{\downarrow} v_{j} \exists^{\downarrow} v_{j}(v_{i} = v_{j})))$.}

\begin{theorem}
Let $\B$ be the relation algebra reduct of $\A$; then $\B$ is a relation algebra, and the
variable free terms corresponding to the formulas  $v_i=P(v_j)$ and $v_j=Q(v_j)$
call  them $p$ and $q$, respectively, are quasi-projections.
\end{theorem}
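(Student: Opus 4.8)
The statement to be established is that the variable-free $\RCA^\uparrow_3$-terms $p$ and $q$ corresponding to the formulas $v_i = P(v_j)$ and $v_i = Q(v_j)$ are quasi-projections on the relation algebra reduct $\B = \Ra\A$ of a directed cylindric algebra $\A \in \CA^\uparrow_3 = \RCA^\uparrow_3$; that is, they satisfy $\breve p ; p \leq \Id$, $\breve q ; q \leq \Id$, and $\breve p ; q = 1$. The plan is to work semantically, exploiting the representation theorem (Theorem~\ref{rep}, due to S\'agi): since $\CA^\uparrow_3 = \RCA^\uparrow_3$, it suffices to verify the three equations in an arbitrary directed cylindric set algebra $\A \subseteq \langle\wp({}^3 U); \cap, -, {\sf C}_i^{\uparrow(R)}, {\sf C}_i^{\downarrow(R)}, {\sf D}_{i,j}^U\rangle$ whose base $\langle U; R\rangle$ is an extensional P-structure, because equations are preserved under subalgebras and products and the representable algebras generate the variety. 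So the first step is to translate the definitions of $\Ra$ (Definition~\ref{RA}, with the reshuffled indices: here one works with the $2$-dimensional elements, using the third coordinate as the spare dimension) and of the operations $;$, $\breve{\ }$, $1'$ into explicit set-theoretic descriptions of what $p, q \subseteq {}^2 U$ (equivalently, binary relations on $U$) actually are in the base structure.

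\textbf{Key steps.} First I would unwind the formula $v_i = P(v_j)$ in the base P-structure $\langle U; R\rangle$: using that $\langle U; R\rangle$ is an extensional P-structure, the term $\{v_j\}_R$ denotes, for a given $b \in U$, the \emph{unique} element $c$ such that the only $R$-child of $c$ is $b$ (existence from the P-structure axiom applied with $x = y = b$, uniqueness from extensionality). Then $v_i = P(v_j)$ (together with the $\forall^\downarrow\exists^\downarrow$ clause, which via the weak P-structure ensures $P$ is genuinely a partial function with the right domain behaviour) defines a \emph{functional} binary relation: $(a, b) \in p$ iff $a = \{b\}_R$. This immediately gives $\breve p ; p \leq \Id$: if $(a,b), (a,b') \in p$ then $a = \{b\}_R = \{b'\}_R$, and extensionality forces $b = b'$, so $\breve p ; p$, which is the relation $\{(b, b') : \exists a\, (a,b) \in p \wedge (a,b') \in p\}$, is contained in the identity. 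The analogous analysis of $v_i = Q(v_j)$ shows $q$ is also functional (the case split in the definition of $Q$ distinguishes whether $v_j$ codes a singleton or a genuine pair, extracting the ``second coordinate'' in each case), giving $\breve q ; q \leq \Id$. The hard step is $\breve p ; q = 1$: I need to show that for \emph{every} pair $(b, b') \in U \times U$ there is some $c \in U$ with $(c, b) \in p$ and $(c, b') \in q$ — i.e., $c$ simultaneously codes $b$ as its ``$P$-value'' and $b'$ as its ``$Q$-value''. This is exactly where the $pair_j$ formula does its work: one invokes the P-structure axiom to produce $c$ coding the set $\{\{b\}_R, \{\{b'\}_R\}_R\}$ (or whatever nesting the definitions of $P$ and $Q$ dictate — the Kuratowski-style encoding has to be read off precisely from the formulas), verifies that $c$ satisfies $pair_j$, and then checks that with this $c$ the defining conditions of both $P(v_j) = v_i$ and $Q(v_j) = v_i$ hold with $v_i \mapsto b$ resp. $v_i \mapsto b'$. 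One must be careful that the directed cylindrifications ${\sf C}_i^{\uparrow(R)}$ and ${\sf C}_i^{\downarrow(R)}$ appearing in the term realizations of the quantifiers $\exists^\uparrow, \exists^\downarrow$ are interpreted correctly, and that passing to $\Ra\A$ (which only sees $2$-dimensional elements and uses one extra dimension for composition) does not distort these — this is why the reshuffling of indices in Definition~\ref{RA} and the choice of spare dimension matter.

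\textbf{Alternative and main obstacle.} An alternative, more syntactic route would bypass the base structure entirely: first establish, as is essentially done in \cite{Sagi}, that $p$ and $q$ \emph{as $\CA^\uparrow_3$-terms} behave like quasi-projections inside $\A$ (the ``pairing'' axioms of $\CA^\uparrow_3$ are designed precisely to force this), and then verify that the $\Ra$-reduct operations $;, \breve{\ }, 1'$ interact with these terms in the way the relation-algebra quasi-projection equations require, using the cylindric-algebra identities relating ${\sf c}_i$, ${\sf s}_i^j$, ${\sf d}_{ij}$. I would likely present the semantic proof as the main argument and remark that the syntactic verification is available in \cite{Sagi}. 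The main obstacle is the verification of $\breve p ; q = 1$, and within it the bookkeeping: one must pin down the exact set-coding that the composite formulas $v_i = P(v_j)$ and $v_i = Q(v_j)$ implement (the nesting of $\{\cdot\}_R$ operations), show that a single $c$ can carry both projections, and confirm that this survives the restriction to $\Ra\A$ where only binary relations and a single spare dimension are available. Everything else — functionality of $p$ and $q$, hence $\breve p ; p \leq \Id$ and $\breve q ; q \leq \Id$ — is a routine consequence of extensionality of the base and the definitions, once those definitions have been correctly unwound.
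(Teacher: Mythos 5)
Your proposal is correct and follows essentially the same route as the paper: the paper likewise reduces to directed cylindric set algebras (using that $\RCA^{\uparrow}_3$ is the variety generated by them), reads off that $(v_i=P(v_j))^{\A}$ and $(v_i=Q(v_j))^{\A}$ are the first and second coordinate extractions of an $R$-coded pair, and concludes that $p,q$ are functional elements satisfying the quasi-projection equations, with the tedious syntactic verification mentioned as the alternative. Your write-up merely supplies more of the detail (extensionality for functionality, the P-structure axiom for $\breve p;q=1$) that the paper leaves as "it is clear".
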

\begin{proof}
One proof is very tedious, though routine. One translates the functions as variable free terms in the language of $\CA_3$ and use
the definition of composition and converse in the $\RA$ reduct, to verify that they are quasi-projections.
Else one can look at their meanings on set algebras, which we recall from S\'agi \cite{Sagi}.
Given a cylindric set algebra $\cal A$ with base $U$ and accessibility relation $R$
$$(v_i=P(v_j))^A=\{s\in {}^3U: (\exists a,b\in U)(s_j=(a,b)_R, s_i=a\}$$
$$(v_i=Q(v_j)^A=\{s\in {}^3U: (\exists a,b\in U)(s_j=(a,b)_R, s_i=b\}.$$
First $P$ and $Q$ are functions, so they are functional elements.
Then it is clear that in this set algebras that $P$ and $Q$ are quasi-projections.
Since $\RCA^{\uparrow}_3$ is the variety generated by set algebras, they have the same meaning in the class $\CA^{\uparrow}_3.$
\end{proof}

Now we can turn the class around. Given a $\QRA$ one can define a directed $\CA_n$, for every finite $n\geq 2$.
This definition is given by N\'emeti and Simon in \cite{NS}.
It is very similar to Simon's definition above (defining $\CA$ reducts in a $\QRA$,
except that directed cylindrifiers along a relation $R$ are
implemented.)

\begin{theorem}\label{directed} The concrete category $\QRA$
with morphisms injective homomorphisms, and that of $\CA^{\uparrow}$  with morphisms also injective homomorphisms are equivalent.
in particular $\CA^{\uparrow}_3$ is categorically equivalent to $\CA^{\uparrow}_n$ for $n\geq 3$.
\end{theorem}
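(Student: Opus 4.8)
The strategy is to establish the equivalence $\QRA\simeq\CA^{\uparrow}$ as a ``round trip'' of the two constructions that have already been set up in the preceding subsections, and then to compose with the equivalences already proved for $\QRA$ to get the ``in particular'' clause. On one side we have, for each finite $n\geq 2$, the functor $\Ca_n$ (the N\'emeti--Simon construction, citing \cite{NS}) sending a quasi-projective relation algebra $\bf Q$ to a directed cylindric algebra of dimension $n$ built from $\bf Q$ and its quasi-projections $p,q$ using $R$-directed cylindrifications; on the other side we have the functor $\Ra^{\uparrow}$ (essentially definition \ref{RA} adapted to the directed setting, using the quasi-projection terms $v_i=P(v_j)$, $v_i=Q(v_j)$ displayed above) sending $\A\in\CA^{\uparrow}_n$ to its relation algebra reduct, which by the theorem just proved carries quasi-projections $p,q$ and hence lands in $\QRA$. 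First I would verify that both assignments are functorial on the category with injective homomorphisms as morphisms: an injective homomorphism is taken to an injective homomorphism (the $\Ra^{\uparrow}$ reduct is a term-reduct so this is automatic; for $\Ca_n$ one checks the generated terms are preserved, exactly as in the $\QRA\to\CA_n$ direction of \cite{Andras}).

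The core of the argument is then the two natural isomorphisms $\Ra^{\uparrow}\circ\Ca_n\cong 1_{\QRA}$ and $\Ca_n\circ\Ra^{\uparrow}\cong 1_{\CA^{\uparrow}_n}$. For the first: starting from $\bf Q\in\QRA$, build $\Ca_n(\bf Q)$, take its $\Ra^{\uparrow}$ reduct, and show the identity-on-the-$2$-dimensional-elements map is an isomorphism back to $\bf Q$; this is the directed analogue of Simon's computation in \cite{Andras} that $\Ra(\Ca_n(\bf Q))\cong\bf Q$, and the extra directed structure does not interfere because the $\Ra$-operations $;,\breve{},1'$ are recovered from the cylindric terms in exactly the same way. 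For the second, harder, direction: starting from $\A\in\CA^{\uparrow}_n$, one must show $\A$ is (up to isomorphism) the algebra $\Ca_n(\Ra^{\uparrow}\A)$ reconstructed from its own relation algebra reduct and quasi-projections. Here I would lean on S\'agi's representation theorem \ref{rep}: since $\RCA^{\uparrow}_n=\CA^{\uparrow}_n$ is generated by the directed cylindric set algebras, it suffices to check the reconstruction on a full directed cylindric set algebra with base structure $\langle U;R\rangle$, where everything is concrete --- the meanings of $P,Q$ were computed above as genuine pairing functions, so the reconstructed algebra literally agrees with the original set algebra, and the abstract statement follows by the usual ``true on generators, hence true everywhere'' argument together with preservation of the relevant terms.

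Having the equivalence $\QRA\simeq\CA^{\uparrow}_n$ for each finite $n\geq 3$ (and noting the base case can be taken as $n=3$ since $\CA^{\uparrow}_3=\RCA^{\uparrow}_3$ by \cite{Sagi}), the ``in particular'' clause is immediate: $\CA^{\uparrow}_3$ and $\CA^{\uparrow}_n$ are each equivalent to the single transient category $\QRA$, hence equivalent to each other, exactly as in the remarks preceding theorem \ref{SUPAP} and in the abstract's description of $\QRA$ as a transient category. One should also remark --- to keep the statement honestly ``categorical equivalence'' rather than merely ``every object of one occurs as an object of the other'' --- that the two functors are, up to the natural isomorphisms just exhibited, mutually quasi-inverse, which is precisely what the two displayed natural isomorphisms say.

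\textbf{Main obstacle.} The delicate step is the second natural isomorphism $\Ca_n\circ\Ra^{\uparrow}\cong 1_{\CA^{\uparrow}_n}$, specifically verifying that the directed cylindrification operators ${\sf c}_i^{\uparrow(R)}$ and ${\sf c}_i^{\downarrow(R)}$ of $\A$ are correctly recovered by the N\'emeti--Simon construction applied to the $\RA$-reduct. Unlike the ordinary cylindrifications, these are bounded quantifications along $R$, and one must be sure that the ``element-of'' relation $R$ is itself faithfully coded by the quasi-projection terms $p,q$ in $\Ra^{\uparrow}\A$ --- equivalently that the formulas $v_i\in_R v_j$, $v_i=\{v_j\}_R$, etc., displayed above, interact with $p,q$ in the way required by \cite{NS}. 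I expect this to go through on concrete set algebras by the explicit pairing semantics given by S\'agi, and then transfer to the abstract class by \ref{rep}; but checking that the transfer preserves all the terms involved (which, unlike pure $\CA$ or $\RA$ terms, mix directed and ordinary operations) is the point that needs genuine care rather than a routine appeal to \cite{Andras}.
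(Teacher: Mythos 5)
Your proposal follows essentially the same route as the paper's (much terser) proof: the N\'emeti--Simon construction and the relation-algebra reduct are exhibited as mutually quasi-inverse functors, and the equivalence of $\CA^{\uparrow}_3$ with $\CA^{\uparrow}_n$ then falls out by composing through the transient category $\QRA$. The one point the paper flags that you treat only implicitly is that the round trip $\QRA\to\CA^{\uparrow}\to\QRA$ may return the algebra with \emph{new} quasi-projections definable from the old ones, so the first natural isomorphism is not literally the identity on the distinguished elements --- worth a sentence, but your two-natural-isomorphism formulation absorbs it correctly.
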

\begin{proof} Given $\A$ in $\QRA$ we can associate a directed $\CA^{\uparrow}_3$, injective
homomorphism are restrictions and vice versa; these are inverse functors.
However, when we pass from an $\QRA$ to a $\CA^{\uparrow}_3$ and then take the $\QRA$ reduct,
we get back exactly to the $\QRA$ we started off with,
but possibly with  new quasi projections which are  definable from the old ones.
Via this equivalence, we readily  conclude
that $\CA^{\uparrow}_3$  and $\CA^{\uparrow}_n$, with the transient category $\QRA$ establishing this categorial
equivalence.
\end{proof}

We have encountered dimension restricted free algebras of infinite dimension in the context of the interpolation property.
Now, in what follows we address finite dimensional ones from a completely different angle.
We show that such (finitely generated) dimension restricted free algebras are not atomic, which is yet another algebraic
reflection of G\'odel's first incompleteness
theorem, which we stumbled on earlier.

\subsection{Dimension restricted free algebras}
\begin{definition}
\cite[2.5.31]{HMT1}
Let $\delta$ be a cardinal.
Let $\alpha$ be an ordinal.
Let $_{\alpha} \Fr_{\delta}$ be the absolutely free algebra on $\delta$
generators and of type $\CA_{\alpha}.$
For an algebra
$\A,$ we write $R\in Con\ A$ if $R$ is a congruence relation on $\A.$
Let $\rho\in {}^{\delta}\wp(\alpha)$.
Let $L$ be a class having the same similarity type as
$\CA_{\alpha}.$ $SL$ denotes the class of all subalgebras of members of $L$.
Let
$$Cr_{\delta}^{(\rho)}L=\bigcap\{R: R\in Co_{\alpha}\Fr_{\delta},
{}_{\alpha}\Fr_{\delta}/R\in SL,
c_k^{_{\alpha}\Fr_{\delta}}{\eta}/R=\eta/R \text { for each }$$
$$\eta<\delta \text
{ and each }k\in \alpha\smallsetminus
\rho{\eta}\}$$
and
$$\Fr_{\delta}^{\rho}L={}_{\alpha}\Fr_{\beta}/Cr_{\delta}^{(\rho)}L.$$
\end{definition}

The ordinal $\alpha$ does not appear in $Cr_{\delta}^{(\rho)}L$ and $\Fr_{\delta}^{(\rho)}L$
though it is involved in their definition.
However, $\alpha$ will be clear from context so that no confusion is likely to ensue.
The algebra $\Fr_{\delta}^{(\rho)}L$ is referred to \cite{HMT1}
as a dimension restricted free algebra over $K$ with $\beta$
generators. Also $\Fr_{\delta}^{(\rho)}L$ is said to be
dimension restricted by the function $\rho$, or simply,
$\rho$-dimension-restricted.

\begin{definition} Let $\delta$ be a cardinal.
Assume that $L\subseteq
\CA_{\alpha}$, $x=\langle x_{\eta}: \eta<\delta\rangle\in {}^{\delta}A$
and $\rho\in {}^{\delta}\wp(\alpha)$. Then we say that the sequence
$x$ $L$-freely generates $\A$ under the
dimension restricting function $\rho,$ if the following two conditions are satisfied:
\begin{enumroman}
\item $\A$ is generated by $\rng x$,
and $\Delta x_{\eta}\subseteq \rho(\eta)$ for every $\eta<\delta.$

\item Whenever $\B\in L$, $y=\langle y_{\eta}: \eta<\delta\rangle\in   {}^{\delta}B$
and $\Delta y_{\eta}\subseteq \rho(\eta)$
for every $\eta <\delta,$ there is a homomorphism
$h$ from $\A$ to $\B$
such that $h\circ x=y.$
\end{enumroman}
\end{definition}
For an algebra $\A$ and $X\subseteq A,$ we write,
following \cite{HMT1}, $\Sg^{\A}X$, or even simply $\Sg X,$
for the subalgebra of $\A$
generated by $X$. We have the following which almost follow from the definitions.

\begin{theorem} Let $L\subseteq \CA_{\alpha}.$ Let $\delta$ be a cardinal.
Let $\rho\in {}^{\delta}\wp(\alpha)$.
Then the sequence $\langle \eta/Cr_{\delta}^{(\rho)}L: \eta<\delta\rangle$
$L$-freely generates $\Fr_{\delta}^{\rho}L$
under the dimension restricting function
$\rho$.
\end{theorem}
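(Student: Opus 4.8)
The statement is the standard universal-mapping characterization of dimension-restricted free algebras, so the plan is to verify the two defining conditions of the relation ``$x$ $L$-freely generates $\A$ under $\rho$'' for the particular algebra $\Fr_\delta^\rho L = {}_\alpha\Fr_\delta/Cr_\delta^{(\rho)}L$ and the particular sequence $x = \langle \eta/Cr_\delta^{(\rho)}L : \eta<\delta\rangle$. First I would unwind the notation: write $R = Cr_\delta^{(\rho)}L = \bigcap\{S : S\in Co_\alpha({}_\alpha\Fr_\delta),\ {}_\alpha\Fr_\delta/S\in SL,\ \text{and } \cyl k(\eta/S)=\eta/S \text{ for all }\eta<\delta,\ k\in\alpha\setminus\rho(\eta)\}$, and note $R$ is itself a congruence (an intersection of congruences), so $\Fr_\delta^\rho L$ is a well-defined $\CA_\alpha$-type algebra and $x$ is a genuine sequence of its elements.

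For condition (i): $\rng x$ generates $\Fr_\delta^\rho L$ because the free generators $\langle\eta:\eta<\delta\rangle$ generate ${}_\alpha\Fr_\delta$ and quotient maps are surjective homomorphisms, so their images generate the quotient. For the dimension restriction $\Delta x_\eta\subseteq\rho(\eta)$: by definition $\Delta(\eta/R) = \{k\in\alpha : \cyl k(\eta/R)\neq \eta/R\}$, and each congruence $S$ in the defining intersection satisfies $\cyl k(\eta/S)=\eta/S$ for $k\notin\rho(\eta)$; since $R\subseteq S$ there is a natural projection ${}_\alpha\Fr_\delta/R\to{}_\alpha\Fr_\delta/S$, but more directly one argues that $(\cyl k\eta,\ \eta)\in S$ for every such $S$, hence $(\cyl k\eta,\eta)\in\bigcap S = R$, so $\cyl k(\eta/R)=\eta/R$, i.e. $k\notin\Delta x_\eta$. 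This gives $\Delta x_\eta\subseteq\rho(\eta)$.

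For condition (ii): let $\B\in L$ and $y=\langle y_\eta:\eta<\delta\rangle\in{}^\delta B$ with $\Delta y_\eta\subseteq\rho(\eta)$. By the universal property of the absolutely free algebra ${}_\alpha\Fr_\delta$, there is a (unique) homomorphism $g:{}_\alpha\Fr_\delta\to\B$ with $g(\eta)=y_\eta$ for all $\eta<\delta$. The key point is that $\ker g$ is one of the congruences $S$ occurring in the intersection defining $R$: indeed ${}_\alpha\Fr_\delta/\ker g\cong \Sg^\B(\rng y)\in S\B\subseteq SL$ (here $\B\in L$ and $SL$ is closed under subalgebras), and $\cyl k(\eta/\ker g)=\eta/\ker g$ for $k\notin\rho(\eta)$ exactly because $\cyl k y_\eta = y_\eta$, which is the hypothesis $\Delta y_\eta\subseteq\rho(\eta)$. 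Therefore $R=\bigcap S\subseteq\ker g$, and by the homomorphism theorem $g$ factors through the projection ${}_\alpha\Fr_\delta\to{}_\alpha\Fr_\delta/R=\Fr_\delta^\rho L$, yielding $h:\Fr_\delta^\rho L\to\B$ with $h\circ(\text{projection})=g$, hence $h\circ x = y$. This establishes (ii) and completes the proof.

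\textbf{Main obstacle.} There is no serious obstacle here — this is essentially bookkeeping with quotients, and it is the analogue of \cite[2.5.35]{HMT1} (which is cited later in the excerpt for the infinite-dimensional interpolation application). The only point requiring a little care is the verification that $\ker g$ really belongs to the defining family for $R$, specifically that $SL$ being closed under subalgebras is what makes ${}_\alpha\Fr_\delta/\ker g\cong\Sg^\B(\rng y)$ land in $SL$; once that is granted, the factorization is automatic. I would present the argument in the two-condition format above, citing \cite[2.5.31, 2.5.35]{HMT1} for the template, and keep the calculations to the one-line observations indicated.
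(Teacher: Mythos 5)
Your proof is correct and is precisely the standard universal-mapping argument that the paper relies on: the paper's own "proof" is just a citation to \cite[2.5.37, 2.6.45]{HMT1}, and the argument it spells out for the essentially identical statement in its interpolation subsection (showing $\ker f$ belongs to the defining family so that $Cr_{\delta}^{(\rho)}L\subseteq \ker f$ and the map factors) is the same as yours. No gaps; the only point of care, that $\Sg^{\B}(\rng y)\in SL$ and that $\Delta y_{\eta}\subseteq\rho(\eta)$ puts $\ker g$ into the intersected family, is exactly the point you isolate.
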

\begin{demo}{Proof}
\cite[2.5.37]
{HMT1} and \cite[2.6.45]{HMT1}.
\end{demo}
We shall need:
\begin{definition}\cite[2.6.28]{HMT1}
\begin{enumroman}
\item Let $1<\alpha<\beta$. Let $\A=(A, +, \cdot, 0, 1, {\sf c}_i, {\sf d}_{ij})_{i,j<\beta}$
be a $\CA_{\beta}$.
For $x\in A$, let $\Delta x=\{i\in \alpha: {\sf c}_ix\neq x\}.$
Then the neat $\alpha$ reduct
of $\A$, in symbols $\Nr_{\alpha}\A$, is the algebra with
universe $\{a\in A: \Delta x\subseteq \alpha\}.$
The Boolean operations of $\Nr_{\alpha}\A$ are inherited from $\A$ and the non Boolean
operations are those of $\A$ up to the index $\alpha.$
\item For $L\subseteq \CA_{\beta}$, $\Nr_{\alpha}L$ denote the class $\{\Nr_{\alpha}\A: \A\in L\}$.
\end{enumroman}
\end{definition}

The notation $_m\Fm_r^{\Lambda_n}$ in the coming theorem is
the syntactic Tarski-Linenbaum algebra where the number of variables available are $n$, but $m>n$ variables are
used to define the congruence relation on
the absolutely free algebras, which is restricted by the rank function $\rho$ of the language.
Then rather cumbersome notation is taken from \cite{HMT2}.
\begin{theorem} If $\Lambda_n=(n,  R, \rho)$ is any language
with $\dom R=\dom\rho=\beta$ and $n\leq m$,
then
$_{m}\Fm_r^{\Lambda_n}\cong \Fr_{\beta}^{\rho}S\Nr_n\CA_{m}.$
Here $S$ stands for the operation of forming subalgebras.
\end{theorem}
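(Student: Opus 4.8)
The plan is to exhibit an explicit isomorphism between the syntactic algebra $_{m}\Fm_r^{\Lambda_n}$ and the dimension-restricted free algebra $\Fr_{\beta}^{\rho}S\Nr_n\CA_{m}$ by verifying that the equivalence classes of the atomic formulas $[R_\eta]$, $\eta<\beta$, freely generate the syntactic algebra, $S\Nr_n\CA_m$-freely, under the dimension restricting function $\rho$. By the theorem quoted above (the one citing \cite[2.5.37]{HMT1} and \cite[2.6.45]{HMT1}), the universal property characterizes $\Fr_{\beta}^{\rho}L$ up to isomorphism, so it suffices to check that $_{m}\Fm_r^{\Lambda_n}$ together with the tuple $\langle [R_\eta]:\eta<\beta\rangle$ satisfies conditions (i) and (ii) of the definition of $L$-free generation under $\rho$, with $L=S\Nr_n\CA_m$.

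First I would set up the syntactic side carefully: $_{m}\Fm_r^{\Lambda_n}$ is the Lindenbaum-Tarski algebra of $n$-variable formulas of the language $\Lambda_n$, where provability $\vdash$ is the $m$-variable calculus (so the congruence is cut out using derivations that may invoke the $m-n$ auxiliary variables but the formulas themselves live in $n$ variables), and the relation symbols $R_\eta$ have rank function $\rho(\eta)\subseteq n$, meaning $R_\eta$ binds only the variables indexed in $\rho(\eta)$. I would then establish two things. (a) $_{m}\Fm_r^{\Lambda_n}\in S\Nr_n\CA_m$: the full $m$-variable Lindenbaum algebra $_{m}\Fm^{\Lambda_m}$ is naturally a $\CA_m$ (standard), and the $n$-variable fragment, under the congruence induced by $m$-provability, embeds as a subalgebra of its $n$-neat reduct $\Nr_n({}_{m}\Fm^{\Lambda_m})$, because a formula $\phi$ with all variables among $v_0,\dots,v_{n-1}$ has $\Delta[\phi]\subseteq n$ and the neat-reduct operations agree with the syntactic ones. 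The rank restriction $\Delta[R_\eta]\subseteq\rho(\eta)$ is immediate from the definition of the language. (b) Condition (i), that $\rng\langle[R_\eta]\rangle$ generates, follows because every $n$-variable formula is built from atomic formulas $R_\eta(\bar v)$ and $v_i=v_j$ by Boolean connectives and quantifiers $\exists v_i$, and these translate term-by-term into the $\CA_n$-operations applied to the generators (the diagonals $[\,v_i=v_j\,]={\sf d}_{ij}$ are themselves $\CA_n$-terms, hence already in the generated subalgebra).

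The substantive step is condition (ii), the universal mapping property. Given any $\B\in S\Nr_n\CA_m$ and a tuple $\langle y_\eta:\eta<\beta\rangle$ in $\B$ with $\Delta y_\eta\subseteq\rho(\eta)$, I must produce a homomorphism $h$ with $h([R_\eta])=y_\eta$. The idea: $\B\subseteq\Nr_n\D$ for some $\D\in\CA_m$; interpret each relation symbol $R_\eta$ by $y_\eta$ (viewed in $\D$) and interpret diagonals and quantifiers by the $\CA_m$-operations of $\D$, obtaining a meaning function from $m$-variable formulas into $\D$. Restricting to $n$-variable formulas lands back inside $\Nr_n\D$, and because $y_\eta\in\B$ and $\B$ is a subalgebra closed under the relevant operations, the image lies in $\B$. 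The point that needs care — and which I expect to be the main obstacle — is \emph{well-definedness}: one must show that whenever $\phi\vdash\psi$ in the $m$-variable calculus, the interpretations agree in $\D$. This is exactly where the $m$ auxiliary variables in the proof system are accounted for by the $m$ dimensions of the dilation $\D$: every inference rule of the $m$-variable calculus corresponds to a valid $\CA_m$-equation, so it is respected by $\D\in\CA_m$; one then observes that the conclusion, living in $n$ variables, has its value in $\Nr_n\D\supseteq\B$, so the agreement descends to $\B$. I would carry this out by induction on the length of derivations, checking each axiom scheme and rule against the $\CA_m$ axioms. Uniqueness of $h$ is automatic since the generators are the images of the $R_\eta$ and the operations are preserved. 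Having verified (i) and (ii), the quoted characterization theorem for dimension-restricted free algebras yields the isomorphism $_{m}\Fm_r^{\Lambda_n}\cong\Fr_{\beta}^{\rho}S\Nr_n\CA_{m}$, completing the proof.
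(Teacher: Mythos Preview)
Your argument is correct and follows essentially the same route as the paper's: both embed the $n$-variable formula algebra as $\Sg^{\Nr_n\B}[\rng x]$ for $\B$ the full $m$-dimensional Lindenbaum algebra, and both obtain the universal mapping property by first producing a $\CA_m$-homomorphism $\B\to\C$ into a dilation $\C\in\CA_m$ with $\A=\Nr_n\C$, then restricting to the subalgebra generated by $x$ inside $\Nr_n\B$. The only difference is one of packaging: where you propose to check well-definedness of the interpretation map by an explicit induction on the length of $m$-variable derivations, the paper simply cites the standard fact that $x=\langle R_\eta/{\cong_m}\rangle$ already $\CA_m$-freely generates $\B$ under the dimension-restricting function $\rho$, which yields the homomorphism $h:\B\to\C$ in one stroke.
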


\begin{demo}{Proof}.
Let $n<\omega$. For brevity, let $\B={}_{m,p}\Fm_r^{\Lambda_n}$
Then $\B\in \CA_m$.
Let $x=\langle R_i/{}_{\cong_m}: i<\beta\rangle$.
Consider any $\A\in \Nr_n\CA_m$ and $y\in {}^{\beta}A$ such that $\Delta y_i\subseteq \rho(i)$ for $i<\beta$.
Let $\C\in \CA_m$ such that $\A=\Nr_n\C$. Clearly $y\in {}^nC$ and $\Delta y_i\subseteq n$ for all $i<\beta$.
Now $x$ $\CA_n$ freely generates $\B$ under the dimension restricting function $\rho$,
and so there is an $h\in Hom(\B, \C)$ such that $h\circ x=y$.
Therefore $h\in Hom(\Rd_{n}\B, \Rd_{n}\C)$. But $\Sg^{\Rd_{n}\B}[\rng x]\subseteq \Rd_{n}\B$
and $$h(\Sg^{\Rd_{n}\B}[\rng x])=\Sg^{\Rd_{n}\C}(h [\rng x])$$
and so
$$h\in Hom(\Sg^{\Rd_{n}\B}[\rng x], \Sg^{\Rd_n\C}h [\rng x]).$$
But $\rng x\subseteq \Nr_n\B$ and $h(\rng x)=[\rng y]\subseteq \Nr_n\C$. We readily obtain that
$h\in Hom(\Sg^{\Nr_{n}\B}[\rng x], \A)$. But
$$\Sg^{\Nr_{n}\B}[\rng x]\cong{} _{m,p}\Fm_r^{\Lambda_n},$$
hence the desired conclusion.
\end{demo}
In fact,  dimension restricted free algebras were designed specially to
represent (algebraically) such formula algebras of pure logic.
When $K\subseteq \CA_n$ and $\rho(i)=n$ for all $i<\beta$, then $\Fr_{\beta}^{\rho}K$ is isomorphic to
$\Fr_{\beta}K$. (In this case $\rho$ is not restricting anything).

The next theorem says that the dimension restricted free algebras of infinitely many pairwise distinct
$3$ dimensional  varieties, namely, $S\Nr_3\CA_m$ for $m>3$,
are not  atomic. The result for dimensions $>3$ also holds, with an easier proof.
\begin{theorem}
\begin{enumroman}
\item Let $\omega\geq m\geq 3$. Let $\beta$ be a non-zero cardinal $<\omega$
and $\rho:\beta\to \wp(3)$ such that $\rho(i)\geq 2$ for some $i\in \beta.$
Then $\Fr_{\beta}^{\rho}S\Nr_3\CA_m$ is not atomic.
In particular, $\Fr_{\beta}^{\rho}\CA_3$ and $\Fr_{\beta}^{\rho}\RCA_3$
are not atomic.

\item Let $m\geq n>3$. Let $\beta$ be a non-zero cardinal $<\omega$ and let $\rho:\beta\to \wp(n)$ such that
$\rho(i)\geq 2$ for some $i\in \beta$.
Then $\Fr_{\beta}^{\rho}S\Nr_n\CA_m$ is not atomic.
In particular, $\Fr_{\beta}\CA_4$ and $\Fr_{\beta}\RCA_4$ are not atomic.
\end{enumroman}
\end{theorem}

\begin{demo}{Proof} Follows from the above.
\end{demo}
The non-atomicity of $\Fr_{\beta}\RCA_3$ which follows from the last theorem,
a result of N\'emeti's, solves \cite[problem 4.14]{HMT1}.
The far more difficult syntactic version addressing $\CA_3$ and even its diagonal free reduct can be found
in \cite{1}. The very rough idea is
that quasi-projections can be stimulated in such weak fragments of first order logic, which means that they are not that weak
after all.

\subsection{G\"odel's second incompleteness theorem}

There has been some debate over the impact of G\"odel's incompleteness theorems on Hilbert's Program,
and whether it was the first or the second incompleteness theorem that delivered the coup de grace. Through a careful G\"odel
coding of sequences of symbols (formulas, proofs), G\"odel showed that in theories $T$ which contain a sufficient amount of arithmetic,
it is possible to produce a formula ${\sf pr}(x, y)$ which "says" that $x$ is (the code of) a proof of (the formula with code) $y$.
Specifically, if $0 = 1$ is the code of the formula $0 = 1$,
then ${\sf Con} (T) = \forall (x \neg pr(x,0 = 1))$ may be taken to "say" that $T$ is consistent (no number is the code of a derivation in $T$ of $0 = 1$).
The second incompleteness theorem $(G2)$ says
that under certain assumptions about $T$ and the coding apparatus, $T$ does not prove ${\sf Con}(T)$.

In fact, G\"odel's second incompleteness
theorem follows from the first by formalizing the meta mathematical proof of it into the formal system
whose consistency is at stake. So  such theories should
be strong enough to encode the proof of the first incompleteness theorem. Roughly the provability relation ${\sf pr}(x,y)$ ($x$ proves $y$)
not only proves, when it does, but it can also prove that it proves. Given a theory $T$ containing arithmetic, let
$Prb_T(\sigma)$ denotes $\exists x {\sf pr}(x, \sigma)$.

We have seen a we can use the  translation function defined by  interpreting Robinson's arithmetic in four variable
fragments of first order logic. But Tarski and Givant went further; they showed
that they can interpret the whole of $ZFC$ in the calculus of relations, hence in $L_4.$

Thus we know a we can interpret strong enough theories
in $\CA_4$ like, for instance, Peano arithmetic.
Now we work out a G\"odel's second incompleteness theorem for finite variable fragments of first order logic.

\begin{definition} A theory $T$ is strong enough if when
$T$ proves $\phi$ then $T$ proves that $T$ proves $\phi$
In more detail,
\begin{enumarab}
\item $T$ contains Robinson's arithmetic

\item for any sentence $\sigma$, $T\vdash \sigma$, then $T\vdash Prb_T(\sigma)$
\item for any sentence $\sigma$, $T\vdash (Prb_T(\sigma)\to Prb_TPrb_T(\sigma)$)
\item For any sentences $\rho$ and $\sigma$, $T\vdash Prb_T(\rho\to \sigma)\to (Prb_T\rho\to Prb_T \sigma).$
\end{enumarab}
\end{definition}
Strong theories are strong enough not to prove their consistency, if they are consistent.
Robinson's arithmetic is not strong enough but $PA$ and $ZF$ are.
So we need to capture at least $PA$ in $L_4$.
Though we can capture the whole of $ZF$, we will be content only with $PA$, which is sufficient for our process.
We can translate
Peano arithmetic in $L_4$ using the translation function denoted by $\sf tr$ above, obtaining $T^+$
using only $4$ variables. Unlike G\"odel's first incompleteness theorem,
we do not have a clear algebraic counterpart of G\"odel's second incompleteness theorem,
at least not an obvious one,
but, all the same,  there is an obvious metalogical counterpart.

\begin{theorem}\label{second}
\begin{enumarab}
\item  There is a formula ${\sf Con}(T^+)$
using only $4$ variables, such that in each model $\frak{M}\vDash T^+$
this formula expresses the consistency of $T^+$. Furthermore,
$$T^+\nvdash  {\sf Con(T^+)}$$
and
$$T^+\nvdash \neg {\sf Con(T^+)}.$$
\item There is a formula $\varphi $
using $4$ variables and an extension $T^{++}$ of $T^+$ in $L_4$ such that truth of statement (i)
below is independent of $ZFC$.

\begin{description}
\item  (i)
\[
T^{++}\vDash \varphi
\]
\end{description}
\end{enumarab}

\end{theorem}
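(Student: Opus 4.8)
\textbf{Proof plan for Theorem \ref{second}.}

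The plan is to transfer G\"odel's second incompleteness theorem and an independence phenomenon from first order logic into the four-variable fragment $L_4$ by means of the translation apparatus already set up in the excerpt. Recall that we have at our disposal the translation function $\tr:\Fm_\omega\to\Fm_4$ built from the Tarski function $f:\Fm_\omega^0\to\Fm_3^0$, a recursive translation $r$ from three-variable formulas to $\RA$ terms, and the natural homomorphism $h:\Fr_1\RA\to\Fm_4$ — all of which are recursive and \emph{meaning-preserving} in the precise sense that $\psi\models\phi$ implies $\tr(\psi)\vdash_4\tr(\phi)$, and moreover (via Tarski–Givant) the whole of $ZF$, and a fortiori $PA$, can be faithfully interpreted this way. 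So first I would fix $T^+=\tr[PA]$ (more precisely, the $L_4$-theory obtained by translating a suitable finite axiomatization / axiom scheme of $PA$), noting that $T^+$ is consistent iff $PA$ is, and that the interpretation is faithful: $T^+\vdash\tr(\sigma)$ iff $PA\vdash\sigma$.

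For part (1): since $PA$ is "strong enough" in the sense of the definition preceding the theorem — it contains Robinson's arithmetic and satisfies the Hilbert–Bernays–L\"ob derivability conditions — G\"odel's second incompleteness theorem gives $PA\nvdash{\sf Con}(PA)$ and (by consistency of $PA$, which we assume as a metatheoretic hypothesis exactly as one must for $G2$) $PA\nvdash\neg{\sf Con}(PA)$. Now set ${\sf Con}(T^+):=\tr({\sf Con}(PA))$; this is a formula of $L_4$, and in any model $\mathfrak M\models T^+$ it expresses consistency of $T^+$ because the interpretation is faithful and meaning-preserving: a proof of $0=1$ in $T^+$ would, by faithfulness, yield one in $PA$, contradicting ${\sf Con}(PA)$; conversely a derivation witnessing inconsistency of $PA$ translates to one for $T^+$. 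From $PA\nvdash{\sf Con}(PA)$ and faithfulness we get $T^+\nvdash\tr({\sf Con}(PA))$, i.e. $T^+\nvdash{\sf Con}(T^+)$, and symmetrically for the negation. The one genuinely delicate point here — and the step I expect to be the main obstacle — is verifying that the syntactic gadgetry of Bew/provability (the $\Sigma_1$ formula ${\sf pr}(x,y)$, the coding of sequences, the diagonal lemma) survives translation into $L_4$ \emph{as an assertion about $T^+$ itself}: one must check that $\tr$ commutes with the relevant metamathematical constructions closely enough that "$T^+$ talks about $T^+$-provability" rather than "$T^+$ talks about $PA$-provability via an opaque interpretation." This is handled by working inside a single ambient arithmetic: encode everything (codes of $L_4$-formulas, the rules of the $L_4$-calculus $\vdash_4$, the recursive functions $f,r,h$) in $PA$, then translate; faithfulness of the Tarski–Givant interpretation guarantees the fixed point / self-reference goes through at the $L_4$ level. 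I would cite the Tarski–Givant monograph and \cite{1} for the coding details rather than reproduce them.

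For part (2): the strategy is to pick a sentence of $PA$ whose provability in some recursively axiomatized extension is independent of $ZFC$ and push it through $\tr$. The cleanest choice is a $\Pi_1$-type statement of the form ${\sf Con}(ZFC)$ or, to get a two-sided independence with a fixed extension, to let $T^{++}$ be $T^+$ together with $\tr({\sf Con}(ZFC))$ (an $L_4$-theory, since ${\sf Con}(ZFC)$ is an arithmetic sentence and translates), and take $\varphi:=\tr(0=1)$, or better $\varphi:=\tr(\psi)$ for a specific $\psi$ so that $T^{++}\models\varphi$ amounts to "$T^{++}$ is inconsistent." Then: if $ZFC$ is consistent, $ZFC\nvdash{\sf Con}(ZFC)$ by $G2$, so $ZFC$ does not prove that $T^{++}$ is inconsistent, while also $ZFC$ does not refute it without more (one uses here a model of $ZFC+\neg{\sf Con}(ZFC)$, which exists if $ZFC$ is consistent, to see the other direction); translating via faithfulness, the statement $T^{++}\models\varphi$ is neither provable nor refutable in $ZFC$. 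Alternatively, and more transparently, take $\varphi:=\tr({\sf Con}(ZFC))$ with $T^{++}=T^+$; then $T^{++}\models\varphi$ holds iff $ZFC$ is consistent (by faithfulness of the $PA$-interpretation and the fact that ${\sf Con}(ZFC)$ is a true-iff-consistent $\Pi_1$ sentence), and whether $ZFC\vdash{\sf Con}(ZFC)$ — equivalently whether statement (i) is \emph{provable} in $ZFC$ — is exactly what $G2$ (applied to $ZFC$) rules out, so (i) is independent of $ZFC$. Either way the only nontrivial work beyond part (1) is the bookkeeping that an extension of $T^+$ by a single translated arithmetic axiom is again a well-defined recursively axiomatized $L_4$-theory and that faithfulness is preserved; this is routine given the machinery already in place. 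I would present the second variant as the main line, remarking that it is essentially \cite[second incompleteness, as applied to $ZFC$]{1} transported along $\tr$.

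$\Box$
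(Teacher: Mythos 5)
Your part (1) is an acceptable alternative route, but note that it is not the paper's: the paper does not obtain ${\sf Con}(T^+)$ by translating ${\sf Con}(PA)$. It arithmetizes the syntax of $L_4$ and the calculus $\vdash_4$ \emph{inside} $T^+$ itself, so that ${\sf pr}(x,y)$ refers directly to $T^+$-proofs, checks the L\"ob conditions for $\pi(y)=\exists x\,{\sf pr}(x,y)$, and sets ${\sf Con}(T^+)=\neg\pi(\mathrm{False})$; the two underivability claims then follow by the standard argument, and extensions by finitely many axioms are handled by extending the $\Sigma_1$-definition of the axiom set. Your transport of $G2$ for $PA$ along the faithful interpretation does yield the two non-derivability statements at the metalevel, but $\tr({\sf Con}(PA))$ as it stands expresses the consistency of $PA$, not of $T^+$; closing that gap requires exactly the step you flag (arithmetizing $\vdash_4$ and the translation maps in $PA$ and proving ${\sf Con}(PA)\leftrightarrow{\sf Con}(T^+)$ there), at which point you have essentially rebuilt the paper's intrinsic construction. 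So (1) is fine with that caveat made explicit.

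Part (2), however, contains a genuine error in the variant you commit to as your main line. Taking $T^{++}=T^+$ and $\varphi=\tr({\sf Con}(ZFC))$, you claim $T^{++}\models\varphi$ holds iff $ZFC$ is consistent. This conflates logical consequence from (translated) $PA$ with truth in the standard model: by G\"odel completeness and faithfulness of the interpretation, $T^+\models\tr({\sf Con}(ZFC))$ is equivalent to $PA\vdash{\sf Con}(ZFC)$, which is simply false whenever $PA$ is consistent (if $ZFC$ is consistent this would contradict $G2$ for $ZFC$, since $ZFC$ proves every arithmetic theorem of $PA$; if $ZFC$ is inconsistent, $\Sigma_1$-completeness would force $PA$ to be inconsistent), and in any case is not equivalent to the truth of ${\sf Con}(ZFC)$ — there are models of $PA$ in which ${\sf Con}(ZFC)$ fails regardless. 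This is precisely why the paper requires $T^{++}$ to be an extension of $T^+$ in which \emph{full} first order arithmetic, i.e. the complete theory of $\N$, is interpreted: only then does $T^{++}\models\tr(\psi)$ reduce to "$\psi$ is true in $\N$", and with $\psi={\sf Con}(ZF)$ that is the statement whose truth is independent of $ZFC$. Your other variant ($T^{++}=T^+$ plus $\tr({\sf Con}(ZFC))$, with $\varphi=\tr(0=1)$) is much closer to being correct, since "$T^{++}$ is inconsistent" is $ZFC$-provably equivalent to $\neg{\sf Con}(ZFC)$, but it is not the one you adopt, and your justification of it has the provability and refutability directions tangled; if you keep it, redo that bookkeeping carefully and state the soundness (or $1$-consistency) hypothesis on $ZFC$ that the independence claim needs.
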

\begin{proof}

\begin{enumarab}

\item In $T^+$ like $PA$, there is formula ${\sf pr}(x,y)$
expressing that $x$ is the G\"{o}del number of a proof from $T^+$
of a formula $\varphi $ of  whose G\"{o}del number is $y$.
Now, $\exists x{\sf pr}(x,y)$ is a provability
formula $\pi (y)$ which in $T^+$ expresses that $y$ is the G\"{o}%
del number of an $L_4$ formula provable in $T^+$. Furthermore, one can
easily check that the L\"{o}b conditions are satisfied by $\pi (y)$ and by $T^+$. Now, we
choose ${\sf Con}(T^+)$ to be $\urcorner \pi $($False$).
The rest follows the standard proof.
Also, the generalization for (consistent) extensions of $T^{+}$ with
finitely many new axioms can be proved like the classical case; if we have a $%
\sigma_{1}$ definition of the G\"{o}del numbers of the axioms of $T^+$
then we can extend this $\sigma_{1}$-definition to ``$T^+$ an
extra (concrete) axiom, say $\varphi $'', since $\varphi $ has a concrete G\"{o}del number $\lceil \varphi \rceil .$

\item Our next theorem says that truth in our theory is independent of $ZF$:
Choose $T^{++}$ such that
the theory of full first-order arithmetic can be
interpreted  in $T^{++}$.  In $\sf Th(\N)$ there exist a formula,
$\psi $, such that the statement "$\sf Th\N\vDash \psi $''
is independent of $ZFC$ (assuming $ZFC$ is consistent). Such a $\psi $ is the G%
\"{o}delian formula Con($ZF$),  then ``$\tr(\psi)\in  T^{++}$'' or equivalently ``
$T^{++}\vDash \tr(\psi )$'' is a statement about  $T^{++}$ whose
truth is independent from $ZFC.$

\end{enumarab}
\end{proof}

\subsection{Forcing in relation and cylindric algebras}

Tarski used the theory of relation algebras to express Zermelo-Fraenkel set theory as a system of equations without variables.
Representations of relation algebras will take us back to set-theoretic relational systems.

On the other hand, Cohen's method of forcing provides us a way to build new models of set theory and to establish the independence
of many set-theoretic statements. In \cite{z} a way of building the missing link to connect relation algebras and the method of forcing is presented.

The idea is that distinct set theories (for example one in which $CH$ is an axiom, and one in which $\neg CH$ is provable),
give rise to equationally  distinct simple countable quasi projective relation algebras.
By the duality established above, the same can be said about N\'emeti's directed cylindric
algebras of any finite dimensional $n>2$.

Maddux showed that every $\QRA$ is representable. The first proof is due to Tarski.
Unlike Simon's proof, appealing to a neat embedding theorem,
which can be easily discerned below the surface of our above discussion,
Maddux style representations are built using  a step-by-step argument.
Moreover, such representations has the supreme
advantage that it preserves {\it well-foundedness}, witness \cite{z},
which allows an algebraic reflection of independence results in set theory,
baring in mind that ${\sf QRA}s$ were originally designed to capture
the whole of set theory without the use of variables.
We formulate this fact in a theorem.
\begin{theorem}\label{forcing}
Let $\A$ be a simple countable $\QRA$
that is based on a model $(M,\in)$ of set theory.
Let $h$ be a Maddux style representation of $\A$. If
$d\in A$ is well founded relation on $M$, then $h(d)$ is well founded
\end{theorem}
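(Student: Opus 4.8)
The plan is to prove Theorem~\ref{forcing} as worded: given a \emph{fixed} Maddux-style representation $h$ of a simple countable $\QRA$ $\A$ based on $(M,\in)$, and a well-founded $d\in A$, show that the concrete relation $h(d)$ on the base set of the representation is well founded. First I would unpack what ``based on $(M,\in)$'' supplies. Since $\A$ is a $\QRA$, it carries quasi-projections $p,q$, and the pairing machinery lets us speak, inside $\A$, of the operations $\dom(x)=1';(x;\breve x)$, $\rng(x)=1';(\breve x;x)$, functional elements, powers $x^n$, and most importantly the equational assertion that ``$d$ is well founded'' is witnessed \emph{algebraically}: in set-theoretic terms well-foundedness of a relation is equivalent to the nonexistence of an infinite descending $\in$-chain, which in the calculus of relations is expressed by an equation or equational scheme built from $d$, converse, composition, and the quasi-projections (this is exactly the point of Tarski--Givant's formalization of $ZF$ without variables). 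So I would begin by making precise the statement ``$d\in A$ is a well-founded relation on $M$'' as an equational (or at worst $L_4$-expressible) property $W(d)$ holding in $\A$, using the pairing terms.

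Next I would use that $h$ is a \emph{representation}: an injective homomorphism of $\A$ into a relation set algebra on some base $U$, preserving all $\RA$ operations and, crucially, the identity and diagonal. The key structural fact I would lean on is the one emphasized in the excerpt about Maddux-style representations: they are built so as to preserve well-foundedness because the step-by-step construction never introduces spurious infinite descending chains through the coded pairs. Concretely, I would argue that because $h$ preserves $;$, $\breve{\phantom{x}}$, $1'$, $p$ and $q$, any infinite $h(d)$-descending chain $a_0,a_1,\dots$ in $U$ would, via the preserved quasi-projection terms, pull back to witnesses in $\A$ contradicting $W(d)$. The plan is thus: suppose for contradiction $h(d)$ is not well founded, so there is an infinite descending sequence $\langle a_n:n<\omega\rangle$ with $(a_{n+1},a_n)\in h(d)$ for all $n$; then use the pairing functions of the \emph{representation} (the images $h(p),h(q)$, which are genuine projection functions on $U$ because $h$ is a representation) to code this $\omega$-sequence into a single point or a single $h(d)$-descending object whose existence is refuted by $h(W(d))=W(h(d))$ being the set-theoretic translate of well-foundedness. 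Because $h$ is a homomorphism, $W$ transfers: $\A\models W(d)$ gives the corresponding concrete statement about $h(d)$ in the set algebra, and that concrete statement is precisely the well-foundedness of $h(d)$ once we know $h$ respects the pairing terms used to formulate $W$.

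The main obstacle, and the step I would spend the most care on, is the transfer of $W$ from the abstract algebra to the concrete relation \emph{for an arbitrary} $h$ rather than a specially constructed one. Equational identities transfer automatically under any homomorphism, so if $W(d)$ is genuinely a finite conjunction of equations then $h(W(d))$ holds in the set algebra and well-foundedness of $h(d)$ is immediate; the difficulty is that well-foundedness is genuinely a $\Pi^1_1$ (non-first-order, non-equational) property, so it \emph{cannot} be captured by a single equation, and a naive equational-transfer argument fails. I therefore expect the real work to be showing that, although ``$d$ is well founded'' is not equationally expressible, the relevant closure property that \emph{is} preserved by every representation (namely, the step-by-step-preserved absoluteness of well-foundedness across the $\in$-model coding) suffices. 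I would handle this by invoking absoluteness of well-foundedness: since $\A$ is based on a genuine model $(M,\in)$ of set theory and $h$ faithfully codes the $\in$-relation via quasi-projections, a hypothetical infinite $h(d)$-descending chain in $U$ would be coded as an element of $M$ that $M$ itself sees as an infinite descending chain, contradicting $M\models$ ``$d$ is well founded''; the point being that the representation, by preserving the pairing terms, preserves exactly the set-membership structure needed for this coding. Making this coding argument precise for an arbitrary representation $h$ --- i.e.\ verifying that $h(p),h(q)$ really are projections on the base and that descending $h(d)$-chains are internally coded --- is the crux, and I would model it closely on \cite{z}, where this well-foundedness preservation is established, adapting it from the specific construction there to the abstract homomorphism hypothesis here.
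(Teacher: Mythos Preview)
The paper does not supply its own proof of this theorem; it merely states the result and attributes the preservation of well-foundedness to the \emph{specific} step-by-step nature of Maddux's construction, citing \cite{z}. So the benchmark you are being compared against is not an argument in the paper but the observation that the proof lives in the analysis of that particular construction.

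Your proposal has a genuine gap at exactly the point you yourself flag as the crux. You try to argue that an infinite $h(d)$-descending chain $\langle a_n:n<\omega\rangle$ in the base $U$ of the representation could be ``coded as an element of $M$ that $M$ itself sees as an infinite descending chain''. This step does not go through. The sequence $\langle a_n\rangle$ lives in the real universe, with the $a_n$ elements of $U$; there is no mechanism that forces such an external $\omega$-sequence to correspond to a single element of $M$, and the images $h(p),h(q)$ of the quasi-projections only let you code \emph{pairs} (and hence finite tuples) from $U$, not arbitrary countable sequences. Absoluteness of well-foundedness is a statement about transitive $\in$-models; it does not give you a way to reflect an external descending chain in $U$ back into $M$. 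So the homomorphism/transfer strategy cannot succeed for an arbitrary representation, which is consistent with your correct observation that well-foundedness is genuinely $\Pi^1_1$ and not preserved by homomorphisms in general.

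The point the paper is making is precisely that ``Maddux style'' is doing real work in the hypothesis: the step-by-step construction builds the base $U$ by adjoining witnesses stage by stage, and one checks along the construction that no infinite descending $h(d)$-chain is ever created --- each new point is added for a finite reason tied back to $M$, and a putative descending chain would have to thread through infinitely many stages in a way the construction rules out. That is the shape of the argument in \cite{z}, and it is where you should direct your effort, rather than attempting a representation-independent transfer.
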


So Maddux's and also Sagi's style representations, the latter proving a strong representability
result for directed cylindric algebras,  in fact preserves well foundedness of relations, which is not an elementary fact.
In \cite[theorem 14,  p. 61]{z}, a characterization of simple ${\QRA}$'s with a distinguished element
that are  isomorphic to an algebra of relations arising
from a countable transitive model of enough set theory is given.

So let $h$ be the Maddux style representation of such an $\A$,
on a set algebra with base $U$. Then $U$ is countable, and $h(e)$ "set like", meaning that it
behaves like the membership relation, at least as far as well-foundness, is concerned.

By Mostowski Collapsing theorem, there is a transitive $M$ and a one to one
map $g$ from $U$ onto $M$, such that $g$
is an isomorphism between $(U, h(e))$ and $(M,\in)$, where
$\in$ is the real membership.
$(M,\in)$ is also, a model of enough set theory. Let $M[G]$ is generic extension of $M$, formed by the methods of forcing,
and take the ${\sf QRA}$, call it $\A[G]$
corresponding to  $(M[G],\in).$  Assume for example that $\A$ models the translation of the continuum hypothesis,
while $M[G]$ models its negation.
Then we can conclude that $\A$ and $\A[G]$ are simple countable quasi projective
relation algebras that are equationally distinct.

One can carry similar investigations in the context of directed cylindric algebras instead of ${\QRA}$,
since representations of such algebras defined by
Sagi also preserves well foundness.

Summarizing, we have:

Let ${\sf M}$ denote the class of all countable transitive models of set theories, and $\sf QRA_{s}$ denotes
the class of countable simple $\sf QRA$s. Then there is a one to one correspondence
between ${\sf M}$ and $\sf QRA_{s}$, that is, in turn, definable in set theory.

For any finite $n>2$, there is also a one to one correspondence between ${\sf M}$ and
simple countable directed cylindric algebras of dimension $n$.

The logic corresponding to N\'emet's directed cylindric algebras is a higher order (untyped) logic,
so here to make the connection with set theory more
explicit one views set theory formulated in higher
order logic.

Now summarizing the above discussion, we have a triple duality
between countable transitive models of set theory, namely, $\sf M$,
$\sf QRA_s$s and simple directed cylindric algebras of any finite dimension
$>2$.

We can also descend one dimension, and work without equality. Then even in this case
quasi-projections {\it can be created} in $\Df_3$ yielding that the whole of $ZFC$ is formalizable in
the equality free  version of $L_3$
with only one ternary relation.
But this is a very long story briefly narrated in
Andr\'eka and N\'emeti's first chapter in \cite{1}.

\end{document}